\documentclass{amsart}
\usepackage[utf8]{inputenc}

\title[The growth of the Green function and Poincar\'e series]{The growth of the Green function for random walks and Poincar\'e series}

\date{}
\usepackage{amsmath}
\usepackage{amssymb}

\newcommand\N{\mathbb{N}}
\newcommand\Z{\mathbb{Z}}

\newcommand\R{\mathbb{R}}

\newtheorem{thm}{Theorem}[section]
\newtheorem{prop}[thm]{Proposition}
\newtheorem{cor}[thm]{Corollary}
\newtheorem{lem}[thm]{Lemma}
\newtheorem{conj}[thm]{Conjecture}

\theoremstyle{definition}
\newtheorem{defn}[thm]{Definition}

\theoremstyle{remark}
\newtheorem{quest}[thm]{Question}
\newtheorem{rem}[thm]{Remark}


 
\newcommand{\showcomments}{yes}

\newsavebox{\commentbox}
%
{\ifthenelse{\equal{\showcomments}{yes}}%
{\footnotemark
    \begin{lrbox}{\commentbox}
    \begin{minipage}[t]{1.25in}\raggedright\sffamily\upshape\tiny
    \footnotemark[\arabic{footnote}]}%
{\begin{lrbox}{\commentbox}}}%
{\ifthenelse{\equal{\showcomments}{yes}}%
{\end{minipage}\end{lrbox}\marginpar{\usebox{\commentbox}}}%
{\end{lrbox}}}

\newcommand{\pG}{\partial{\Gamma}}
\newcommand{\dirac}[1]{{\mbox{Dirac}}{(#1)}}
\newcommand{\supp}{\mbox{supp}}

\usepackage{xcolor}

\usepackage[colorlinks=true,pdfstartview=FitH,bookmarks=false]{hyperref}
\hypersetup{urlcolor=black,linkcolor=black,citecolor=black,colorlinks=true}

\usepackage{etoolbox}
\apptocmd{\sloppy}{\hbadness 10000\relax}{}{}
\apptocmd{\sloppy}{\vbadness 10000\relax}{}{}
\usepackage{enumerate}

\author[M. Dussaule]{Matthieu Dussaule}
\address{Matthieu Dussaule : Cogitamus Laboratory \\ France}
\email{matthieu.dussaule@hotmail.fr}

\author{Longmin Wang}
\address{Longmin Wang : School of Statistics and Data Science, LPMC, Nankai University, 94 Weijin Road, Nankai District, Tianjin, China }
\email{wanglm@nankai.edu.cn}

\author{Wenyuan Yang}
\address{Wenyuan Yang : Beijing International Center for Mathematical Research (BICMR), Beijing University, No. 5 Yiheyuan Road, Haidian District, Beijing, China}
\email{yabziz@gmail.com}

\numberwithin{equation}{section}

\thanks{L.W. is supported by National Natural Science Foundation of China (12171252).  W.Y. is supported by National Key R \& D Program of China (SQ2020YFA070059).}

\begin{document}

\begin{abstract}
Given a probability measure $\mu$ on a finitely generated group $\Gamma$, the Green function $G(x,y|r)$ encodes many properties of the random walk associated with $\mu$.
Finding asymptotics of $G(x,y|r)$ as $y$ goes to infinity is a common thread in probability theory and is usually referred as renewal theory in literature. 
Endowing $\Gamma$ with a word distance, we denote by $H_r(n)$ the sum of the Green function $G(e,x|r)$ along the sphere of radius $n$. This quantity appears naturally when studying asymptotic properties of branching random walks driven by $\mu$ on $\Gamma$ and the behavior of $H_r(n)$ as $n$ goes to infinity is intimately related to renewal theory.
Our motivation in this paper is to construct various examples of particular behaviors for $H_r(n)$.
First, our main result exhibits a class of relatively hyperbolic groups with convergent Poincar\'e series generated by $H_r(n)$, which  answers some questions raised in \cite{DWY}.  Along the way, we  investigate    the behavior of $H_r(n)$ for several classes of finitely generated groups, including abelian groups, certain nilpotent groups, lamplighter groups, and Cartesian products of free groups.
\end{abstract}

\maketitle
\section{Introduction}


Let $\Gamma$ be a finitely generated group endowed with a finite generating set.
Denote by $|\!\cdot\!|$ the induced word distance and by $S_n = \left\{ x \in \Gamma \colon |x| =n \right\}$ the sphere of radius $n$ centered at the neutral element $e$ of $\Gamma$.
We set
$$v=\limsup_{n\to\infty} \frac{1}{n}\log \sharp S_n.$$
The number $v$ is called the volume growth of $\Gamma$ with respect to the chosen generating set.
Note that $v$ is also the growth rate of balls, i.e.\ $v=\limsup_{n\to\infty} \frac{1}{n}\log \sharp B_n$, where $B_n$ is the ball of radius $n$ centered at $e$, see \cite[Section~2.2]{Mann} for more details.
The group $\Gamma$ has exponential growth (respectively sub-exponential growth) if $v>0$ (respectively $v=0$) and this does not depend on the choice of the generating set, although the exact value of $v$ does, when non-zero.

Consider an admissible probability measure $\mu$ on $\Gamma$. 
Let $p_n(x, y)$ be the $n$-step transition probability for the random walk $(X_k)$ with step distribution $\mu$.  Define for $0 < r \leq R$ the Green function
\[
  G(x, y | r) = \sum_{n = 0}^{\infty} r^n p_n(x, y), 
\]
where $R$ is the radius of convergence of the series, which is also the inverse of the spectral radius of the random walk.
When $r = 1$ we write $G(x, y) = G(x, y | 1)$ for simplicity. 
Note that we have $G(x, y | r) = G(e, e | r) F(x, y | r)$, where $F(x, y | r)$ is the first return Green function \cite[Lemma~1.13]{Woessbook}. 
Define the \textit{growth function} associated with the Green function $H_r(n)$ as
\[
  H_r(n) = \sum_{x \in S_n} G(e, x | r)
\]
and  the \textit{growth rate of the Green function} $\omega_\Gamma(r)$ as
\[
  \omega_{\Gamma}(r) = \limsup_{n \to \infty} \frac{1}{n} \log H_r(n). 
\]

We will always consider random walks that are transient at the spectral radius, i.e.\ $G(x,y|r)$ is finite for every $x,y\in \Gamma$ and for every $r\leq R$.
By Varapoulos Theorem \cite[Theorem~7.8]{Woessbook}, only groups with at most quadratic growth can carry a random walk which is not transient at the spectral radius.
In other words, we will always assume that $\Gamma$ is not virtually $\Z^d$, $d\leq 2$.
In particular, this ensures that $H_r(n)$ and $\omega_\Gamma(r)$ are well defined for every $r\leq R$.

\medskip
\textbf{Notations.} In all the paper, given two functions $f$ and $g$, we write $f\simeq g$ if the difference between $f$ and $g$ is bounded from above and below, that is $|f-g|\leq C$ for some constant $C$.
We write $f\simeq_L g$ if the constant $C$ depends on a parameter $L$.
Assuming further that $f$ and $g$ are positive, we write $f\asymp g$ if the ratio of $f$ and $g$ is bounded from above and below, that is
$\frac{1}{C}f\leq g\leq Cf$
for some positive constant $C$.
Similarly, if $C$ depends on $L$, we write $f\asymp_L g$.
Also, if $f\leq Cg$, we write $f\prec g$ and $f\prec_L g$ if $C$ depends on $L$.
If the dependency is not clear from the context, we will avoid these notations.

\medskip
This paper mostly deals with the asymptotics of $H_r(n)$ as $n$ goes to infinity.
A fruitful line of research in the theory of random walks is to compute asymptotics in space of the Green function, that is asymptotics of
$G(e,x|r)$ as $x$ goes to infinity.
This is referred as renewal theory and goes back to Blackwell's renewal theorem
for drifted random walks on $\mathbb R$, see \cite{Blackwell} and earlier references therein.
The terminology renewal comes from an interpretation of the Green function $G(e,x)$ as the probability that a renewal event takes place at $x$ for a suited process, see \cite[Chapter~II.9]{Spitzer}.

In fact, the terminology renewal theory is used in a much broader setting and we refer to \cite[Chapter~XI]{Feller} for a more complete exposition within the scope of probability theory.
Let us also mention that Lalley \cite{Lalleyrenewal} generalized classical renewal theory, with a new approach to deal with asymptotics of certain counting functions arising in geometric group theory.
This led to significant research in dynamical systems, where renewal theory is now a common thread.

Finally, note that Ledrappier interpreted the computation of the limit of $H_1(n)$ on the free group as a renewal theorem for the distance \cite{Ledrappier01}, see also \cite{Ledrappier93} for related results concerning the Brownian motion on the universal cover of compact negatively curved manifolds.
However, besides such specific examples, the behaviors of $H_r(n)$ and $\omega_\Gamma(r)$ have not been investigated much in literature, so let us first explain in which context these quantities occur.

\medskip
Consider a probability measure $\nu$ on $\Z_{\geq 0}=\{0,1,2,...\}$.
The branching random walk driven by $\nu$ and $\mu$ on $\Gamma$, denoted by $\mathrm{BRW}(\Gamma,\nu,\mu)$ is described as follows.
One starts with a single particle at $e$.
For every $n$,  every alive particle at time $n$ dies after giving birth to an independent number of children, according to the distribution of $\nu$, each of which independently moves on $\Gamma$ according to the distribution of $\mu$.
The measure $\nu$ is called the offspring distribution.

In recent years, there has been a large body of work dedicated to understanding the asymptotic behavior of $\mathrm{BRW}(\Gamma,\nu,\mu)$ in terms of geometric features of $\Gamma$.
For such study, one usually condition on non-extinction of the system, which boils down to considering an offspring measure distributed on $\N=\{1,2,...\}$, see \cite[Chapter~1]{AN04}.
It is thus natural to assume that $\mathbf E[\nu]>1$, otherwise, conditioned on non-extinction, $\nu$ is the Dirac measure at 1 and the branching random walk is nothing but the usual random walk whose step distribution is given by $\mu$.
One of the cornerstone result is that letting $\mathbf E[\nu]=r$, then $1<r\leq R$ if and only if the branching random walk is transient, i.e.\ almost surely it does not visit every vertex infinitely many times, see \cite{BP94} and \cite{GM06}.
Furthermore, if $1<r\leq R$, then the branching random walk has exponential volume growth by \cite{BM12}.
Precisely, letting $M_n$ be the cardinality of the number of elements of $S_n$ that are ever visited by the branching random walk, we have that almost surely,
$$\limsup_{n\to\infty} \frac{1}{n}\log M_n>0.$$
In some cases, such as hyperbolic groups \cite[Theorem~1.1]{SWX} and relatively hyperbolic groups \cite[Theorem~1.1]{DWY}, it was shown that this growth rate coincides almost surely with the growth rate of the Green function at $1<r\le R$.
That is,
$$\limsup_{n\to\infty} \frac{1}{n}\log M_n=\omega_\Gamma(r).$$
Moreover, for hyperbolic groups, we have $H_r(n)\asymp \mathrm{e}^{n\omega_\Gamma(r)}$, i.e.\ the Green function has purely exponential growth, see \cite[Theorem~3.1]{SWX}.


\medskip
In fact, our original motivation for this paper was  to answer some questions raised in \cite{DWY} about branching random walks on relatively hyperbolic groups.
We introduce the following Poincar\'e series associated with $\mu$ on $\Gamma$.
For $r\leq R$ and $s\in \R$, we set
\begin{align}\label{PoincareSeriesEQ}
\Theta_\Gamma(r,s)=\sum_{x\in \Gamma}G(e,x|r)\mathrm{e}^{-sd(e,x)}=\sum_{n\geq 0}H_r(n)\mathrm{e}^{-sn}.    
\end{align}
The growth rate $\omega_\Gamma(r)$ is thus the critical exponent of this Poincar\'e series, i.e.\ for $s<\omega_\Gamma(r)$, $\Theta_\Gamma(r,s)$ diverges and for $s>\omega_\Gamma(r)$, $\Theta_\Gamma(r,s)$ converges.

Several criteria were found in \cite{DWY} to ensure that this Poincar\'e series diverges at $s=\omega_\Gamma(r)$.
One of the missing pieces was whether  there exists an example with convergent Poincar\'e series.
Our main result exhibits such an example by actually constructing a relatively hyperbolic group for which divergence of the Poincar\'e series depends on $r$.
Precisely, we prove the following.

\begin{thm}[Theorem~\ref{transitionphasePoincare}]\label{mainthm}
There exists a relatively hyperbolic group $\Gamma$, endowed with a finitely supported symmetric admissible probability measure $\mu$, and there exist $1<r_*<r_\sharp<R$ such that  
\begin{enumerate}
    \item 
    for any $r\leq r_*$, $\Theta_\Gamma(r,\omega_\Gamma(r))$ diverges,
    \item
    at $r=r_\sharp$, $\Theta_\Gamma(r,\omega_\Gamma(r))$ converges.
\end{enumerate}
\end{thm}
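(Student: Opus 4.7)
The approach is to construct $\Gamma$ as a free product $\Gamma = P * H$, where $P$ is a non-elementary factor whose Green function sums $H_r^P(n)$ carry a polynomial correction at the corresponding critical parameter --- concretely, a Cartesian product of free groups, as analyzed in the sections of this paper devoted to such groups --- and $H$ is a factor (for instance a free group) which ensures that $\Gamma$ is relatively hyperbolic with $P$ as a parabolic subgroup. The measure $\mu$ is taken as a symmetric admissible convex combination $\alpha \mu_P + (1-\alpha)\mu_H$ of finitely supported nearest-neighbour measures on the two factors, with the weight $\alpha$ chosen to tune the position of the phase transition.

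The first step is to decompose $H_r(n)$ along the normal forms of the free product. Writing any $x \in \Gamma$ in normal form $x = x_1 \cdots x_k$ with $x_i$ alternating between $P \setminus \{e\}$ and $H \setminus \{e\}$, the Green function $G(e, x | r)$ factorises through first-passage generating functions $F_P(e, x_i | \tilde r)$ and $F_H(e, x_i | \tilde r)$ evaluated at an effective parameter $\tilde r \le r$ produced by the usual Voiculescu/Cartwright--Soardi functional equations. This expresses $H_r(n)$ as a weighted convolution of the growth functions of the two factors, and in particular gives $\omega_\Gamma(r)$ as the maximum of the growth rates of $H_{\tilde r}^P$ and $H_{\tilde r}^H$ at that effective parameter.

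The second step is to exploit a phase transition on the parabolic side. For $r$ close to $1$, the effective parameter $\tilde r$ is still well below the critical parameter of $P$; the Green function sums on $P$ behave purely exponentially, the hyperbolic factor $H$ dominates the convolution, and one recovers $H_r(n) \asymp \mathrm{e}^{n\omega_\Gamma(r)}$. This yields divergence of $\Theta_\Gamma(r, \omega_\Gamma(r))$, proving $(1)$ for some $r_*>1$. As $r$ increases, at a critical value $r_\sharp < R$ the effective parameter on $P$ reaches the value at which the polynomial correction (coming from the Cartesian-product analysis) is activated. Propagating this correction through the free-product convolution produces $H_{r_\sharp}(n) \asymp \mathrm{e}^{n\omega_\Gamma(r_\sharp)}/n^{\beta}$ with an exponent $\beta>1$ inherited from $P$, and consequently the Poincar\'e series converges at criticality, establishing $(2)$.

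The main obstacles are twofold. First, one must choose $P$, $H$ and the splitting of $\mu$ so that the transition on $P$ lands strictly inside the spectrum of $\Gamma$, guaranteeing $1 < r_* < r_\sharp < R$; this is a tuning problem for the weight $\alpha$ and requires that both endpoints are accessible as the effective parameter $\tilde r(r)$ moves with $r$. Second, and more delicate, one has to show that the polynomial correction on $P$ survives the free-product convolution: a priori, alternating with exponentially many words from $H$ could smear the $n^{-\beta}$ factor. The argument requires that, at $r = r_\sharp$, the dominant contributions to $H_r(n)$ come from normal forms whose final block lies in $P$ and whose $H$-factors are of bounded size, so that the polynomial decay produced by the parabolic piece is preserved at the global scale.
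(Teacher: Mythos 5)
Your overall architecture matches the paper's: the parabolic factor is a Cartesian product of free groups $\mathbb F_n\times\mathbb F_m$ whose phase transition comes from Theorem~\ref{thmbitree}, the group is a free product with the measure a convex combination $\alpha\mu_1+(1-\alpha)\mu_0$, and the effective parameters $\tilde r$ you invoke are exactly the functions $\zeta_{i,\alpha}(r)$ of~(\ref{equationG0})--(\ref{equationG1}). Part (1) is essentially the paper's argument (divergence of the parabolic series forces a parabolic gap by \cite[Corollary~3.9]{DWY}, and the gap yields divergence of $\Theta_\Gamma$ by \cite[Lemma~3.7]{DWY}), although your side claim that $\omega_\Gamma(r)$ equals the maximum of the two factors' growth rates at the effective parameter is false precisely in this regime: the gap means $\omega_\Gamma(r)$ strictly exceeds both.

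There are, however, two concrete gaps. First, your choice of the second free factor $H$ as a free group is problematic. The paper takes $H=\Z^d$, $d\ge 3$, and this is not cosmetic: the convergence estimate at $r_\sharp$ bounds $\Theta_\Gamma$ by a geometric series whose ratio contains the factor $\sum_{y\in H\setminus\{e\}}G(e,y|r)\,\mathrm{e}^{-\omega_\Gamma(r)|y|}$, and this is made small using polynomial growth of $\Z^d$ together with $\omega_\Gamma(r)\ge c>0$ and Lemma~\ref{lemmaDWY(14)} ($G(e,y|r)\le\epsilon$ uniformly for small $\alpha$). With a non-amenable $H$ of exponential growth this sum need not be finite, let alone small, and $\omega_H(r)$ could itself compete with $\omega_P(r)$; amenability of $H$ is also what guarantees (Proposition~\ref{propamenableparabolic}) that $H$ never destroys the gap. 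Second, and more seriously, your route to part (2) — propagating the $n^{-3/2}$ correction through the free-product convolution to get $H_{r_\sharp}(n)\asymp n^{-\beta}\mathrm{e}^{n\omega_\Gamma(r_\sharp)}$ with $\beta>1$ — is exactly the step you flag as delicate and do not carry out, and it is not how the paper proceeds. The paper never computes the asymptotics of $H_r(n)$ on $\Gamma$ at $r_\sharp$; it bounds $\Theta_\Gamma(r_\sharp,\omega_\Gamma(r_\sharp))$ directly by $G(e,e|r)\sum_k C_2^k$ with $C_2<1$, using only convergence of $\Theta_{\Gamma_0}(r_1,\omega_{\Gamma_0}(r_1))$ at the effective parameter $r_1=\zeta_{0,\alpha}(r_\sharp)>r_0$ and the smallness of the $\Z^d$ contribution. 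Your heuristic that the dominant normal forms have bounded $H$-blocks is plausible but unproven, and without it your part (2) does not close; you should replace it with the direct summation of the Poincar\'e series over normal forms.
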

\begin{rem}
We are unable to determine whether the second assertion is true for any $r>r_*$. If this were true, we would obtain a phase transition for the divergence of the Poincare series and also for all the   assertions in Corollary \ref{transitionphaseCor} below.
However, up to changing the measure $\mu$, we have a weak form of a phase transition at the level of the parabolic subgroups, see Remark~\ref{remarkchoiceofr1} for more details.
\end{rem}

We are now moving on explaining several applications of Theorem~\ref{mainthm}. Before that, let us put into context the series  $\Theta_\Gamma(r,s)$ under consideration.
The question whether $\Theta_\Gamma(r,s)$ converges or diverges at $s=\omega_\Gamma(r)$ is of particular importance for many properties such as  the construction of boundary measures and growth problems.  It is also related to the parabolic gap property coined in \cite{DWY} that we discuss next.

Consider a relatively hyperbolic group $\Gamma$ and a maximal parabolic subgroup $P$.
The \textit{growth rate of the Green function} induced on $P$ is defined as
$$\omega_P(r)=\limsup_{n\to\infty}\frac{1}{n}\log \sum_{\underset{|x|=n}{x\in P}}G(e,x|r). $$
Following \cite{DWY}, we say  that $\Gamma$ has a {parabolic gap along $P$ for the Green function} at $r\in[1,R]$ if $\omega_P(r)<\omega_\Gamma(r)$. If this holds along every parabolic subgroups, we say $\Gamma$ has a {parabolic gap for the Green function} at $r$ and if this in turn holds for every $r\in [1,R]$ we say that $\Gamma$ has a \textit{parabolic gap for the Green function}.

This notion is analogous to the parabolic gap condition introduced in \cite{DOP}, where first examples of convergent (standard) Poincar\'e series were constructed without this parabolic gap condition. As an interesting consequence, the authors of \cite{DOP} produced Patterson-Sullivan measures having atoms at parabolic points in the visual boundary of Hadamard manifolds.

In \cite{DWY}, we proved that if there exists a relatively hyperbolic group with convergent Poincar\'e series, then the parabolic gap condition fails.
On the other hand, having a parabolic gap has various applications related to asymptotic properties of branching random walks, see in particular \cite[Theorem~1.8, Remark~5.4]{DWY}.

Motivated by this discussion and by the work of \cite{DOP}, we make use of the above theorem to further develop various properties with a similar behavior.
We introduce a family of Patterson-Sullivan type measures $\nu_e(r)$ associated with the Poincar\'e series defined in~(\ref{PoincareSeriesEQ}) and a family of proper distances $\mathfrak d_r$ which are quasi-isometric to the word distance.
We summarize here the different results we obtain, see Theorem~\ref{transitionphasePoincare}, Corollary~\ref{transitionphasePattersonSullivan} and Theorem~\ref{transitionphasegrowthtightness}.
\begin{cor}\label{transitionphaseCor}
The   pair $(\Gamma,\mu)$   in Theorem~\ref{mainthm} has the following properties: 
\begin{enumerate}
\item  the parabolic gap for the Green functions holds for $r\le r_*$ but fails at $r=r_\sharp$,
\item  the Patterson-Sullivan measure $\nu_e(r)$ is supported on conical limit points for $r\le r_*$ and is purely atomic and supported on parabolic limit points at $r=r_\sharp$,
\item
the growth tightness for the proper distance $\mathfrak d_r$ holds for $r\le r_*$  but fails at $r=r_\sharp$.     
\end{enumerate}    
\end{cor}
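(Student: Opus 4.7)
The plan is to feed Theorem~\ref{mainthm} into three standard transfer principles, one for each item. In all three cases the input is the divergence of $\Theta_\Gamma(r,\omega_\Gamma(r))$ for $r\le r_*$ and its convergence at $r=r_\sharp$, and the output is the corresponding dichotomy for the property under consideration.

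For (1), the direction ``failure of parabolic gap at $r_\sharp$'' is the contrapositive of the implication established in \cite{DWY}: the existence of a parabolic gap at $r$ forces $\Theta_\Gamma(r,\omega_\Gamma(r))$ to diverge, so convergence at $r_\sharp$ rules out the parabolic gap there. For the forward direction at $r\le r_*$, I would verify the strict inequalities $\omega_P(r)<\omega_\Gamma(r)$ for every maximal parabolic subgroup $P$ directly from the estimates on $H_r^P(n)$ that enter the proof of Theorem~\ref{mainthm}: the convergence at $r_\sharp$ has to be driven by at least one parabolic $P$ whose growth rate saturates $\omega_\Gamma(r_\sharp)$, and one expects this saturation to be absent throughout $r\le r_*$.

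For (2), I would define $\nu_e(r)$ through the classical Patterson construction applied to the series~(\ref{PoincareSeriesEQ}), inserting a Patterson-type regularization weight whenever the series converges at its critical exponent. A shadow-lemma argument on the Bowditch boundary together with a Borel--Cantelli estimate then yields the Hopf--Tsuji--Sullivan dichotomy: divergence of $\Theta_\Gamma(r,\omega_\Gamma(r))$ for $r\le r_*$ translates into full $\nu_e(r)$-mass on the conical limit set. At $r=r_\sharp$, the failure of the parabolic gap from~(1) identifies a parabolic subgroup whose Poincar\'e contribution dominates, so after regularization the limit measure concentrates on the fixed points of its conjugates, giving a purely atomic measure supported on the parabolic limit set in the spirit of~\cite{DOP}.

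For (3), the distance $\mathfrak d_r$ is designed to be quasi-isometric to the word distance and to align divergence of $\Theta_\Gamma(r,\omega_\Gamma(r))$ with divergence-type behavior of the canonical Poincar\'e series of $(\Gamma,\mathfrak d_r)$. Growth tightness at $r\le r_*$ then follows from a Sambusetti-type quotient argument in the relatively hyperbolic framework, based on combining a shadow lemma for $\nu_e(r)$ with the divergence input. Conversely, at $r=r_\sharp$ the parabolic subgroup detected in (1) realizes the critical exponent and provides an explicit obstruction to growth tightness, since one can quotient by an element of infinite order in this parabolic without strictly dropping $\omega_\Gamma(r_\sharp)$. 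The main obstacle will be the atomic conclusion of (2) at $r=r_\sharp$: one has to control the Patterson regularization finely enough to guarantee that the mass not only escapes into the ends of the Cayley graph but in fact concentrates exactly on the parabolic fixed points associated with the saturating parabolic, which will rely on a weighted shadow lemma adapted to the Green-function setting.
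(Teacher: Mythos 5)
Your overall architecture (three transfer principles fed by Theorem~\ref{mainthm}) matches the paper, and your treatment of (1) is essentially the paper's: the contrapositive of \cite[Lemma~3.7]{DWY} handles $r_\sharp$, while for $r\le r_*$ the gap along $\Gamma_0$ comes from the divergence of the parabolic Poincar\'e series via \cite[Corollary~3.9]{DWY} and the gap along $\mathbb Z^d$ from amenability (Proposition~\ref{propamenableparabolic}); all of this is already packaged inside Theorem~\ref{transitionphasePoincare}. However, there are genuine gaps elsewhere. First, in (3) your obstruction to growth tightness at $r_\sharp$ --- ``quotient by an element of infinite order in this parabolic'' --- does not work: killing the normal closure of some $p\in P$ properly collapses $P$ itself, so there is no reason the image of $P$ still realizes the critical exponent $\delta(\Gamma,\mathfrak d_r)=1$ for the quotient metric. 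The paper instead quotients by $\langle\langle f^n\rangle\rangle$ for a \emph{loxodromic} $f$ and $n$ large, so that $P\cap\langle\langle f^n\rangle\rangle$ is trivial (relative Dehn filling, \cite[Lemma~8.9]{YangPS}) and $P$ embeds into the quotient with $\delta(\bar P,\bar{\mathfrak d_r})\ge\delta(P,\mathfrak d_r)=1$; see Proposition~\ref{growthrightnessandparabolicgap}.

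Second, for the positive directions of (2) and (3) you take divergence of $\Theta_\Gamma(r,\omega_\Gamma(r))$ as the input, but the actual arguments run off the \emph{parabolic gap}. In (2), full conical mass for $r\le r_*$ is not obtained by a second Borel--Cantelli argument from divergence; it is obtained by showing $\nu_e$ has no atoms at parabolic points, which uses the convergence of $\Theta_P(r,\omega_\Gamma(r))$ --- a consequence of $\omega_P(r)<\omega_\Gamma(r)$ --- together with multiplicativity of the Green function along transition points (Theorem~\ref{divergencetypeandatoms}). In (3), growth tightness for $r\le r_*$ is a barrier-free/ping-pong argument (Proposition~\ref{PropTwoPGs}, Lemma~\ref{BarrierFreeLem}) whose input is again the gap, not the measure $\nu_e$ or divergence. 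Since the paper explicitly leaves open whether a divergent Poincar\'e series forces the parabolic gap, your ``divergence~$\Rightarrow$~conclusion'' transfer principles are not established in general; they apply here only because item~(1) supplies the gap for $r\le r_*$, a dependency you should make explicit. Finally, you misplace the difficulty at $r_\sharp$ in (2): once $\Theta_\Gamma(r_\sharp,\omega_\Gamma(r_\sharp))$ converges, the lower shadow bound alone forces $\nu_e(\partial\Gamma^{\mathrm{con}})=0$ (contrapositive of Proposition~\ref{PosMeasonConcialProp}), and pure atomicity on the parabolic points is then automatic because they form a countable set carrying the full mass of a probability measure; no fine control of the Patterson regularization is needed at that step.
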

\begin{rem}
In \cite{Peigne11}, Peign\'e constructed a divergent Schottky group without the parabolic gap condition. In view of our examples, it is relevant to ask  here  whether there exist examples of divergent Poincar\'e series (\ref{PoincareSeriesEQ})  without a parabolic gap for the Green function. 
\end{rem}
\begin{rem}
The proper metric in (3) is defined by
$$\mathfrak d_r(x,y):=\omega_\Gamma(r)|x^{-1}y|+|x^{-1}y|_r$$
so it is a linear combination of the word metric $|x^{-1}y|$ and the $r$-Green metric $|x^{-1}y|_r:=-\log \frac{G(x,y|r)}{G(e,e|r)}$. Cashen-Tao \cite{CaTao16} have shown examples   of product groups with growth tightness for one generating set but not for another generating set. The above examples within the class of relatively hyperbolic groups are new. 
\end{rem}


These theorems are the conclusion of several results that we prove along the paper.
If we restrict our attention to branching random walks, the study of $H_r(n)$ and $\omega_\Gamma(r)$ seems to be relevant only for $r>1$.
However, as we now observe, even for $r\leq 1$, both these quantities appear naturally and are worth being studied.

For any subset $A$  of $\Gamma$, the growth rate $\omega_A(r)$ of the Green function restricted to $A$ at $r$ can be defined similarly to $\omega_\Gamma(r)$.
Namely, we set
\[
  H_{A,r}(n) = \sum_{x \in S_n\cap A} G(e, x | r)
\]
and
\[
  \omega_{A}(r) = \limsup_{n \to \infty} \frac{1}{n} \log H_{A,r}(n). 
\]
We can  interpret  $\omega_A(r)$ via      the first return kernel to   $A$ associated with $r\mu$, which is defined by
$$p_{r,A}(x,y)=\sum_{n\geq 1}\sum_{z_1,...,z_{n-1}\notin A}r^np(x,z_1)p(z_1,z_2)...p(z_{n-1},y), \ x,y\in A.$$
Letting $G_{r,A}$ denote the corresponding Green function, it holds that for every $x,y$ in $A$,
$$G_{r,A}(x,y|1)=G(x,y|r)$$ by \cite[Lemma~4.4]{DG21}.
Consequently, the growth rate $\omega_A(r)$  coincides with the growth rate at 1 of this new Green function $G_{r,A}$.
In general, there is no reason for $p_{r,A}$ to be a Markov transition kernel, that is, $\sum_{y\in A}p_{r,A}(x,y)$ might  not be a constant equal to 1.
When $A$ is a subgroup of $\Gamma$, then $p_{r,A}$ is $A$-invariant, so $\sum_{y\in A}p_{r,A}(x,y)$ is independent of $x$, but there is still no reason for this transition kernel to be Markov.
As a matter of fact, in various interesting cases, $p_{r,A}$ is a sub-Markov transition kernel.
Letting $t$ be its total mass and setting $\tilde{p}_{r,A}=t^{-1}p_{r,A}$ and $\widetilde{G}_{r,A}$ for the corresponding Green function, we see that
$G_{r,A}(\cdot,\cdot|1)=\widetilde{G}_{r,A}(\cdot,\cdot|t)$.
Therefore, the growth rate $\omega_A(r)$ restricted to $A$ at $r$ coincides in this case with the growth rate at some $t<1$ of a Markov transition kernel on $A$.
As explained above, in the context of relatively hyperbolic groups, letting $A$ be a maximal parabolic subgroup, the relation between $\omega_A(r)$ and $\omega_\Gamma(r)$ is of particular importance in understanding the asymptotic behavior of $H_r(n)$. 
In particular, this discussion  motivates the study of $H_r(n)$ and $\omega_\Gamma(r)$ also for $r\leq 1$.

\medskip
In fact, a large part of our work is devoted to understanding the behavior of $H_1(n)$ for symmetric admissible and finitely supported random walks on amenable groups and our study goes beyond applications to relatively hyperbolic groups.
Here is a summary of the different results we obtain.

\begin{thm}[Theorem~\ref{thmnilpotent}, Theorem~\ref{thmlamplighter}]\label{thmamenable}
The function $H_1(n)$ is asymptotically linear in $n$ for the following classes of groups:
\begin{enumerate}
    \item if $\Gamma$ is a free abelian group of finite rank endowed  with a finitely supported symmetric admissible probability measure, then $H_1(n)\sim Cn$ for any finite generating set,
    \item if $\Gamma$ is a finitely generated nilpotent group of nilpotency class $N_\Gamma\leq 2$ endowed with a finitely supported symmetric admissible probability measure, then $H_1(n)\asymp n$ for any finite generating set,
    \item
    if $\Gamma$ is a lamplighter group endowed with the set of generators considered in \cite{BrofferioWoess}, then for the simple random walk, $H_1(n)\sim Cn$.
\end{enumerate}
\end{thm}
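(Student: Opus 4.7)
The overall plan is to treat the three assertions by reducing, in each case, the sum $H_1(n)=\sum_{|x|=n}G(e,x|1)$ to two pieces: (i) a decay estimate for the Green function $G(e,x|1)$ as $|x|\to\infty$, and (ii) a counting estimate for the sphere $S_n$. In the polynomial-growth cases one expects $G(e,x|1)$ to decay like $|x|^{-(D-2)}$, where $D$ is the dimension controlling the on-diagonal heat-kernel decay $p_n(e,e)\asymp n^{-D/2}$, while $|S_n|\asymp n^{D-1}$; the advertised linear behavior of $H_1(n)$ comes from the resulting cancellation. For the lamplighter, the linear behavior will come from a different, horocyclic-decomposition mechanism.

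For part (1), with $\Gamma=\Z^d$ and $d\ge 3$ (recall $d\le 2$ is excluded by the Varopoulos assumption), the finitely supported, symmetric, admissible random walk $\mu$ satisfies a local central limit theorem, and standard potential theory then gives the sharp asymptotic $G(e,x|1)\sim c_\Sigma\,\|x\|_\Sigma^{-(d-2)}$, where $\|\cdot\|_\Sigma$ is the Euclidean norm associated with the covariance of $\mu$. Since any word distance on $\Z^d$ coming from a finite symmetric generating set is the restriction of a polyhedral norm on $\R^d$, the sphere $S_n$ is the set of integer points on the boundary of the dilation by $n$ of a fixed convex polytope. A Riemann-sum argument then converts $H_1(n)$ into $n$ times a surface integral of $\|y\|_\Sigma^{-(d-2)}$ over this polytopal boundary, giving $H_1(n)\sim Cn$ with an explicit $C$.

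For part (2), the hypothesis that the nilpotency class is at most $2$ lets me apply the Gaussian heat-kernel bounds of Hebisch--Saloff-Coste and Alexopoulos for finitely supported symmetric admissible measures, namely $p_n(e,x)\asymp n^{-D/2}\exp(-c|x|_{CC}^2/n)$, where $D$ is the homogeneous dimension and $|\cdot|_{CC}$ is the Carnot--Carath\'eodory (Pansu) distance on the associated graded Carnot group. On class-two nilpotent groups $|\cdot|_{CC}$ is comparable to the word distance (Pansu), and integrating the heat-kernel bound over $n$ yields $G(e,x|1)\asymp |x|^{-(D-2)}$. Combining with the Bass--Guivarc'h estimate $|S_n|\asymp n^{D-1}$ gives $H_1(n)\asymp n$. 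The reason only the two-sided relation $\asymp$ is claimed, rather than a sharp $\sim Cn$, is that the heat-kernel constants are not explicit and, for genuinely class-two groups, the word-distance unit sphere is not an exact polytope in the Pansu limit.

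For part (3), the Brofferio--Woess generating set realizes the Cayley graph of the lamplighter as a Diestel--Leader graph, and \cite{BrofferioWoess} provides precise asymptotics for the simple random walk, most importantly for the position of $X_n$ along the horocyclic $\Z$-direction. My plan is to decompose $S_n$ according to the horocyclic height $h\in\Z$ and the lamp configuration, and to compute, slice by slice, the contribution to $H_1(n)$ using the Green-function estimates in \cite{BrofferioWoess}: on each horocyclic slice the Green function has an explicit exponential profile in $h$, the combinatorial count of configurations at that slice with $|x|=n$ is also explicit, and the two factors combine to a geometric-type summation that is asymptotically a constant times $n$. The main obstacle I anticipate lies exactly here: unlike the polynomial-growth cases, where Gaussian heat-kernel bounds and sphere counting are off-the-shelf, for part (3) the horocyclic decomposition of $S_n$ has to be matched carefully with the asymptotics of $G(e,x|1)$ along the non-symmetric directions of the DL graph in order to extract the sharp $H_1(n)\sim Cn$ rather than only a two-sided estimate.
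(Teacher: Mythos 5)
Your overall strategy coincides with the paper's: in parts (1) and (2) one multiplies a spatial decay estimate for $G(e,x)$ by a count of $S_n$, and in part (3) one decomposes $S_n$ along the horocyclic structure of $\mathrm{DL}(q,q)$ and feeds in the Brofferio--Woess Green function asymptotics. Parts (1) and (3) are on track (for (1) the ``Riemann-sum over the polytopal sphere'' step is exactly what the cited counting results for spheres in $\Z^d$ provide, and for (3) your plan matches the paper's computation in the coordinates $(\mathfrak u_1,\mathfrak d_2,\mathfrak s)$, though you have only sketched the combinatorial matching that constitutes the actual work).

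Part (2), however, contains a genuine gap, and you have misplaced where the hypothesis $N_\Gamma\le 2$ is used. The Gaussian two-sided bounds $p_n(e,x)\asymp n^{-D/2}\mathrm{e}^{-c|x|^2/n}$ hold for \emph{all} finitely generated nilpotent groups (Alexopoulos), so the estimate $G(e,x)\asymp|x|^{-(D-2)}$ does not need class $\le 2$. What does need it is the sphere count. You invoke ``the Bass--Guivarc'h estimate $\sharp S_n\asymp n^{D-1}$,'' but Bass--Guivarc'h only gives $\sharp B_n\asymp n^D$ for \emph{balls}, and even Pansu's sharper $\sharp B_n\sim cn^D$ does not imply $\sharp S_n\asymp n^{D-1}$: the lower bound $\sharp S_n\succ n^{D-1}$ is easy, but the upper bound can fail to follow from ball asymptotics alone, since a priori some spheres could be as large as $n^{D-\beta}$ with $\beta<1$. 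The correct input is Breuillard--Le Donne's theorem, which gives $\sharp B_n=Cn^D+O(n^{D-1})$, hence $\sharp S_n\asymp n^{D-1}$, precisely when $N_\Gamma\le 2$; for higher nilpotency class only $\sharp S_n\prec n^{D-\beta}$ is known, which is why the general statement in the paper is $n\prec H_1(n)\prec n^{2-\beta}$ and the clean $H_1(n)\asymp n$ is restricted to class $\le 2$. As written, your argument for (2) only yields the weaker two-sided bound unless you supply the Breuillard--Le Donne sphere estimate.
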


Finally, elaborating on the work of Picardello and Woess \cite{PicardelloWoessbitree}, we prove the following phase transition result for random walks on bi-trees.
Consider the Cartesian product of two regular trees $T_1$ and $T_2$ of respective degree $l_1$ and $l_2$.
Let $\mu_i$ be finitely supported admissible symmetric probability measures on $T_i$ and set $\mu=\alpha \mu_1 +(1-\alpha)\mu_2$ for $0<\alpha<1$.
\begin{thm}[Theorem~\ref{thmbitree}]\label{mainthmbitree}
The probability measure $\mu$ on $T_1\times T_2$ satisfies the following.
If $l_1=l_2$, then for every fixed $r<R$,
$H_r(n)\asymp \mathrm{e}^{n\omega_\Gamma(r)}$ for all $n\ge 1$.
If $l_1>l_2$, then there exists a phase transition at some $r_0\in (1,R)$ such that the following holds.
\begin{enumerate}
    \item For every $r<r_0$, we have $H_r(n)\asymp \mathrm{e}^{n\omega_\Gamma(r)}$ for all $n\ge 1$.
    \item At $r=r_0$, we have $H_r(n)\asymp n^{-1}\mathrm{e}^{n\omega_\Gamma(r)}$ for all $n\ge 1$.
    \item For every $r_0<r<R$, we have $H_r(n)\asymp n^{-3/2}\mathrm{e}^{n\omega_\Gamma(r)}$  for all $n\ge 1$.
\end{enumerate}
\end{thm}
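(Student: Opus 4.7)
The plan is to exploit the product structure, following Picardello--Woess \cite{PicardelloWoessbitree}: decompose the walk factor by factor, obtain an integral formula for the Green function, and then carry out a uniform saddle-point analysis of the sum defining $H_r(n)$. Since $\mu=\alpha\mu_1+(1-\alpha)\mu_2$, each $n$-step trajectory on $T_1\times T_2$ is an interleaving of $j$ independent $\mu_1$-steps and $n-j$ independent $\mu_2$-steps, with $j$ of binomial distribution of parameters $(n,\alpha)$. Multiplying the binomial identity for $p_n$ by $r^n$, summing over $n$, and using $(j+m)!=\int_0^\infty e^{-t}t^{j+m}\,dt$ yields the closed form
\begin{equation*}
G((e,e),(x,y)\mid r)\;=\;\int_0^\infty e^{-t}\,\widetilde G_1(\alpha rt,x)\,\widetilde G_2((1-\alpha)rt,y)\,dt,
\end{equation*}
where $\widetilde G_i(z,x):=\sum_{j\ge 0}\tfrac{z^j}{j!}p_j^{(i)}(e,x)$ is the Poissonised Green function on $T_i$. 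Convergence of this integral for all $r<R$ follows from the local-limit estimate $\widetilde G_i(z,x)\asymp z^{-1/2}e^{\rho_i z}$ for fixed $x$ combined with $R^{-1}=\alpha\rho_1+(1-\alpha)\rho_2$.

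Radial symmetry of the Green functions on regular trees together with $|S_k^{(i)}|\asymp(l_i-1)^k$ reduces $H_r(n)$ to
\begin{equation*}
H_r(n)\;\asymp\;\sum_{k+m=n}(l_1-1)^k(l_2-1)^m\int_0^\infty e^{-t}\,\widetilde g_1(k,\alpha rt)\,\widetilde g_2(m,(1-\alpha)rt)\,dt,
\end{equation*}
where $\widetilde g_i(k,z)$ denotes the radial value of $\widetilde G_i(z,\cdot)$ at distance $k$. Sharp uniform estimates for $\widetilde g_i(k,z)$ in the joint regime $k,z\to\infty$ are available via the factorisation $G_i(e,x\mid s)=g_i(s)F_i(s)^k$ and a standard steepest-descent analysis on each tree; they produce a Gaussian correction in $k/z$ around the subordinated drift together with a $z^{-1/2}$ prefactor. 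Substituting these estimates turns the sum into a two-parameter Laplace integral over $\lambda:=k/n$ and $\tau:=t/n$, with an explicit rate function $\Phi_r(\lambda,\tau)$ built from $\log(l_i-1)$ and the exponents of $\widetilde g_i$, and $\omega_\Gamma(r)=\sup_{(\lambda,\tau)}\Phi_r(\lambda,\tau)$.

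The three regimes of the theorem correspond to where the supremum is attained. When $l_1=l_2$, symmetry and strict concavity of $\Phi_r$ force a nondegenerate interior maximiser $(\lambda^*(r),\tau^*(r))$ uniformly in $r<R$, and Laplace's method gives $H_r(n)\asymp e^{n\omega_\Gamma(r)}$. When $l_1>l_2$, increasing $r$ pushes $\lambda^*(r)$ towards $1$, reflecting that the walk prefers to spend its steps on the larger tree; the critical value $r_0$ is characterised by $\lambda^*(r_0)=1$. For $r<r_0$ the interior nondegenerate saddle still governs and yields pure exponential growth; for $r>r_0$ the maximiser sits at the boundary $\lambda=1$ and the $z^{-1/2}$ prefactor of $\widetilde g_2$ combines with the boundary-Laplace mechanism to produce the $n^{-3/2}$ correction, which is exactly the local-limit exponent of the walk on $T_2$ alone; at $r=r_0$ the interior critical point collides with the boundary and Laplace's method degenerates into a ``saddle-at-endpoint'' asymptotic with prefactor $n^{-1}$. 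The hard part will be the uniform saddle-point analysis near $r=r_0$: the interior maximiser merges with the boundary exactly where the interior Laplace regime must be matched with the boundary local-limit regime of $\widetilde g_2$, and it is in this matching that both the sharp $n^{-1}$ at $r_0$ and the uniform boundedness of the $\asymp$ constants across all $n\ge 1$ have to be established, most likely via a uniform asymptotic expansion of the integral representation along appropriately deformed contours.
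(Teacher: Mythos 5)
Your route is genuinely different from the paper's. The paper does not rederive the Green function asymptotics on $T_1\times T_2$: it quotes \cite[Theorem~3.1]{PicardelloWoessbitree}, which already packages the whole bivariate saddle-point analysis into the statement $G(e,x|r)\sim G_1(e,x_1|r_1)G_2(e,x_2|r_2)(|x_1|+\kappa_1)(|x_2|+\kappa_2)|x_1|^{-5/2}C(\lambda)$ with $(r_1,r_2)$ determined by an explicit system in $\lambda=|x_2|/|x_1|$. After summing over spheres using $G_i(e,x_i|s)=G_i(s)F_i(s)^{|x_i|}$, only a \emph{one}-dimensional Laplace sum in $\lambda$ remains (Lemma~\ref{asymptotics}), and the three regimes are read off from whether the explicit function $\Psi(\lambda)$ is maximized in the interior, at the endpoint $\lambda=0$ with $\Psi'(0)<0$, or at the endpoint with $\Psi'(0)=0$. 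You instead propose to start from a Poissonized integral representation and carry out the two-parameter $(\lambda,\tau)$ saddle-point analysis from scratch. Your qualitative picture is the correct one (the maximizer migrates to the boundary where all displacement lies in the larger tree, and $r_0$ is where the interior critical point hits that boundary), and the integral identity you write is valid; what your approach buys is self-containedness, at the price of redoing the hardest part of \cite{PicardelloWoessbitree}.

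That price is where the gaps lie. First, the uniform two-variable estimates on $\widetilde g_i(k,z)$ are asserted, not proved, and the one prefactor you do state is wrong: since $p_j^{(i)}(e,x)\sim Cj^{-3/2}\rho_i^j$ on a tree, Poissonization gives $\widetilde G_i(z,x)\asymp z^{-3/2}\mathrm{e}^{\rho_i z}$, not $z^{-1/2}\mathrm{e}^{\rho_i z}$. This is not cosmetic: the entire content of the theorem is the polynomial correction ($1$ vs.\ $n^{-1}$ vs.\ $n^{-3/2}$), and every such correction is assembled from exactly these prefactors; an error of a full power of $z$ at this stage propagates into the final exponents. Second, the uniform matching of the interior and boundary regimes near $r=r_0$ --- which is what produces the $n^{-1}$ at criticality and the $n$-uniform $\asymp$ constants --- is explicitly deferred as ``the hard part''; in the paper this difficulty simply does not arise, because the PW asymptotics are uniform in $\lambda$ (continuity of $C(\lambda)$) and the degeneracy is isolated in the elementary Lemma~\ref{asymptotics}(iii). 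Third, you never address the claim $r_0\in(1,R)$, in particular $r_0>1$: the paper needs a separate argument for this (it identifies $\omega_\Gamma(r_0)=\varphi_2(\beta_2)>0$ and invokes $\omega_\Gamma(1)=0$ together with monotonicity of $\omega_\Gamma$). As it stands, your proposal is a plausible programme rather than a proof: to complete it you would either have to supply the full uniform bivariate steepest-descent analysis with the correct prefactors, or do what the paper does and import it from \cite{PicardelloWoessbitree}.
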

This theorem is a pivotal result in our paper. It is the conclusion of our general study of $H_r(n)$ on finitely generated group and is also one of the main pieces in proving Theorem~\ref{mainthm}.

\medskip
Let us now give more details on how the paper is organized.
It has seven sections including the introduction.
In the first half which consists of Sections~\ref{Sectiongrowthrateat1},~\ref{Sectionabeliannilpotent},~\ref{Sectionlamplighter} and~\ref{Sectionbitree}, we give general statements on $H_r(n)$ and $\omega_\Gamma(r)$, as well as various examples of different situations.
The second half focuses on applications to relatively hyperbolic groups and is divided into Sections~\ref{Sectionpattersonsullivan} and~\ref{SectionRHG}.
It is based both on the first half and on previous results of \cite{DWY}.

\medskip
More specifically,
in Section~\ref{Sectiongrowthrateat1}, we consider the growth rate $\omega_\Gamma(r)$ at the special value $r=1$.
We prove that $\omega_\Gamma(1)=0$ and study further continuity of the function $r\mapsto \omega_\Gamma(r)$ at $1$.

We then study the asymptotic behavior of $H_1(n)$ on amenable groups $\Gamma$ and we prove Theorem~\ref{thmamenable} in Sections~\ref{Sectionabeliannilpotent} and~\ref{Sectionlamplighter}.
We show in particular that $H_1(n)$ is asymptotically linear in $n$ for abelian groups and provide partial results for nilpotent groups, see Theorem~\ref{thmnilpotent} and Conjecture~\ref{conjecturenilpotent}.
This raises the question whether $H_1(n)$ always behaves linearly in amenable groups.
We prove that it is the case for the simple random walk on the lamplighter group in Theorem~\ref{thmlamplighter}.
The main strategy in these two sections is basically to combine previously known renewal results, i.e.\ about the asymptotics in space of the Green function and known results on the large scale properties of the spheres $S_n$.

Then, in Section~\ref{Sectionbitree}, we prove Theorem~\ref{mainthmbitree}.
The behavior of $H_r(n)$ that we exhibit is of an again different form than the one described above, for (relatively) hyperbolic groups and for amenable groups respectively.
As a particular consequence, we see that the quantity $H_r(n)$ may fail to be sub-multiplicative.
This disproves an argument due to Candellero, Gilch and M\"uller \cite{CandelleroGilchMuller} as well as its consequences, see Remark~\ref{remarkCGM} for more details.

\medskip
In Section~\ref{Sectionpattersonsullivan}, we focus on relatively hyperbolic groups.
We introduce a proper distance $\mathfrak d_r$ on $\Gamma$, $1\leq r\leq R$, and its associated Poincar\'e series $\mathcal{P}(s)$.
For $r<R$, $\mathfrak d_r$ is quasi-isometric to the word distance.
The critical exponent of $\mathcal{P}(s)$ is 1 and
we prove that divergence, respectively convergence of $\mathcal{P}(s)$ at 1 is equivalent to divergence, respectively convergence of the
Poincar\'e series $\Theta_\Gamma(r,s)$ at $s=\omega_\Gamma(r)$.
Then, we use $\mathcal{P}(s)$ to construct Patterson-Sullivan type measures $\nu_x$ on the Bowditch boundary of the group.
This allows us to find a characterization of divergence of $\Theta_\Gamma(r,s)$ at the growth rate $\omega_\Gamma(r)$ in terms of the support of the measures $\nu_x$, see in particular Theorem~\ref{divergencetypeandatoms}.

We then prove Theorem~\ref{mainthm} in Section~\ref{SectionRHG} and answer some questions raised in \cite{DWY}.
In particular, we show that whenever maximal parabolic subgroups are amenable, $\Gamma$ necessarily has a parabolic gap for the Green function, see Corollary~\ref{coramenableparabolic}.
Along the way, we study further the distance $\mathfrak d_r$ and prove that the parabolic gap for the Green function is equivalent to growth tightness for the distance $\mathfrak d_r$.
In particular, we find that for suited $r$, $\Gamma$ is not growth tight for $\mathfrak d_r$, which allows us to partially answer a question raised in \cite{ACT}, see Corollary~\ref{corgrowthtightness}.

\section{The growth rate of the Green function at $r=1$}\label{Sectiongrowthrateat1}
We study $\omega_\Gamma(1)$ in this section.
We first show that $\omega_\Gamma(r)$ always vanishes at 1 and then study continuity at this special value.

\subsection{Nullity of the growth rate at $r=1$}
In this section, we do not need to assume that the random walk is symmetric, nor that it is finitely supported.
We start with the following lower bound.
\begin{lem}\label{lowerboundgrowthrateat1}
Let $\Gamma$ be a finitely generated group and 
$\mu$ an admissible probability measure on $\Gamma$.  Then,
$\omega_\Gamma(1)\geq 0$.
\end{lem}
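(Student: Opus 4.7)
The plan is to argue by contradiction: if $\omega_\Gamma(1)$ were strictly negative, then $H_1(n)$ would decay exponentially, making $\sum_n H_1(n)$ convergent, which contradicts the fact that this sum equals the total expected time spent by the random walk on $\Gamma$, hence is infinite.

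First, I would interchange the order of summation to obtain
\[
\sum_{n=0}^{\infty} H_1(n) \;=\; \sum_{n=0}^{\infty}\sum_{x\in S_n} G(e,x\,|\,1) \;=\; \sum_{x\in \Gamma} G(e,x\,|\,1) \;=\; \sum_{x\in \Gamma}\sum_{k=0}^{\infty} p_k(e,x) \;=\; \sum_{k=0}^{\infty} 1 \;=\; \infty,
\]
where the interchange is valid because all terms are non-negative (Tonelli), and the last equality uses that $\mu$ is a probability measure so $\sum_{x\in \Gamma} p_k(e,x)=1$ for every $k\geq 0$. This step is completely soft and uses neither admissibility nor finite support.

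Next, suppose for contradiction that $\omega_\Gamma(1) = -c$ with $c > 0$. By definition of the limit superior, for $\varepsilon = c/2$ there exists $N\geq 0$ such that $\frac{1}{n}\log H_1(n) < -c+\varepsilon = -c/2$ for every $n\geq N$; equivalently $H_1(n) < \mathrm{e}^{-cn/2}$ for all $n\geq N$. Then
\[
\sum_{n=0}^{\infty} H_1(n) \;\leq\; \sum_{n=0}^{N-1} H_1(n) \;+\; \sum_{n=N}^{\infty}\mathrm{e}^{-cn/2} \;<\; \infty,
\]
which contradicts the identity established in the first step. Therefore $\omega_\Gamma(1)\geq 0$.

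There is essentially no obstacle here; the only mild subtlety is to make sure the limsup hypothesis is correctly converted into a pointwise upper bound valid for all sufficiently large $n$, which is immediate from the definition. Note also that admissibility is not actually used in this lemma — only the fact that $\mu$ is a probability measure — and finite support is irrelevant. (Admissibility will matter, however, for the matching upper bound $\omega_\Gamma(1)\leq 0$ that presumably comes next.)
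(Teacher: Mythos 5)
Your proof is correct and is essentially the paper's own argument: both establish $\sum_n H_1(n)=\sum_{k\ge 0}\sum_{x\in\Gamma}p_k(e,x)=\infty$ and conclude that the limsup defining $\omega_\Gamma(1)$ cannot be negative. You merely spell out the contrapositive step (exponential decay would force convergence of the series) that the paper leaves implicit.
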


\begin{proof}
Since the series $\sum_{n = 0}^{\infty} H_1(n) = \sum_{k=0}^{\infty} \sum_{x \in \Gamma} p_k(e, x) = \sum_{k=0}^{\infty} 1 = \infty$, we have that
\begin{equation*}
\omega_{\Gamma}(1) = \limsup_{n \to \infty} \frac{1}{n} \log H_1(n) \geq 0. \qedhere
\end{equation*}
\end{proof}

\begin{rem}
If $\mu$ is finitely supported, then there is a constant $C > 0$ such that $H_n(1) \geq C$ for all $n \geq 1$.  In fact,
since the random walk driven by $\mu$ is transient and has bounded jumps at most $C_0>0$, it will eventually visit the annulus 
$$A(n,n+C_0)=\{x,n\leq |x|\leq n+C_0\}.$$
Note that
\begin{align*}
    \sum_{x\in A(n,n+C_0)}G(e,x)&\ = \sum_{x\in A(n,n+C_0)}\sum_{k\geq0}p_k(e,x)\\
    &\ =\sum_{x\in A(n,n+C_0)}\sum_{k\geq0}\mathbf P(X_k=x).
\end{align*}
Thus,
\begin{align*}
  \sum_{x\in A(n,n+C_0)}G(e,x)   & \ \geq \mathbf{P}(\text{the random walk ever visits }A(n,n+C_0)) \\
    &\ \geq 1.
\end{align*}
Now by \cite[Lemma~3.1~(1)]{DWY}, $H_1(n)\asymp H_1(n+1)$, hence
$$\sum_{x\in A(n,n+C_0)}G(e,x)\asymp H_1(n),$$
which concludes the proof that $H_n(1) \geq C$.
\end{rem}


\begin{prop}\label{upperboundgrowthrateat1}
Let $\Gamma$ be a finitely generated group and $\mu$ be an admissible probability measure on $\Gamma$.
Then $\omega_{\Gamma}(1) \leq 0$.
 Moreover, if $\Gamma$ has exponential growth, there exists $C>0$ such that $H_1(n)\leq C n^{3}$.
\end{prop}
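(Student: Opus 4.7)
The plan is to prove the polynomial bound $H_1(n)\leq Cn^3$ in the exponential growth case, which together with a trivial estimate in the sub-exponential case yields $\omega_\Gamma(1)\leq 0$ in general. In the sub-exponential case ($v=0$), we have $|S_n|=e^{o(n)}$; combined with $G(e,x)=G(e,e)F(e,x)\leq G(e,e)$, this gives $H_1(n)\leq G(e,e)|S_n|=e^{o(n)}$, so $\omega_\Gamma(1)\leq 0$.

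For exponential growth with $|S_n|\leq e^{vn}$, I would start from the identity
\[
 H_1(n)=\sum_{k\geq 0}\mathbf P(|X_k|=n),
\]
and use that $\mathbf P(|X_k|=n)=0$ for $k<n/C_0$, where $C_0$ is the range of $\mu$. The strategy is to split this sum at a threshold $k_0=An^3$. For $n/C_0\leq k\leq k_0$, the trivial probability bound $\mathbf P(|X_k|=n)\leq 1$ contributes at most $An^3$. For $k>k_0$, I would combine the bound
\[
 \mathbf P(|X_k|=n)=\sum_{x\in S_n}p_k(e,x)\leq |S_n|\cdot p_k(e,e)
\]
(the inequality $p_k(e,x)\leq p_k(e,e)$ for symmetric walks coming from Cauchy--Schwarz $p_k(e,x)\leq\sqrt{p_k(e,e)p_k(x,x)}$ and group invariance $p_k(x,x)=p_k(e,e)$) with Varopoulos' heat kernel estimate $p_k(e,e)\leq C\exp(-ck^{1/3})$ for symmetric finitely supported admissible walks on groups of exponential growth. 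By integral comparison, $\sum_{k>k_0}p_k(e,e)\leq C'k_0^{2/3}\exp(-ck_0^{1/3})$, so choosing $A$ large enough that $cA^{1/3}>v$ makes the late contribution $O(1)$. Combining, $H_1(n)\leq An^3+O(1)\leq C''n^3$.

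The main technical obstacle is that the proposition is stated for a general (not necessarily symmetric) admissible $\mu$, whereas Varopoulos' estimate and the bound $p_k(e,x)\leq p_k(e,e)$ rely on reversibility. To bypass this, I would consider the symmetric admissible measure $\check\mu*\mu$ (with $\check\mu(x)=\mu(x^{-1})$) and use the Cauchy--Schwarz bound $p_k(e,x)^2\leq\sum_y p_k(e,y)^2$ to relate the pointwise transition probabilities of the original walk to the return probabilities of an auxiliary symmetric walk, to which Varopoulos' estimate applies. A minor parity issue (the inequality $p_k(e,x)\leq p_k(e,e)$ can fail for odd $k$ on bipartite structures) is handled by a one-step averaging that compares odd times to neighboring even times. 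Together these adjustments recover the polynomial bound $H_1(n)\leq Cn^3$ for general admissible $\mu$ up to constants, completing the proof.
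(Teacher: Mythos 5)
Your proof is correct and follows essentially the same route as the paper: split $H_1(n)=\sum_k\sum_{x\in S_n}p_k(e,x)$ at $k\asymp n^3$, bound the early part trivially by $\sum_{x\in S_n}p_k(e,x)\le 1$, and kill the late part by playing Varopoulos' sub-exponential heat-kernel decay $\mathrm{e}^{-ck^{1/3}}$ against $\sharp S_n\le c_1\mathrm{e}^{v_2 n}$, with the same integral comparison. The only difference is that the paper directly invokes the off-diagonal bound $p_k(x,y)\le c_2\mathrm{e}^{-c_3k^{1/3}}$ for admissible $\mu$, whereas you add a symmetrization step ($\check\mu*\mu$ plus Cauchy--Schwarz and the parity fix) to justify it without assuming symmetry -- a reasonable precaution, but not a genuinely different argument.
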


\begin{proof}
Note that
  \[
    H_1(n) \leq G(e, e) \sharp S_n,
  \]
  hence, 
  if the volume growth rate
  \[
    v = \limsup_{n \to \infty} \frac{1}{n} \log \sharp S_n = 0, 
  \]
then $\omega_{\Gamma}(1) \leq 0$.

  We assume that $v > 0$ in the remainder of the proof.  
Then there exist $c_1 > 1$ and $v_1$, $v_2 > 0$ such that
    \begin{equation}
      \label{hksubexp}
    c_1^{-1} \mathrm{e}^{v_1 n} \leq \sharp B_n \leq c_1 \mathrm{e}^{v_2 n},
  \end{equation}
  where $\sharp B_n$ is the cardinality of the ball of radius $n$.  By~\cite[Theorem 1]{V91}, 
\[
  p_n(x, y) \leq c_2 \mathrm{e}^{- c_3 n^{1/3}}
\]
for some $c_2$, $c_3 > 0$.  Choose $c_4$ so that $c_4^{1/3} c_3 > v_2$.  Then
\begin{align*}
  H_1(n) &\ \leq \sum_{k = 0}^{c_4 n^3} \sum_{x \in S_n} p_k(e, x) + c_2\sum_{x \in S_n} \sum_{k= c_4 n^3+1}^{\infty} c_3 \mathrm{e}^{- c_3 k^{1/3}}\\
 &\ \leq c_4n^3 + 1 + c_5 \sum_{k = c_4 n^3+1}^{\infty} \mathrm{e}^{v_2 n} \mathrm{e}^{- c_3 k^{1/3}}. 
\end{align*}

Note that
\[
  \sum_{k = c_4 n^3+1}^{\infty} \mathrm{e}^{- c_3 k^{1/3}} \leq \int_{c_4 n^3}^{\infty} \mathrm{e}^{-c_3 t^{1/3}} \mathrm{d} t \leq c_6 n^{2} \mathrm{e}^{- c_3 c_4^{1/3} n}.  
\]
Thus we have
\[
  H_1(n) \leq c_4 n^3 + 1+c_7 n^{2} \mathrm{e}^{\left( v_2 - c_3 c_4^{1/3} \right) n} \leq c_8 n^3 
\]
and hence $\omega_{\Gamma}(1) \leq 0$.
\end{proof}

Lemma~\ref{lowerboundgrowthrateat1} and Proposition~\ref{upperboundgrowthrateat1} together yield the following corollary.
\begin{cor}
  Let $\Gamma$ be a finitely generated group and let $\mu$ be an 
  admissible probability measure on $\Gamma$.
Then, $\omega_\Gamma(1)=0$.
\end{cor}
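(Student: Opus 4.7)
The statement to be proved is simply the combined equality $\omega_\Gamma(1)=0$, and since both inequalities are already established as standalone results immediately preceding the corollary, the plan is to just invoke them and take the intersection.

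More concretely, Lemma~\ref{lowerboundgrowthrateat1} supplies $\omega_\Gamma(1)\ge 0$: the total mass $\sum_n H_1(n)$ rearranges as $\sum_k \sum_x p_k(e,x)=\sum_k 1=\infty$, so a geometric series argument forces the exponential growth rate of $H_1(n)$ to be non-negative. Proposition~\ref{upperboundgrowthrateat1} supplies the reverse bound $\omega_\Gamma(1)\le 0$: in the subexponential volume case this is immediate from $H_1(n)\le G(e,e)\,\sharp S_n$, while in the exponential volume case one splits the defining double sum for $H_1(n)$ at time $k\asymp n^{3}$, bounding the early part by the length of the time interval and the tail part via the Varopoulos heat kernel decay $p_k(e,x)\le c_2\mathrm e^{-c_3 k^{1/3}}$, which dominates the volume growth factor $\mathrm e^{v_2 n}$ once $k^{1/3}\gg n$.

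Concatenating the two inequalities yields $\omega_\Gamma(1)=0$, and that is the entire content of the corollary. There is no genuine obstacle to address at this step, since the substance of the argument has been absorbed into the lemma and proposition: the corollary is essentially a cosmetic packaging of $0\le \omega_\Gamma(1)\le 0$.
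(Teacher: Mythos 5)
Your proposal is correct and matches the paper exactly: the corollary is stated immediately after Lemma~\ref{lowerboundgrowthrateat1} and Proposition~\ref{upperboundgrowthrateat1}, and the paper's own justification is precisely that these two results "together yield" it. Your summaries of the two ingredient proofs are also accurate, so there is nothing to add.
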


We also have the following result.
\begin{prop}\label{omeganonincreasing}
Let $\Gamma$ be a finitely generated group and let $\mu$ be an
  admissible probability measure on $\Gamma$.
Then, $r\mapsto\omega_\Gamma(r)$ is monotonically non-decreasing.
\end{prop}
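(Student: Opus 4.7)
The claim to prove is that $r\mapsto\omega_\Gamma(r)$ is non-decreasing on $(0,R]$. My plan is to establish the stronger pointwise statement that, for each fixed $n$, the function $r\mapsto H_r(n)$ is itself non-decreasing, and then simply pass to the $\limsup$ of $\frac{1}{n}\log H_r(n)$.

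First I would observe that since $p_n(e,x)\ge 0$ and $r\ge 0$, each term $r^n p_n(e,x)$ in the defining series $G(e,x\,|\,r)=\sum_{n\ge 0} r^n p_n(e,x)$ is non-decreasing in $r$. Hence for $0<r_1\le r_2\le R$ we have $G(e,x\,|\,r_1)\le G(e,x\,|\,r_2)$ for every $x\in\Gamma$. Summing this inequality over $x\in S_n$ yields
\[
H_{r_1}(n)\ =\ \sum_{x\in S_n} G(e,x\,|\,r_1)\ \le\ \sum_{x\in S_n} G(e,x\,|\,r_2)\ =\ H_{r_2}(n).
\]
Taking logarithms, dividing by $n$, and passing to the $\limsup$ as $n\to\infty$ gives $\omega_\Gamma(r_1)\le\omega_\Gamma(r_2)$, which is exactly monotonicity.

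There is really no obstacle here; the argument is essentially a one-line consequence of non-negativity of the transition probabilities. The only thing worth emphasizing is that since $r\le R$ is assumed throughout, the Green functions involved are finite (recall the standing assumption of transience at the spectral radius), so the term-by-term comparison is not an inequality between two infinite quantities.
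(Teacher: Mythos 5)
Your proof is correct and is exactly the argument the paper uses: the Green function is term-by-term non-decreasing in $r$, hence so is $H_r(n)$ for each fixed $n$, and the monotonicity of $\omega_\Gamma$ follows by taking $\limsup$. No further comment is needed.
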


\begin{proof}
Since the Green function itself is non-decreasing in $r$, $H_n(s)\leq H_n(r)$, hence $\omega_\Gamma(s)\leq \omega_\Gamma(r)$ if $s\leq r$.
\end{proof}

\subsection{Continuity of the growth rate}
We start with the following result which holds for every finitely generated group, without assuming that the random walk is finitely supported.
We assume however that it is symmetric.

  \begin{prop}\label{omegaGcont}
Let $\Gamma$ be a finitely generated group and let $\mu$ be a symmetric admissible probability measure on $\Gamma$.
Then, the function $\omega_{\Gamma}(r)$ is continuous for $0 < r < R$.  
  \end{prop}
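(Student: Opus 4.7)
My plan is to identify $r\mapsto\omega_\Gamma(r)$, viewed as a function of $\rho:=\log r$, as a pointwise decreasing limit of convex functions, which will force it to be convex and hence continuous on the open interval $(0,R)$. Monotonicity from Proposition \ref{omeganonincreasing} already guarantees the existence of one-sided limits, so this convexity would be the additional content.

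The starting point is the log-convexity of the Green function in $\log r$. For each fixed $x\in\Gamma$, $G(e,x|r)=\sum_{k\geq 0} p_k(e,x) r^k$ is a power series with non-negative coefficients. After substituting $r=e^\rho$, it takes the form of a moment-generating function $\sum_k p_k(e,x) e^{k\rho}$, and a direct Cauchy--Schwarz calculation shows that any such expression is log-convex in $\rho$. Since sums of log-convex functions remain log-convex (H\"older's inequality), the finite sum
\[
H_{e^\rho}(n)=\sum_{x\in S_n} G(e,x|e^\rho)
\]
is log-convex in $\rho$ for every $n$. Consequently each $a_n(\rho):=\tfrac{1}{n}\log H_{e^\rho}(n)$ is a convex function on $(-\infty,\log R)$.

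To handle the $\limsup$, I would set $b_N(\rho):=\sup_{n\geq N} a_n(\rho)$, which is again convex as a supremum of convex functions. Finiteness of $b_N$ on $(-\infty,\log R)$ follows from the crude bound $H_r(n)\leq G(e,e|r)\,\sharp S_n$ together with the fact that the exponential volume growth rate is finite, giving a uniform upper bound on $a_n$ on compact subsets. The family $b_N$ is non-increasing in $N$ and $\omega_\Gamma(e^\rho)=\lim_N b_N(\rho)$ pointwise. Passing to the limit in the convexity inequality $b_N(\lambda\rho_1+(1-\lambda)\rho_2)\leq \lambda b_N(\rho_1)+(1-\lambda)b_N(\rho_2)$ shows that the limit $\omega_\Gamma(e^\rho)$ is convex as well, and a real-valued convex function on an open interval is automatically continuous. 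Reverting to the variable $r$ yields continuity of $\omega_\Gamma$ on $(0,R)$.

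The only real substance of the argument lies in the log-convexity observation; everything else is a soft passage to the limit, so I do not expect any serious obstacle. Interestingly, symmetry of $\mu$ plays no role in this approach, so either the convexity argument gives a slightly stronger statement than the authors intend, or they take a genuinely different route (perhaps via spectral theory of the self-adjoint Markov operator, or via estimates tied to the Green metric that require symmetry).
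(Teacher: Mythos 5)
Your argument is correct, and it is genuinely different from the paper's. The paper truncates the Green function in time: using symmetry to get $p_k(e,x)\le R^{-k}$ and the bound $\sharp S_n\le c_1\mathrm{e}^{v_2n}$, it shows that the tail $\sum_{k>c_2n}r^kp_k(e,x)$ is negligible for a suitable $c_2=c_2(\delta)$, so that $\omega_\Gamma(r)$ is computed from the partial sums $\sum_{k\le c_2n}r^kp_k(e,x)$; comparing these for two values $s<r$ gives the explicit local Lipschitz estimate $0\le\omega_\Gamma(r)-\omega_\Gamma(s)\le c_2(\log r-\log s)$ on $[\delta,R-\delta]$. Your route instead extracts convexity of $\rho\mapsto\omega_\Gamma(\mathrm{e}^\rho)$ from the log-convexity of $H_{\mathrm{e}^\rho}(n)=\sum_k\mathbf{P}(X_k\in S_n)\mathrm{e}^{k\rho}$ (a single power series with non-negative coefficients), which is a stronger structural statement and, since finite convex functions on open intervals are locally Lipschitz, recovers a Lipschitz modulus as well, though without the paper's explicit constant. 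Your remark about symmetry is almost right: the convexity core indeed uses none, but your finiteness bound $H_r(n)\le G(e,e|r)\,\sharp S_n$ rests on $G(e,x|r)\le G(e,e|r)$, which for $r>1$ is itself a consequence of symmetry (via positivity of the resolvent); since the proposition assumes symmetry anyway, this costs nothing.

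One point you should make explicit (the paper is equally silent about it, and in fact its choice of $c_2$ with $v_2-c_2(\log R-\log(R-r))<\omega_\Gamma(r)$ presupposes it): continuity of a convex function requires it to be real-valued, so you need $\omega_\Gamma(r)>-\infty$ for $0<r<R$, in particular for $r<1$ where monotonicity and $\omega_\Gamma(1)=0$ give only an upper bound. This follows from admissibility: each element of the finite generating set is reached by the walk in at most $m$ steps with probability at least $q>0$, so every $x\in S_n$ satisfies $p_k(e,x)\ge q^n$ for some $k\le mn$, whence $H_r(n)\ge(\min(r,1)^{m}q)^n$ and $\omega_\Gamma(r)\ge m\log\min(r,1)+\log q$. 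With that one line added, your proof is complete.
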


  \begin{proof}
    We modify the arguments in~\cite[Lemma 3.1]{DWY}, where $\mu$ was assumed to be finitely supported and only the case $1<r<R$ was treated.
    There are constants $c_1 > 0$ and $v_2 \geq 0$ such that $\sharp S_n \leq c_1 \mathrm{e}^{v_2 n}$.  Fix $\delta > 0$.  We choose $c_2$ so that for every $\delta \leq r \leq R - \delta$, 
    \[
      v_2 -  c_2 \left( \log R - \log (R - r) \right) < \omega_{\Gamma}(r). 
    \]
  Note that since the underlying random walk is symmetric, for every $x$ and every $k$, we have
  $p_k(e,x)p_k(e,x)\leq p_{2k}(e,e)$ and by \cite[Lemma~1.9]{Woessbook}, $p_{2k}(e,e)\leq R^{-2k}$.
  Thus, 
\begin{equation}\label{decayheatkernel}
    p_k(e, x) \leq R^{-k}
\end{equation} for every $x \in \Gamma$ and $k \ge 0$.  
  Consequently, we have for $\delta \le r \le R - \delta$,
\[
\sum_{x \in S_n} \sum_{k > c_2 n} r^k p_k(e, x) \le c_1 \mathrm{e}^{v_2 n} \sum_{k > c_2 n} \left( \frac{r}{R} \right)^k \le c_1 \delta^{-1} R  \mathrm{e}^{v_2 n - c_2 \left( \log R - \log r \right) n}.    
\]
By the choice of $c_2$ we have that 
\[
  \omega_{\Gamma}(r) = \limsup_{n \to \infty} \frac{1}{n} \log \sum_{x \in S_n} \sum_{k=0}^{c_2 n} r^k p_k(e, x).  
\]
Now for $0 < s < r \le R - \delta$,
\[
  H_n(s) \geq \sum_{x \in S_n} \sum_{k = 0}^{c_2 n} s^k p_k(e, x)
  \ge \left( \frac{s}{r} \right)^{c_2 n} \sum_{x \in S_n} \sum_{k = 0}^{c_2 n} r^k p_k(e, x),  
\]
and hence $0 \leq \omega_{\Gamma}(r) - \omega_\Gamma(s) \le c_2 \left( \log r - \log s \right)$, since $\omega_\Gamma$ is non-decreasing by Proposition~\ref{omeganonincreasing}.
Now $\delta > 0$ is arbitrary, so we prove that $\omega_{\Gamma}(r)$ is continuous in $0 < r < R$.
  \end{proof}


The continuity at the inverse of the spectral radius seems to be a difficult problem in general.
It is already known that $\omega_\Gamma$ is continuous at $R$ in hyperbolic groups \cite[Theorem~1.1]{SWX} and in relatively hyperbolic groups \cite[Theorem~1.1]{DWY}.
However, we do not know much beyond these classes of groups.

We now investigate further continuity at $r=1$.
By \cite[Theorem~12.5]{Woessbook}, $R=1$ can only happen if $\Gamma$ is amenable, hence the following discussion only applies to amenable groups.

Assume that the random walk satisfies the following Gaussian lower bound.
There exists a sub-exponential function $f$, i.e.\ $\frac{1}{n}\log f(n)\to 0$, $n\to \infty$, such that for every $k$, for every $x\in \Gamma$,
\begin{equation}\label{Gaussianlowerbound}
      p_k(e, x) \gtrsim f(k) \mathrm{e}^{- c \frac{|x|^2}{k}}.
\end{equation}
Then,
\begin{align*}
  H_r(n) =& \sum_{x \in S_n} \sum_{k = 0}^{\infty} r^k p_k(e, x)
  \gtrsim \sharp S_n \sum_{k = 0} f(k) \mathrm{e}^{- k \log r^{-1} - c \frac{n^2}{k}} \\ 
  \gtrsim & \ f\left( \left \lfloor \frac{n}{\sqrt{\log r^{-1}}} \right \rfloor \right)\sharp S_n  \mathrm{e}^{- n (1 + c) \sqrt{\log r^{-1}}}. 
\end{align*}
It follows that
\[
  \omega_{\Gamma}(r) \geq -(1 + c) \sqrt{\log r^{-1}} + v. 
\]
Letting $r \uparrow 1$, we see that $0 \geq \limsup_{r \uparrow 1} \omega_{\Gamma}(r) \geq v$.  Therefore $v = 0$ and
\[
  \lim_{r \uparrow 1} \omega_{\Gamma}(r) = 0.  
\]



\begin{rem}
We thus recover the fact that a Gaussian lower bound like~(\ref{Gaussianlowerbound}) cannot hold for groups of exponential volume growth, which was already noticed in literature, see for instance the comments after \cite[(0.3)]{Alexopoulospolycyclic}.
\end{rem}

\begin{prop}
    Let $\Gamma$ be a finitely generated virtually nilpotent group endowed with a finite generating set and let $\mu$ be a finitely supported symmetric admissible probability measure on $\Gamma$.
    Then, the function $\omega_\Gamma$ is left-continuous at 1.
\end{prop}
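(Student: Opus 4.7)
The plan is to combine the monotonicity of $\omega_\Gamma$ with the Gaussian lower bound argument already spelled out in the paragraph preceding the proposition. By Proposition~\ref{omeganonincreasing} together with the equality $\omega_\Gamma(1) = 0$ established above, we automatically have $\limsup_{r \uparrow 1} \omega_\Gamma(r) \leq 0$, so the only thing to prove is $\liminf_{r \uparrow 1} \omega_\Gamma(r) \geq 0$.

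The key input I would use is an off-diagonal Gaussian lower bound on the heat kernel. A finitely generated virtually nilpotent group has polynomial volume growth of some degree $d$ by the Bass--Guivarc'h formula, so in particular $v = 0$. Moreover, for a finitely supported symmetric admissible random walk, Alexopoulos~\cite{Alexopoulospolycyclic} established the off-diagonal bound
\[
p_k(e,x) \gtrsim \frac{1}{k^{d/2}} \exp\!\left(-c\,\frac{|x|^{2}}{k}\right)
\]
in the range $k \gtrsim |x|$. Since $f(k) = k^{-d/2}$ is sub-exponential, this is exactly a bound of the form~(\ref{Gaussianlowerbound}). Inserting it into the computation displayed immediately above the proposition yields
\[
\omega_\Gamma(r) \geq -(1+c)\sqrt{\log r^{-1}} + v = -(1+c)\sqrt{\log r^{-1}},
\]
and letting $r \uparrow 1$ gives $\liminf_{r \uparrow 1} \omega_\Gamma(r) \geq 0$, which combined with the upper bound proves $\lim_{r \uparrow 1} \omega_\Gamma(r) = 0 = \omega_\Gamma(1)$, i.e.\ left-continuity at $r = 1$.

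The only point requiring care is that Alexopoulos' lower bound is valid only in the regime where the time is at least comparable to the distance. A Laplace-type optimisation in $k$ identifies the dominant times in $\sum_k r^k p_k(e,x)$ with $x \in S_n$ as $k^\star \sim n/\sqrt{\log r^{-1}}$, and as $r \uparrow 1$ this $k^\star$ tends to infinity and in particular satisfies $k^\star \gg n \geq |x|$, so the range condition is automatically met for $r$ close enough to $1$. There is therefore no serious obstacle; the proposition is essentially a direct corollary of the Alexopoulos heat-kernel estimates together with the calculation already laid out in the text. The hardest part, if any, is simply to invoke the off-diagonal (rather than merely on-diagonal) version of the nilpotent heat-kernel bound and to check that it applies in the finitely supported symmetric admissible setting considered here.
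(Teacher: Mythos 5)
Your argument is essentially identical to the paper's: the proof there consists of one line, namely citing the Gaussian lower bound~(\ref{Gaussianlowerbound}) with $f(k)=k^{-d/2}$ for virtually nilpotent groups (from Alexopoulos's nilpotent heat-kernel estimates, \cite[Corollary~1.9]{Alexopoulos}, rather than the polycyclic paper you name) and feeding it into the computation displayed immediately before the proposition. Your additional care about the regime of validity of the off-diagonal bound is a reasonable refinement that the paper glosses over, but the route is the same.
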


\begin{proof}
    The fact that a Gaussiam lower bound~(\ref{Gaussianlowerbound}) with $f(k)=k^{-d/2}$ holds for virtually nilpotent groups is well known, see for instance \cite[Corollary~1.9]{Alexopoulos}.
\end{proof}

\section{Abelian and nilpotent groups}\label{Sectionabeliannilpotent}
Our goal in this section is to prove the following result.
\begin{thm}\label{thmnilpotent}
Let $\Gamma$ be a finitely generated nilpotent group of nilpotency class $N_\Gamma$ at most 2.
Consider a finitely generating set $S$ and a finitely supported admissible symmetric probability measure $\mu$ on $\Gamma$.
Then,
$$\sum_{x\in S_n}G(e,x)\asymp n.$$
In general, there exists $C\geq 1$ and $0<\beta\leq 1$ that only depends on $N_\Gamma$ such that
$$\frac{1}{C}n\leq \sum_{x\in S_n}G(e,x)\leq Cn^{2-\beta}.$$
Moreover, if $\Gamma=\Z^d$, $d\geq 3$ then there exists $C>0$ such that
$$\sum_{x\in S_n}G(e,x)\sim C n.$$
\end{thm}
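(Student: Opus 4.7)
My strategy combines two ingredients: sharp two-sided Gaussian heat-kernel bounds on nilpotent groups, and the Bass-Guivarc'h volume asymptotic. Let $d$ denote the homogeneous dimension of $\Gamma$ given by the Bass-Guivarc'h formula; under the standing assumption that $\Gamma$ is not virtually $\Z^k$ with $k\leq 2$, one always has $d\geq 3$. The Gaussian upper bound $p_k(e,x)\leq C k^{-d/2}\mathrm{e}^{-c|x|^2/k}$ of Hebisch and Saloff-Coste, paired with Alexopoulos's matching Gaussian lower bound (with possibly different constants in the exponential), integrates over $k$ to yield the pointwise two-sided estimate
\[
G(e,x)\asymp |x|^{-(d-2)}\qquad\text{for }|x|\geq 1.
\]

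For nilpotency class at most $2$, Stoll's refinement of Pansu's theorem furnishes the pointwise sphere count $|S_n|\asymp n^{d-1}$; combining with the Green function estimate,
\[
H_1(n) \;=\; \sum_{x\in S_n}G(e,x) \;\asymp\; |S_n|\cdot n^{-(d-2)} \;\asymp\; n,
\]
which establishes the first claim. For the general nilpotency class $N_\Gamma\geq 3$, only weaker quantitative forms of Pansu's theorem are available, giving $|S_n|\leq Cn^{d-\beta}$ for some $\beta=\beta(N_\Gamma)\in(0,1]$; this yields the upper bound $H_1(n)\leq Cn^{2-\beta}$. The corresponding lower bound $H_1(n)\geq cn$ would follow from a pointwise lower bound $|S_n|\geq cn^{d-1}$, itself a consequence of Pansu's theorem together with the regularity of the limit Carnot shape; as a safety net, one can sum the Green function over a dyadic annulus and invoke the near-monotonicity $H_1(n)\asymp H_1(n+1)$ of \cite[Lemma~3.1(1)]{DWY} to propagate the bound.

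For the refined asymptotic $H_1(n)\sim Cn$ on $\Z^d$ with $d\geq 3$, I would invoke the Ney-Spitzer renewal theorem
\[
G(0,x) \;=\; c_d\,Q(x)^{-(d-2)/2}+O\bigl(|x|^{-(d-1)}\bigr),
\]
where $Q$ is the positive-definite quadratic form attached to the covariance of $\mu$, and Burago's theorem on $\Z^d$ giving $B_n/n\to K$ in Hausdorff distance with error $O(1/n)$, where $K$ is the unit ball of the limit norm $\|\cdot\|_S$. Using homogeneity $Q(x)=n^2 Q(x/n)$ and collecting error terms,
\[
H_1(n) \;=\; c_d\, n^{-(d-2)}\sum_{x\in S_n} Q(x/n)^{-(d-2)/2} \;+\; O(1),
\]
and the sum on the right is a surface Riemann sum over a lattice sphere of cardinality $\sim c n^{d-1}$, converging to $\int_{\partial K} Q(\omega)^{-(d-2)/2}\,d\sigma(\omega)$ with $d\sigma$ the appropriate surface measure on $\partial K$; this gives $H_1(n)\sim Cn$ with $C$ explicit.

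The principal technical obstacle lies precisely in this final step: rigorously justifying the surface Riemann-sum convergence requires both uniformity of the Ney-Spitzer error as $|x|\to\infty$, and a careful matching between the lattice sphere $S_n$ and the boundary of the scaled limit-norm body $nK$ under Burago's quantitative convergence. For the general nilpotency class, the principal difficulty shifts to pinning down the optimal exponent $\beta(N_\Gamma)$, which reflects the current quantitative limitations of Pansu's theorem for higher-step nilpotent groups.
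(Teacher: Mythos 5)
Your overall architecture coincides with the paper's: two-sided Gaussian heat-kernel bounds are integrated in time (the paper uses Alexopoulos's estimates plus an elementary Riemann-sum lemma) to get $G(e,x)\asymp |x|^{-(D-2)}$, and then everything reduces to counting spheres; for $\Z^d$ the refinement $H_1(n)\sim Cn$ comes from the exact Green asymptotic together with an averaging statement for the homogeneous function $\|x\|^{-(d-2)}$ over word-metric spheres. However, there are two places where your write-up leaves a genuine gap that the paper closes by citing specific results.

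First, the pointwise lower bound $\sharp S_n\succ n^{D-1}$. You derive it from ``Pansu's theorem together with the regularity of the limit Carnot shape,'' but Pansu (and, in class $\le 2$, Stoll's error term $\sharp B_n=Cn^D+O(n^{D-1})$) only controls balls; differencing consecutive balls gives the \emph{upper} bound $\sharp S_n=O(n^{D-1})$, while the error term can a priori wipe out the main term for individual $n$, so no pointwise lower bound on spheres follows. The paper instead invokes Breuillard--Le Donne (Corollaries 9 and 11), which assert precisely $\sharp S_n\ge C^{-1}n^{D-1}$. Your proposed safety net does not repair this: $H_1(n)\asymp H_1(n+1)$ from \cite[Lemma~3.1(1)]{DWY} is a one-step comparison, so propagating it across a dyadic annulus multiplies the constant $n$ times and yields nothing; and knowing only that $\sum_{m=n+1}^{2n}H_1(m)\asymp n^2$ is compatible with individual terms being arbitrarily small. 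Second, for $\Z^d$ the step you yourself flag as the principal obstacle --- the convergence of the surface Riemann sum $\sum_{x\in S_n}Q(x/n)^{-(d-2)/2}$ --- is exactly the content of \cite[Theorem~1.1]{DLM} (together with $\sharp S_n\sim Cn^{d-1}$ from \cite[Theorem~1.4]{DLM}), which the paper uses as a black box; reproving it via Burago's theorem is not routine, since Hausdorff convergence of $B_n/n$ does not by itself control sums of an unbounded homogeneous function over the lattice sphere. With those two references substituted for the hand-waved steps, your argument becomes the paper's proof.
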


\subsection{Random walks in $\Z^d$}
We start with the following result.
\begin{prop}\label{propabelian}
Let $\mu$ be a finitely supported symmetric admissible probability measure on $\Z^d$, $d\geq 3$.
Endow $\Z^d$ with a finite generating set S.
Then, there exists $C>0$ such that
$$\sum_{x\in S_n}G(e,x)\sim C n.$$
\end{prop}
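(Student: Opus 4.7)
The plan is to combine two classical inputs: a sharp spatial asymptotic for the Green function on $\Z^d$ with $d\ge 3$, and the precise geometry of word-metric spheres for abelian groups. Writing $\Sigma$ for the covariance matrix of $\mu$ and $Q_\mu(x)=x^\top \Sigma^{-1} x$, the local central limit theorem for finitely supported symmetric random walks (see e.g.\ \cite{Spitzer}) gives
$$G(e,x)=\frac{c_d}{Q_\mu(x)^{(d-2)/2}}\bigl(1+o(1)\bigr)\qquad\text{as }x\to\infty\text{ in }\Z^d,$$
with an effective error of order $|x|^{-1}$. On the geometric side, the word norm satisfies $|x|_S=\|x\|_S+O(1)$, where $\|\cdot\|_S$ is the stable norm; since $S$ is finite, the unit ball $B_S$ of $\|\cdot\|_S$ is a polytope, and $S_n$ lies in a bounded tubular neighborhood of $n\partial B_S$.

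Given these inputs, I would interpret $H_1(n)$ as a Riemann sum on $\partial B_S$. For $x\in S_n$ the rescaling $y=x/n$ lies within $O(1/n)$ of $\partial B_S$, and $Q_\mu(x)^{(d-2)/2}=n^{d-2}Q_\mu(y)^{(d-2)/2}$, while $Q_\mu$ is bounded away from $0$ on $\partial B_S$. Combined with $\sharp S_n\asymp n^{d-1}$, this already yields the two-sided bound $H_1(n)\asymp n$. To promote this to an asymptotic, the idea is to partition $\partial B_S$ into small patches, observe that the number of points of $S_n$ projecting into each patch grows like $n^{d-1}$ times the surface area of the patch, and then let the patch size go to $0$. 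This produces
$$\frac{H_1(n)}{n}\longrightarrow c_d\int_{\partial B_S}\frac{1}{Q_\mu(y)^{(d-2)/2}}\,d\sigma_S(y)=:C,$$
where $d\sigma_S$ is the natural surface measure arising from lattice-point counting on the facets of $B_S$.

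The main technical obstacle is making these approximations sharp enough, simultaneously in both $G$ and in the counting. The $o(1)$ in the Green function asymptotic must be upgraded to a quantitative, summable error so that its aggregate over the $\asymp n^{d-1}$ points of $S_n$ remains $o(n)$; the standard quantitative local CLT provides exactly this. Separately, the Riemann-sum approximation on $\partial B_S$ requires controlling the lattice-point count on $S_n$ with error $o(n^{d-1})$, which is where the polytopal nature of $B_S$ pays off: each facet carries an affine lattice whose density is explicit, and the codimension-two skeleton of $\partial B_S$ contributes only $O(n^{d-2})$, which is negligible. As a sanity check, one can bypass the spatial asymptotic entirely and start from $H_1(n)=\sum_{k\ge 0}\mathbf{P}(X_k\in S_n)$, apply the local CLT to $p_k(e,x)$ and, after the substitution $t=n^2/k$, collapse the double sum to a Gaussian integral times $n$, recovering the same constant $C$ in closed form.
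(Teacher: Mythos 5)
Your proposal follows essentially the same route as the paper: combine the renewal asymptotic $G(e,x)\sim c_d\,Q_\mu(x)^{-(d-2)/2}$ (Uchiyama/Woess) with the statistics of word-metric spheres in $\Z^d$. The only difference is that the paper outsources the geometric half to Duchin--Leli\`evre--Mooney, whose Theorem~1.1 gives $\frac{1}{\sharp S_n}\sum_{x\in S_n}\|x\|^{-d+2}\sim C_1 n^{-d+2}$ and whose Theorem~1.4 gives $\sharp S_n\sim C_2 n^{d-1}$, whereas you sketch a direct Riemann-sum proof of these facts. That sketch is where all the real work lies, and two points deserve emphasis: first, $S_n$ is defined by the exact word length being $n$, not by proximity to $n\partial B_S$, so the patch-by-patch count (and in particular the asymptotic $\sharp S_n\sim Cn^{d-1}$, which does not follow formally from $\sharp B_n=cn^d+O(n^{d-1})$) requires the facet-by-facet affine-lattice analysis you allude to; second, the limiting measure is the cone measure on $\partial B_S$ rather than the Euclidean surface measure, which is consistent with your phrase ``measure arising from lattice-point counting'' but should be made precise. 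Since relative error $o(1)$ in the Green function asymptotic already suffices (there are $\asymp n^{d-1}$ terms each of size $\asymp n^{-(d-2)}$), the quantitative local CLT you invoke is more than enough, and your closing remark that one could instead sum $\mathbf{P}(X_k\in S_n)$ over $k$ is a reasonable alternative but again reduces to the same sphere-equidistribution input.
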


\begin{proof}
Classical estimates show that $G(e,x)\sim C_0\|x\|^{-d+2}$, see \cite[Theorem~25.11]{Woessbook}.
Here, $\|x\|$ is the norm of $x$ given by the inverse of the covariance matrix, which is symmetric and defines a positive definite quadratic form $Q$ associated with the random walk.
These asymptotics actually hold in a more general setting, namely it is only required that $\mu$ has suited finite polynomial moments related to the dimension $d$, as proved by Uchiyama, see in particular \cite[Theorem~2]{Uchiyama}.
The function $x\mapsto \|x\|^{-d+2}$ is $(-d+2)$-homogeneous in the sense that for $a> 0$, $\|ax\|^{-d+2}=a^{-d+2}\|x\|^{-d+2}$.
Let us fix a generating set $S$ for $\Z^d$.
Then, letting $S_n$ be the sphere of radius $n$ centered at $e$ with respect to $S$, we have by \cite[Theorem~1.1]{DLM}
$$\frac{1}{\sharp S_n}\sum_{x\in S_n}\|x\|^{-d+2}\sim C_1 n^{-d+2}.$$
Moreover, by \cite[Theorem~1.4]{DLM},
$$\sharp S_n\sim C_2n^{d-1},$$
hence
$$\sum_{x\in S_n}\|x\|^{-d+2}\sim C_3 n.$$
Consequently,
\begin{equation*}
\sum_{x\in S_n}G(e,x)\sim C_4 n. \qedhere
\end{equation*}
\end{proof}

For sake of completeness, let us consider the case of a (possibly lazy) simple random walk on $\Z^d$, endowed with the standard set of generators.
The quadratic form $Q$ associated with the covariance matrix introduced above is the Hessian at $u=0$ of the function
$$\Phi(u)=\sum_{x\in \Z^d}\mu(x)\mathrm{e}^{u\cdot x},$$
see \cite[Section~8~B, Section~13]{Woessbook} for more details.
Letting $(e_1,...,e_d)$ be standard generators defined by $e_i=(0,...,0,1,0...,0)$, where $1$ is at the $i$th position and assuming that $\mu$ is of the form
$$\mu=\alpha \delta_e+\frac{1-\alpha}{2d}\sum_{i=1}^d(\delta_{e_i}+\delta_{-e_i}),$$
we have
$$\Phi(u)=\alpha+\frac{1-\alpha}{d}\sum_{i=1}^d\mathrm{cosh}(u_i),$$
where $u_i$ are the coordinates of $u$.
In particular,
$$Q(x)=\frac{1-\alpha}{d}\langle x,x\rangle$$
and so the explicit computations of \cite[Theorem~25.11]{Woessbook} yield
$$G(e,x)\sim \frac{\Gamma(\frac{d-2}{2})\pi^{-\frac{d}{2}}(1-\alpha)^{-d+3/2}}{2d^{-d+3/2}}\|x\|_2^{-d+2},$$
where $\|\cdot\|_2$ is the Euclidean norm.

Furthermore, the constant in \cite[Theorem~1.1]{DLM} is given by
$$\int_L \|x\|^{-d+2} d\mu_L,$$
where $L$ is the boundary of the convex hull $\mathbf{C}$ of the generating set $S$ and $d\mu_L$ is the cone volume on $L$.
Applying this to our context where $S$ is the standard generating set and $\|\cdot\|$ is the Euclidean norm $\|\cdot\|_2$, we get
$$\frac{1}{\sharp S_n}\sum_{x\in S_n}\|x\|^{-d+2}\sim \int_{\partial B_1}\|x\|_2^{-d+2} d\mu_1 n^{-d+2},$$
where $B_1$ is the unit ball for the $\|\cdot\|_1$-norm $\{x,\sum |x_i|\leq 1\}$,
$\partial B_1$ is the unit sphere $\{x,\sum |x_i|=1\}$ and $\mu_1$ is the uniform measure on $\partial B_1$.
Also, the constant in \cite[Theorem~1.4]{DLM} is given by $d\cdot \mathrm{vol}(\mathbf{C})$ and
the volume of the ball $B_1$ is given by $\frac{2^d}{d!}$, see for instance \cite[Lecture~1]{Ball}.
We finally get
$$\sum_{x\in S_n}G(e,x)\sim n\cdot \frac{2^d}{d!} \int_{\partial B_1}\|x\|_2^{-d+2} d\mu_1 \frac{\Gamma(\frac{d-2}{2})\pi^{-\frac{d}{2}}(1-\alpha)^{-d+3/2}}{2d^{-d+1/2}}.$$

\subsection{Non-abelian nilpotent groups}
In general, for nilpotent groups, we cannot prove such precise asymptotics.
Let $\Gamma$ be a finitely generated nilpotent group.
Set $\Gamma^1=\Gamma$, $\Gamma^2=[\Gamma,\Gamma]$, ..., $\Gamma^n=[\Gamma^{n-1},\Gamma]$.
Let $N_{\Gamma}$ be the nilpotency class of $\Gamma$, that is $N_{\Gamma}$ is the smallest $n$ such that $\Gamma^{n+1}$ is trivial.
Note that the groups $\Gamma^n/\Gamma^{n+1}$ are all finitely generated abelian groups and so have a well defined rank.
The homogeneous dimension of $\Gamma$ is defined as
$$D=\sum_{n=1}^{N_{\Gamma}}n\ \mathrm{rank}\left (\Gamma^{n}/\Gamma^{n+1}\right ).$$

By \cite[Theorem~1.8, Corollary~1.9]{Alexopoulos}, there exists $c$ such that
\begin{equation}\label{AlexopoulosGaussian}
\frac{1}{c}n^{-D/2}\mathrm{e}^{-c\frac{|x|^2}{2n}}\leq p_n(e,x)\leq cn^{-D/2}\mathrm{e}^{-\frac{1}{c}\frac{|x|^2}{2n}},
\end{equation}
where $D$ is the homogeneous dimension of $\Gamma$.
This is enough to deduce rough asymptotics for the Green function as we now show.

\begin{lem}\label{lemmaasympGaussianestimates}
For any positive $a,b,c$, there exists $K>0$ such that as $u$ tends to infinity,
$$\sum_{n\geq1}cn^{-a}\mathrm{e}^{-bu/n}\sim K u^{-(a-1)}.$$
\end{lem}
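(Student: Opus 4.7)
The plan is to approximate the discrete sum by a classical integral and then evaluate the integral by a substitution reducing it to the Gamma function. Note that the statement implicitly requires $a>1$: otherwise $\mathrm{e}^{-bu/n}\to 1$ as $n\to\infty$ forces the series to diverge term-by-term, so I shall work under this assumption, which is in any case the only case used in the paper (where $a=D/2$ with $D\ge 3$).

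First I would absorb the constant $c$ until the end and study $S(u):=\sum_{n\geq 1}n^{-a}\mathrm{e}^{-bu/n}$. I would compare $S(u)$ with $J(u):=\int_1^\infty t^{-a}\mathrm{e}^{-bu/t}\,dt$. The integrand $f(t)=t^{-a}\mathrm{e}^{-bu/t}$ is $C^1$ on $[1,\infty)$, vanishes at both endpoints, and is unimodal with a single maximum at $t_\star=bu/a$ of height $a^a(bu)^{-a}\mathrm{e}^{-a}$. Writing
$$S(u)-J(u)=\sum_{n\geq 1}\int_n^{n+1}\bigl(f(n)-f(t)\bigr)\,dt,$$
each summand is bounded in absolute value by $\int_n^{n+1}|f'|$; summing and using the unimodality of $f$ gives $|S(u)-J(u)|\leq 2 f(t_\star)=O(u^{-a})$, which is negligible compared to the target order $u^{-(a-1)}$.

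Next I would evaluate $J(u)$ exactly via the substitution $s=bu/t$, so that $dt=-bu\,s^{-2}\,ds$ and the endpoints $t=1,\infty$ become $s=bu,0$. A short computation yields
$$J(u)=(bu)^{1-a}\int_0^{bu}s^{a-2}\mathrm{e}^{-s}\,ds.$$
Since $a>1$, the integrand $s^{a-2}\mathrm{e}^{-s}$ is integrable on $(0,\infty)$, so the inner integral tends to $\Gamma(a-1)$ as $u\to\infty$. Therefore $J(u)\sim \Gamma(a-1)\,b^{-(a-1)}\,u^{-(a-1)}$; combining with the error estimate and restoring the prefactor $c$ yields the claim with $K=c\,\Gamma(a-1)\,b^{-(a-1)}$. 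The only subtlety is the sum-to-integral comparison, which is handled by the unimodality of $f$: the telescoping error collapses to a single peak value of order $u^{-a}$, strictly smaller than the main term of order $u^{-(a-1)}$.
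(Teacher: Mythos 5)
Your proof is correct and follows essentially the same route as the paper, which also reduces the sum to the integral $\int_0^\infty t^{-a}\mathrm{e}^{-b/t}\,dt$ (equivalently your $\Gamma(a-1)b^{1-a}$ after substitution), there via a rescaled Riemann sum with mesh $1/u$ adapted from \cite[Theorem~25.11]{Woessbook}. Your version is slightly more careful in that it makes the sum-versus-integral error explicit (via unimodality/total variation) and identifies the constant $K=c\,\Gamma(a-1)\,b^{-(a-1)}$, and it rightly flags the implicit hypothesis $a>1$, which the paper leaves unstated.
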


\begin{proof}
The proof is adapted from \cite[Theorem~25.11]{Woessbook}.
We set
$t_n=\frac{n}{u}$, so that
$$\Delta_n=t_n-t_{n-1}=\frac{1}{u}\underset{u\to \infty}{\longrightarrow} 0.$$
Then,
$$\frac{1}{c}u^{a-1}\sum_{n\geq1}cn^{-a}\mathrm{e}^{-bu/n}=\sum_{n\geq 1}t_n^{-a}\mathrm{e}^{-\frac{b}{t_n}}\Delta_n.$$
The right hand-side is a Riemannian sum of
$$\int_0^{\infty}t^{-a}\mathrm{e}^{-b/t}dt$$
so it converges to a constant that only depends on $a$ and $b$.
\end{proof}

Applying this lemma to $a=D/2$ and $u=|x|^2$, we deduce from~(\ref{AlexopoulosGaussian}) that for every symmetric finitely supported admissible probability measure on a finitely generated nilpotent group,
\begin{equation}\label{asympGreennilpotent}
    G(e,x)\asymp |x|^{-D+2}.
\end{equation}

Guivarc'h \cite{Guivarch73} and Bass \cite{Bass} independently proved that the homogeneous dimension $D$ is the degree of the polynomial growth of $\Gamma$, that is, letting $B_n$ be the ball of radius $n$,
$$\sharp B_n\asymp n^D.$$
Conversely, by the landmark paper of Gromov \cite{Gromovnilpotent}, a finitely generated group of polynomial growth is virtually nilpotent.
Extending the work of Gromov, Pansu \cite{Pansu83} proved deep results about the asymptotic behavior of the word distance in terms of the geometry of the universal cover of $\Gamma$.
As a particular case of his results, we have
$$\sharp B_n\sim cn^D,$$
see \cite[Proposition~(5), Section~(51)]{Pansu83}.
Although one might expect from these asymptotics that $\sharp S_n\sim c'n^{D-1}$, it is in fact much more difficult to obtain precise asymptotics for $\sharp S_n$.
However, Breuillard and Le Donne proved the following.
\begin{prop}\label{propBLD}\cite[Corollary~9, Corollary~11]{BreuillardLeDonne}
Let $\Gamma$ be a finitely generated nilpotent group endowed with a finite generating set.
Let $N_\Gamma$ be its nilpotency class.
\begin{enumerate}
    \item If $N_\Gamma\leq 2$, then
    $$\sharp B_n=Cn^D+O\big(n^{D-1}\big)$$ and
    $$\sharp S_n\asymp n^{D-1}.$$
    \item In general, there exists $0<\beta\leq 1$ that only depends on $N_\Gamma$ such that
    $$\sharp B_n=Cn^D+O\big(n^{D-\beta}\big)$$ and there exists $C>0$ such that
    $$\frac{1}{C}n^{D-1}\leq \sharp S_n \leq C n^{D-\beta}.$$
\end{enumerate}
\end{prop}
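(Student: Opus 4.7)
The plan is to reduce $\sum_{x \in S_n} G(e,x)$ to the product of a pointwise Green-function estimate on $S_n$ and the cardinality of $S_n$, and then invoke results already recorded in the paper. The key observation is that on a finitely generated nilpotent group the Green function is essentially radial: one has the uniform two-sided estimate (\ref{asympGreennilpotent}), namely $G(e,x) \asymp |x|^{-D+2}$, where $D$ is the homogeneous dimension of $\Gamma$. Since we always exclude $\Gamma$ virtually $\Z^d$ for $d \leq 2$, any $\Gamma$ under consideration has $D \geq 3$, so $-D+2 < 0$ and the exponents to appear are all in the right range.

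The derivation of (\ref{asympGreennilpotent}) has essentially been carried out in the text immediately preceding the theorem: one sums the two-sided Gaussian bounds (\ref{AlexopoulosGaussian}) of Alexopoulos over $k \geq 1$ and invokes Lemma~\ref{lemmaasympGaussianestimates} with $a = D/2$ and $u = |x|^2$. The conclusion of the lemma depends on the polynomial exponent $a$ but not on the particular value of the coefficient $b$ in the exponential, so the upper and lower Gaussians produce matching polynomial tails of the form $K_\pm |x|^{-D+2}$ and the constants are uniform in $x$. Since $|x| = n$ for $x \in S_n$, this pointwise bound reads $G(e,x) \asymp n^{-D+2}$ uniformly in $x \in S_n$, so
$$\sum_{x \in S_n} G(e,x) \asymp \sharp S_n \cdot n^{-D+2}.$$
Proposition~\ref{propBLD}(1) of Breuillard-Le Donne gives $\sharp S_n \asymp n^{D-1}$ when $N_\Gamma \leq 2$, which yields the first assertion $\sum_{x \in S_n} G(e,x) \asymp n$. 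Proposition~\ref{propBLD}(2) yields the bracketing $\tfrac{1}{C} n^{D-1} \leq \sharp S_n \leq C n^{D-\beta}$ in general, producing the asymmetric estimate $\tfrac{1}{C}n \leq \sum_{x \in S_n} G(e,x) \leq C n^{2-\beta}$.

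The refined $\sim Cn$ statement for $\Gamma = \Z^d$, $d \geq 3$, is exactly Proposition~\ref{propabelian}, which I would prove just below: here one upgrades the rough $G(e,x) \asymp |x|^{-D+2}$ to Uchiyama's sharp equivalent $G(e,x) \sim C_0 \|x\|^{-d+2}$, and the rough $\sharp S_n \asymp n^{D-1}$ to the sharp $\sharp S_n \sim C_2 n^{d-1}$ of \cite{DLM}, combined with the averaged asymptotic $\frac{1}{\sharp S_n} \sum_{x \in S_n} \|x\|^{-d+2} \sim C_1 n^{-d+2}$ also from \cite{DLM}, which together convert $\asymp$ into a genuine equivalent. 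The main technical weight of the theorem lies entirely in the cited results of Alexopoulos, Breuillard-Le Donne, Uchiyama, and \cite{DLM}; no single step in the assembly is a serious obstacle. The only mild care needed is to verify uniformity of constants in (\ref{asympGreennilpotent}), which follows because Lemma~\ref{lemmaasympGaussianestimates} is quantitative and its constants do not depend on $x$.
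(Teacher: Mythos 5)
There is a genuine mismatch here: the statement you were asked to prove is Proposition~\ref{propBLD} itself, i.e.\ the Breuillard--Le Donne counting estimates $\sharp B_n=Cn^D+O\big(n^{D-\beta}\big)$ (with $\beta=1$ when $N_\Gamma\leq 2$) and the sphere bounds $\frac{1}{C}n^{D-1}\leq \sharp S_n\leq Cn^{D-\beta}$, whereas your write-up never addresses these at all. What you prove is the downstream result, Theorem~\ref{thmnilpotent}, about $\sum_{x\in S_n}G(e,x)$, and in doing so you explicitly invoke Proposition~\ref{propBLD}(1) and (2) as known inputs (``Proposition~\ref{propBLD}(1) of Breuillard--Le Donne gives $\sharp S_n\asymp n^{D-1}$\dots''). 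Relative to the target statement this is circular: the ball and sphere asymptotics are assumed, not derived. In the paper these estimates are likewise not proved but quoted from \cite{BreuillardLeDonne}; your text, as a proof of the proposition, therefore contains no argument at all, and as a proof of Theorem~\ref{thmnilpotent} it simply reproduces the paper's assembly (Gaussian bounds of Alexopoulos plus Lemma~\ref{lemmaasympGaussianestimates} to get $G(e,x)\asymp|x|^{-D+2}$, then multiply by the sphere counts, with Proposition~\ref{propabelian} handling the sharp $\Z^d$ case).

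If the goal were really to prove Proposition~\ref{propBLD}, the content you would need is of a different nature: one compares the word metric on $\Gamma$ with a homogeneous quasi-norm on the associated graded nilpotent Lie group (Pansu's theorem with quantitative error terms), obtains $\sharp B_n=Cn^D+O\big(n^{D-\beta}\big)$ with $\beta$ depending only on the nilpotency class (and $\beta=1$ in class at most $2$), and then deduces the sphere estimates: the upper bound $\sharp S_n\prec n^{D-\beta}$ follows by differencing $\sharp B_n-\sharp B_{n-1}$ against the error term, while the lower bound $\sharp S_n\succ n^{D-1}$ requires a separate argument and is the content of Breuillard--Le Donne's Corollary~11; none of this can be extracted from random-walk estimates or from the statement of Theorem~\ref{thmnilpotent}.
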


This proposition allows us to conclude the proof of Theorem~\ref{thmnilpotent}.

\begin{proof}[Proof of Theorem~\ref{thmnilpotent}]
Combining~(\ref{asympGreennilpotent}) and Proposition~\ref{propBLD}, we have for $N_\Gamma\leq 2$
$$\sum_{x\in S_n}G(e,x)\asymp n$$
and in general we have
$$\frac{1}{C}n\leq \sum_{x\in S_n}G(e,x)\leq C n^{2-\beta}.$$
Together with Proposition~\ref{propabelian}, this concludes the proof of the theorem.
\end{proof}

\medskip
It is also conjectured that one can always take $\beta=1$ in the above result of Breuillard and Le Donne, see \cite[Conjecture~10]{BreuillardLeDonne}.
In such case, we would always have $H_1(n)\asymp n$.
Conversely, note that since we already know that Gaussian estimates~(\ref{AlexopoulosGaussian}) hold, the following conjecture is a reformulation of that of Breuillard and Le Donne.

\begin{conj}\label{conjecturenilpotent}
Let $\Gamma$ be a finitely generated nilpotent group and let $\mu$ be a symmetric admissible finitely supported probability measure and $S$ be a finite generating set.
Then,
$$\sum_{x\in S_n}G(e,x)\asymp n.$$
\end{conj}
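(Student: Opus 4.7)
The plan is to reduce the conjecture to a purely geometric statement about sphere cardinalities, namely the sphere-growth conjecture of Breuillard and Le Donne. First, I would invoke the Gaussian bound~(\ref{AlexopoulosGaussian}) and the computation in Lemma~\ref{lemmaasympGaussianestimates} which together give the asymptotic $G(e,x)\asymp |x|^{-D+2}$ of~(\ref{asympGreennilpotent}), valid for every $x\in\Gamma$ once $|x|$ is large enough, with implicit constants depending only on $\mu$ and $S$ (the small $|x|$ range contributes only a bounded additive term to $H_1(n)$ and is harmless).

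Second, since every $x\in S_n$ satisfies $|x|=n$, the asymptotic is uniform on the sphere, so
\begin{equation*}
\sum_{x\in S_n}G(e,x)\;\asymp\;\sharp S_n\cdot n^{-D+2}.
\end{equation*}
Hence the conjecture $\sum_{x\in S_n}G(e,x)\asymp n$ is equivalent to the statement $\sharp S_n\asymp n^{D-1}$, which is precisely \cite[Conjecture~10]{BreuillardLeDonne}. In the nilpotency class $N_\Gamma\le 2$ case, Proposition~\ref{propBLD} gives exactly $\sharp S_n\asymp n^{D-1}$, so the conjecture follows unconditionally and we recover the first part of Theorem~\ref{thmnilpotent}.

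The hard part is therefore the step I cannot presently carry out: upgrading the upper bound $\sharp S_n\le Cn^{D-\beta}$ in Proposition~\ref{propBLD}(2) to $\sharp S_n\le Cn^{D-1}$ for arbitrary nilpotency class. Pansu's asymptotic $\sharp B_n\sim c n^D$ together with a differencing argument would suffice, but differencing a polynomial asymptotic only yields the lower-order control afforded by the error term $O(n^{D-\beta})$ in Proposition~\ref{propBLD}. Obtaining the sharp error $O(n^{D-1})$ requires a finer understanding of how the word metric balls $B_n/n$ approach the limit Carnot--Carath\'eodory ball in the asymptotic cone, which is the core difficulty in the Breuillard--Le Donne conjecture. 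Absent such an improvement, one cannot hope to push past the Green-function upper bound $Cn^{2-\beta}$ already proved in Theorem~\ref{thmnilpotent}, and the only tractable route appears to be solving the geometric conjecture first.
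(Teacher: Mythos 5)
The statement you are addressing is presented in the paper as an open conjecture, not a theorem: the paper's own discussion consists precisely of the observation you make, namely that the Gaussian estimates~(\ref{AlexopoulosGaussian}) yield $G(e,x)\asymp|x|^{-D+2}$, so that $\sum_{x\in S_n}G(e,x)\asymp \sharp S_n\cdot n^{-D+2}$ and the conjecture is equivalent to the Breuillard--Le Donne sphere conjecture $\sharp S_n\asymp n^{D-1}$, which Proposition~\ref{propBLD} settles only for nilpotency class at most $2$. Your reduction is correct and identical to the paper's, and your identification of the sharp upper bound $\sharp S_n\prec n^{D-1}$ for higher nilpotency class as the open geometric core is exactly right --- no proof of the full conjecture exists in the paper.
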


Finally, note that for the particular case of the Heisenberg group $\mathbb H_3(\Z)$ whose homogeneous dimension is 4, the fact that $\sharp S_n\sim Cn^3$ for the standard set of generators is well known and was first proved by Shapiro, see \cite[Theorem,~p.607]{Shapiro}.
See also \cite[Theorem~25]{DuchinMooney} which is analogous to \cite[Theorem~1.1]{DLM} for abelian groups, which in turn is a key result that we used within the proof of Proposition~\ref{propabelian}.
However, even for the Heisenberg group, it is unknown  to our knowledge if one can replace $\asymp$ with $\sim$ in the asymptotics of the Green function~(\ref{asympGreennilpotent}).



\subsection{Polycyclic groups and groups of intermediate growth}\label{sectionpolycyclic}
Let us conclude this section with a small discussion on possible ways to generalize our results for nilpotent groups.

By a seminal result of Mal'cev \cite{Malcev}, finitely generated torsion free nilpotent groups are exactly the lattices in simply connected nilpotent Lie groups, see also \cite[Theorem~2.18]{Raghunathan}.
A further topic of interest would be the growth rate of the Green function for lattices in simply connected solvable Lie groups.
By~\cite[Theorem~4.28]{Raghunathan}, these are exactly torsion free polycyclic groups.
Moreover, every finitely generated polycyclic group has a subgroup of finite index which is torsion free by \cite[Lemma~4.6]{Raghunathan}.

This gives motivation for studying $H_1(n)$ for polycyclic groups.
The exponent $1/3$ in Varapoulos' on-diagonal upper bound $p_n(e,e)\prec \mathrm{e}^{-c_1n^{1/3}}$ that we used within the proof of Proposition~\ref{upperboundgrowthrateat1} is known to be optimal for polycyclic groups of exponential growth.
Indeed, by \cite[Theorem~1]{Alexopoulospolycyclic}, we also have $p_n(e,e)\gtrsim \mathrm{e}^{-c_2n^{1/3}}$ for such groups.
In order to get asymptotics of $H_1(n)$, optimal off-diagonal upper and lower bounds would be needed.
However, as already mentioned, Gaussian-type estimates like~(\ref{Gaussianlowerbound}) cannot hold for groups of exponential volume growth and finding optimal upper and lower bounds would have to involve somehow the growth rate $v$.
This in turn would require new material.

\medskip
In another direction, it is not hard to see that the rough asymptotics of the Green function in Theorem~\ref{thmnilpotent} hold for any virtually nilpotent groups.
Indeed, let $\Gamma$ be a virtually nilpotent and  $N$ be a finite index nilpotent subgroup of $\Gamma$.
Consider a word distance $|\cdot|_\Gamma$ on $\Gamma$ and a word distance $|\cdot|_N$ on $N$.
Then, the restriction of $|\cdot|_\Gamma$ to $N$ is bi-Lipschitz to $|\cdot|_N$.
Moreover, if $|x|_\Gamma=n$, then $x$ is within a uniform bounded distance of a point $y\in N$.
In particular, $G(e,x)\asymp G(e,y)$ and $|y|_N\asymp n$.
We find that
$$\sum_{x\in \Gamma,|x|_\Gamma=n}G(e,x)\asymp \sum_{x\in N,|x|_N\asymp n}G(e,x).$$
Moreover, $|\cdot|_\Gamma^{-D+2}\asymp |\cdot|_N^{-D+2}$ and given the asymptotics of Proposition~\ref{propBLD}, the sphere $S_n(\Gamma)$ for $|\cdot|_\Gamma$ and $S_n(N)$ for $|\cdot|_N$ satisfy that
$\sharp S_n(\Gamma)\asymp \sharp S_n(N)$.
Thus we have
$$\sum_{x\in \Gamma,|x|_\Gamma=n}G(e,x)\asymp n^{-D+2}\sharp S_n(\Gamma)\asymp  n^{-D+2}\sharp S_n(N).$$

As explained above, virtually nilpotent groups are exactly the groups of polynomial growth, by a celebrated result of Gromov.
Answering Conjecture~\ref{conjecturenilpotent} would thus settle the case of polynomial growth.
It would be interesting to see if the linear asymptotics of $H_1(n)$ also hold for groups of intermediate growth.

\section{The lamplighter group and DL graphs}\label{Sectionlamplighter}
Beyond nilpotent groups, studying the growth rate and the asymptotics of $H_1(n)$ for general finitely generated solvable groups seems to be a difficult task.
Our next goal is to compute precise such asymptotics for the lamplighter group, which is the first step into this direction.

\medskip
The lampligther group $L_q$ is the restricted wreath product $\Z_q\wr \Z$.
The study of this group is a classical topic in geometric group theory and we refer to \cite[Lecture~15]{ClayMargalit} for an overview of its properties.
The usual description of $L_q$ is as follows.
Consider the line $\Z$ and suppose that there is a lamp at each site of $\Z$.
Every lamp can be either off or lit with $q-1$ different colors, so that it has $q$ different possible states.
At the beginning, every lamp is switched off.
The lampligther is at the origin of $\Z$ and can either change the state of the lamp where they stand or take one step to the left or to the right in $\Z$.
Those possible actions are considered as the generators of the group.
Elements of $L_q$ are then described by the position of the lamplighter on $\Z$ and the states of the lamps, which can be seen as a function $\Z\to \Z_q$ with finite support.

There is a one-to-one correspondence between the lamplighter group $\Z_q \wr \Z$ and the vertices of the Diestel-Leader graph $\mathrm{DL}(q,q)$, which is defined as the following horocycle product.
Consider the $(q+1)$-regular tree $\mathbb T_q$ and let $h$ be a horofunction based at some fixed chosen point at infinity (see \cite[Section~2]{BrofferioWoess} for a precise definition and also \cite[Section~15.4]{ClayMargalit}, where $h$ is called a height function).
Then,
$$\mathrm{DL}(q,q)=\big \{(x_1,x_2),x_i\in \mathbb T_q, h(x_1)=-h(x_2)\big \}.$$
Thus, any element $x$ of $\Z_q\wr \Z$ can be written as $x=(x_1,x_2)$, $x_i\in \mathbb T_q$.

Following \cite{BrofferioWoess}, what we call here the simple random walk on $L_q$ is the simple random walk on $\mathrm{DL}(q,q)$ endowed with the product graph distance.
In other words, it is the simple random walk on the graph whose vertices are elements of $\mathrm{DL}(q,q)$ and two vertices $(x_1,x_2)$ and $(y_1,y_2)$ are connected by an edge if $x_i$ and $y_i$ are connected with an edge in $\mathbb T_q$ for $i=1,2$.
This is exactly the simple random walk on $L_q$ endowed with an appropriate set of generators described in \cite[Section~2]{BrofferioWoess}, see in particular \cite[(2.3)]{BrofferioWoess} for a formula for the probability measure $\mu$ (with $\alpha=\frac{1}{2}$ since we do not consider biased random walks here).

In terms of lamplighting, we need to change a bit the above description to understand this random walk.
Think of the lamps not placed at each vertex of $\Z$
but at the middle of each edge. Suppose the current position of the lamplighter is $k \in \Z$.
They first toss a coin. If “head” comes up, they move
to $k+1$ and switch the lamp on the transverse edge to a state chosen at random in
$\Z_q$. Otherwise, they move to $k-1$ and also switch the lamp on the transverse edge to
a random state.

\medskip
Lamplighter groups provide a great source of examples of particular asymptotic behaviors of random walks, see \cite{KaimanovichVershik},  \cite{Erschler1}, \cite{Erschler2}, \cite{Revelle1}, \cite{Revelle2}, \cite{BNW} just to name a few.
We prove here that $H_1(n)$ is still asymptotically linear in $n$ for the simple random walk.
Our study is based on the renewal theorems of \cite{BrofferioWoess}, see also references therein for other significant results related to random walks on these groups.

\begin{thm}\label{thmlamplighter}
Consider the simple random walk on $L_q$.
There exists $C$ such that
$$\sum_{x\in S_n}G(e,x)\sim C n.$$
\end{thm}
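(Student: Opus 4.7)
The strategy mirrors the one used for abelian groups in Proposition~\ref{propabelian}: combine precise pointwise asymptotics of the Green function (a renewal theorem) with a count over level sets of the word length. First I would use the identification between $L_q$ (endowed with the generating set of~\cite{BrofferioWoess}) and the Diestel--Leader graph $\mathrm{DL}(q,q)$, so that the simple random walk on $L_q$ coincides with the nearest-neighbor simple random walk on $\mathrm{DL}(q,q)$. Writing $x = (x_1,x_2)$ with $x_i \in \mathbb{T}_q$ and $h(x_1)+h(x_2)=0$, the word length $|x|$ and the signed height $k=h(x_1)$ together parameterize the relevant geometric data of $x$.

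Next, I would invoke the renewal theorems of Brofferio and Woess~\cite{BrofferioWoess}, which describe the asymptotic behavior of $G(e,x)$ as $x$ leaves every finite subset of $\mathrm{DL}(q,q)$ along a prescribed direction in $\partial\mathbb{T}_q \times \partial\mathbb{T}_q$. Since the height projection of the simple random walk is a centered simple random walk on $\mathbb{Z}$, one is in the drift-free, recurrent regime, in which the limit form of the Green function factors naturally through the two tree coordinates, weighted by a contribution coming from the recurrent height axis.

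Then I would decompose
\[
H_1(n) \;=\; \sum_{k}\;\sum_{\substack{x \in S_n \\ h(x_1)=k}} G(e,x)
\]
and evaluate each inner sum by combining the Brofferio--Woess asymptotics with a count of the horospherical slices in $\mathbb{T}_q$ at a given height. Because $\mathbb{T}_q$ has branching number $q$, the exponential growth of these slices exactly balances the exponential decay of $G$ in the tree distances, so that each sector $k$ contributes an $O(1)$ amount. Summing over $k$ in its admissible range, which is of order $n$, and identifying the total as an integral of the renewal density against a natural measure on $\partial\mathbb{T}_q \times \partial\mathbb{T}_q$, should then yield $H_1(n) \sim C n$ with an explicit constant.

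The main obstacle, and where the technical heart of the argument must lie, is uniformity of the renewal asymptotics across all sectors $k$: the theorems of~\cite{BrofferioWoess} give limits along sequences converging to a fixed boundary pair, whereas here one needs simultaneous control over a growing family of sectors, in particular the degenerate regimes $|k| \approx n/2$ where one of the tree coordinates becomes short and the renewal regime changes. Handling this will presumably require either a local-limit-theorem-type statement for the height projection with explicit error bounds, or a direct comparison with the Green function of an auxiliary horocyclic random walk, in the spirit of the methods of~\cite{BrofferioWoess}.
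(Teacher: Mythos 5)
Your proposal follows essentially the same route as the paper: identify $L_q$ with $\mathrm{DL}(q,q)$, invoke the Brofferio--Woess asymptotics of $G(e,x)$, decompose the sphere by the height $h(x_1)$ (equivalently by $\mathfrak s$), and observe that the $q^{\mathfrak s}$ decay of the Green function cancels the $q^{\mathfrak s}$ size of each horospherical slice, so that each of the $\sim n$ sectors contributes $O(1)$. The uniformity obstacle you flag does not arise in the paper's argument, since \cite[Theorem~4.2]{BrofferioWoess} is used as a single asymptotic valid as $|x|\to\infty$ and every $x\in S_{2n}$ with $h(x_1)\neq 0$ has $\mathfrak s>n$, so what remains is only the explicit polynomial summation over $(\mathfrak d_2,\mathfrak s)$ together with the separate (bounded) contribution of the slice $h(x_1)=0$.
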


We identify $L_q=\Z_q\wr \Z$ with $\mathrm{DL}(q,q)$ as above, with $h$ a fixed horofunction.
By \cite[(4.1)]{BrofferioWoess}, the word distance in $L_q$ is given by
$$d(x,y)=d(x_1,y_1)+d(x_2,y_2)-|h(y_1)-h(x_1)|.$$
Note that $h(y_2)-h(x_2)=-h(y_1)+h(x_1)$, so
$$d(x,y)=d(x_1,y_1)+d(x_2,y_2)-|h(y_2)-h(x_2)|.$$

The identity element $e$ is identified with $(o_1,o_2)$ in $\mathrm{DL}(q,q)$, where $o_i$ is the root of $\mathbb T_q$.
In particular, we have $h(o_1)=h(o_2)=0$.
For $(x_1,x_2)\in \mathrm{DL}(q,q)$, we now set, following \cite{BrofferioWoess},
$$|x_i|=d(o_i,x_i)=\mathfrak d_i+\mathfrak u_i,$$
$$h(x_i)=\mathfrak d_i-\mathfrak u_i$$
and
$$\mathfrak s := \mathfrak u_1+\mathfrak u_2=\mathfrak d_1+\mathfrak d_2.$$
The quantities $\mathfrak d$ and $\mathfrak u$ are defined in terms of geometric features of $\mathbb T_q$, see \cite[Section~2, Figure~1]{BrofferioWoess} for more details.
The elements $x_1$ and $x_2$ are completely determined by $|x_1|$, $|x_2|$ and $h(x_1)=-h(x_2)$.
Thus, any element $(x_1,x_2)$ is completely determined by $\mathfrak u_1$, $\mathfrak d_2$ and $\mathfrak s$.

Let us now describe the sphere of radius $n$ in $L_q$.
By what precedes,
$$d(e,x)=|x_1|+|x_2|-|h(x_1)|=\mathfrak u_1+\mathfrak d_1+\mathfrak u_2+\mathfrak d_2-|\mathfrak d_1-\mathfrak u_1|.$$
We first find the $x\in S_{n}$ with $h(x_1)>0$.
For such $x$,
$$d(e,x)=2 \mathfrak u_1 + \mathfrak u_2 + \mathfrak d_2=\mathfrak u_1 + \mathfrak d_1+2\mathfrak d_2=\mathfrak u_1+\mathfrak d_2 + \mathfrak s.$$
We write $\tilde{S}(n,m)$ the set of $z\in \mathbb T_q$ with $|z|=n$ and $h(z)=m$, $-n\leq m\leq n$, and so $|n-m|$ needs to be even.
Let us compute the cardinality of $\tilde S(n,m)$ for small values of $n,m$, when $q=3$, i.e. $\mathbb T_q$ is the $4$-regular tree, which is the Cayley graph of $F_2$.
We write $F_2=\langle a,b\rangle$ and we fix the point at infinity defining the horofunction $h$ to be $a^{\infty}$.
We have $\tilde S(1,1)=\{a^{-1},b,b^{-1}\}$ and $\tilde S(1,-1)=\{a\}$, so
$$\sharp \tilde{S}(1,1)=3, \ \sharp \tilde{S}(1,-1)=1.$$
For $n=2$, we have $\tilde S(2,2)=\{a^{-2},a^{-1}b,a^{-1}b^{-1},ba,ba^{-1}, b^2,b^{-1}a,b^{-1}a^{-1},b^{-2}\}$,
$\tilde S(2,0)=\{ab,ab^{-1}\}$ and $\tilde S(2,-2)=\{a^2\}$.
Therefore,
$$\sharp \tilde{S}(2,2)=3^2, \ \sharp \tilde{S}(2,0)=2, \ \sharp \tilde{S}(2,-2) =1.$$
Similarly, we prove that
$$\sharp \tilde{S}(3,3)=3^3, \ \sharp \tilde{S}(3,1)=2\times 3, \ \sharp \tilde{S}(3,-1)=2, \ \sharp \tilde{S}(3,-3)=1,$$
$$\sharp \tilde{S}(4,4)=3^4, \ \sharp \tilde{S}(4,2)=2\times 3^2, \ \sharp \tilde{S}(4,0) = 2\times 3, \ \sharp \tilde{S}(4,-2) = 2, \ \sharp \tilde{S}(4,-4) = 1.$$

In general, we have for $0\leq k \leq n$
$$\sharp \tilde{S}(n,n)=q^{n}, \ \sharp \tilde{S}(n,-n)=1$$
and
$$\sharp \tilde{S}(n,n-2k)=(q-1)\times q^{n-1-k}, 1\leq k \leq n-1.$$
Note that if $z\in \tilde{S}(n,n-2k)$, then $\mathfrak u =k$ and $\mathfrak d=n-k$.
In fact, $\mathfrak u$ is the number of steps upward in direction to $a^{\infty}$ and $\mathfrak d$ is the number of steps downward.

\medskip

We now consider the set $A(i,j,k)=\{(x_1,x_2), \mathfrak u_1=i,\mathfrak d_2=j,\mathfrak s = k\}$, with $i\leq k$, $j\leq k$ and $i< k-j$ (the last condition being equivalent to $h(x_1)> 0$).
Then, by what precedes, for $0<j<k$,
$$\sharp A(0,j,k)=(q-1)q^{k - 1}, \ \sharp A(k,j,k)=(q-1)q^{k-1}$$
and for $0<i<k$,
$$\sharp A(i,j,k) = (q-1)^2q^{k-2}.$$
Now, for $j=0$, we have
$$\sharp A(0,0,k)=q^k,\ \sharp A(k , 0 ,k)=1$$
and for $0<i<k$,
$$\sharp A(i,0,k)=(q-1)q^{k-1}.$$
Finally, for $j=k$, we have
$$\sharp A(0,k,k)=(q-1)q^{k-1},\ \sharp A(k,k,k)=q^k,$$
and for $0<i<k$,
$$\sharp A(i,k,k)=(q-1)q^{k-1}.$$

\medskip
We are now ready to prove Theorem~\ref{thmlamplighter}.
\begin{proof}
The following asymptotics of the Green function are proven in \cite[Theorem~4.2]{BrofferioWoess}.
We have, as $|x|$ tends to infinity,
\begin{equation}\label{asympGreenlamplighter}
G(e,x)\sim \frac{C}{\mathfrak s^4q^{\mathfrak s}}\bigg (\frac{q+1}{q-1} \big ( \mathfrak u_1 (\mathfrak s-\mathfrak d_2)+(\mathfrak s -\mathfrak u_1) \mathfrak d_2\big) + \mathfrak s \mathfrak u_1 \mathfrak d_2 + \mathfrak s (\mathfrak s-\mathfrak u_1)(\mathfrak s -\mathfrak d_2)\bigg).
\end{equation}

We fix $(x_1,x_2)\in S_{2n}$ and $h(x_1)>0$, so
$2n=\mathfrak s+\mathfrak d_2+\mathfrak u_1$ and $\mathfrak d_2+\mathfrak u_1<\mathfrak s$.
In particular, $s>n$.
Also, since $\mathfrak d_2,\mathfrak u_1\geq 0$, we have $s\leq 2n$.
Conversely, choose any couple $(\mathfrak d_2,\mathfrak s)$ satisfying $n+1\leq \mathfrak s\leq 2n$ and $0\leq \mathfrak d_2\leq 2n-\mathfrak s$. Set $\mathfrak u_1=2n-\mathfrak s-\mathfrak d_2$.
Note that $\mathfrak s>n$ implies that $2n-\mathfrak s\leq \mathfrak s -1$, so
$0\leq \mathfrak u_1\leq \mathfrak s-1-\mathfrak d_2$ and $0\leq \mathfrak d_2<\mathfrak s$. We find that $\mathfrak u_1<\mathfrak s-\mathfrak d_2$ and so any such triple $(\mathfrak u_1,\mathfrak d_2,\mathfrak s)$ defines a point $(x_1,x_2)$ with $h(x_1)>0$.
By~(\ref{asympGreenlamplighter}), setting $\mathfrak u_1=i$, $\mathfrak d_2=j$, $\mathfrak s =k$ we get the following.
The sum of the Green function along points $x$ in the sphere $S_{2n}$ satisfying $h(x_1)>0$ can be written as
\begin{align*}
\sum_{\underset{h(x_1)>0}{(x_1,x_2)\in S_{2n},}}G(e,x)\sim \sum_{k=n+1}^{2n}\sum_{j=0}^{2n-k}&\frac{C}{k^4q^{k}}\sharp A(i,j,k)\\
&\bigg (\frac{q+1}{q-1} \big ( i (k-j)+(k-i)j\big) \\
&+ kij + k(k-i)(k-j)\bigg).
\end{align*}
Thus, replacing $\mathfrak u_1$ with $2n-\mathfrak s-\mathfrak d_2$, we find
\begin{align*}
\sum_{\underset{h(x_1)>0}{(x_1,x_2)\in S_{2n},}}G(e,x)\sim &\sum_{k=n+1}^{2n}\sum_{j=0}^{2n-k}\frac{C}{k^4q^{k}}\sharp A(2n-k-j,j,k)\\
&\bigg (\frac{q+1}{q-1} \big ( (2n-k-j) (k-j)+(2k +j-2n) j\big) \\
&+ k (2n-k-j) j + k (2k+j-2n)(k-j)\bigg).
\end{align*}

Since the dominant term in $\sharp A(i,j,k)$ is $q^k$, we find
\begin{align*}
\sum_{\underset{h(x_1)>0}{(x_1,x_2)\in S_{2n},}}G(e,x)&\ \sim C_1 \sum_{k=n+1}^{2n}\sum_{j=0}^{2n-k}\frac{1}{k^4}\big((2n-k-j)(k-j)+(2k+j-2n)j\\
&\hspace{2.8cm}+k(2n-k-j)j+k(2k+j-2n)(k-j)\big)\\& \sim C_2 n.
\end{align*}
By symmetry, we have the same asymptotics for $h(x_1)<0$.
Now, if $h(x_1)=0$, then $\mathfrak u_1=\mathfrak s-\mathfrak d_2$.
Combining this with $\mathfrak u_1+\mathfrak d_2+\mathfrak s=2n$, we get $\mathfrak s=n$, so the sum of the Green function along $S_{2n}$, assuming further $h(x_1)=0$ is asymptotic to a constant $C_3$.
Thus, we find that $\sum_{x\in S_{2n}}G(e,x)$ is linear in $n$.
The same proof with more delicate sums to handle, when considering the integer part of $n/2$,  shows that the same is true for $S_{2n+1}$.
This concludes the proof of Theorem~\ref{thmlamplighter}.
\end{proof}

Lamplighter groups are solvable and thus amenable.
They are actually classical examples of non-polycyclic solvable groups, see for instance the comments at the end of \cite[Chapter~13]{DrutuKapovich}.
In light of the discussion in Section~\ref{sectionpolycyclic}, this raises the following question.

\begin{quest}\label{questionsolvable}
Is it true that
$$\sum_{x\in S_n}G(e,x)\asymp n$$
holds for every symmetric finitely supported admissible probability measure on a finitely generated amenable group ?
If not, can a counter-example be found among finitely generated solvable groups ?
\end{quest}

Also, let us conclude this section with further comments on horocycle products.
We refer to \cite{Woesshorocyclic} for a more complete exposition.
Given two hyperbolic spaces $X_1,X_2$, one can perform a construction similar to that of $\mathrm{DL}(q,q)$ by choosing two Busemann functions $h_1$, respectively $h_2$, based at infinity on $X_1$, respectively $X_2$.
The horocycle product of $X_1$ and $X_2$ is the set of pairs $(x_1,x_2)\in X_1\times X_2$ with $h_1(x_1)+h_2(x_2)=0$.
Interesting examples of such spaces are the horocycle product $\mathrm{HT}(p,q)$ of $\mathbb H(p)$ with a homogeneous tree of degree $q+1$ and the horocycle product $\mathrm{Sol}(p,q)$ of $\mathbb H(p)$ and $\mathbb H(q)$.
In both cases, $\mathbb H(p)$ is the standard Poincar\'e upper half-space with suited rescaling of the hyperbolic metric.
Amenable Baumslag-Solitar groups $BS(1,q)$ act properly discontinuously and co-compactly via isometries on $\mathrm{HT}(q,q)$, see \cite{FM98}.
On the other hand, the spaces $\mathrm{Sol}(p,q)$ are classical examples of solvable Lie groups and play an important role in Thurston's geometrization theorem.
We refer to \cite{BSW} and references therein for further details.
A next subject of interest would be the study of $H_1(n)$ on amenable Baumslag-Solitar groups and on lattices of $\mathrm{Sol}(p,q)$.
Note that the later are examples of polycyclic groups.


\section{Cartesian products of trees}\label{Sectionbitree}
In this section, we study the behavior of $H_r(n)$ for random walks on
$T\times T'$, where $T,T'$ are regular trees.
The asymptotics for the Green functions are given by the work of Picardello and Woess \cite{PicardelloWoessbitree}, see also \cite[Section~28]{Woessbook} and references therein for the particular case of $T\times \Z$.

Let $T_1,T_2$ be regular trees of degree $l_1,l_2\geq 3$.
We consider the lazy simple random walk $\mu_i$ on $T_i$ whose transition kernel $p_i(x,y)$ is defined by 
$$p_i(x,y)=\left \{\begin{array}{lll}
        \frac{1}{2l_i} \text{ if } x,y \text{ are connected with an edge in } T_i,\\
        \frac{1}{2} \text{ if }x=y,\\
        0 \text{ otherwise.}
    \end{array}\right .$$
In particular, $\mu_i$ is an admissible symmetric finitely supported probability measures on $T_i$.
For every $\alpha_1,\alpha_2\geq 0$, $\alpha_1+\alpha_2=1$, we let $\mu$ be the probability measure on $T_1\times T_2$ given by
\begin{equation}\label{defmubitree}
\mu=\alpha_1 \mu_1+\alpha_2\mu_2.
\end{equation}
In terms of Markov operators, this means that
$$P_{\mu}=\alpha_1 P_{\mu_1}\otimes I+\alpha_2 I\otimes P_{\mu_2}.$$
As noted in \cite[Section~3]{PicardelloWoessbitree}, the lazy simple random walk on $T_1\times T_2$ with $\mu(x,x)=1/2$ is given by $\alpha_i=\frac{l_i}{l_1+l_2}$.
Set $\rho_i = \frac{1}{2} + \frac{\sqrt{l_i - 1}}{l_i}$ and $R = \frac{1}{\alpha_1 \rho_1 + \alpha_2 \rho_2}$.
Then, $\rho_i$ is the spectral radius of $\mu_i$, see \cite[(2.2)]{PicardelloWoessbitree} and $\rho=R^{-1}$ is the spectral radius of $\mu$, see \cite[Section~3]{PicardelloWoessbitree}.
We prove here the following.
\begin{thm}\label{thmbitree}
If $l_1=l_2$, then for every $r<R$, we have
$H_r(n)\asymp \mathrm{e}^{n\omega_\Gamma(r)}$.
If $l_1>l_2$, then there exists a phase transition at some $r_0\in (1,R)$ such that the following holds.
\begin{itemize}
    \item For every $r<r_0$, we have $H_r(n)\asymp \mathrm{e}^{n\omega_\Gamma(r)}$.
    \item At $r=r_0$, we have $H_r(n)\asymp n^{-1}\mathrm{e}^{n\omega_\Gamma(r)}$.
    \item For every $r_0<r<R$, we have $H_r(n)\asymp n^{-3/2}\mathrm{e}^{n\omega_\Gamma(r)}$.
\end{itemize}
\end{thm}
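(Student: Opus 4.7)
The proof hinges on the precise asymptotics of $G\bigl((e,e),(x_1,x_2)\mid r\bigr)$ on the bi-tree derived by Picardello and Woess in \cite{PicardelloWoessbitree}. Since $P_{\mu_1}\otimes I$ and $I\otimes P_{\mu_2}$ commute inside $P_\mu$, we expand
$$G\bigl((e,e),(x_1,x_2)\mid r\bigr)=\sum_{k,m\ge 0}\binom{k+m}{k}(\alpha_1 r)^k(\alpha_2 r)^m p_1^{(k)}(e,x_1)\,p_2^{(m)}(e,x_2).$$
Representing $\binom{k+m}{k}$ as a contour integral turns this into a saddle-point integral whose integrand is built from the single-tree Green functions. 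Picardello and Woess perform this saddle analysis, extracting asymptotics of $G$ as $|x_1|+|x_2|\to\infty$. The asymptotics split into regimes depending on whether the associated saddle lies in the interior of its admissible domain or on its boundary, the boundary corresponding to saturation at one of the single-tree spectral radii $\rho_1^{-1},\rho_2^{-1}$.

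Given this input, the plan is to reduce $H_r(n)$ to a one-dimensional sum and analyse it by Laplace's method. Since $G$ is radial in each tree factor and $\sharp S^i_j\asymp(l_i-1)^j$, writing $g(k,m\mid r)$ for the common value of $G((e,e),(x_1,x_2)\mid r)$ when $|x_1|=k$ and $|x_2|=m$,
$$H_r(n)\asymp\sum_{k=0}^n (l_1-1)^k(l_2-1)^{n-k}\,g(k,n-k\mid r).$$
Inserting the Picardello-Woess asymptotics, the summand takes the form $\mathrm{e}^{n\varphi_r(k/n)}$ times an explicit polynomial factor that depends on the saddle regime of $g$. Here $\varphi_r$ is a concave function on $[0,1]$ aggregating the sphere weights, the binomial contribution, and the exponential Green function rate, so that $\omega_\Gamma(r)=\max_{t\in[0,1]}\varphi_r(t)$. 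Denote by $t^*(r)$ its maximizer.

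The three regimes in the theorem come from the interplay between the location of $t^*(r)$ and the saddle regime of $g$ there. When $l_1=l_2$, symmetry pins $t^*(r)=1/2$ for all $r$, and a direct check shows that the saddle governing $g$ stays in the boundary regime throughout; consequently the sum is essentially geometric and yields $H_r(n)\asymp\mathrm{e}^{n\omega_\Gamma(r)}$. When $l_1>l_2$, we define $r_0\in(1,R)$ as the threshold at which the saddle governing $g$ along the dominant direction $t^*(r)$ crosses from the boundary into the interior of its admissible domain. For $r<r_0$ the sum is dominated by a single boundary term, in which $g$ has pure exponential decay, and we obtain $\mathrm{e}^{n\omega_\Gamma(r)}$. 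For $r>r_0$ the interior saddle of $g$ contributes an intrinsic polynomial factor that, combined with the Gaussian Laplace factor from summing over $k$, produces $n^{-3/2}\mathrm{e}^{n\omega_\Gamma(r)}$. At the transition $r=r_0$, a degenerate saddle analysis (in which the second-order coefficient of the phase vanishes) yields the critical polynomial $n^{-1}\mathrm{e}^{n\omega_\Gamma(r)}$.

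The principal technical obstacle is the uniform control of polynomial corrections across the saddle regimes, in particular at the transition $r=r_0$: identifying $r_0$ explicitly in terms of $\alpha_1,\alpha_2,l_1,l_2$, and matching the polynomial contributions coming from the intrinsic Green function asymptotics with those arising from the Laplace method on the one-dimensional sum, form the technical heart of the argument.
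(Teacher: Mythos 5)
Your starting point and overall skeleton do match the paper: quote the Picardello--Woess asymptotics for $G(e,x|r)$, decompose $S_n=\bigcup_{k}S_k^1\times S_{n-k}^2$, reduce $H_r(n)$ to a one-dimensional sum of the form $n^{-5/2}\sum_{k}k(n-k)\mathrm{e}^{n\Psi((n-k)/k)}$, and run a Laplace analysis on the phase $\Psi$. But the way you assign mechanisms to the three regimes is essentially reversed, and as written it would not produce the stated exponents. In the paper, the regime $H_r(n)\asymp\mathrm{e}^{n\omega_\Gamma(r)}$ (all $r<R$ when $l_1=l_2$, and $r<r_0$ when $l_1>l_2$) comes from an \emph{interior} non-degenerate maximum $\lambda_0>0$ of $\Psi$: there the intrinsic polynomial weight $k(n-k)\,n^{-5/2}\asymp n^{-1/2}$ is exactly cancelled by the $\sqrt{n}$ of the Gaussian window, so the sum is not ``essentially geometric'' and is certainly not dominated by a single boundary term. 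Conversely, the $n^{-3/2}$ regime ($r_0<r<R$) comes from the maximum sitting at the \emph{endpoint} $\lambda=0$ with $\Psi'(0)<0$: the sum is geometric near $k=n$, and $k(n-k)\,n^{-5/2}$ with $n-k=O(1)$ gives $n^{-3/2}$. Your assignment (boundary $\Rightarrow$ pure exponential, interior $\Rightarrow n^{-3/2}$) is the opposite, and neither of your mechanisms yields the claimed power: a single boundary term carries the factor $n\cdot n^{-5/2}=n^{-3/2}$, not a constant, while an interior Gaussian saddle gives $n^{-1/2}\cdot n^{1/2}=1$, not $n^{-3/2}$.

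Two further concrete gaps. At $r=r_0$ the transition is not a degenerate saddle with vanishing second-order coefficient: it is the borderline case where the maximum is still at the endpoint $\lambda=0$ but $\Psi'(0)=0$ and $\Psi''(0)<0$; a half-Gaussian sum then gives $n^{3/2}\cdot n^{-5/2}=n^{-1}$. For $l_1=l_2$ the maximizer is not pinned at $t^*=1/2$ by symmetry: there is no symmetry between the two factors unless $\alpha_1=\alpha_2$, and one must check directly that $\lambda_0=\alpha_2/\alpha_1$ solves $t_1(\lambda_0)=t_2(\lambda_0)$ and is an interior non-degenerate maximum. You also omit the monotonicity argument locating the unique critical point of $\Psi$ (that $\varphi_1(t_1(\lambda))$ increases and $\varphi_2(t_2(\lambda))$ decreases in $\lambda$, via the implicit function theorem applied to the system for $(t_1,t_2)$), and you do not address why $r_0>1$, which the paper obtains by noting $\omega_\Gamma(r_0)>0$ while $\omega_\Gamma(1)=0$ and $\omega_\Gamma$ is non-decreasing.
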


The remainder of the section is devoted to the proof of this theorem.
By \cite[Theorem~3.1]{PicardelloWoessbitree}, for every $r<R$, for every $\lambda_0\geq 0$, there exist $r_1,r_2$ such that as $x=(x_1,x_2) \in T_1\times T_2$ tends to infinity and $|x_2|/|x_1|=\lambda$ converges $\lambda_0$, we have
$$G(e,x|r)\sim G_1(e,x_1|r_1)G_2(e,x_2|r_2)\bigg(|x_1|+\frac{l_1}{l_1-2}\bigg)\bigg(|x_2|+\frac{l_2}{l_2-2}\bigg)\frac{1}{|x_1|^{5/2}}C(\lambda),$$
where $C(\lambda)$ is a continuous positive function.
The numbers $r_1$ and $r_2$ are the unique solutions of the system
\begin{equation}\label{systemr1r2}
\left\{
    \begin{array}{ll}
        \alpha_1r_1^{-1}+\alpha_2r_2^{-1}=r^{-1}\\
        \alpha_2\sqrt{(r_2^{-1}-\frac{1}{2})^2-\frac{l_2-1}{l_2^2}}=\lambda \alpha_1\sqrt{(r_1^{-1}-\frac{1}{2})^2-\frac{l_1-1}{l_1^2}}
    \end{array}
\right.
\end{equation}
By symmetry, the same holds when $|x_1|/|x_2|=\lambda'$ converges to $\lambda'_0\in [0,+\infty)$, switching the indices $1$ and $2$ and replacing the function $C$ with a function $C'$ which is also continuous and positive.

\medskip
Set $\beta_i = \frac{\sqrt{l_i - 1}}{l_i}$ and 
\begin{gather*}
  F_i(r) = \frac{l_i}{l_i - 1} \left( r^{-1} - \frac{1}{2} - \sqrt{\left( r^{-1} - \frac{1}{2} \right)^2 - \beta_i^2} \right), \\
  G_i(r)  = \frac{r^{-1}}{r^{-1} - (1/2) \left( 1 + F_i(r) \right)}. 
\end{gather*}
By \cite[(2.4)]{PicardelloWoessbitree}, we have that
\[
  G_i(e_i, \, x_i  |  r) = G_i(r) F_i(r)^{|x_i|},
\]
hence in particular $G_i(e_i,e_i|r)=G_i(r)$.
Since $1 \leq G_i(r) \leq G_i(\rho_i^{-1})$, we have
\begin{equation}
  \label{Hniasymp}
  H_n^i(r) = \sum_{x_i \in T_i \colon |x_i| =n} G_i(e_i, x_i | r) = l_i (l_i - 1)^{n - 1} G_i(r) F_i(r)^n \asymp \mathrm{e}^{n \omega_{T_i}(r)}, 
\end{equation}
where
\[
  \omega_{T_i}(r) = \log (l_i - 1) + \log F_i(r).  
\]

\medskip
We now fix $r<R$ and we consider $n\geq0$.
The sphere $S_n$ in $T_1\times T_2$ can be decomposed as
$$S_n=\bigcup_{k=0}^nS_{k}^1\times S_{n-k}^2.$$
Then, for every $k\leq n$, there exists a unique couple $(r_1(\lambda), r_2(\lambda))$ satisfying~(\ref{systemr1r2}) with $\lambda=\frac{n-k}{k}$.
Moreover,
$$\frac{1}{|x_1|^{5/2}}C(\lambda)=\frac{1}{(|x_1|+|x_2|)^{5/2}}(1+\lambda)^{5/2}C(\lambda)$$
and for $k\geq n/2$, $\lambda \leq 1$, hence by the continuity of $C(\lambda)$,
$$\frac{1}{|x_1|^{5/2}}C(\lambda)\asymp \frac{1}{(|x_1|+|x_2|)^{5/2}}.$$
Similarly,
$$\frac{1}{|x_2|^{5/2}}C'(\lambda')=\frac{1}{(|x_1|+|x_2|)^{5/2}}(1+\lambda')^{5/2}C'(\lambda')$$
and so for $k\leq n/2$,
$$\frac{1}{|x_2|^{5/2}}C'(\lambda')\asymp \frac{1}{(|x_1|+|x_2|)^{5/2}}.$$
Setting $\kappa_i=\frac{l_i}{l_i-2}$, we thus have
\begin{equation*}
\begin{split}
\sum_{x\in S_n}G(e,x|r)\asymp &\frac{1}{n^{5/2}}\sum_{k=0}^n\sum_{x_1\in S_k^1}\sum_{x_2\in S_{n-k}^2}G_1(e,x_1|r_1(\lambda))G_2(e,x_2|r_2(\lambda))\\
&\hspace{4cm}(k+\kappa_1)((n-k)+\kappa_2)\\
&\asymp \frac{1}{n^{5/2}}\sum_{k=0}^n(k+\kappa_1)((n-k)+\kappa_2)H^1_k(r_1(\lambda))H^2_{n-k}(r_2(\lambda)).
\end{split}
\end{equation*}
Applying~\eqref{Hniasymp}, we see that
\begin{equation}
  \label{Hnasymp}
  H_n(r) \asymp \frac{1}{n^{5/2}} \sum_{k = 0}^n (k+\kappa_1)((n-k)+\kappa_2) \exp \left( n \Psi(\lambda) \right), 
\end{equation}
with $\lambda = \frac{n - k}{k}$ and 
\[ 
  \Psi(\lambda) = \frac{1}{1 + \lambda} \omega_{T_1}(r_1(\lambda)) + \frac{\lambda}{1 + \lambda} \omega_{T_2}(r_2(\lambda)). 
\]
In order to find the asymptotics of $H_n(r)$, we thus need to find where the function $\Psi(\lambda)$ takes its maximum value.  

\medskip
Now let $t = r^{-1} - \frac{1}{2}$ and $t_i = t_i(\lambda) = r_i(\lambda)^{-1} - \frac{1}{2}$.  Then $(t_1, t_2)$ solves the system of equations
\begin{equation}
  \label{systemt1t2}
  \begin{cases}
    \alpha_1 t_1 + \alpha_2 t_2 = t, \\
    \alpha_2 \sqrt{t_2^2 - \beta_2^2} = \lambda \alpha_1 \sqrt{t_1^2 - \beta_1^2}.  
  \end{cases}
\end{equation}

\begin{lem}\label{t1t2deriviative}
  The functions $\lambda \mapsto t_1(\lambda)$ and $\lambda \mapsto t_2(\lambda)$ are continuously differentiable.  Furthermore,
  \begin{gather*}
    t_1'(\lambda) = - \frac{\lambda \alpha_1 (t_1^2 - \beta_1^2)}{\alpha_2 t_2 + \lambda^2 \alpha_1 t_1}, \\
    t_2'(\lambda) = - \frac{\alpha_1}{\alpha_2} t_1'(\lambda) = \frac{\lambda^{-1} \alpha_2 \left( t_2^2 - \beta_2^2 \right)}{\alpha_2 t_2 + \lambda^2 \alpha_1 t_1}. 
  \end{gather*}
\end{lem}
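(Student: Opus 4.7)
The proof is a direct application of the implicit function theorem followed by implicit differentiation, so my plan is simply to carry out these two steps carefully and record the resulting formulas.

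First I would verify the hypotheses of the implicit function theorem. Squaring the second equation of~(\ref{systemt1t2}) eliminates the square roots, so the pair $(t_1(\lambda), t_2(\lambda))$ is defined as a solution of the smooth system
\[
F_1(t_1,t_2,\lambda) := \alpha_1 t_1 + \alpha_2 t_2 - t = 0, \qquad F_2(t_1,t_2,\lambda) := \alpha_2^2(t_2^2-\beta_2^2) - \lambda^2\alpha_1^2(t_1^2-\beta_1^2) = 0.
\]
The Jacobian with respect to $(t_1,t_2)$ has determinant $2\alpha_1\alpha_2(\alpha_2 t_2 + \lambda^2\alpha_1 t_1)$. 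Here one must remark that, because $r \le R$ forces $r_i \le \rho_i^{-1}$, we have $t_i = r_i^{-1} - \tfrac12 \ge \beta_i > 0$ throughout, so the determinant is strictly positive. The implicit function theorem then gives the $C^1$ regularity of $\lambda \mapsto t_1(\lambda)$ and $\lambda \mapsto t_2(\lambda)$.

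Next I would compute the derivatives. Differentiating the first equation of~(\ref{systemt1t2}) gives immediately $\alpha_1 t_1'(\lambda) + \alpha_2 t_2'(\lambda) = 0$, which is exactly the relation $t_2'(\lambda) = -(\alpha_1/\alpha_2) t_1'(\lambda)$ asserted in the lemma. Differentiating the squared second equation yields
\[
\alpha_2^2 t_2 t_2'(\lambda) = \lambda\alpha_1^2(t_1^2-\beta_1^2) + \lambda^2\alpha_1^2 t_1 t_1'(\lambda).
\]
Substituting $t_2'(\lambda) = -(\alpha_1/\alpha_2) t_1'(\lambda)$ into the left-hand side and solving for $t_1'(\lambda)$ gives
\[
t_1'(\lambda) = -\frac{\lambda\alpha_1(t_1^2-\beta_1^2)}{\alpha_2 t_2 + \lambda^2\alpha_1 t_1},
\]
and the second formula follows by combining this with the linear relation between $t_1'$ and $t_2'$ and then rewriting $\lambda\alpha_1\sqrt{t_1^2-\beta_1^2} = \alpha_2\sqrt{t_2^2-\beta_2^2}$ to introduce a $\lambda^{-1}$ factor.

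There is no real obstacle here; the only point that needs some care is to justify that the denominator $\alpha_2 t_2 + \lambda^2\alpha_1 t_1$ never vanishes, which comes from the strict positivity of $t_1, t_2$ noted above. This also confirms that the formulas define continuous functions of $\lambda$, so $t_1$ and $t_2$ are indeed $C^1$ on the whole range of $\lambda$ considered.
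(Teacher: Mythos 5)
Your proof is correct and follows essentially the same route as the paper: the implicit function theorem for $C^1$ regularity followed by implicit differentiation of the system~(\ref{systemt1t2}), yielding exactly the stated formulas. Your execution is in fact the more standard one — you square the second equation (which neatly avoids the non-differentiability of $\sqrt{t_2^2-\beta_2^2}$ at $\lambda=0$, where $t_2=\beta_2$) and invert the Jacobian in the variables $(t_1,t_2)$, whereas the paper computes the partial derivatives of $\Upsilon$ with respect to $(t,\lambda)$; both arguments lead to the same conclusion.
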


\begin{proof}
Let $U$ be the open set $(\beta_1,+\infty)\times (\beta_2,+\infty)\times (\beta,+\infty)\times (0,+\infty)$ with $\beta = \rho - \frac{1}{2}$.  
We set 
$$\Upsilon:(t_1,t_2,t,\lambda)\in U\mapsto \bigg(\alpha_1 t_1 + \alpha_2 t_2-2t, \alpha_2 \sqrt{t_2^2 - \beta_2^2}- \lambda \alpha_1 \sqrt{t_1^2 - \beta_1^2}\bigg).$$
Then
$$\frac{\partial \Upsilon}{\partial t}=(-2,0)$$
and
$$\frac{\partial \Upsilon}{\partial \lambda}=\bigg(0,- \alpha_1 \sqrt{t_1^2-\beta^2}\bigg).$$
For $t_1>\beta_1$, the matrix
$$\begin{pmatrix}
    -2&0\\
    0&- \alpha_1 \sqrt{t_1^2-\beta^2}
\end{pmatrix}$$
is invertible.
The implicit function theorem shows that the solution $(t_1,t_2)$ of~(\ref{systemt1t2}) is continuously differentiable in the variables $(t,\lambda)$.
The formulas for $t_1'(\lambda)$ and $t_2'(\lambda)$ are then derived from~(\ref{systemt1t2}).
\end{proof}

Define
\[
  \varphi_i(t) = \log l_i + \log \left( t - \sqrt{t^2 - \beta_i^2} \right). 
\]
Then
\[
  \Psi(\lambda) = \frac{1}{1 + \lambda} \varphi_1(t_1(\lambda)) + \frac{\lambda}{1 + \lambda} \varphi_2(t_2(\lambda)). 
\]
Since 
\begin{equation}
  \label{phideriviative}
  \varphi_i'(t) = - \frac{1}{\sqrt{t^2 - \beta_i^2}},   
\end{equation}
we have that
\[
  \lambda \varphi_2'(t_2(\lambda)) = \frac{\alpha_2}{\alpha_1} \varphi_1'(t_1(\lambda)), 
\]
and hence
\begin{equation}
  \label{Psideriviative}
  \begin{split}
    \Psi'(\lambda) =& \frac{\varphi_1'(t_1(\lambda))}{1 + \lambda}  t_1'(\lambda) + \frac{\lambda \varphi_2'(t_2(\lambda))}{1 + \lambda}  t_2'(\lambda) + \frac{\varphi_2(t_2(\lambda)) - \varphi_1(t_1(\lambda))}{(1 + \lambda)^2} \\
    =& \frac{\alpha_1 t_1'(\lambda) + \alpha_2 t_2'(\lambda)}{\alpha_1 (1 + \lambda)} \varphi_1'(t_1(\lambda)) + \frac{\varphi_2(t_2(\lambda)) - \varphi_1(t_1(\lambda))}{(1 + \lambda)^2} \\  
  =& \frac{\varphi_2(t_2(\lambda)) - \varphi_1(t_1(\lambda))}{(1 + \lambda)^2}.
  \end{split}
\end{equation}
Furthermore,
\begin{equation}
  \label{Psi2deriviative}
  \Psi''(\lambda) = - \frac{2}{(1 + \lambda)^3} \left[ \varphi_2(t_2(\lambda)) - \varphi_1(t_1(\lambda)) \right] - \frac{\alpha_1 \sqrt{t_1^2 - \beta_1^2}}{(1 + \lambda) \left( \alpha_2 t_2 + \lambda^2 \alpha_1 t_1 \right)}. 
\end{equation}

By Lemma~\ref{t1t2deriviative} and~\eqref{phideriviative}, we see that $\varphi_1(t_1(\lambda))$ (resp. $\varphi_2(t_2(\lambda))$) is strictly increasing (resp. decreasing) in $\lambda$.  It follows that there is at most one $\lambda_0 \in [0, \, + \infty)$ such that $\Psi'(\lambda_0) = 0$.  Note that $t_2(0) = \beta_2$, $t_1(0) = \alpha_1^{-1} \left( t - \alpha_2 \beta_2 \right) > \beta_1$, and
\[
  \varphi_2(t_2(0)) = \frac{1}{2} \log \left( l_2 - 1 \right). 
\]
On the other hand, $\lim_{\lambda \to + \infty} t_1(\lambda) = \beta_1$, $\lim_{\lambda \to + \infty} t_2(\lambda) = \alpha_2^{-1}(t - \alpha_1 \beta_1) > \beta_2$, and
\[
  \lim_{\lambda \to + \infty} \varphi_1(t_1(\lambda)) = \frac{1}{2} \log \left( l_1 - 1 \right). 
\]

Assume $l_1 = l_2 = l$.  Then for $\lambda_0 = \frac{\alpha_2}{\alpha_1}$ we have that $t_1(\lambda_0) = t_2(\lambda_0) = t$ and $\Psi'(\lambda_0) = 0$.  Note that
\[
  \Psi''(\lambda_0) = - \frac{\alpha_1^2 \sqrt{t^2 - \beta^2}}{\alpha_2 t} < 0
\]
by Lemma~\ref{t1t2deriviative} and~\eqref{phideriviative}.  By Lemma~\ref{asymptotics}~(ii) below, we can deduce that
\begin{equation}
  \label{Hnl1=l2}
  H_n(r) \asymp \mathrm{e}^{n \Psi(\lambda_0)} = \mathrm{e}^{n \log \left[ l \left( r^{-1} - \frac{1}{2} - \sqrt{\left( r^{-1} - \frac{1}{2} \right)^2 - \beta^2} \right) \right]}. 
\end{equation}
This concludes the proof of Theorem~\ref{thmbitree} for the case $l_1=l_2$.

\medskip
Assume now that $l_1 > l_2$.
Then, we have that $\beta_1 < \beta_2$ and $\varphi_1(\beta_1) > \varphi_2(\beta_2)$.  Since $\lim_{s \to + \infty} \varphi_1(s) = - \infty$, there exists $t_0 > \alpha_1 \beta_1 + \alpha_2 \beta_2 > \beta_1$ such that
\[
  \varphi_1 \left( \alpha_1^{-1} \left( t_0 - \alpha_2 \beta_2 \right) \right) = \varphi_2(\beta_2).  
\]

If $t < t_0$, then
\[
  \varphi_1(t_1(0)) = \varphi_1 \left( \alpha_1^{-1} \left( t - \alpha_2 \beta_2 \right) \right) > \varphi_1 \left( \alpha_1^{-1} \left( t_0 - \alpha_2 \beta_2 \right) \right) = \varphi_2 (t_2(0))
\]
and hence $\Psi'(\lambda) < 0$ for all $\lambda \geq 0$.  It follows that $\Psi(\lambda)$ takes its unique maximum at $\lambda = 0$.  By Lemma~\ref{asymptotics}~(i) below, 
\begin{equation}
  \label{Hnt<t0}
    H_n(r) \asymp n^{-3/2} \mathrm{e}^{n \log \varphi_1 \left( \alpha_1^{-1} \left( t - \alpha_2 \beta_2 \right) \right)}. 
  \end{equation}

  Similarly, if $t = t_0$, then $0$ is also the unique maximum point of $\Psi(\lambda)$, and we have further that $\Psi'(0) = 0$, 
  \[
    \Psi''(0) = - \frac{\sqrt{\left( t_0 - \alpha_2 \beta_2 \right)^2 - \alpha_1 \beta_1^2}}{\alpha_2 \beta_2} < 0.  
  \]
  Thus, Lemma~\ref{asymptotics}~(iii) below shows that
  \begin{equation}
    \label{Hnt=t0}
    H_r(n) \asymp n^{-1} \mathrm{e}^{n \Psi(0)} = n^{-1} \mathrm{e}^{n \log \varphi_1 \left( \alpha_1^{-1} \left( t - \alpha_2 \beta_2 \right) \right)}. 
  \end{equation}

  It remains to consider the case that $t > t_0$.  Since $t_0 > \alpha_1 \beta_1 + \alpha_2 \beta_2$ we have
  \[
    \lim_{\lambda \to + \infty} \varphi_2 \left( t_2 (\lambda) \right) = \varphi_2 \left( \alpha_2^{-1} \left( t - \alpha_1 \beta_1 \right) \right) < \varphi_2(\beta_2) < \varphi_1(\beta_1) = \lim_{\lambda \to + \infty} \varphi_1(t_1(\lambda)),  
  \]
  and 
  \[
    \varphi_1(t_1(0)) = \varphi_1 \left( \alpha_1^{-1} \left( t - \alpha_2 \beta_2 \right) \right) < \varphi_1 \left( \alpha_1^{-1} \left( t_0 - \alpha_2 \beta_2 \right) \right) = \varphi_2(t_2 (0)).  
  \]
  Thus there exists $\lambda_0 > 0$ such that $\varphi_1(t_1(\lambda_0)) = \varphi_2 \left( t_2(\lambda_0) \right)$.  By~\eqref{Psi2deriviative}, $\Psi''(\lambda_0) < 0$, and we see from Lemma~\ref{asymptotics}~(ii) below that
  \begin{equation}
    \label{Hnt>t0}
    H_r(n) \asymp \mathrm{e}^{n \varphi_1(t_1(\lambda_0))}.  
  \end{equation}

To conclude, what is left to do is proving that $r_0>1$, i.e. $t_0<1/2$.
Lengthily computations would prove that
$$\varphi_1 \left( \alpha_1^{-1} \left( \frac{1}{2} - \alpha_2 \beta_2 \right) \right) < \varphi_2(\beta_2),$$ hence we necessarily have $t_0<1/2$.
However, we see that at $t_0$, we have by~(\ref{Hnt=t0})
$ H_r(n) \asymp n^{-1} \mathrm{e}^{n \Psi(0)} = n^{-1} \mathrm{e}^{n \log \varphi_1 \left( \alpha_1^{-1} \left( t - \alpha_2 \beta_2 \right) \right)}. $
In particular,
$$\omega_\Gamma(r_0)=\log \varphi_1 \left( \alpha_1^{-1} \left( t_0 - \alpha_2 \beta_2 \right) \right)=\log \varphi_2(\beta_2)>0.$$
Since $\omega_\Gamma(1)=0$ and $\omega_\Gamma$ is increasing, we see directly that $r_0>1$.
This ends the proof of Theorem~\ref{thmbitree}. \qed

\begin{lem}
  \label{asymptotics}
  Assume that $\Phi \in C^2 \left( [0, +\infty) \right)$ is eventually decreasing and has a unique maximum point at $0 \leq \lambda_0 < \infty$.  Define
  \[
    f(n) = \sum_{k = 0}^n k (n - k) \mathrm{e}^{n \Phi \left( \frac{n - k}{k} \right)}. 
  \]
  \begin{enumerate}[(i)]
  \item If $\lambda_0 = 0$ and $\Phi'(0) < 0$, then
    \[
      f(n) \asymp n \mathrm{e}^{n \Phi(0)}.  
    \]
  \item If $\lambda_0 > 0$ and $\Phi''(\lambda_0) < 0$, then
    \[
      f(n) \asymp n^{5/2} \mathrm{e}^{n \Phi(\lambda_0)}. 
    \]
  \item If $\lambda_0 = 0$, $\Phi'(0) = 0$ and $\Phi''(0) < 0$, then
    \[
      f(n) \asymp n^{3/2} \mathrm{e}^{n \Phi(0)}. 
    \]
    
  \end{enumerate}
\end{lem}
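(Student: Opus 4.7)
The plan is to apply Laplace's method to the sum by converting it to a Riemann-sum approximation of an integral, then localising the integral near the unique maximum $\lambda_0$ of $\Phi$.

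\smallskip

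First, I would change variables from $k$ to $\lambda_k=(n-k)/k$. Since
$$\lambda_{k-1}-\lambda_k=\frac{n}{k(k-1)}\asymp \frac{(1+\lambda_k)^2}{n},$$
the discrete sample points have effective ``density'' $n/(1+\lambda)^2$ in $\lambda$. Combined with the identity $k(n-k)=n^2\lambda/(1+\lambda)^2$, this reduces $f(n)$, up to Euler--Maclaurin error, to
$$f(n)\asymp n^3\int_0^\infty \frac{\lambda}{(1+\lambda)^4}\,e^{n\Phi(\lambda)}\,d\lambda.$$
The comparison is checked by a summation-by-parts argument together with the hypothesis that $\Phi$ is eventually decreasing, so that the tails contribute negligibly; some care is needed near $\lambda=0$, where the density is only bounded by $n$ rather than growing.

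\smallskip

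Next, in each of the three cases I would apply Laplace's method localised near $\lambda_0$. In case (ii), an interior critical point with $\Phi''(\lambda_0)<0$ produces Gaussian concentration of width $O(1/\sqrt n)$ in $\lambda$, and together with the bounded prefactor $\lambda_0/(1+\lambda_0)^4$ this yields a $\sqrt n$ factor, giving $n^{5/2}e^{n\Phi(\lambda_0)}$. In case (i), the boundary critical point with $\Phi'(0)<0$ forces exponential decay on scale $1/n$, so that $\int_0^\infty \lambda\,e^{n\Phi'(0)\lambda}d\lambda$ contributes an $O(1/n^2)$ factor, producing $n\,e^{n\Phi(0)}$. In case (iii), writing $\Phi(\lambda)=\Phi(0)+\tfrac12\Phi''(0)\lambda^2+o(\lambda^2)$ and integrating the prefactor $\lambda$ against the resulting half-Gaussian of width $O(1/\sqrt n)$ at the boundary, with careful attention to the interplay of the truncated Gaussian and the vanishing of the prefactor, gives the claimed $n^{3/2}e^{n\Phi(0)}$.

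\smallskip

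Matching upper and lower bounds follow from the usual Laplace splitting: for the lower bound restrict to a shrinking neighborhood of $\lambda_0$ where the Gaussian approximation is two-sided; for the upper bound split into this neighborhood (controlled by the Gaussian) and its complement (controlled by $\Phi(\lambda)\le \Phi(\lambda_0)-\varepsilon$ together with the polynomial prefactor bound $k(n-k)\le n^2$). The main technical obstacle is case~(iii): one must track the Riemann-sum approximation uniformly as the Gaussian concentration region touches the boundary $\lambda=0$, where the sample-point spacing is only $\asymp 1/n$, the prefactor $k(n-k)$ vanishes, and the integration range is truncated to the half-line, so that the polynomial prefactor in $n$ is captured correctly.
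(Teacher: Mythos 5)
Your reduction of the sum to the integral
\[
f(n)\asymp n^3\int_0^\infty \frac{\lambda}{(1+\lambda)^4}\,\mathrm{e}^{n\Phi(\lambda)}\,\mathrm{d}\lambda
\]
is correct (the identity $k(n-k)=n^2\lambda/(1+\lambda)^2$ together with the spacing $\asymp (1+\lambda)^2/n$ gives exactly the density you state), and it is essentially the paper's argument repackaged: the paper likewise replaces $\Phi$ by its first or second Taylor approximation on a $\delta$-neighborhood of $\lambda_0$, discards the complement using the unique-maximum hypothesis, and compares the remaining sum with an integral. Your evaluations in cases (i) and (ii) are right: $\int_0^\delta \lambda\,\mathrm{e}^{n\Phi'(0)\lambda}\,\mathrm{d}\lambda\asymp n^{-2}$ gives $n\,\mathrm{e}^{n\Phi(0)}$, and an interior nondegenerate maximum with prefactor $\lambda_0(1+\lambda_0)^{-4}>0$ gives $n^3\cdot n^{-1/2}=n^{5/2}$.

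Case (iii) is where the proposal breaks down, and the issue is not one of "careful attention": your own integral gives
\[
n^3\int_0^\delta \lambda\,\mathrm{e}^{-c n\lambda^2}\,\mathrm{d}\lambda=n^3\cdot\frac{1}{2cn}\left(1-\mathrm{e}^{-cn\delta^2}\right)\asymp n^{2},
\]
not $n^{3/2}$. On the concentration scale $\lambda\asymp n^{-1/2}$ the prefactor $\lambda$ is itself $\asymp n^{-1/2}$, so it multiplies the half-Gaussian mass $\asymp n^{-1/2}$ by another $n^{-1/2}$, yielding $n^{-1}$ for the integral and $n^{2}$ overall; to land on $n^{3/2}$ you would need a prefactor vanishing like $\lambda^{2}$ at the boundary, which $k(n-k)$ does not supply. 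The same answer comes straight from the sum: with $j=n-k$ one has $k(n-k)\asymp nj$ and $n\Phi(j/(n-j))-n\Phi(0)\asymp -j^{2}/n$, so $\sum_j nj\,\mathrm{e}^{-cj^{2}/n}\asymp n\cdot n=n^{2}$ (the summand peaks at $j\asymp\sqrt n$ with value $\asymp n^{3/2}$ over a window of width $\asymp\sqrt n$). So you have asserted the target conclusion rather than derived it, and what your (correct) setup actually proves is $f(n)\asymp n^{2}\mathrm{e}^{n\Phi(0)}$. Note that the paper's own treatment of (iii) is a one-line "similar to (ii)" that also does not produce matching bounds at the claimed order (reusing the bound $k(n-k)\prec n^{2}$ from (ii) only yields the upper bound $n^{5/2}$), so before relying on the exponent $3/2$ you should resolve this discrepancy; as it stands the exponent $2$ is what the computation supports, and it would propagate to $H_r(n)\asymp n^{-1/2}\mathrm{e}^{n\omega_\Gamma(r)}$ at $r=r_0$ in Theorem~\ref{thmbitree}.
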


\begin{proof}
  \begin{enumerate}[(i)]
  \item For any $0 < \varepsilon < - \Phi'(0)$, there exists $\delta > 0$ such that $\left| \Phi'(\lambda) - \Phi'(0) \right| < \varepsilon$ for every $0 \leq \lambda \leq \delta$.  By the mean value theorem, for $\lambda \leq \delta$, 
    \[
      \left( \Phi'(0) - \varepsilon \right) \lambda \leq \Phi(\lambda) - \Phi(0) \leq \left( \Phi'(0) + \varepsilon \right) \lambda. 
    \]
    If $\frac{n - k}{k} \leq \delta$, then we have $k \geq (1+\delta)^{-1}n$.  Thus 
    \begin{align*}
      f(n) \geq& \sum_{k \ge (1+\delta)^{-1} n} k (n - k) \mathrm{e}^{n \Phi \left( \frac{n - k}{k} \right)} \\ 
      \succ& n \mathrm{e}^{n \Phi(0)} \sum_{k \geq (1+\delta)^{-1} n} (n - k) \mathrm{e}^{n \left( \Phi'(0) - \varepsilon \right) \frac{n - k}{k}} \\
      \geq& n \mathrm{e}^{n \Phi(0)} \sum_{k \leq \frac{\delta}{1 + \delta} n} k \mathrm{e}^{\left( \Phi'(0) - \varepsilon \right) k} \\
      \succ & n \mathrm{e}^{n \Phi(0)}. 
    \end{align*}
    Since $0$ is the unique maximum point of $\Phi(\lambda)$, there exists $\eta > 0$ such that $\Phi(\lambda) \leq \Phi(0) - \eta$ for all $\lambda \geq \delta$. 
    Thus
    \[
      \sum_{k < (1 + \delta)^{-1}n } k (n - k) \mathrm{e}^{n \Phi \left( \frac{n - k}{k} \right)}
      \leq n^3 \mathrm{e}^{n \left( \Phi(0) - \eta \right)}. 
    \]
    Also,
    \begin{align*}
      \sum_{k \geq (1 + \delta)^{-1} n} k (n - k) \mathrm{e}^{n \Phi \left( \frac{n - k}{k} \right)}
      \leq& n \mathrm{e}^{n \Phi(0)} \sum_{k \geq (1 + \delta)^{-1} n} (n - k) \mathrm{e}^{n \left( \Phi'(0) + \varepsilon \right) \frac{n - k}{k}} \\
      \leq& n \mathrm{e}^{n \Phi(0)} \sum_{k \leq \frac{\delta}{1 + \delta}n} k \mathrm{e}^{\left( \Phi'(0) + \varepsilon \right) (1 + \delta)^{-1} k} \\
      \leq& n \mathrm{e}^{n \Phi(0)} \sum_{k = 0}^{\infty} k \mathrm{e}^{\left( \Phi'(0) + \varepsilon \right) (1 + \delta)^{-1} k}
    \end{align*}
    Combining the last two displays yields the desired upper-bound. 
  \item  For $0 < c_1 < - \frac{\Phi''(\lambda_0)}{2} < c_2$, there exists $\delta > 0$ such that
\begin{equation}\label{secondorderapproximationPhi}
 - c_2 \left( \lambda - \lambda_0 \right)^2 \leq \Phi(\lambda) - \Phi(\lambda_0) \leq - c_1 \left( \lambda - \lambda_0 \right)^2
\end{equation}
     for $\left| \lambda - \lambda_0 \right| \leq \delta$.  Clearly, we have that
    \[
      \sum_{k \colon \left| \frac{n - k}{k} - \lambda_0 \right| > \delta} k (n - k) \mathrm{e}^{n \Phi \left( \frac{n - k}{k} \right)} \prec n^3 \mathrm{e}^{n \left( \Phi(\lambda_0) - \eta \right)}
    \]
    for some $\eta > 0$. 
    Now,
    \begin{align*}
      \sum_{k \colon \left| \frac{n - k}{k} - \lambda_0 \right| \leq \delta} k (n - k) \mathrm{e}^{n \Phi \left( \frac{n - k}{k} \right)} 
      \prec&  n^3 \mathrm{e}^{n \Phi(\lambda_0)} \sum_{k \colon \left| \frac{n - k}{k} - \lambda_0 \right| \leq \delta} \frac{1}{n} \mathrm{e}^{- c_1 n \left( \frac{n}{k} - 1 - \lambda_0 \right)^2} \\ 
      \asymp& n^3 \mathrm{e}^{n \Phi(\lambda_0)} \int_{(1 + \lambda_0 + \delta)^{-1}}^{(1 + \lambda_0 - \delta)^{-1}} \mathrm{e}^{- c_1 n \left( x^{-1} - 1 - \lambda_0 \right)^2} \mathrm{d} x.
    \end{align*}
By a change of variables, we get
\begin{align*}
    \sum_{k \colon \left| \frac{n - k}{k} - \lambda_0 \right| \leq \delta} k (n - k) \mathrm{e}^{n \Phi \left( \frac{n - k}{k} \right)} =& n^3 \mathrm{e}^{n \Phi(\lambda_0)} \int_{- \delta}^{\delta} \mathrm{e}^{-c_1 n y^2} \frac{\mathrm{d} y}{(y + 1 + \lambda_0)^2} \\
      \asymp& n^{5/2} \mathrm{e}^{n \Phi(\lambda_0)} \int_{- \delta \sqrt{n}}^{\delta \sqrt{n}} \mathrm{e}^{- c_1 z^2} \mathrm{d} z \\
      \asymp& n^{ 5 / 2} \mathrm{e}^{n \Phi(\lambda_0)}. 
\end{align*}
    The lower bound can be proved by the same arguments, changing $c_1$ with $c_2$.
  \item The proof of (iii) is similar to that of (ii), except that we need to replace~(\ref{secondorderapproximationPhi}) with
  $$ - c_2  \lambda^2 \leq \Phi(\lambda) - \Phi(0) \leq - c_1 \lambda ^2$$
  and the order of magnitude is not $\frac{n-k}{k}\asymp \lambda_0\pm \delta$ anymore, but $k\geq (1+\delta)^{-1}n$ as in~(i). \qedhere
  \end{enumerate}
\end{proof}

\section{Twisted Patterson-Sullivan measures}\label{Sectionpattersonsullivan}

In this and next   sections, we will give applications of our results to relatively hyperbolic groups.
We start here by investigating the properties of Patterson-Sullivan like measures on the Bowditch boundary of such groups, which we define by using suited Poincar\'e series.

\subsection{Some background on relatively hyperbolic groups}
Since their introduction  by Gromov, relatively hyperbolic groups were studied by many authors through several equivalent definitions.
We will mainly use the viewpoint of Bowditch \cite{Bowditch} and Gerasimov-Potyagailo \cite{GePoJEMS}, \cite{GePoGGD}, \cite{GP16} in the sequel.
Consider a finitely generated group $\Gamma$ acting properly via isometries on a proper geodesic Gromov hyperbolic space $X$.
Define the \textit{limit set} $\Lambda_\Gamma$ as the closure of $\Gamma$ in the Gromov boundary $\partial X$ of $X$, that is, fixing a base point $x_0$ in $X$, $\Lambda_\Gamma$ is the set of all possible limits of sequences $g_n\cdot x_0$ in $\partial X$, $g_n\in \Gamma$. The proper action of $\Gamma$ on $X$ by isometries extends to a convergence group action on $\Lambda_\Gamma$ by homeomorphisms, which means that the induced action on the space of distinct triples is properly continuous (see \cite[Proposition~1.11]{Bo99} for example). If $\Lambda_\Gamma$ contains at least three points, then $\Gamma$ acts minimally on $\Lambda_\Gamma$. 

A loxodromic element $x\in \Gamma$ is an infinite order element with exactly two fixed points $x_-\ne x_+$ in $\Lambda_\Gamma$.  Moreover, $x$ acts via North-South dynamics on $\Lambda_\Gamma$ in the sense that for any $\xi\neq x_{\pm}$, $x^{\mp n}\xi$ converges to $x_{\mp}$ as $n$ goes to $+\infty$. Then $x_+$ is called the attracting fixed point of $x$ and $x_-$ is called its repelling fixed point. The fixed points of any two loxodromic elements are either the same  or disjoint. So $\Gamma$ contains infinitely many loxodromic elements with pairwise disjoint fixed points.

A point $\xi\in \Lambda_\Gamma$ is called \textit{conical} if there is a sequence $g_{n}$ of $\Gamma$ and distinct points $\xi_1,\xi_2$ in $\Lambda_\Gamma$ such that
$g_{n}\xi$ converges to $\xi_1$ and $g_{n}\zeta$ converges to $\xi_2$ for all $\zeta\neq \xi$ in $\Lambda_\Gamma$.
A point $\xi\in \Lambda_\Gamma$ is called \textit{parabolic} if its stabilizer in $\Gamma$ is infinite, fixes exactly $\xi$ in $\Lambda_\Gamma$ and contains no loxodromic element.
If, in addition, its stabilizer in $\Gamma$ acts co-compactly on $\Lambda_\Gamma \setminus \{\xi\}$, then $\xi$ is called \textit{bounded parabolic}.
Say that the action of $\Gamma$ on $X$ is \textit{geometrically finite} if the induced convergence group action on the limit set $\Lambda_
\Gamma$ is geometrically finite: $\Lambda_
\Gamma$ only consists of conical limit points and bounded parabolic limit points. 
 See \cite{Bo99} for more   general facts on convergence groups.

\medskip
A group $\Gamma$ is called \textit{relatively hyperbolic} with respect to a collection of subgroups $\mathbb P$ if it acts geometrically finitely on a proper geodesic hyperbolic space $X$ such that the stabilizers of parabolic limit points are exactly the conjugates of the elements of $\mathbb P$.
Elements of $\mathbb P$ are called maximal parabolic subgroups.
We will write $\mathbb P_0$ for the choice of a set of representatives of conjugacy classes of elements of $\mathbb P$.
By \cite[Proposition~6.15]{Bowditch}, such a set $\mathbb P_0$ is finite.

The limit set $\Lambda_\Gamma$ in the Gromov boundary of $X$ is called the \textit{Bowditch boundary} of $\Gamma$.
By \cite[Theorem~9.4]{Bowditch}, it is unique up to equivariant homeomorphism and in particular does not depend on the choice of a proper geodesic Gromov hyperbolic space $X$ on which $\Gamma$ acts geometrically finitely.
We will write $\partial\Gamma$ for the Bowditch boundary of $\Gamma$ in the sequel.
A relatively hyperbolic group is called \textit{non-elementary} if its Bowditch boundary is infinite; equivalently, $\sharp \partial\Gamma>2$.




\medskip
We now fix a finite set $\mathbb P_0$ of representatives of conjugacy classes of maximal parabolic subgroups.
When $P\in \mathbb P_0$ and $g\in \Gamma$, we call $gP$ a maximal parabolic coset.
\begin{defn}
Let $gP$ be a maximal parabolic coset and $\eta, L>0$ be fixed constants. A point $p$ on a geodesic $\alpha$ is called \textit{$(\eta, L)$-deep} in $gP$ if
$$B(p, 2L)\cap \alpha\subseteq N_\eta(gP).$$
It is called an \textit{$(\eta, L)$-transition point} if it is not \textit{$(\eta, L)$-deep} in any maximal parabolic coset $gP$.  
\end{defn}

The following set of inequalities is called weak relative Ancona inequalities and will be helpful below.
They extend similar inequalities proved for hyperbolic groups by Ancona \cite{Ancona} for $r=1$, by Gou\"ezel-Lalley \cite{GouezelLalley} on co-compact Fuchsian groups for $r\leq R$ and by Gou\"ezel \cite{Gouezel} in full generality for $r\leq R$.
The version for relatively hyperbolic groups that we use here were first proved for $r=1$ by Gekhtman-Gerasimov-Potyagailo-Yang \cite{GGPY} and then by Dussaule-Gekhtman \cite{DG21} for $r\leq R$.

\begin{lem}\label{weakAncona}\cite[Theorem~1.1]{GGPY}\cite[Theorem~1.6]{DG21}
Let $\Gamma$ be a relatively hyperbolic group and let $\mu$ be a finitely supported admissible and symmetric probability measure on $\Gamma$.
Then, for every $c,\eta,L\geq 0$ there exists $C>0$ such that for every $r\leq R$, the following holds.
For every $x,y,z\in \Gamma$, if $y$ has a distance at most $c$ to an $(\eta,L)$-transition point on a geodesic from $x$ to $z$, then
$$\frac{1}{C}G(x,y|r)G(y,z|r)\leq G(x,z|r)\leq C G(x,y|r)G(y,z|r).$$
\end{lem}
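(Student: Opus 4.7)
The lower inequality $G(x,z|r) \geq C^{-1} G(x,y|r) G(y,z|r)$ is elementary and requires no geometric hypothesis on $x,y,z$. Decomposing each path from $x$ to $z$ according to the first time it visits $y$ (if ever), one obtains
$$G(x,z|r) \geq F(x,y|r) G(y,z|r) = \frac{G(x,y|r)\, G(y,z|r)}{G(e,e|r)},$$
where $F(x,y|r)$ is the first-passage generating function and the identity comes from $G(x,y|r) = F(x,y|r) G(e,e|r)$. Since $G(e,e|r) \leq G(e,e|R) < \infty$ under our standing transience-at-the-spectral-radius assumption, this gives the lower bound with a constant uniform in $r \leq R$.

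The upper bound is the Ancona inequality in its relatively hyperbolic form, and here both the transition point hypothesis and the uniformity in $r$ are essential. My approach follows the strategy pioneered by Ancona in the hyperbolic setting, sharpened up to the spectral radius by Gou\"ezel, and adapted to the relatively hyperbolic setting by Gekhtman-Gerasimov-Potyagailo-Yang and Dussaule-Gekhtman. The key steps, in order, would be: first, pass to the Groves-Manning cusped graph, which is Gromov hyperbolic and in which transition points on $\Gamma$-geodesics have natural counterparts that lie in the ``thick'' part of the space; second, establish a Harnack-type inequality for $r$-superharmonic functions of the form $G(x,\cdot|r) \asymp G(x',\cdot|r)$ whenever $d(x,x') \leq c$, with constants independent of $r \leq R$; third, prove a \emph{separation estimate} saying that the $r$-mass of paths from $x$ to $z$ that avoid a suitable bounded neighborhood of $y$ is at most a fixed fraction (strictly less than one) of the total $r$-mass of paths from $x$ to $z$.

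The separation estimate is the heart of the argument. Iterating it over a sequence of nested annuli centered at $y$ produces a geometric series, and summing yields $G(x,z|r) \leq C\, G(x,y|r) G(y,z|r)$. What makes this work precisely at a transition point $y$ is the combination of the Gromov hyperbolicity of the cusped graph with the fact that $y$ is not deep in any parabolic coset: any path from $x$ to $z$ that avoids a neighborhood of $y$ must take a genuinely long detour in the cusped geometry, and the transition condition rules out replacing this detour by a cheap journey through the horoball of a nearby parabolic coset. The key probabilistic consequence to isolate is that once the walk leaves a bounded ball around $y$, it has uniform-in-$r$ probability of never returning.

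The main obstacle is keeping constants uniform in $r$ up to the spectral radius $R$; a direct probabilistic argument produces constants that degenerate as $r \to R$, since the Green function itself does. Gou\"ezel's remedy, which would have to be carried through in the presence of parabolic cosets, is to work with relative quantities built from the $r$-Martin kernel rather than absolute probabilities, and to exponentially tilt $\mu$ by a suitable $r$-superharmonic function in order to convert $r$-dependent statements into uniform deviation estimates for a family of tilted walks. A secondary obstacle is controlling what happens along portions of geodesics that fellow-travel a parabolic coset: these are handled via the identity $G_{r,P}(x,y|1) = G(x,y|r)$ from \cite[Lemma~4.4]{DG21}, which translates estimates on the ambient walk into (sub-)Markov estimates on the parabolic subgroup $P$, where one can exploit the geometry of $P$ directly.
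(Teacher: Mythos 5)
Your lower bound is correct and complete: decomposing at the first visit to $y$ gives $G(x,z|r)\geq F(x,y|r)G(y,z|r)=G(x,y|r)G(y,z|r)/G(e,e|r)$, and $G(e,e|r)\leq G(e,e|R)<\infty$ under the standing transience assumption, so the constant is uniform in $r$. This part needs no geometry, as you say. Be aware, though, that the paper does not prove this lemma at all: it is quoted from \cite[Theorem~1.1]{GGPY} and \cite[Theorem~1.6]{DG21}, and the only argument the authors supply is the translation between the formulation in those references (which is in terms of the Floyd distance) and the transition-point formulation stated here, via \cite[Corollary~5.10]{GePoGGD}. Your proposal does not address that translation, and instead sets out to reprove the cited theorems from scratch.

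For the upper bound, what you have written is a research program, not a proof. You correctly name the circle of ideas (Harnack, a separation/barrier estimate at $y$, iteration over nested annuli, Gou\"ezel-style tilting to keep constants uniform as $r\to R$), but every one of these steps is asserted rather than established, and the separation estimate at a transition point, uniformly in $r\leq R$, \emph{is} the theorem — it is precisely the content of the two cited papers, each of which devotes substantial work to it. Two specific gaps: (i) your claim that "once the walk leaves a bounded ball around $y$, it has uniform-in-$r$ probability of never returning" is mere transience and is far weaker than what the iteration needs, namely that the $r$-weight of $x$-to-$z$ trajectories avoiding $B(y,N)$ is a definite fraction of $G(x,z|r)$, with the fraction improving as $N$ grows; nothing in your sketch produces this. (ii) Your proposed route through the Groves--Manning cusped space and through the first-return identity $G_{r,P}(\cdot,\cdot|1)=G(\cdot,\cdot|r)$ is not the route of \cite{GGPY} or \cite{DG21}, which work with the Floyd function to get coarse separation directly in the Cayley graph; if you want to argue via the cusped space you would need to relate $\mu$-trajectories in $\Gamma$ to the cusped geometry, which is itself nontrivial and not sketched. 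As it stands the upper bound should simply be cited, as the paper does, together with the transition-point/Floyd-distance dictionary from \cite{GePoGGD}.
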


Note that the constant $C$ is independent of $r\in [1,R]$.
In other words, the Green function is roughly multiplicative along transition points on a geodesic.
In both \cite{GGPY} and \cite{DG21}, these inequalities are formulated in terms of the Floyd distance, which is a suited rescaling of the word distance.
However, the statement for transition points directly follows from \cite[Corollary~5.10]{GePoGGD} which relates transition points with the Floyd distance.
We also refer to \cite[Section~9]{GGPY} for more details.

We will also use the following at some point.

\begin{lem}\cite[Lemma 2.14]{YangPS}\label{ConCharLem}
There exist universal constants $\eta, L$ with the following property. Let   $\gamma$ be a  geodesic ray  ending at a conical point $\xi\in\pG$.  Then   $\gamma$ contains a unbounded sequence of $(\eta,L)$-transition points $x_n$.
\end{lem}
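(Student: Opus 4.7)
The plan is to prove the contrapositive: if the geodesic ray $\gamma$ admits only finitely many $(\eta, L)$-transition points, then its endpoint $\xi$ must be a parabolic fixed point, contradicting the assumption that $\xi$ is conical. To set up the universal constants, I would invoke the standard bounded coset penetration property for relatively hyperbolic groups: for every $\eta > 0$ there exists $D = D(\eta) > 0$ such that
\[
\mathrm{diam}\bigl(N_\eta(gP) \cap N_\eta(g'P')\bigr) \leq D
\]
for any two distinct maximal parabolic cosets $gP \ne g'P'$. Fix such an $\eta$ and take $L > D$.

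Suppose for contradiction that the conclusion fails; then there exists $T_0 \geq 2L$ such that every point $\gamma(t)$ with $t \geq T_0$ is $(\eta, L)$-deep in some maximal parabolic coset $g_t P_t$. I would next show that the assignment $t \mapsto g_t P_t$ is constant on $[T_0, \infty)$. For any $t, s \in [T_0, \infty)$ with $|s - t| \leq L$, the deepness hypothesis gives
\[
\gamma\bigl([t - 2L, t + 2L]\bigr) \subseteq N_\eta(g_t P_t), \qquad \gamma\bigl([s - 2L, s + 2L]\bigr) \subseteq N_\eta(g_s P_s).
\]
These two parameter intervals overlap on a subinterval of length $4L - |s - t| \geq 3L > D$, so the corresponding segment of $\gamma$ lies in $N_\eta(g_t P_t) \cap N_\eta(g_s P_s)$. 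If the two cosets were distinct, this would violate the diameter bound $D$, whence they must coincide. Chaining this local comparison along an $L$-net in $[T_0, \infty)$, one obtains a single coset $gP$ with $\gamma([T_0, \infty)) \subseteq N_\eta(gP)$.

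But then the geodesic ray $\gamma$ escapes to infinity inside the parabolic neighborhood $N_\eta(gP)$, so its endpoint $\xi$ is forced to be the unique limit point of $gP$ in $\partial\Gamma$, namely the parabolic fixed point stabilized by $gPg^{-1}$. This contradicts the conicality of $\xi$, since the sets of conical and bounded parabolic limit points are disjoint under the geometrically finite convergence action. The main obstacle in this plan is locating and calibrating the correct geometric input --- the bounded coarse intersection diameter $D(\eta)$ of distinct maximal parabolic neighborhoods --- against the transition-point threshold $L$; this is what lets $\eta$ and $L$ be chosen \emph{universally}, depending only on the relatively hyperbolic structure of $\Gamma$. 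Once this input is in hand, the rest of the argument is a short chaining argument along $\gamma$.
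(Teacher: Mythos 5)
The paper does not actually prove this lemma---it is quoted verbatim from \cite[Lemma~2.14]{YangPS}---so there is no internal proof to compare against; what can be judged is whether your self-contained argument is sound, and it is. Your contrapositive-plus-chaining scheme works: the negation of the conclusion does give a $T_0$ beyond which every $\gamma(t)$ is $(\eta,L)$-deep in some coset; since $\gamma$ is a geodesic, $B(\gamma(t),2L)\cap\gamma$ is exactly the subsegment $\gamma([t-2L,t+2L])$, so the overlap of two such windows at parameters $|s-t|\le L$ is a geodesic segment of diameter at least $3L>D(\eta)$, forcing the two cosets to agree by the bounded coarse intersection of distinct peripheral cosets (a genuine standard fact for relatively hyperbolic groups); chaining along an $L$-net then traps $\gamma([T_0,\infty))$ in a single $N_\eta(gP)$, whose limit set in $\pG$ is the single parabolic point fixed by $gPg^{-1}$, contradicting the (standard, Tukia--Bowditch) disjointness of conical and bounded parabolic points in a geometrically finite action. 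The two places where you lean on outside input---the uniform bound $D(\eta)$ on $\mathrm{diam}\bigl(N_\eta(gP)\cap N_\eta(g'P')\bigr)$, and the fact that a sequence staying within bounded distance of $gP$ can only accumulate at its parabolic point---are both legitimate and should simply be cited (e.g.\ Dru\c{t}u--Sapir or Osin for the first). By contrast, the proof in the cited source proceeds through the Floyd-distance characterization of transition points and of conical limit points due to Gerasimov--Potyagailo (the route the present paper also gestures at via \cite[Corollary~5.10]{GePoGGD}); your coset-chaining argument is more elementary and avoids the Floyd machinery entirely, at the cost of invoking the coarse almost-malnormality of the peripheral structure directly. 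One cosmetic caveat: for the lemma to interface with the other results in the paper (the relative thin-triangle property and the Ancona inequalities), $\eta$ must in addition be taken large enough for those statements, which your argument accommodates since it only requires $L>D(\eta)$ \emph{after} $\eta$ is fixed.
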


In the remainder of this section, we consider a finitely generated relatively hyperbolic group $\Gamma$.
When speaking of a transition point, we mean an $(\eta,L)$-transition point with $(\eta,L)$ satisfying Lemma~\ref{ConCharLem}.
We also fix a finitely supported symmetric and admissible probability measure $\mu$ on $\Gamma$.

\subsection{Busemann cocyles}
Given $x,y,z \in \Gamma$, let $B_z(x, y) := d(x,
z) -d(y,z)$ and $K_z(x,y|r)=\frac{G(x,z|r)}{G(y,z|r)}$.
The function $B_z$ is called the Busemann function associated with the distance $d$ at $z$.

Following \cite{BlachereBrofferio}, we define the $r$-Green distance by
$$
d_r(x,y) = - \log F_r(x,y)=-\log \frac{ G(x,y|r)}{G(e,e|r)}.
$$
Then $K_z(x,y|r)=\mathrm{e}^{-[d_r(x,z)-d_r(y,z)]}$ is the exponential of the Busemann function for the $r$-Green distance. We also write $|x^{-1}y|_r=d_r(x,y)$, and $|x^{-1}y|=d(x,y)$.

Consider the distance for $x,y\in \Gamma$: $$\mathfrak {d}_r(x,y):=\omega_\Gamma(r)|x^{-1}y| +|x^{-1}y|_r.$$ 
\begin{lem}\label{quasiisometricdistances}
If $1\leq r<R$, then the distance $\mathfrak d_r$ is proper and quasi-isometric to the word distance.
\end{lem}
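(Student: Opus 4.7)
I would verify first that $\mathfrak{d}_r$ satisfies the metric axioms, then establish a two-sided linear comparison with the word distance, from which properness will also follow.

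That $\mathfrak{d}_r$ is a metric reduces to checking the same for $d_r$, since the word distance is a metric and $\omega_\Gamma(r)\geq 0$ by Lemma~\ref{lowerboundgrowthrateat1}. Symmetry of $d_r$ follows from the symmetry of $\mu$. The triangle inequality follows from the first-passage decomposition $G(x,z|r)\geq F(x,y|r)G(y,z|r)=(G(x,y|r)/G(e,e|r))G(y,z|r)$, and positive definiteness follows from $F(x,y|r)<1$ for $x\neq y$.

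For the upper bound $\mathfrak{d}_r(x,y)\leq A|x^{-1}y|+B$, the word-distance term is already linear. Since $\mu$ is admissible and finitely supported, there exist $N\geq 1$ and $c_0>0$ such that $\mu^{*N}(s)\geq c_0$ for every element $s$ of the fixed finite generating set. Concatenating along a word-geodesic of length $n=|x^{-1}y|$ gives $p_{Nn}(x,y)\geq c_0^n$, hence $G(x,y|r)\geq r^{Nn}c_0^n\geq c_0^n$ for $r\geq 1$, so $d_r(x,y)\leq n\log(1/c_0)+\log G(e,e|r)$.

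For the lower bound, I would first note that $G(x,y|r)\leq G(e,e|r)$ by the Cauchy--Schwarz bound for symmetric resolvents (see \cite[Lemma~1.9]{Woessbook}), so $d_r\geq 0$ and $\mathfrak{d}_r(x,y)\geq\omega_\Gamma(r)|x^{-1}y|$. Once one knows $\omega_\Gamma(r)>0$, this gives the desired linear lower bound, and properness is then immediate since $\mathfrak{d}_r$-balls are contained in finite word-distance balls. For $1<r<R$ on a non-elementary relatively hyperbolic group, positivity of $\omega_\Gamma(r)$ is ensured by the discussion in the introduction identifying $\omega_\Gamma(r)$ with the positive exponential growth of the branching random walk of mean $r$.

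The main obstacle is the endpoint $r=1$, where the corollary in Section~\ref{Sectiongrowthrateat1} yields $\omega_\Gamma(1)=0$ and therefore $\mathfrak{d}_1=d_1$. A linear lower bound must then be extracted directly from an exponential decay of $G(e,x|1)$ in $|x|$, which I would obtain by iterating the weak relative Ancona inequality of Lemma~\ref{weakAncona} across a linear-in-$|x|$ collection of transition points on a geodesic from $e$ to $x$, handling portions that remain deep in a maximal parabolic coset via the strict sub-Markovicity of the induced first-return kernel on that coset. Independently of any linear lower bound, properness at $r=1$ can be deduced from Carne--Varopoulos-type estimates, which show $G(e,x|r)\to 0$ as $|x|\to\infty$ whenever $r<R$.
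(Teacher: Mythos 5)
Your overall strategy is sound, and the upper bound (concatenating positive-probability paths along a word geodesic to get $G(x,y|r)\geq c_0^{|x^{-1}y|}$, hence $d_r\prec|\cdot|$) is exactly the paper's argument. But for the lower bound you take a genuinely different and much heavier route than the paper, and the route splits into two cases in a way that leaves the $r=1$ endpoint essentially unproved. The paper's proof is uniform in $1\leq r<R$: by symmetry, $p_k(e,x)^2\leq p_{2k}(e,e)\leq R^{-2k}$, so $p_k(e,x)\leq R^{-k}$; since $\mu$ is finitely supported, $p_k(e,x)=0$ for $k<c|x|$, whence $G(e,x|r)\leq\sum_{k\geq c|x|}(r/R)^k\leq C(r/R)^{c|x|}$. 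Because the group here is non-elementary relatively hyperbolic (hence non-amenable, so $R>1$), this gives $d_r(e,x)\succ|x|$ for every $r<R$, including $r=1$, and properness and the quasi-isometry follow at once. You actually cite the key ingredient — the Carne--Varopoulos-type bound showing $G(e,x|r)\to 0$ — but deploy it only for properness; pushed one step further (combined with the finite support of $\mu$) it already yields the exponential decay you need, making both the appeal to positivity of $\omega_\Gamma(r)$ for $r>1$ (which rests on external branching-random-walk results rather than anything established at this point of the paper) and the entire $r=1$ discussion unnecessary.

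As written, the $r=1$ case is a gap rather than just a detour: "iterating Lemma~\ref{weakAncona} across a linear-in-$|x|$ collection of transition points" fails for points $x$ deep in a parabolic coset, where a geodesic $[e,x]$ may carry only boundedly many transition points; your proposed fix via "strict sub-Markovicity of the first-return kernel" requires both $t_{1,P}<1$ (established only later, in Section~\ref{SectionRHG}, via convergence of the walk to conical points) and exponential decay of the sub-Markov Green function along the parabolic segment, which is again the spectral-radius estimate in disguise. None of this is carried out, and none of it is needed. A secondary small point: a single $N$ with $\mu^{*N}(s)\geq c_0$ for all generators $s$ need not exist for periodic $\mu$; concatenate paths of varying (but uniformly bounded) lengths instead, as the paper does.
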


\begin{proof}
The proof is standard, but we write a complete argument for sake of completeness.
First we prove that for every $r<R$, there exist $C_1> 0$ and $\alpha_1>0$ such that for every $x\in \Gamma$
\begin{equation}\label{upperboundquasiisometry}
G(e,x|r)\leq C_1\mathrm{e}^{-\alpha |x|}.
\end{equation}
Since $\mu$ is finitely supported, there exists $c> 0$ such that
$$G(e,x|r)=\sum_{n\geq c|x|}r^np_n(e,x).$$
Moreover, by~(\ref{decayheatkernel}),
$$p_n(e,x)\leq R^{-n}.$$
Therefore,
$$G(e,x|r)\leq \sum_{n\geq c|x|}\left (\frac{r}{R}\right )^n\leq C_1\left (\frac{r}{R}\right )^{c|x|}.$$
This proves~(\ref{upperboundquasiisometry}).

Second, we prove that for every $r\geq 1$, there exists $C_2>0$ and $\alpha_2>0$ such that for every $x\in \Gamma$,
\begin{equation}\label{lowerboundquasiisometry}
G(e,x|r)\geq C_2 \mathrm{e}^{-\alpha_2 |x|}.
\end{equation}
Indeed, since the support of $\mu$ generates $\Gamma$, there exists a path $x_0=e,x_1,...,x_n=x$ such that $n\asymp |x|$ and $\mu(x_k^{-1}x_{k+1})>0$.
In particular, we find that
$$G(e,x|r)\geq G(e,x|1)\geq \mu(x_0^{-1}x_1)\cdots \mu(x_{n-1}^{-1}x_n),$$
which proves~(\ref{lowerboundquasiisometry}).
We conclude that for $1\leq r<R$, the Green distance is quasi-isometric to the word distance and that $G(e,x|r)$ vanishes at infinity.
Consequently, the distance $\mathfrak d_r$ also is quasi-isometric to the word distance and satisfies that as $x$ goes to infinity,
$\mathfrak d_r(e,x)$ tends to infinity.
In particular, any ball for $\mathfrak d_r$ is contained in a larger ball for the word distance and thus is finite, so $\mathfrak d_r$ is proper.
\end{proof}

\begin{rem}
According to \cite[Lemma~2.1]{GouezelLalley}, for any non-amenable group $\Gamma$ and any finitely supported symmetric admissible probability measure $\mu$, $G(e,x|R)$ converges to 0 as $|x|$ goes to infinity.
As a consequence, the distance $\mathfrak d_R$ is proper, although it might not be quasi-isometric to the word distance.
\end{rem}
Define the corresponding  Busemann cocycle
\begin{equation}\label{defBusemann}
\mathfrak B_\xi(x, y;r) = \omega_\Gamma(r)B_\xi(x, y)-\log K_\xi(x,y|r)
\end{equation}




\begin{lem}\label{BusemanEstLem}
There exists a constant $C>0$ with the following property.
Let $\xi \in \pG$ be a conical point, and $x, y \in \Gamma$. There exists    
a neighborhood $V=V(x,y)$ of $\xi$ in $\Gamma\cup\pG$ such that   for any $ z \in V \cap \Gamma$:
\begin{align*}
|B_\xi(x, y)-B_z(x, y)| \le C,  \\
|\log K_{\xi}(x, y|r)-\log K_{z}(x, y|r)| \le C.
\end{align*} 
\end{lem}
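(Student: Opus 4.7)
The plan is to define the quantities $B_\xi(x,y)$ and $\log K_\xi(x,y|r)$ as limits of $B_{t_n}(x,y)$ and $\log K_{t_n}(x,y|r)$ along an unbounded sequence of transition points $t_n$ on a geodesic ray ending at $\xi$, whose existence is furnished by Lemma~\ref{ConCharLem}, and then show that these limits are stable when $t_n$ is replaced by any $z \in \Gamma$ in a sufficiently small neighborhood of $\xi$. Throughout I would fix $(\eta, L)$ simultaneously satisfying the hypotheses of Lemma~\ref{ConCharLem} and of Lemma~\ref{weakAncona}, together with a geodesic ray $\gamma$ from $e$ to $\xi$ carrying an unbounded sequence of $(\eta,L)$-transition points $t_1, t_2, \ldots$.

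For the Green-function bound, the key tool is Lemma~\ref{weakAncona}, whose multiplicative constant is uniform in $r \in [1,R]$. I would pick $N$ large enough so that $|t_N|$ is much greater than $|x|+|y|$, and then work in a neighborhood $V$ of $\xi$ small enough so that, for any $z \in V$, the point $t_N$ is within bounded distance of a transition point on both $[x,z]$ and $[y,z]$. Lemma~\ref{weakAncona} then yields
$$G(x,z|r) \asymp G(x, t_N|r)\, G(t_N, z|r), \qquad G(y,z|r) \asymp G(y, t_N|r)\, G(t_N, z|r),$$
with constants independent of $r$ and $z$. Dividing cancels the common factor $G(t_N,z|r)$ and gives $K_z(x,y|r) \asymp K_{t_N}(x,y|r)$ uniformly. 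Taking logarithms and defining $\log K_\xi(x,y|r) := \lim_n \log K_{t_n}(x,y|r)$ (a limit that exists by applying the same argument to pairs $t_n, t_m$) produces the second claimed inequality.

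The distance bound follows from the same geometric picture. Fellow-traveling of geodesic rays sharing the endpoint $\xi$ forces $t_N$ to lie within bounded distance of both $[x,z]$ and $[y,z]$ whenever $z \in V$, hence
$$d(x,z) \simeq d(x,t_N) + d(t_N,z), \qquad d(y,z) \simeq d(y,t_N) + d(t_N,z).$$
Subtracting these yields $B_z(x,y) \simeq B_{t_N}(x,y)$, and setting $B_\xi(x,y) := \lim_n B_{t_n}(x,y)$ gives the first claimed inequality.

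The main obstacle is the geometric input used tacitly above: namely, that the transition points on $\gamma$ remain close to the geodesics $[x,z]$ and $[y,z]$ for every $z$ in a sufficiently small neighborhood of $\xi$. The word distance on a relatively hyperbolic group is not itself Gromov hyperbolic, so one cannot simply invoke thin triangles. Instead, I would rely on the identification of transition points with large Floyd-distance points due to Gerasimov--Potyagailo, together with the convergence-group dynamics of $\Gamma$ on $\Lambda_\Gamma$: as $z \to \xi$, any geodesic $[\cdot, z]$ must track $\gamma$ up to bounded error over any prescribed initial segment, and a transition point of $\gamma$ translates into a transition point of $[\cdot, z]$ up to bounded perturbation. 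Once this stability is secured, the proof reduces to the clean application of Lemmas~\ref{ConCharLem} and~\ref{weakAncona} outlined above.
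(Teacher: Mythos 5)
Your argument is sound in outline, but it takes a genuinely different route from the paper: the paper's proof is essentially a two-line citation, deferring the Busemann bound to \cite[Lemma~2.20]{YangPS} and the Martin-kernel bound to \cite[Proposition~4.1]{DG21} (continuity of $K_z(\cdot,\cdot|r)$ at conical limit points, itself a consequence of the weak relative Ancona inequalities). What you propose is, in effect, a self-contained reproof of both cited facts: anchor a far transition point $t_N$ on a ray to $\xi$ via Lemma~\ref{ConCharLem}, use the stability of transition points under perturbation of the far endpoint (the relative thin-triangle property for transition points, \cite[Lemma~2.9]{DWY}, combined with the Floyd-distance characterization from \cite{GePoGGD}) to place $t_N$ within bounded distance of transition points on $[x,z]$ and $[y,z]$ for all $z$ near $\xi$, and then cancel the common factor $G(t_N,z|r)$ via Lemma~\ref{weakAncona}. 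This is exactly the mechanism underlying the cited results, and the uniformity of the Ancona constant in $r\in[1,R]$ gives the required uniform $C$. Your approach buys transparency and makes visible why the constant is uniform; the paper's buys brevity and, in the case of the Martin kernel, a strictly stronger conclusion.

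One caveat: your claim that $\lim_n \log K_{t_n}(x,y|r)$ and $\lim_n B_{t_n}(x,y)$ exist ``by applying the same argument to pairs $t_n,t_m$'' is not justified as stated --- that argument only shows the differences are uniformly bounded, which yields a coarse limit (a $\limsup$ well defined up to additive error $C$), not convergence. For the word-metric Busemann cocycle this is all that is true and all that is needed (see the remark following the lemma, where $\mathfrak B_\xi$ is explicitly defined as a $\limsup$ up to bounded error). For the Martin kernel, however, the quantity $K_\xi(x,y|r)$ appearing in the statement is the genuine limit furnished by \cite[Proposition~4.1]{DG21}; establishing actual convergence requires a contraction-type argument beyond bounded oscillation. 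Since your coarse limit agrees with the true $K_\xi$ up to the same additive constant, your inequality still follows, but you should either invoke \cite{DG21} for the existence of the limit or weaken your definition to a $\limsup$.
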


\begin{proof}

The statement for $B_\xi$ is proved in \cite[Lemma~2.20]{YangPS}.
Also, by \cite[Proposition~4.1]{DG21}, the Martin kernel $K_z(\cdot,e|r)=G(\cdot, z|r)/G(e, z|r)$ extends continuously  to $K_\xi(\cdot,e|r)$ as $z$ converges to a conical limit point $\xi$.
This follows from weak relative Ancona inequalities.
In particular, $K_z(x,y|r)$ converges to $K_\xi(x,y|r)$ as $z$ converges to $\xi$, so the statement for $K_\xi$ also holds.
\end{proof}

As a consequence, the Busemann cocyle $\mathfrak B_z(x,y)$ extends to a coarse cocycle $\mathfrak B_\xi(x,y)$ at a conical point $\xi$. That is, 
$$
\mathfrak B_\xi(x,y) := \limsup_{z\to \xi} \mathfrak B_\xi(x,y)
$$
does not depend on the choice of $z\to \xi$, up to a bounded additive error $C$ independent of $\xi$.

\subsection{Quasi-conformal densities}
A Borel measure $\mu$ on a topological Hausdorff space $T$ is \textit{regular}
if $\mu(A) = \inf\{\mu(U): A \subset U, U$ is open$\}$ for any Borel
set $A$ in $T$. It is called \textit{tight} if $\mu(A) =
\sup\{\mu(K): K \subset A, K$ is compact$\}$ for any Borel set $A$
in $T$. A finite Borel measure is called \textit{Radon} if   it is tight and regular. It is well-known that all
finite Borel measures on compact metric spaces are Radon, see \cite[Theorem~1.1, Theorem~1.3]{Bill99}. 

Denote by
$\mathcal M(\pG)$ the set of finite positive Radon measures on
$\pG$. Then $\Gamma$ acts on $\mathcal M(\pG)$ via
$g_*\nu(A) = \nu(g^{-1}A)$ for any Borel set $A$ in $\pG$.

\begin{defn}
    We call a map $x \mapsto \nu_x$ equivariant if for every $x,g\in \Gamma$, we have
$$\nu_{gx}(A) = \nu_{x}(g^{-1}A)$$ for every Borel set $A \subset\pG$.
\end{defn}

\begin{defn} 
Let $\omega \in [0, \infty[$. We call a $\Gamma$-equivariant map $$\nu: \Gamma \to
\mathcal M(\pG),\; x \mapsto \nu_x$$ an
\textit{$\omega$-dimensional quasi-conformal density} if for any $x,
y \in \Gamma$ the following holds
\begin{equation}\label{cdensitydef}
\frac{d\nu_{x}}{d\nu_{y}}(\xi) \asymp \mathrm{e}^{-\omega B_\xi (x, y)}K_{\xi}(x,y|r),
\end{equation}
for $\nu_y$-a.e. points $\xi \in \pG$, where the implicit constant does neither depend on $x, y$, nor on $\xi$.
\end{defn}

By the equivariant property of $\mu$, we see the following result.
\begin{lem}
Let $\{\nu_x\}_{x\in\Gamma}$ be a $\sigma$-dimensional quasi-conformal density on $\pG$.
Then the support of any $\nu_x$ is $\pG$.
\end{lem}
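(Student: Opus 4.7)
The plan is to exploit two features of a quasi-conformal density simultaneously: mutual absolute continuity between the measures at different base points, and equivariance under the $\Gamma$-action. Together these will force $\mathrm{supp}(\nu_x)$ to be a closed $\Gamma$-invariant subset of $\partial\Gamma$, and then minimality of the $\Gamma$-action on the Bowditch boundary will finish the argument.

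First I would observe that for any fixed $x,y\in\Gamma$ the function
\[
\xi\longmapsto \mathrm{e}^{-\sigma B_\xi(x,y)}K_\xi(x,y|r)
\]
is bounded above and bounded away from $0$ on $\partial\Gamma$. Indeed, $|B_\xi(x,y)|\le d(x,y)$ from the triangle inequality (extended to $\xi\in\partial\Gamma$ via Lemma \ref{BusemanEstLem} at conical points and the analogous estimates at parabolic points), and $K_\xi(x,y|r)$ is controlled in the same way using weak relative Ancona inequalities (Lemma \ref{weakAncona}); thus both factors are pinched between positive constants depending only on $x,y$. Combined with the quasi-conformality condition~\eqref{cdensitydef}, this means $\nu_x$ and $\nu_y$ are mutually absolutely continuous with Radon–Nikodym derivative bounded above and below. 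Consequently $\mathrm{supp}(\nu_x)=\mathrm{supp}(\nu_y)$ for all $x,y\in\Gamma$, assuming $\nu_x$ is not the zero measure (which we may assume, since otherwise all $\nu_y$ vanish and there is nothing interesting to prove).

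Next I would use equivariance. Since $\nu_{gx}(A)=\nu_x(g^{-1}A)$ for every Borel set $A\subset\partial\Gamma$, the pushforward relation gives $\mathrm{supp}(\nu_{gx})=g\cdot\mathrm{supp}(\nu_x)$. Combining this with the previous step, for every $g\in\Gamma$ we obtain
\[
\mathrm{supp}(\nu_x)=\mathrm{supp}(\nu_{gx})=g\cdot\mathrm{supp}(\nu_x).
\]
Hence $\mathrm{supp}(\nu_x)$ is a closed, nonempty, $\Gamma$-invariant subset of the Bowditch boundary $\partial\Gamma$.

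Finally, since $\Gamma$ is non-elementary (so that $\sharp\partial\Gamma\ge 3$), the convergence group action of $\Gamma$ on $\Lambda_\Gamma=\partial\Gamma$ is minimal, as recalled in the background on relatively hyperbolic groups. Therefore the only closed nonempty $\Gamma$-invariant subset of $\partial\Gamma$ is $\partial\Gamma$ itself, and we conclude $\mathrm{supp}(\nu_x)=\partial\Gamma$. The only point that requires any genuine care is the boundedness of the Radon–Nikodym factor at parabolic limit points of $\partial\Gamma$, where the Busemann and Martin cocycles were only explicitly controlled at conical points in Lemma \ref{BusemanEstLem}; however, for the purpose of identifying the support one only needs the density to be strictly positive $\nu_y$-almost everywhere, which follows directly from~\eqref{cdensitydef} since $\mathrm{e}^{-\sigma B_\xi(x,y)}K_\xi(x,y|r)>0$ wherever it is defined.
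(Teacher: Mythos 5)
Your proof is correct and follows essentially the same route as the paper's: equivariance together with mutual absolute continuity (coming from quasi-conformality) shows that $\supp(\nu_x)$ is a nonempty closed $\Gamma$-invariant subset of $\pG$, and minimality of the $\Gamma$-action on the Bowditch boundary then forces it to be all of $\pG$. The extra care you take about positivity of the Radon--Nikodym derivative at parabolic points is a reasonable elaboration of what the paper leaves implicit, but it does not change the argument.
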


\begin{proof}
By definition, the support $\supp(\mu_x)$ is a maximal closed subset such that
any point in $\pG \setminus \supp(\mu_x)$ has an open
neighborhood which is $\nu_x$-null.
It is well-known that $\pG$ is a
minimal $\Gamma$-invariant closed set, see for instance \cite[Section~2]{Bo99}.
Thus, it suffices to prove that the support of
$\nu_x$ is $G$-invariant. This follows from equivariance and quasi-conformality, since $\nu_x(A)=\nu_{gx}(gA)$ for any Borel subset $A\subset \pG$ and $\nu_x$ and $\nu_{gx}$ are absolutely continuous with respect to each other.
\end{proof}

As explained in the introduction, we associate the following Poincar\'e series to $\mu$ and to the word distance, by setting
\begin{align*}
\Theta_\Gamma({r, s}) := \sum_{x\in \Gamma} G(e, x |r) \mathrm{e}^{-s|x|} 
= \sum_{n\ge 0} H_r(n) \mathrm{e}^{-sn}
=G(e,e|r)\cdot \sum_{x\in \Gamma} \mathrm{e}^{-s|x|-|x|_r}
\end{align*}
where we recall that $H_r(n)=\sum_{x\in S_n} G(e,x|r)$
and that the
\textit{critical exponent}  $\omega_\Gamma(r)$ is defined by
$$\omega_\Gamma(r) = \limsup\limits_{n \to \infty} \frac{1}{n} \log H_r(n).$$
The group $\Gamma$ is of \textit{divergent} (resp.
\textit{convergent}) type {for the Green function} if $\Theta_\Gamma({r,s})$ is divergent (resp. convergent) at
$s=\omega_\Gamma(r)$. 

Recall $\mathfrak {d}_r(x,y):=\omega_\Gamma(r)|x^{-1}y| +|x^{-1}y|_r$. 
\begin{lem}\label{NewPoincareSeries}
The series defined as follows
$$\forall s>0,\;\mathcal P_{\Gamma}(s):=\sum\limits_{x \in \Gamma}  \mathrm{e}^{- s \mathfrak d_r(x,y)}$$
has critical exponent $1$, and the divergence of $\mathcal P_{\Gamma}(s)$   at $s=1$ is equivalent to that of $\Theta_\Gamma({r,s})$ at $s=\omega_\Gamma(r)$.    
\end{lem}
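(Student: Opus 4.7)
The plan is to unfold the definitions and compare $\mathcal{P}_\Gamma(s)$ to $\Theta_\Gamma(r, s\omega_\Gamma(r))$, then exploit that $\omega_\Gamma(r)$ is the critical exponent of $s \mapsto \Theta_\Gamma(r,s)$. Since the random walk kernel is $\Gamma$-invariant, $G(gx, gy|r) = G(x,y|r)$, so both $|x^{-1}y|$ and $|x^{-1}y|_r$ depend only on $x^{-1}y$; hence $\mathfrak{d}_r$ is left-invariant and we may take $y = e$ in the sum. Expanding $|x|_r = -\log(G(e,x|r)/G(e,e|r))$ gives the key identity
\[
  \mathcal{P}_\Gamma(s) \;=\; G(e,e|r)^{-s} \sum_{x \in \Gamma} G(e,x|r)^s \, \mathrm{e}^{-s\omega_\Gamma(r)|x|},
\]
which at $s = 1$ reads $\mathcal{P}_\Gamma(1) = \Theta_\Gamma(r, \omega_\Gamma(r))/G(e,e|r)$. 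This already establishes the equivalence of divergence at $s=1$.

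Next I would establish the pointwise bound $G(e,x|r) \leq G(e,e|r)$, equivalently $|x|_r \geq 0$. Since $\mu$ is symmetric, the Markov operator $P$ is self-adjoint on $\ell^2(\Gamma)$, and for $r < R$ the resolvent $(I - rP)^{-1}$ is a bounded positive self-adjoint operator whose kernel is $G(\cdot,\cdot|r)$. Cauchy--Schwarz for the semi-inner product it induces, combined with translation invariance, yields
\[
  G(e,x|r)^2 \;=\; \bigl\langle (I - rP)^{-1} \delta_e,\; \delta_x \bigr\rangle^2 \;\leq\; G(e,e|r)\, G(x,x|r) \;=\; G(e,e|r)^2.
\]

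With $|x|_r \geq 0$ in hand, the inequality $\mathrm{e}^{-s|x|_r} \leq \mathrm{e}^{-|x|_r}$ for $s \geq 1$ and its reverse for $s \leq 1$ sandwich $\mathcal{P}_\Gamma(s)$ between multiples of $\Theta_\Gamma(r, s\omega_\Gamma(r))/G(e,e|r)$: upper bound for $s \geq 1$ and lower bound for $s \leq 1$. Provided $\omega_\Gamma(r) > 0$ (which holds for $r > 1$), the upper bound is finite for $s > 1$ since $s\omega_\Gamma(r) > \omega_\Gamma(r)$, and the lower bound is infinite for $s < 1$ since $s\omega_\Gamma(r) < \omega_\Gamma(r)$; hence the critical exponent of $\mathcal{P}_\Gamma$ equals $1$.

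The main obstacle will be the degenerate case $\omega_\Gamma(r) = 0$, which occurs at $r = 1$: there the comparison with $\Theta_\Gamma(r, 0)$ carries no information about $s$, since $s\omega_\Gamma(r) = 0$ for every $s$. In that situation $\mathfrak{d}_1 = |\cdot|_1$ is just the Green metric, and one would need to show directly that the growth rate of $\Gamma$ with respect to $|\cdot|_1$ equals $1$. I would handle this by combining the Ancona-type multiplicativity of $G$ along transition points (Lemma~\ref{weakAncona}) with the quasi-isometric comparison of Lemma~\ref{quasiisometricdistances}, using that $\omega_\Gamma(1) = 0$ forces the exponential decay rate of $G(e,x|1)$ to match the volume growth rate of $\Gamma$.
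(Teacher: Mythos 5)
Your argument is essentially the paper's: after rewriting $\mathcal P_{\Gamma}(s)=G(e,e|r)^{-s}\sum_{x}G(e,x|r)^{s}\mathrm{e}^{-s\omega_\Gamma(r)|x|}$, both proofs use that the normalized Green function is at most $1$ to compare $G(e,x|r)^{s}$ with $G(e,x|r)$ monotonically in $s$, then invoke that $\omega_\Gamma(r)$ is the critical exponent of $\Theta_\Gamma(r,\cdot)$, the identity at $s=1$ giving the second assertion. The only cosmetic difference is how one sees that $\mathrm{e}^{-|x|_r}\le 1$: you use Cauchy--Schwarz for the positive self-adjoint resolvent (fine for $r<R$, and at $r=R$ by letting $r\uparrow R$), while the paper quotes Gou\"ezel--Lalley to get $G(e,x|R)\le 1$ for large $|x|$. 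Your concern about the degenerate case $\omega_\Gamma(r)=0$ (i.e.\ $r=1$) is legitimate --- the paper's proof tacitly assumes $\omega_\Gamma(r)>0$ when asserting that $\Theta_\Gamma(r,s\omega_\Gamma(r))$ converges for $s>1$ --- but the repair is simpler than the route you sketch: the exponential upper bound $G(e,x|r)\le C_1\mathrm{e}^{-\alpha|x|}$ from Lemma~\ref{quasiisometricdistances} (valid for all $r<R$, hence at $r=1$ since $\Gamma$ is non-amenable) gives $G(e,x|r)^{1+\epsilon}\prec\mathrm{e}^{-\epsilon\alpha|x|}G(e,x|r)$, so $\mathcal P_\Gamma(1+\epsilon)\prec\Theta_\Gamma(r,\omega_\Gamma(r)+\epsilon\alpha)<\infty$, with no need for the Ancona inequalities; divergence for $s\le 1$ is then immediate from $\mathcal P_\Gamma(s)\ge\mathcal P_\Gamma(1)=G(e,e|1)^{-1}\Theta_\Gamma(1,0)=\infty$.
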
 
\begin{proof}
By \cite[Lemma~2.1]{GouezelLalley}, $G(e,x|R)$ goes to 0 as $|x|$ tends to infinity.
Thus, $G(e,x|r)\leq G(e,x|R)\leq 1$ for large enough $x$.
In particular, we see that for $s> 1$,
$$\sum_{x\in \Gamma}G(e,x|r)^s\mathrm{e}^{-s\omega_\Gamma(r) |x|}\leq C \sum_{x\in \Gamma}G(e,x|r)\mathrm{e}^{-s\omega_\Gamma(r) |x|}.$$
The right-hand side converges, so the left-hand side also converges.
Conversely, for $s<1$,
$$\sum_{x\in \Gamma}G(e,x|r)\mathrm{e}^{-s\omega_\Gamma(r) |x|}\leq C \sum_{x\in \Gamma}G(e,x|r)^s\mathrm{e}^{-s\omega_\Gamma(r) |x|}.$$
The left-hand side diverges, so the right-hand side also diverges.
Thus, the critical exponent of $\mathcal{P}_\Gamma(s)$ is 1.

Also, note that $\mathcal{P}_\Gamma(1)=\Theta_\Gamma(r,\omega_\Gamma(r))$, so the second conclusion of the lemma follows.
\end{proof}

 Write
$\mu(f) = \int f d\mu$ for a continuous function $f \in C(\pG)$.
We endow $\mathcal M(\pG)$ with the weak topology.
That is, a sequence $\mu_n \in \mathcal M(\pG)$ converges to $\mu$ if $\mu_n(f)
$ converges to $ \mu(f)$ for any $f \in C(\pG)$.
Equivalently, by the Portmanteau Theorem \cite[Theorem~2.1]{Bill99}, $\mu_n$ converges to $\mu$ if
$\liminf\limits_{n \to \infty}\mu_n(U) \ge \mu(U)$ for any open set
$U \subset \pG$.

We start by constructing a family of measures $\{\nu_x^s\}_{x\in \Gamma}$ supported on $\Gamma$ for any $s >1$. First,
assume that $\mathcal P_\Gamma(s)$ is divergent at $s=1$. Set
$$\nu^s_x = \frac{1}{\mathcal P_{\Gamma}(s)} \sum\limits_{z \in \Gamma} \mathrm{e}^{-s\mathfrak d_r(x,z)} \cdot \dirac{z},$$
where $s >1$ and $x \in \Gamma$. Note that $\nu^s_x$ is a probability
measure.

On the contrary, assume that $\mathcal P_{\Gamma}(s)$ is convergent at $s=1$, Patterson introduced in
\cite[Lemma 3.1]{Patt} a monotonically increasing function $H: \mathbb R_{\ge 0} \to \mathbb R_{\ge 0}$ with the
following property:
\begin{equation}\label{patterson}
\forall \epsilon >0, \;\exists t_\epsilon,\; \forall t
> t_\epsilon,\; \forall a>0:\quad H(a+t) \le \exp(a\epsilon) H(t).
\end{equation}
and such that the following modified series $$\mathcal P_{\Gamma}'(s):=\sum\limits_{x \in \Gamma} H(\mathfrak d_r(x,z)) \cdot \mathrm{e}^{-s\mathfrak d_r(x,z)}$$ is divergent for $s \le 1$ and convergent for $s>1$.
Then define measures as follows:
$$\nu^s_x = \frac{1}{\mathcal P_{\Gamma}'(s)} \sum\limits_{z \in \Gamma} \mathrm{e}^{-s \mathfrak d_r(x,z)} \cdot H(\mathfrak d_r(x,z)) \cdot \dirac{z},$$
where $s >1$ and $x \in \Gamma$.

Choose $s_i \searrow 1$ such that $\nu_x^{s_i}$ are convergent in
$\mathcal M(\pG\cup\Gamma)$ for all $x\in \Gamma$.  Let $\nu_x = \lim \nu_x^{s_i}$ be the limit
measures, which are called \textit{Patterson-Sullivan measures} associated with the Poincar\'e series $\mathcal{P}_\Gamma$. Note that forcing the Poincar\'e series to be divergent at 1, we have $\nu_x(\pG) = 1$. In the sequel, we write PS-measures as shorthand for
Patterson-Sullivan measures.
We also write $\partial\Gamma^{\mathrm{con}}$ for the set of conical limit points in the Bowditch boundary.

\begin{prop}\label{prequasiconf}
The PS-measures $\{\nu_x\}_{x\in \Gamma}$ on the Bowditch boundary are absolutely continuous with respect to each other and satisfy 
\begin{align}
\label{LowerBoundEQ} \forall \xi\in \pG: \quad \frac{d\nu_x}{d\nu_y}(\xi) \ge \mathrm{e}^{-\mathfrak d_r(x,y)},\\
\label{cdensity}\forall \nu_y\; \text{a.e.}\; \xi\in \pG^{\mathrm{con}}: \quad \frac{d\nu_{x}}{d\nu_{y}}(\xi) \asymp \mathrm{e}^{-\omega_\Gamma(r) B_\xi (x, y)}K_{\xi}(x,y|r), 
\end{align}
where the implicit constant is independent of $x,y$ and $\xi$.
\end{prop}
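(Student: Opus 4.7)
The plan is to establish both estimates first at the level of the approximating measures $\nu_x^s, \nu_y^s$ and then to pass them to the weak limit along $s_i \searrow 1$. Both measures are atomic on $\Gamma$, so they are mutually absolutely continuous with density
\[
\frac{d\nu_x^s}{d\nu_y^s}(z) \;=\; \frac{H_s(\mathfrak d_r(x,z))}{H_s(\mathfrak d_r(y,z))}\, \mathrm{e}^{-s[\mathfrak d_r(x,z) - \mathfrak d_r(y,z)]},
\]
with $H_s \equiv 1$ in the divergent case and $H_s = H$ in the convergent case. Since $|\mathfrak d_r(x,z) - \mathfrak d_r(y,z)| \le \mathfrak d_r(x,y)$, the slowly varying property \eqref{patterson} forces the ratio $H_s(\mathfrak d_r(x,z))/H_s(\mathfrak d_r(y,z))$ to tend to $1$ as $|z|\to\infty$, so it plays no asymptotic role.

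For the lower bound \eqref{LowerBoundEQ}, I would use that $\mathfrak d_r$ is a pseudo-metric: the Green distance $|\cdot|_r$ satisfies the triangle inequality via $F_r(x,z) \ge F_r(x,y) F_r(y,z)$, and so does the word metric. This yields $\mathfrak d_r(x,z) - \mathfrak d_r(y,z) \le \mathfrak d_r(x,y)$ for every $z$, hence the measure inequality $\nu_x^{s_i} \ge (1 - o(1))\,\mathrm{e}^{-s_i \mathfrak d_r(x,y)}\,\nu_y^{s_i}$ on $\Gamma \cup \pG$. Since such inequalities pass to weak limits when tested against nonnegative continuous functions (and mass on any finite subset of $\Gamma$ vanishes in the limit), we get $\nu_x \ge \mathrm{e}^{-\mathfrak d_r(x,y)}\nu_y$ on $\pG$, which is \eqref{LowerBoundEQ}. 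Exchanging the roles of $x$ and $y$ yields the reverse inequality, hence mutual absolute continuity.

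For the quasi-conformality \eqref{cdensity} at a conical $\xi\in\pG^{\mathrm{con}}$, I would localize via Lemma \ref{BusemanEstLem}. That lemma supplies a neighborhood $V = V(x,y,\xi)$ of $\xi$ in $\Gamma \cup \pG$ such that for every $z \in V \cap \Gamma$,
\[
\mathfrak d_r(x,z) - \mathfrak d_r(y,z) \;=\; \omega_\Gamma(r) B_\xi(x,y) - \log K_\xi(x,y|r) + O(1),
\]
where the additive error is independent of $\xi$, $x$ and $y$. Exponentiating and integrating any nonnegative continuous $f$ supported in $V$, the relation $d\nu_x^{s_i} = (d\nu_x^{s_i}/d\nu_y^{s_i})\, d\nu_y^{s_i}$ passes to the weak limit to give $\nu_x(A) \asymp \mathrm{e}^{-\omega_\Gamma(r) B_\xi(x,y)} K_\xi(x,y|r)\,\nu_y(A)$ for every Borel $A \subset V$, with uniform implicit constant. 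A standard differentiation result for Radon measures on the compact metric space $\Gamma \cup \pG$, applied to a basis of neighborhoods shrinking to $\xi$, then identifies $d\nu_x/d\nu_y$ at $\nu_y$-almost every conical $\xi$ with the asserted value. The main hurdle will be controlling the multiplicative constant uniformly in $x$, $y$, and $\xi$; this is what forces \eqref{cdensity} to feature $\asymp$ rather than equality, and it hinges on the universality of $C$ in Lemma \ref{BusemanEstLem}, which itself rests on the uniform weak relative Ancona inequalities of Lemma \ref{weakAncona}.
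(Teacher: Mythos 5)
Your proposal is correct and follows essentially the same route as the paper: the triangle inequality for $\mathfrak d_r$ together with the slowly varying property~(\ref{patterson}) of $H$ gives the two-sided bound~(\ref{LowerBoundEQ}) and mutual absolute continuity, and the quasi-conformality at conical points is obtained exactly as in the paper by localizing in the neighborhood $V$ from Lemma~\ref{BusemanEstLem}, testing against continuous $f$ supported in $V$, passing to the limit $s\to 1$ and then $\epsilon\to 0$. The only cosmetic difference is that you invoke a differentiation theorem for Radon measures to pass from the inequality on all such $f$ to the $\nu_y$-a.e.\ pointwise statement about the density, a step the paper leaves implicit.
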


\begin{rem}
If $\Gamma$ is of divergent type for Green function, then Theorem \ref{divergencetypeandatoms} below says that PS-measures have no atoms on Bowditch boundary and give full measure to conical limit points, so (\ref{cdensity})  holds for $\nu_y$-a.e. $\xi\in \pG$. In this case, $\nu$ is an 
$\omega_\Gamma(r)$-dimensional quasi-conformal density.

\end{rem}
\begin{proof}
Since $\mathfrak d_r$ satisfies the triangle inequality and $\lim_{t\to\infty} \frac{H(a+t)}{H(t)}=1$, 
we see that $\{\nu_x: x\in \Gamma\}$ are absolutely continuous with respect to each
other,\begin{align*}
\mathrm{e}^{-\mathfrak d_r(x,y)}\le \frac{d\nu_x}{d\nu_y}(\xi) \le \mathrm{e}^{\mathfrak d_r(x,y)}.   
\end{align*}

We now verify the quasi-conformality at conical limit points. 
We only consider the case where $\Gamma$ is of convergent type, the divergent type being simpler. 
Let $\epsilon> 0$ and $t_\epsilon$ the number be given by~(\ref{patterson}) for the function $H$. 
Let $\phi=\frac{d\nu_x}{d\nu_y}$ be the Radon-Nikodym derivative, uniquely defined up to a $\nu_y$-null set.
Let $\xi \in \pG$ be a conical limit point and consider the open neighborhood $V$ of $\xi$ and the uniform constant $C$ given by Lemma~\ref{BusemanEstLem}. 

Let $f$ be a
continuous function supported in $V$.
One can choose $V$ also such
that $\mathfrak d_r(y, z) > t_\epsilon$ for all $z \in V$. If  $z \in
V$ satisfies  $\mathfrak d_r(x, z) >\mathfrak d_r(y, z)$, then  we have
\begin{align*}
H(\mathfrak d_r(x,z)) &= H(\mathfrak d_r(x,z)-\mathfrak d_r(y,z)+\mathfrak d_r(y,z)) \\
& \le \mathrm{e}^{\epsilon[\mathfrak d_r(x,z)-\mathfrak d_r(y,z)]} \cdot H(\mathfrak d_r(y,z)) \\
& \le \mathrm{e}^{\epsilon (C + \mathfrak B_\xi(x, y))}\cdot H(\mathfrak d_r(y, z)).
\end{align*}
Since $H$ is increasing,
we have
\begin{equation}\label{prequasiconfeq}
C_\epsilon^{-1} H(\mathfrak d_r(y, z))\le H(\mathfrak d_r(x, z)) \le C_\epsilon  H(\mathfrak d_r(y, z)), 
\end{equation}
where $C_\epsilon= \mathrm{e}^{\epsilon (C + \mathfrak B_\xi(x, y))}>1$ depends on $\epsilon, C$ and $ (x,y)$, but not on $z\in V$.
Note that $C_\epsilon\to 1$ as $\epsilon\to 0$.
By symmetry, the   conclusion~(\ref{prequasiconfeq}) also holds if
$\mathfrak d_r(x, z) < \mathfrak d_r(y, z)$ for  $z \in V$.

Using~(\ref{prequasiconfeq}),  we get the following estimates.
First,
\begin{align*}
\nu^s_x(f) & = \frac{1}{\mathcal P_{\Gamma}(s)} \sum\limits_{z \in V}
\mathrm{e}^{-s \mathfrak{d}_r(x,z)} H(\mathfrak d_r(x,z)) {f(z)} \\
& \leq  C_\epsilon \mathrm{e}^{-s\mathfrak B_{z}(x, y))}\cdot \frac{1}{\mathcal P_{\Gamma}(s)}
\sum\limits_{z \in V}
\mathrm{e}^{-s\mathfrak d_r(y, z)} H(\mathfrak{d}_r(y, z)) {f(z)}\\
& \asymp   C_\epsilon  \mathrm{e}^{-s\mathfrak B_\xi(x, y)} \nu^s_y(f) 
\end{align*}
and second
\begin{align*}
\nu^s_x(f) & = \frac{1}{\mathcal P_{\Gamma}(s)} \sum\limits_{z \in V}
\mathrm{e}^{-s \mathfrak{d}_r(x,z)} H(\mathfrak d_r(x,z)) {f(z)} \\
& \geq  C_\epsilon^{-1} \mathrm{e}^{-s\mathfrak B_{z}(x, y))}\cdot \frac{1}{\mathcal P_{\Gamma}(s)}
\sum\limits_{z \in V}
\mathrm{e}^{-s\mathfrak d_r(y, z)} H(\mathfrak{d}_r(y, z)) {f(z)}\\
& \asymp   C_\epsilon  \mathrm{e}^{-s\mathfrak B_\xi(x, y)} \nu^s_y(f) 
\end{align*}
where the implicit constants  depend only on $C$ but not on $\epsilon$.
Letting $s\to 1$, we get
$$C_\epsilon^{-1} \mathrm{e}^{-\mathfrak B_\xi(x, y)}\nu_y(f)\prec \nu_x(f) \prec C_\epsilon \mathrm{e}^{-\mathfrak B_\xi(x, y)} \nu_y(f)$$
for any continuous function $f$ supported in $V$. 
As $\epsilon\to 0$, $C_\epsilon\to 1$, hence it follows that
$$ \phi(\xi) \asymp \mathrm{e}^{-
\mathfrak B_\xi (x, y)}$$ for $\nu_y$-a.e. conical limit point $\xi \in \pG$.
\end{proof}

\subsection{Shadow Lemma}
The key observation in the theory of PS-measures is the Sullivan
Shadow Lemma shedding some light on the relation between the PS-measure and
the geometric properties of the boundary.

Recall that $\partial \Gamma$ is a visual boundary for the Cayley graph $\mathrm{Cay}(G,S)$: any two distinct points $x,y\in \mathrm{Cay}(G,S)\cup \partial \Gamma$ can be connected by a geodesic.  See \cite[Proposition 2.4]{GePoJEMS}.

\begin{defn}
Let $C>0$ and $x \in\Gamma$. The \textit{shadow $\Pi_C(x)$} at $x$ is the set of points
$\xi \in \pG$ such that there exists some geodesic $\gamma=[e, \xi]$
intersecting $B(x, C)$.

The \textit{partial shadow $  \Psi_{C}(x)$} at $x$
is the set of points $\xi \in \pG$ such that   some geodesic
$[e,\xi]$ contains a transition point $C$-close to $x$.
\end{defn}

We now state the Shadow Lemma in our context, whose proof follows closely  the   proofs  of   \cite[Lemmas 4.1 \& 4.2]{YangPS} with Lemma~\ref{BusemanEstLem} replacing Lemma~2.19 there.
Let us denote by $\Psi^{\mathrm{con}}_C(g)$ the set of all conical limit points in
$\Psi_C(g)$.

\begin{lem}[Shadow Lemma]\label{ShadowLem}
Let $\{\nu_x\}_{x \in \Gamma}$ be an $\omega_\Gamma(r)$-dimensional PS measures on the Bowditch boundary $\pG$. Then
there exists $C_0> 0$ such that for any $C\ge C_0$ and $x\in \Gamma$ the following two inequalities hold
\begin{align}
\label{LowBoundEQ} \mathrm{e}^{-\omega_\Gamma(r) |x|}G(e,x|r)\prec_C \,  
&\nu_e(\Psi_{C}(x))\le \nu_e(\Pi_{C}(x)),\\
\label{UpperBoundEQ}
&\nu_e(\Psi^{\mathrm{con}}_{C}(x)) \prec_C \, \mathrm{e}^{-\omega_\Gamma(r) |x|} G(e,x|r).
\end{align}
\end{lem}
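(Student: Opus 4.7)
The strategy is the classical Patterson--Sullivan one, as in \cite[Lemmas 4.1 \& 4.2]{YangPS}: combine the quasi-conformality of $\{\nu_x\}$ at conical points (equation~\eqref{cdensity}) with the weak Ancona inequalities (Lemma~\ref{weakAncona}), which supply the relatively hyperbolic analogue of additivity of the word distance along geodesics through transition points.

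The first step is a pointwise estimate of the Radon--Nikodym derivative $d\nu_e/d\nu_x$ on the shadow. Fix $\xi\in\Psi_C^{\mathrm{con}}(x)$ and choose a geodesic ray $[e,\xi)$ containing a transition point $p$ with $d(p,x)\le C$. By Lemma~\ref{ConCharLem} one can exhaust $\xi$ along this ray by transition points $z_n$ lying beyond $p$. Two observations follow: since $p\in[e,z_n]$ and $d(p,x)\le C$, one has $B_{z_n}(e,x)=|x|+O(C)$; and Lemma~\ref{weakAncona} applied at $p$, together with the finite support of $\mu$ (which lets us slide $p$ to $x$ at the cost of a bounded multiplicative factor), gives $G(e,z_n|r)\asymp G(e,x|r)\,G(x,z_n|r)$, i.e.\ $K_{z_n}(e,x|r)\asymp G(e,x|r)$. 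Passing to the limit and plugging into~\eqref{cdensity} yields
\begin{equation}\label{plan:RN}
\frac{d\nu_e}{d\nu_x}(\xi)\;\asymp\;\mathrm{e}^{-\omega_\Gamma(r)|x|}\,G(e,x|r)
\end{equation}
for $\nu_x$-a.e.\ $\xi\in\Psi_C^{\mathrm{con}}(x)$, with constants depending only on $C$.

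Integrating~\eqref{plan:RN} and using $\Psi_C^{\mathrm{con}}(x)\subseteq\Psi_C(x)\subseteq\Pi_C(x)$, both inequalities of the lemma reduce to showing that $\nu_x(\Psi_C^{\mathrm{con}}(x))$ lies between two positive constants independent of $x$, provided $C$ is large enough. I would handle this at the pre-limit level $\nu_x^s$. For the upper bound, the points $z$ contributing to $\nu_x^s(\Pi_C(x))$ are precisely those for which the triangle inequality for $\mathfrak d_r$ saturates along $(e,x,z)$ up to an additive $O(1)$, once again by Lemma~\ref{weakAncona}; summing over such $z$ and comparing to the normalization of $\nu_e^s$ via a bounded-multiplicity covering of $\pG$ by shadows at the level $|g|\asymp|x|$ yields $\nu_x^s(\Pi_C(x))\prec 1$ uniformly in $x$. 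For the lower bound, the same covering argument, now used to produce enough $g$'s with partial shadows fitting inside a neighborhood of $\Psi_{C_0}^{\mathrm{con}}(x)$, combined with~\eqref{plan:RN} applied in reverse, gives $\nu_x(\Psi_C^{\mathrm{con}}(x))\succ 1$ as soon as $C\ge C_0$. The slow variation of $H$ in~\eqref{patterson} ensures that uniformity in $x$ survives the weak limit $s\to 1^+$.

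The main obstacle is precisely this uniform two-sided control of $\nu_x(\Psi_C^{\mathrm{con}}(x))$: the naive bound $\nu_x(\pG)\leq\mathrm{e}^{\mathfrak d_r(e,x)}\nu_e(\pG)$ coming from~\eqref{LowerBoundEQ} grows with $|x|$ and is useless on its own. The saving mechanism is the saturation of the triangle inequality for $\mathfrak d_r$ on shadows, produced by Lemma~\ref{weakAncona}: this cancels exactly the exponential $x$-dependence that would otherwise enter the Poincar\'e series normalization, leaving only the harmless $H$-factor, which is then absorbed by~\eqref{patterson}.
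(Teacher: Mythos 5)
Your treatment of the upper bound \eqref{UpperBoundEQ} is essentially the paper's: the pointwise estimate $\frac{d\nu_e}{d\nu_x}(\xi)\asymp_C \mathrm{e}^{-\omega_\Gamma(r)|x|}G(e,x|r)$ on $\Psi^{\mathrm{con}}_C(x)$ via Lemma~\ref{weakAncona} and \eqref{cdensity} is correct, and integrating it against $\nu_x$ gives \eqref{UpperBoundEQ} immediately since $\nu_x(\pG)\le 1$ by equivariance --- the bounded-multiplicity covering you invoke for ``$\nu_x^s(\Pi_C(x))\prec 1$'' is unnecessary.

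The genuine gap is in the lower bound \eqref{LowBoundEQ}. You reduce it to showing $\nu_x(\Psi^{\mathrm{con}}_C(x))\succ 1$ uniformly in $x$, but this quantity can be \emph{zero}: when $\Gamma$ is of convergent type, Theorem~\ref{divergencetypeandatoms} shows $\nu_e$ is purely atomic and supported on parabolic points, so the conical part of every shadow is null. The Shadow Lemma must hold in that case too (it is used to prove Proposition~\ref{PosMeasonConcialProp} and hence Theorem~\ref{divergencetypeandatoms} itself), so no argument routed through the conical shadow and the quasi-conformal estimate \eqref{cdensity} --- which is only available $\nu$-a.e.\ on conical points --- can yield \eqref{LowBoundEQ}. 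The correct mechanism is different on both counts: one bounds $\nu_e(x^{-1}\Psi_C(x))$ (the \emph{full} translated shadow) from below by a constant $D>0$ independent of $x$, using a fixed finite set $F$ of loxodromic elements with disjoint fixed points so that for every $x$ some fixed open neighborhood $U_f$ of an attracting fixed point satisfies $U_f\subset x^{-1}\Psi_C(x)$, and then uses minimality of the $\Gamma$-action on $\pG$ to guarantee $\nu_e(U_f)>0$; one then transfers back to $\nu_e(\Psi_C(x))=\nu_{x^{-1}}(x^{-1}\Psi_C(x))$ using the crude, \emph{everywhere-valid} bound $\frac{d\nu_{x^{-1}}}{d\nu_e}(\xi)\ge \mathrm{e}^{-\mathfrak d_r(x^{-1},e)}=\mathrm{e}^{-\omega_\Gamma(r)|x|}G(e,x|r)/G(e,e|r)$ from \eqref{LowerBoundEQ} of Proposition~\ref{prequasiconf}, not the conical-only density \eqref{cdensity}. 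Your proposed covering argument neither supplies the uniform constant $D$ nor avoids the conical restriction, so as written the lower bound does not go through.
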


\begin{rem}
If $\nu_e$ has no atoms at parabolic points which form a countable subset of the Bowditch boundary, then we obtain the full strength of the partial shadow lemma without having to restrict our attention to conical points. The upper bound~(\ref{UpperBoundEQ}) for the partial shadow uses the relative Ancona inequalities (Lemma~\ref{weakAncona}), while  it is unknown whether the upper bound holds for the usual shadow.     
\end{rem}

\begin{proof}

Let $F$ be a set of three loxodromic elements with pairwise disjoint fixed points. For each     $f\in F$, let $\alpha:=\cup_{i\in \mathbb Z} f^i[e,f]$ be an $\langle f\rangle$-invariant quasi-geodesic between   two fixed points $f_-,f_+\in\pG$. Let $U_f\subset \pG$ be an open neighborhood of $f_+$ so that for any $\eta\in U_f$, the projection of $\eta$ to the axis $\alpha$ has a distance to $e$ at least $C$.  By \cite[Lemma 2.4]{DWY}, for any $x\in \Gamma$,  there exists $f\in F$ so that $[x^{-1},\eta]$ contains a transition point $C$-close to  $e$.
Thus, $U_f\subset x^{-1}\Psi_{C}(x)$. 

As $\Gamma$ acts minimally with a dense orbit in  $\pG$, the $\Gamma$-orbit 
of any open set $U\subset \pG$ covers   $\pG$, so $U$ have positive $\nu_e$-measure. 
Hence, setting
$$D=\min\{\nu_e(U_f): f\in F\}>0$$
which is independent of $x$, we have  
$$
\nu_e(x^{-1}\Psi_{C}(x)) \ge D.
$$ 
Since $\nu_x $ is  equivariant,   the lower bound     in (\ref{LowerBoundEQ}) implies 
\begin{align*}
\nu_e(\Psi_{C}(x))={\nu_{x^{-1}}(x^{-1}\Psi_{C}(x))}&\ge   \mathrm{e}^{-\omega_\Gamma(r) |x|}\frac{G(e,x|r) }{G(e,e|r)}\cdot {\nu_e( x^{-1}\Psi_C(x)) }\\
&\ge D \mathrm{e}^{-\omega_\Gamma(r) |x|}\frac{G(e,x|r) }{G(e,e|r)}.
\end{align*}
Since $G(e,e|r)$ is bounded by $G(e,e|R)$, this concludes the
proof of the lower bound.
 
For any $\xi \in x^{-1} \Psi^{\mathrm{con}}_C(x)$, there is a geodesic
$\gamma$ from $x^{-1}$ to $\xi$ which intersects   $B(e, C)$ and contains a transition point. Thus, $|B_\xi(x^{-1},e)
- d(x^{-1}, e)| \le 2C.$ By the relative Ancona inequalities (Lemma~\ref{weakAncona}), there is a   constant $C_1$ independent of $r$ such that $$K_\xi(x^{-1}, e|r) = \lim_{z\to \xi}\frac{G(x^{-1},z|r)}{G(e,z|r)}
\le C_1 G(e,x|r).$$
Also, by (\ref{cdensity}) there
is a constant $C_2>0$ such that for $\nu_e$-a.e. conical limit point $\xi \in \pG^{\mathrm{con}}$, 
$$
\frac{d\nu_{x^{-1}}}{d\nu_e}(\xi) \le C_2 \mathrm{e}^{-\omega_\Gamma(r) B_\xi(x^{-1},e)} K_\xi(x^{-1},
e| r).
$$
Combining together the above estimates, we have 
\begin{align*}
\nu_e(\Psi^{\mathrm{con}}_C(x)) & = \nu_{x^{-1}}(x^{-1}\Psi^{\mathrm{con}}_C(x)) \\
& \le  C_2\int_{x^{-1}\Psi^{\mathrm{con}}_C(x)} \mathrm{e}^{-\omega_\Gamma(r) B_\xi(x^{-1},e)} K_\xi(x^{-1},
e|r) d \nu_e(\xi)  \\
& \leq C_1C_2 \mathrm{e}^{2C\omega_\Gamma(r)} \cdot \mathrm{e}^{-\omega_\Gamma(r) |x|} G(e,x|r), 
\end{align*}
which finishes the proof of the upper bound.
\end{proof}

\begin{prop}\label{PosMeasonConcialProp}
Suppose that $\nu_e$ gives positive measure to the set of conical limit points.
Then $\Gamma$ is of divergent type for the Green function.
\end{prop}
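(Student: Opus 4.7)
The plan is to run a standard Sullivan-style shadow argument: combine the upper bound of the Shadow Lemma~\ref{ShadowLem} with the observation that every conical limit point is contained in infinitely many partial conical shadows $\Psi^{\mathrm{con}}_C(x)$. The divergence of the resulting sum over the group then forces $\Theta_\Gamma(r,\omega_\Gamma(r))$ to diverge.

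More concretely, I would first fix $C\ge C_0$, where $C_0$ is the constant of the Shadow Lemma~\ref{ShadowLem}, and show that
\[
\sum_{x\in\Gamma} \mathbf{1}_{\Psi^{\mathrm{con}}_C(x)}(\xi) = +\infty \qquad \text{for every } \xi\in \pG^{\mathrm{con}}.
\]
To see this, apply Lemma~\ref{ConCharLem} to a geodesic ray $\gamma=[e,\xi)$: it contains an unbounded sequence of $(\eta,L)$-transition points $x_n\in\Gamma$. Each $x_n$ is itself a transition point on a geodesic from $e$ to $\xi$ lying at distance $0$ from $x_n$, so $\xi\in\Psi^{\mathrm{con}}_C(x_n)$ for every $C\geq 0$. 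Since $|x_n|\to\infty$, the $x_n$ are pairwise distinct, giving the claim. Integrating against $\nu_e$ by Tonelli's theorem and using the hypothesis $\nu_e(\pG^{\mathrm{con}})>0$, we get
\[
\sum_{x\in\Gamma}\nu_e\big(\Psi^{\mathrm{con}}_C(x)\big) \;=\; \int_{\pG}\sum_{x\in\Gamma}\mathbf{1}_{\Psi^{\mathrm{con}}_C(x)}(\xi)\, d\nu_e(\xi) \;=\; +\infty.
\]
Combining this with the upper bound~(\ref{UpperBoundEQ}) of the Shadow Lemma yields
\[
+\infty \;=\; \sum_{x\in\Gamma}\nu_e\big(\Psi^{\mathrm{con}}_C(x)\big) \;\prec_C\; \sum_{x\in\Gamma} \mathrm{e}^{-\omega_\Gamma(r)|x|}G(e,x|r) \;=\; \Theta_\Gamma(r,\omega_\Gamma(r)),
\]
so $\Theta_\Gamma(r,\omega_\Gamma(r))$ diverges, i.e.\ $\Gamma$ is of divergent type for the Green function.

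The main obstacle I anticipate is the verification of the pointwise infinite-multiplicity claim, as it requires checking that the transition points supplied by Lemma~\ref{ConCharLem} can indeed be taken as group elements witnessing $\xi\in\Psi^{\mathrm{con}}_C(x_n)$ for the same $C$ that makes the Shadow Lemma applicable; this should follow by taking $C$ at least $C_0$ and using that transition points on the Cayley graph are vertices lying at distance $0$ from themselves. A secondary concern is the measurability of $\Psi^{\mathrm{con}}_C(x)$ required for Tonelli's theorem, but this is a routine consequence of the visual topology on $\pG$ together with the fact that the set of conical limit points is Borel in the Bowditch boundary.
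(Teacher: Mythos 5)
Your proof is correct and is essentially the paper's argument: both rest on the upper bound~(\ref{UpperBoundEQ}) of the Shadow Lemma together with the fact (from Lemma~\ref{ConCharLem}) that every conical limit point lies in infinitely many partial shadows $\Psi^{\mathrm{con}}_C(x)$. The only cosmetic difference is that you sum over all of $\Gamma$ and invoke Tonelli, whereas the paper bounds each tail $\sum_{|x|\ge n}\nu_e(\Psi^{\mathrm{con}}_C(x))$ from below by $\nu_e(A_C)>0$ via a limsup-set argument; both conclude divergence of $\Theta_\Gamma(r,\omega_\Gamma(r))$ in the same way.
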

\begin{proof}
List $\Gamma = \{x_1, \dotso, x_i, \ldots\}$ such that for all $i$, $|x_i|\le | x_{i+1}|$.  Let $C_0$ be given by Lemma~\ref{ShadowLem}. For any $C>C_0$, set  
\begin{equation}\label{conicalset}
A_{C} := \bigcap_{n=1}^{\infty} \bigcup_{i=n}^{\infty}
\Psi_{C}^{\mathrm{con}}(x_i). 
\end{equation}
By Lemma
\ref{ConCharLem}, we have $\partial^{\mathrm{con}}\Gamma =A_{C}$. 
In
other words, any conical limit point is shadowed infinitely many times
by elements of $\Gamma$.

We claim that $\Theta_{\Gamma}(r,\omega_\Gamma(r))$ is divergent. Recall that
$$\Theta_{\Gamma}(r,\omega_\Gamma(r)) = \sum\limits_{x \in \Gamma} \mathrm{e}^{-\omega_\Gamma(r)|x|}G(e,x|r).$$
By Lemma~\ref{ShadowLem}, we see that  
\begin{align*}
\sum\limits_{|x|\geq n} \mathrm{e}^{-\omega_\Gamma(r)|x|}G(e,x|r) 
& \succ \sum\limits_{|x|\ge n} \nu_e(\Psi^{\mathrm{con}}_{C}(x)) \\
&\succ \nu_e\big(\bigcup\limits_{|x|\ge n} \Psi^{\mathrm{con}}_C(x)\big) \succ \nu_e(A_{C}) > 0.
\end{align*}
This is true for all $n > 0$ and $\nu_e(A_{C})$ is independent of $n$.
Thus, $\Theta_{\Gamma}(r,s)$ is indeed divergent at
$s=\omega_\Gamma(r)$.
\end{proof}

\begin{thm}\label{divergencetypeandatoms}
If $\Gamma$ is of divergent type for Green function, then $\nu_e$ has no atoms. Otherwise, $\nu_e$ is purely atomic and  supported on the set of parabolic points.  
\end{thm}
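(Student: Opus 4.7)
The theorem is a dichotomy, so I split the argument into two directions.

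\textbf{Convergent type.} Assume $\Theta_\Gamma(r,\omega_\Gamma(r)) < \infty$. Enumerate $\Gamma = \{x_1, x_2, \dotsc\}$ (say, in order of increasing word length) and apply the upper bound~(\ref{UpperBoundEQ}) of the Shadow Lemma to get
$$\sum_i \nu_e\bigl(\Psi^{\mathrm{con}}_C(x_i)\bigr) \prec_C \sum_i \mathrm{e}^{-\omega_\Gamma(r)|x_i|} G(e, x_i|r) = \Theta_\Gamma(r, \omega_\Gamma(r)) < \infty.$$
By the Borel--Cantelli lemma, $\nu_e(A_C) = 0$, where $A_C = \limsup_i \Psi^{\mathrm{con}}_C(x_i)$; and by Lemma~\ref{ConCharLem}, for $C$ sufficiently large, $\partial\Gamma^{\mathrm{con}} \subseteq A_C$, so $\nu_e(\partial\Gamma^{\mathrm{con}}) = 0$. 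Geometric finiteness of the $\Gamma$-action on $\partial\Gamma$ gives $\partial\Gamma = \partial\Gamma^{\mathrm{con}} \sqcup \partial\Gamma^{\mathrm{par}}$, so $\nu_e$ is supported on $\partial\Gamma^{\mathrm{par}}$. Because the parabolic limit set is countable -- it is the union of the $\Gamma$-orbits of the finitely many representatives $P \in \mathbb{P}_0$ -- $\nu_e$ is then automatically purely atomic, supported on parabolic points.

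\textbf{Divergent type, conical atoms.} Argue by contradiction: suppose $\nu_e(\{\xi\}) = a > 0$. If $\xi$ is conical, the convergence-group dynamics at $\xi$ furnish a sequence $g_n \in \Gamma$ with $g_n^{-1}\cdot e \to \xi$ along a ray $[e, \xi]$ passing through unbounded transition points (Lemma~\ref{ConCharLem}); since the stabilizer of a conical point is finite, after passing to a subsequence the points $\{g_n\xi\}_n$ are pairwise distinct. Equivariance gives $\nu_e(\{g_n\xi\}) = \nu_{g_n^{-1}}(\{\xi\})$. The conical quasi-conformality~(\ref{cdensity}) combined with the weak Ancona inequalities (Lemma~\ref{weakAncona}) applied at the transition point $g_n^{-1}$ yields $B_\xi(g_n^{-1}, e) = -|g_n^{-1}| + O(1)$ and $K_\xi(g_n^{-1}, e|r) \asymp G(e, g_n^{-1}|r)^{-1}$, whence
$$\nu_e(\{g_n\xi\}) \asymp \frac{a}{G(e,e|r)}\, \mathrm{e}^{\omega_\Gamma(r)|g_n^{-1}| + |g_n^{-1}|_r} = \frac{a}{G(e,e|r)}\,\mathrm{e}^{\mathfrak{d}_r(e,\, g_n^{-1})}.$$
Since $\mathfrak{d}_r$ is proper (Lemma~\ref{quasiisometricdistances}), $\mathfrak{d}_r(e, g_n^{-1}) \to +\infty$, and the masses at the distinct atoms $g_n\xi$ tend to infinity, contradicting $\nu_e(\partial\Gamma) < \infty$.

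\textbf{Divergent type, parabolic atoms (main obstacle).} The conical quasi-conformality~(\ref{cdensity}) is unavailable at parabolic points, so the conical dynamics argument above does not apply directly. The plan is to leverage the pointwise bound~(\ref{LowerBoundEQ}) instead: writing $P=\mathrm{Stab}(\xi)$ and choosing, within each coset of $\Gamma/P$, a representative $g_{\min}$ minimizing $\mathfrak{d}_r(e,\cdot)$, the atom at $\xi$ propagates to $\nu_e(\{g\xi\}) \geq a\cdot \mathrm{e}^{-\mathfrak{d}_r(e,\,g_{\min})}$ for every $g \in \Gamma$. Finiteness of $\nu_e$ on the $\Gamma$-orbit of $\xi$ forces the coset sum $\sum_{[g]\in\Gamma/P} \mathrm{e}^{-\mathfrak{d}_r(e,\,g_{\min})}$ to converge. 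One then decomposes
$$\mathcal{P}_\Gamma(1) = \sum_{[g]\in\Gamma/P} \sum_{p\in P} \mathrm{e}^{-\mathfrak{d}_r(e,\,g_{\min}p)},$$
and uses the weak Ancona inequalities (Lemma~\ref{weakAncona}) together with the horoball structure -- in particular, the fact that a geodesic from $e$ to $g_{\min}p$ enters the horoball based at $g_{\min}P$ near $g_{\min}$ -- to factor the inner parabolic sum roughly as $\mathrm{e}^{-\mathfrak{d}_r(e,\,g_{\min})}\,\mathcal{P}_P(1)$. The convergent coset sum then collides with the divergence of $\mathcal{P}_\Gamma(1)$ to produce the contradiction. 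The key technical difficulty lies in making this factorization precise across the horoballs, and is the place where the relatively hyperbolic geometry is genuinely needed.
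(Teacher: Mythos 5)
Your treatment of the convergent case is correct and is essentially the contrapositive of the paper's Proposition~\ref{PosMeasonConcialProp}, run through Borel--Cantelli with the upper bound~(\ref{UpperBoundEQ}); and your argument that a conical point cannot be an atom in the divergent case is also valid, though it takes a different route from the paper: you push the atom along the orbit using equivariance, the conical quasi-conformality~(\ref{cdensity}) and Ancona, and let properness of $\mathfrak d_r$ blow up the mass, whereas the paper simply observes via Lemma~\ref{ConCharLem} and the Shadow Lemma that a conical point lies in infinitely many partial shadows whose measures tend to $0$. (Two small remarks on your version: the stabilizer of a conical point need not be finite -- the fixed points of a loxodromic element are conical with virtually cyclic stabilizer -- but distinctness of the $g_n\xi$ is irrelevant, since the mass of a single point tending to infinity already contradicts finiteness of $\nu_e$.)

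The genuine gap is the parabolic case of the divergent direction, which is the substantive half of the theorem and which you leave as a plan. Beyond the factorization difficulty you acknowledge, the plan as stated cannot produce a contradiction without an input you never identify: the \emph{finiteness} of the inner parabolic sum $\mathcal P_P(1)=\sum_{p\in P}\mathrm{e}^{-\mathfrak d_r(e,p)}$, i.e.\ of $\Theta_P(r,\omega_\Gamma(r))$. If that sum diverges, your proposed factorization reads ``(convergent coset sum)$\,\times\,\infty$'', which is perfectly consistent with the divergence of $\mathcal P_\Gamma(1)$ and yields nothing. The paper supplies exactly this missing piece by citing \cite[Corollary~3.9]{DWY}, and then avoids your minimal-representative factorization altogether: it estimates $\nu_e^s(U_n)$ for the shrinking neighborhoods $U_n=\{q\}\cup\{pK: p\in P,\ |p|\ge n\}$ of the parabolic point, where $K$ is a compact fundamental domain for the stabilizer, using the contracting property of $P$ (\cite[Proposition~8.5]{GP16}) to place $p$ near a transition point of $[e,py]$ and hence, by Lemma~\ref{weakAncona}, to get $G(e,py|r)\asymp G(e,p|r)G(e,y|r)$ with constants uniform in $p$ and $y$; convergence of $\Theta_P(r,\omega_\Gamma(r))$ then forces $\nu_e(U_n)\to 0$ after letting $s\to 1$ (Portmanteau) and $n\to\infty$. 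Note also that working at the level of the approximating measures $\nu_e^s$ and a fundamental domain sidesteps the question of whether your $\mathfrak d_r$-minimal coset representative $g_{\min}$ actually sits near the entry point of geodesics into the coset, which is what your Ancona factorization would require and which is not automatic.
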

\begin{proof}
First of all, the otherwise statement follows from Proposition~\ref{PosMeasonConcialProp}.
Indeed, if $\Gamma$ is of convergent type, then $\nu_e$ cannot give positive measure to conical limit points, hence it is supported on the set of parabolic limit points.
Since this set is countable, $\nu_e$ is necessarily purely atomic.

Assume now that $\Gamma$ is of divergent type.
Let $q\in \pG$ be a bounded parabolic point so that the stabilizer $P\in \mathbb P$ acts co-compactly on $\pG\cup \Gamma \setminus \{q\}$. Let $K\subset \Gamma\cup\pG\setminus \{q\}$ be a compact fundamental domain.
For a point $y\in \Gamma$, we let $\pi_P(y)$ be the set of nearest point-projections $$\pi_P(y)=\{p\in P: d(y,p)=d(y,P)\}.$$ Define $\pi_P(A):=\cup_{a\in A} \pi_P(a)$. As $\partial K$ is disjoint with $\partial P=\{q\}$, the shortest projection $Z:=\pi_P(K\cap \Gamma)$ has  bounded  diameter by \cite[Proposition 3.3]{GP16}. By enlarging $Z$, assume without loss of generality that $1\in Z$. 

Note that  a maximal parabolic $P$ has  the contracting property by \cite[Proposition 8.5]{GP16}: any geodesic $[x,y]$ with large projection to $P$ is uniformly close to $\pi_P(x)$ and $\pi_P(y)$. Thus, for any $y\in \Gamma\cap K$ and $p\in P$,  any geodesic $[e,py]$    passes within uniformly bounded distance of $p\in pZ$. Consequently, $|p y| \simeq |p| + |y|$. Moreover, $[e,py]$ exits $P$ with bounded distance to $pZ$, so  $p$ is within a bounded distance of a transition point on $[e,py]$. Now, by the relative Ancona inequalities in Lemma~\ref{weakAncona}, $G(e, p y|r) \asymp G(e, p|r) G(e, y|r)$. Here the implicit constants in $\simeq$ and $\asymp$ are independent of $y$ and $p$.


We can estimate the $\nu_e^s$-measure of an open neighborhood 
$$U_n= \{q\}\cup \{pK: p\in P; |p|\ge n\}$$   as follows
$$\nu_e^s(U_n) \le \sum_{|p|>n} \nu_e^s(pK) \le {\frac{1}{\mathcal{P}_{\Gamma}(s)}} \sum_{|p|>n} \sum_{y\in K} \mathrm{e}^{-s\omega_\Gamma(r)|p y|} [G(e, p y|r)]^s,$$
hence
\begin{align*}
\nu_e^s(U_n) &\prec {\frac{1}{\mathcal{P}_{\Gamma}(s)}}  \sum_{|p|>n} \sum_{y\in K} \mathrm{e}^{-s\omega_\Gamma(r)|p|} \mathrm{e}^{-s\omega_\Gamma(r)|y|} [G(e,y|r)]^s [G(e,p|r)]^s\\
& \prec \nu_e^s(K) \sum_{|p|>n} \mathrm{e}^{-s\omega_\Gamma(r) |p|}  [G(e,p|r)]^s.    
\end{align*}

{By~\cite[Corollary 3.9]{DWY}},   $\Theta_P(r,\omega_\Gamma(r))$ is convergent.
Also, by the Portmanteau Theorem \cite[Theorem~2.1]{Bill99}, $\limsup_{s\to 1}\nu_e^s(K)\leq \nu_e(K)$.
Letting $s\to 1$ and then $n\to\infty$, we see that $\nu_e(U_n)\to 0$. 
Thus, $\nu_e$ has  no atoms at parabolic limit points.
By Lemma~\ref{ConCharLem},  a  conical limit point  $\xi$ is contained in infinitely many partial shadows $\Psi_{C}(x_n)$. By Lemma~\ref{ShadowLem}, as $x_n\to \xi$, the $\nu_e$-measure of $\Psi_C(x_n)$ tends to $0$, so conical limit points are not atoms as well.
\end{proof}


\section{Convergent Poincar\'e series and applications}\label{SectionRHG}


This final section is devoted to answering some questions raised in \cite{DWY} where we initiated the study of branching random walks on relatively hyperbolic groups.
In particular, we end here the proof of our main result, Theorem~\ref{mainthm}.

One important notion that was coined in \cite{DWY} is the \textit{parabolic gap for the Green function} whose definition we now recall.
Let $\mu$ be a probability measure on $\Gamma$,
let $\mathbb P_0$ be a finite set of representatives of conjugacy classes of maximal parabolic subgroups and let $P\in \mathbb P_0$.
We set
$$H_{P,r}(n)=\sum_{x\in S_n\cap P}G(e,x|r)$$
and
$$\omega_P(r)=\limsup_{n\to \infty}\frac{1}{n}\log H_{P,r}(n).$$
\begin{defn}
If $\omega_P(r)<\omega_\Gamma(r)$, we say that $\Gamma$ has a \textit{parabolic gap} along $P$ for the Green function at $r$.
If for every $P$, for every $r\in (1,R]$, $\omega_P(r)<\omega_\Gamma(r)$, then we say that $\Gamma$ has a \textit{parabolic gap} for the Green function.
\end{defn}

One of the consequences of having a parabolic gap for the Green function is that $H_r(n)$ is roughly multiplicative and the Green function has purely exponential growth.
Namely, by \cite[Theorem~1.8]{DWY}, if $\Gamma$ is a non-elementary relatively hyperbolic group and if $\mu$ is a finitely supported admissible and symmetric probability measure on $\Gamma$, then for every $1<r\leq R$, there exist $C$ and $C'$ such that for all $n$,
\begin{equation}\label{roughmultiplicativity}
\frac{1}{C}H_{r} (n+m)\leq  H_r(n)H_r(m)\le C H_{r} (n+m)
\end{equation}
and
\begin{equation}\label{purelyexponentialgrowth}
\frac{1}{C'}\mathrm{e}^{n\omega_\Gamma(r)}\leq H_r(n)\leq C' \mathrm{e}^{n\omega_\Gamma(r)}.
\end{equation}
As proved in \cite{SWX}, these two properties~(\ref{roughmultiplicativity}) and~(\ref{purelyexponentialgrowth}) hold for all hyperbolic groups.
It was proved in \cite{DWY} that if maximal parabolic subgroups are amenable and if $r<R$, then the parabolic gap condition holds, hence so do the properties~(\ref{roughmultiplicativity}) and~(\ref{purelyexponentialgrowth}).
Under additional assumptions on the random walk, this was also proved at $R$.
Among the unanswered problems in \cite{DWY} are the following questions.
Does the parabolic gap condition holds at $R$ as soon as maximal parabolic subgroups are amenable ?
Does there exist examples of relatively hyperbolic groups endowed with a finitely supported admissible symmetric probability measures such that the properties~(\ref{roughmultiplicativity}) and~(\ref{purelyexponentialgrowth}) fail ?
As particular cases of our work, we answer these two questions here.

\subsection{First return  to maximal parabolic subgroups}
We gather the results of Section~\ref{Sectiongrowthrateat1} to prove that whenever maximal parabolic subgroups are amenable, the parabolic gap for the Green functions holds.

Consider the first return kernel $p_{r,P}$ to $P$ for $r \mu$ defined by
\[
  p_{r,P}(x,y)=\sum_{n\geq 1}\sum_{z_1, \ldots ,z_{n-1}\notin P} r^n p(x,z_1)p(z_1,z_2) \cdots p(z_{n-1},y).
\]
Denote by $p_{r, P}^{(n)}$ the $n$th convolution power of this transition kernel and by $G_{r,P}$ the associated Green function.
As explained in the introduction, for every $x$, $y\in P$, $G_{r,P}(x,y|1)=G(x,y|r)$ by~\cite[Lemma~4.4]{DG21}.
Set $t_{r, P} = \sum_{x \in P} p_{r, P}(e, x)$.  Then $t_{r, P}^{-1} p_{r, P}$ is a symmetric admissible and $P$-invariant transition kernel, thus defines a random walk on $P$.   

\begin{prop}
  \label{omegaPlesst}
  Let $\Gamma$ be a relatively hyperbolic group and let $P\in \mathbb P_0$.
Consider an admissible and symmetric probability measure $\mu$ on $\Gamma$. If $t_{r, P} \leq 1$, then  we have $\omega_P(r) \leq 0$;  in particular, $\omega_P(r) < \omega_{\Gamma}(r)$. 
\end{prop}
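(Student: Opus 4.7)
The plan is to leverage the identification $G(e,x|r) = G_{r,P}(e,x|1) = \sum_{n\ge 0} p_{r,P}^{(n)}(e,x)$ for $x \in P$ (via \cite[Lemma~4.4]{DG21}), and to split the analysis according to whether $t_{r,P}<1$ or $t_{r,P}=1$. A preliminary observation is that $p_{r,P}$ is $P$-invariant, inherited from the $\Gamma$-invariance of $\mu$, so by induction on $n$ one has
\[
\sum_{x\in P} p_{r,P}^{(n)}(e,x) = t_{r,P}^n.
\]

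In the strict subcase $t_{r,P} < 1$, I would simply sum $G(e,\cdot\,|r)$ over all of $P$ to obtain
\[
\sum_{x\in P} G(e,x|r) = \sum_{n\ge 0} \sum_{x\in P} p_{r,P}^{(n)}(e,x) = \sum_{n\ge 0} t_{r,P}^n = \frac{1}{1-t_{r,P}} < \infty.
\]
This gives the uniform bound $H_{P,r}(n) \le (1-t_{r,P})^{-1}$ for every $n$, from which $\omega_P(r) \le 0$ is immediate.

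In the boundary subcase $t_{r,P}=1$, the kernel $\tilde p_{r,P} := p_{r,P}$ is a genuine symmetric Markov kernel on $P$ whose Green function at $s=1$ is $G(e,\cdot\,|r)|_P$. I would then rerun the argument of Proposition~\ref{upperboundgrowthrateat1} verbatim with $\mu$ replaced by $\tilde p_{r,P}$ and the ambient word metric kept on $P$: writing
\[
H_{P,r}(n) = \sum_{k\ge 0} \sum_{x \in S_n\cap P} \tilde p_{r,P}^{(k)}(e,x),
\]
the short-time part $k \le c_4 n^3$ is controlled by $\sum_{x \in P} \tilde p_{r,P}^{(k)}(e,x) = 1$, contributing $O(n^3)$, and the long-time part is handled via the ambient volume bound $\sharp(S_n\cap P) \le \sharp S_n \le C\mathrm{e}^{v n}$ together with a Varopoulos-type heat kernel upper bound $\tilde p_{r,P}^{(k)}(e,x) \le C\mathrm{e}^{-c_3 k^{1/3}}$, with $c_4$ chosen so that $c_3 c_4^{1/3} > v$. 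This yields $H_{P,r}(n) = O(n^3)$, hence $\omega_P(r) \le 0$. The ``in particular'' statement is then a consequence of the fact (used throughout the paper and in \cite{DWY}) that $\omega_\Gamma(r) > 0$ for the relevant range of $r$, so $\omega_P(r) \le 0 < \omega_\Gamma(r)$.

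The main obstacle sits in the $t_{r,P}=1$ subcase: the induced kernel $\tilde p_{r,P}$ is never finitely supported since it encodes arbitrarily long excursions outside $P$, so Varopoulos's bound is not directly applicable. When $P$ is non-amenable one can bypass this with the spectral-radius bound $\tilde p_{r,P}^{(k)}(e,x)\le \tilde\rho^{k}$, $\tilde\rho<1$, which already makes the argument go through with a threshold linear (rather than cubic) in $n$; the delicate case is the amenable one, where one must verify suitable moment decay on $\tilde p_{r,P}$ (which should hold for $r<R$ by standard excursion-length estimates for the ambient walk) in order to appeal to Varopoulos on $P$ itself.
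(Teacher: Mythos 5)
Your treatment of the subcase $t_{r,P}<1$ is correct and in fact sharper than required: $P$-invariance gives $\sum_{x\in P}p_{r,P}^{(n)}(e,x)=t_{r,P}^{\,n}$, hence $\sum_{x\in P}G(e,x|r)=(1-t_{r,P})^{-1}<\infty$ and $H_{P,r}(n)$ is uniformly bounded. The genuine gap is the subcase $t_{r,P}=1$, which you flag but do not close: you propose to rerun the Varopoulos argument of Proposition~\ref{upperboundgrowthrateat1} for the induced kernel, correctly observe that this kernel is never finitely supported, and then leave the amenable case contingent on unverified ``moment decay'' estimates. That is exactly the case the proposition is needed for (Proposition~\ref{propamenableparabolic} concerns amenable parabolics, where $t_{r,P}=1$ forces the normalized kernel to have spectral radius $1$), so as written the proof is incomplete where it matters most.

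The paper closes both subcases at once, with no case split, via a normalization-plus-monotonicity trick. Set $t=t_{r,P}$ and let $G_t$ denote the Green function of the \emph{probability} measure $t^{-1}p_{r,P}$ on $P$, which is admissible, symmetric and $P$-invariant (but not finitely supported). Then for $x\in P$,
\[
G(e,x|r)=G_{r,P}(e,x|1)=G_t(e,x|t)\le G_t(e,x|1),
\]
where the inequality uses only $t\le 1$ and the monotonicity of $s\mapsto G_t(e,x|s)$. One then invokes Proposition~\ref{upperboundgrowthrateat1} as a black box for the measure $t^{-1}p_{r,P}$ on the finitely generated group $P$: that proposition is deliberately stated for arbitrary admissible probability measures, with no finite-support hypothesis (the authors stress this immediately after Proposition~\ref{propamenableparabolic}), so the obstruction you identify is absorbed into a statement you are already entitled to cite rather than re-derive. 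Whether \cite[Theorem~1]{V91} genuinely covers non-finitely-supported symmetric admissible measures is a question about the proof of Proposition~\ref{upperboundgrowthrateat1} itself, not something this proposition needs to revisit. Your justification of the ``in particular'' clause matches the paper's implicit use of $\omega_\Gamma(r)>0$ for the relevant range $r>1$.
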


\begin{proof}
    Denote $t = t_{r, P}$ for simplicity and let $G_t$ be the Green function associated with $t^{-1} p_{r, P}$.  By Proposition~\ref{upperboundgrowthrateat1}, $\sum_{x \in P, \, |x| = n} G_t(e, x | 1)$ has growth rate at most 0.  Since $t \leq 1$ we have that
    \[
      G(e, x | r) = G_{r, P}(e, x | 1) = G_t(e, x | t) \leq G_t(e, x | 1). 
    \]
    Therefore $\omega_P(r) \leq 0$. 
  \end{proof}

\begin{prop}\label{propamenableparabolic}
Let $\Gamma$ be a relatively hyperbolic group and let $P$ be a maximal parabolic subgroup.
Consider an admissible and symmetric probability measure $\mu$ on $\Gamma$.
If $P$ is amenable, then for every $r\leq R$, $\omega_P(r)\leq 0$.  
\end{prop}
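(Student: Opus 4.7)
The plan is to reduce the statement to Proposition~\ref{omegaPlesst} by showing that $t_{r,P} \leq 1$ whenever $P$ is amenable, and the key input is Kesten's characterization of amenability via the spectral radius.

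First, I would observe that $\tilde{p}_{r,P} := t_{r,P}^{-1} p_{r,P}$ is an admissible, symmetric Markov transition kernel on $P$. $P$-invariance is built into the definition. Symmetry follows by reversing any excursion $e = z_0, z_1, \ldots, z_{n-1}, y$ with $z_1, \ldots, z_{n-1} \notin P$ into the excursion $y, z_{n-1}, \ldots, z_1, e$ and invoking the symmetry of $\mu$. Admissibility on $P$ follows from admissibility of $\mu$ on $\Gamma$: any $y \in P$ is reachable from $e$ by a $\mu$-path in $\Gamma$, which can be decomposed into excursions between successive visits to $P$, each of which contributes positive mass to $p_{r,P}$ at some element of $P$; composing these, every element of $P$ lies in the semigroup generated by the support of $p_{r,P}$.

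Next, let $\tilde{\rho}$ be the spectral radius of $\tilde{p}_{r,P}$. Since $P$ is amenable and $\tilde{p}_{r,P}$ is symmetric and admissible on $P$, Kesten's theorem (as used in the form of \cite[Theorem~12.5]{Woessbook}) gives $\tilde{\rho} = 1$. Unwinding the normalization, we have $p_{r,P}^{(n)}(e,e) = t_{r,P}^{n}\, \tilde{p}_{r,P}^{(n)}(e,e)$ for every $n$, so
\[
  G_{r,P}(e,e \mid 1) \;=\; \sum_{n \geq 0} p_{r,P}^{(n)}(e,e) \;=\; \sum_{n \geq 0} t_{r,P}^{n}\, \tilde{p}_{r,P}^{(n)}(e,e) \;=\; \widetilde{G}_{r,P}(e,e \mid t_{r,P}).
\]
By \cite[Lemma~4.4]{DG21}, the left-hand side equals $G(e,e \mid r)$, which is finite for every $r \leq R$ by the standing transience-at-the-spectral-radius assumption. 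Hence $\widetilde{G}_{r,P}(e,e \mid t_{r,P}) < \infty$, so $t_{r,P}$ lies in the radius of convergence of $\widetilde{G}_{r,P}(e,e \mid \cdot)$, which is $\tilde{\rho}^{-1} = 1$. Therefore $t_{r,P} \leq 1$.

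Applying Proposition~\ref{omegaPlesst} now yields $\omega_P(r) \leq 0$, which is exactly the conclusion. The only slightly delicate point is the verification of symmetry/admissibility of $\tilde{p}_{r,P}$, together with the clean invocation of Kesten's theorem at the borderline value $r = R$; the latter is precisely the content of the transience-at-the-spectral-radius assumption that has been maintained throughout the paper, so there is no real obstacle.
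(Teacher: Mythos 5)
Your proof is correct and follows essentially the same route as the paper's: both reduce to Proposition~\ref{omegaPlesst} by showing $t_{r,P}\leq 1$, using the finiteness of $G(e,e|r)$ (transience at the spectral radius) to bound the spectral radius of $p_{r,P}$ and Kesten's theorem via \cite[Corollary~12.5]{Woessbook} to identify the spectral radius of the normalized kernel $t_{r,P}^{-1}p_{r,P}$ as $1$. The only difference is that you spell out the verification that the normalized kernel is symmetric and admissible and the power-series bookkeeping $G_{r,P}(e,e|1)=\widetilde{G}_{r,P}(e,e|t_{r,P})$, which the paper leaves implicit.
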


\begin{proof}
  Since $G(e, x | r) < \infty$, we deduce that the spectral radius of $p_{r, P}$ is at most $1$.  
  By~\cite[Corollary~12.5]{Woessbook} and the fact that $P$ is amenable, the spectral radius of $t_{r, P}^{-1} p_{r, P}$ is $1$ and hence $t_{r, P} \leq 1$.  The result follows from Proposition~\ref{omegaPlesst}.  
\end{proof}

Note that we do not need to assume that $\mu$ is finitely supported in this proposition, although we need this assumption in the following corollary, which  also relies on \cite[Theorem~1.8]{DWY} mentionned above, where the assumption is crucially used.

\begin{cor}\label{coramenableparabolic}
Let $\Gamma$ be a relatively hyperbolic group endowed with a finitely supported admissible and symmetric probability measure $\mu$.
Assume that maximal parabolic subgroups of $\Gamma$ are amenable.
Then $\Gamma$ has a parabolic gap for the Green function and so~(\ref{roughmultiplicativity}) and~(\ref{purelyexponentialgrowth}) hold.
\end{cor}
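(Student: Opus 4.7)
The plan is to combine Proposition~\ref{propamenableparabolic} with positivity of $\omega_\Gamma$ on $(1,R]$. First I would apply Proposition~\ref{propamenableparabolic} to each $P \in \mathbb P_0$: since $P$ is amenable, we obtain $\omega_P(r) \le 0$ for every $r \le R$, including the endpoint $r=R$. This part is unconditional and is where all the analytic work sits: it uses only that the spectral radius of the first return kernel $p_{r,P}$ is at most $1$, combined with Kesten's criterion for amenable $P$ to force $t_{r,P}\le 1$.

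To upgrade $\omega_P(r)\le 0$ to the strict inequality $\omega_P(r)<\omega_\Gamma(r)$ required by the parabolic gap, I need $\omega_\Gamma(r) > 0$ for every $r\in(1,R]$. Since the Bowditch boundary is infinite, $\Gamma$ is non-elementary; for $r\in(1,R]$ the branching random walk on $\Gamma$ with offspring distribution of mean $r$ is transient (by \cite{BP94,GM06}), so by \cite{BM12} the number $M_n$ of vertices of $S_n$ ever visited grows at a strictly positive exponential rate almost surely. By \cite[Theorem~1.1]{DWY}, this rate agrees almost surely with $\omega_\Gamma(r)$, which is therefore strictly positive for every $r\in(1,R]$.

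Combining the two steps gives $\omega_P(r) \le 0 < \omega_\Gamma(r)$ for every $P \in \mathbb P_0$ and every $r\in(1,R]$, which is precisely the parabolic gap for the Green function. The bounds (\ref{roughmultiplicativity}) and (\ref{purelyexponentialgrowth}) then follow immediately from \cite[Theorem~1.8]{DWY} applied to the symmetric finitely supported admissible measure $\mu$.

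The only real subtlety is the treatment of the endpoint $r=R$, where in \cite{DWY} additional hypotheses on $\mu$ were needed. Proposition~\ref{propamenableparabolic} bypasses these hypotheses by working directly at $R$ via the sub-Markov/Kesten argument rather than by a limiting procedure from $r<R$; likewise the positivity input $\omega_\Gamma(R)>0$ via \cite{BM12} and \cite[Theorem~1.1]{DWY} is valid directly at $R$, so no continuity of $\omega_\Gamma$ at the spectral radius is needed.
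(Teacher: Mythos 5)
Your proof is correct and follows essentially the route the paper intends: Proposition~\ref{propamenableparabolic} (via Proposition~\ref{omegaPlesst}) gives $\omega_P(r)\le 0$ for every $r\le R$ including $r=R$, the strict inequality $\omega_P(r)<\omega_\Gamma(r)$ for $r\in(1,R]$ comes from positivity of $\omega_\Gamma$ there, and (\ref{roughmultiplicativity}), (\ref{purelyexponentialgrowth}) follow from \cite[Theorem~1.8]{DWY}, which is where finite support of $\mu$ is actually used. The only stylistic difference is that you justify $\omega_\Gamma(r)>0$ through the branching random walk results of \cite{BM12} and \cite{DWY}, whereas this also follows elementarily from $\sum_{|x|\le n}G(e,x|r)\ge\sum_{k\le n/c}r^k$ for $c$ bounding the support of $\mu$; either way the argument closes.
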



Note that $t_{1, P}$ is the probability that the random walk associated to $\mu$ eventually returns to $P$. We see that $t_{1, P} < 1$ for all $P \in \mathbb{P}_0$.
Otherwise, the random walk would visit $P$ infinitely many times with positive probability, which in turn would imply that it accumulates at the parabolic limit points fixed by $P$.
This would contradict the fact that the random walk almost surely converges to a conical limit point \cite[Theorem~9.14]{GGPY}.

More generally, consider a branching random walk on $\Gamma$ whose step distribution is given by $\mu$ and with mean offspring $r$.
Following \cite{CandelleroGilchMuller}, by collecting all particles returning to $P$, one gets a Galton-Watson process with mean offspring $t_{r,P}$, see precisely the proof of \cite[Proposition~4.3]{CandelleroGilchMuller}.
Consequently, the branching random walk returns to $P$ infinitely many times if and only if $t_{r,P}>1$.
In such case, the branching random walk must accumulate in $P$.
Moreover, the same arguments as in \cite[Proposition~4.4]{CandelleroGilchMuller} show that there are almost surely infinitely many cosets $gP$, such that the branching random walk accumulates in $gP$.
On the contrary, if $t_{r,P}\leq 1$, then for all coset $gP$, the branching random walk eventually leaves $gP$.
This follows from the fact that $g$ is visited finitely many time almost surely and that starting a branching random walk at $g$, it comes back to $gP$ only finitely many times, see also \cite[Proposition~4.5]{CandelleroGilchMuller} for a more detailed proof.

Define now
\[
r_{\mathcal{P}} = \sup \{ r > 1 \colon t_{r, P} \leq 1 \text{ for all } P \in \mathbb{P}_0 \}.
\]
Then for $1<r\leq r_\mathcal{P}$, we have $\omega_P(r)\leq 0$ for all $P\in \mathbb P_0$, hence
$\Gamma$ has a parabolic gap for the Green function and $H_r(n)$ has purely exponential growth.
Moreover, $r_\mathcal{P}$ is the transition for the branching random walk spending infinite times in maximal parabolic subgroups : if $r \leq r_\mathcal{P}$, then the branching random walk eventually leaves every coset $gP$, $P\in \mathbb P_0$, while if $r>r_\mathcal{P}$, it accumulates in infinitely many cosets $gP$ for at least one of the $P\in \mathbb P_0$. 

Recall that the limit set $\Lambda$ of a branching random walk inside the Bowditch boundary $\partial\Gamma$ is the set of accumulation points in $\partial\Gamma$ of the trace of $\mathrm{BRW}(\Gamma,\nu,\mu)$, which is the set of elements of $\Gamma$ that are ever visited by the branching random walk.
We take the occasion to derive the following consequence which sheds some light on the geometry of the limit set.
\begin{prop}\label{propparaboliclimitset}
Let $\Gamma$ be a relatively hyperbolic group endowed with a finitely supported admissible and symmetric probability measure $\mu$.
Consider a probability measure $\nu$ on $\mathbb N$ with mean $r\leq R$.
Let $\Lambda$ be the limit set inside the Bowditch boundary of $\Gamma$ of the branching random walk associated with $\mu$ and $\nu$.
\begin{enumerate}
    \item If $r<r_\mathcal{P}$, then almost surely, $\Lambda$ does not contain any parabolic limit point.
    \item If $r>r_\mathcal{P}$, then almost surely, $\Lambda$ contains an infinite number of parabolic limit points.
\end{enumerate}
\end{prop}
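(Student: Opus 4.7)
The plan is to handle the two cases separately, reducing each to the Galton--Watson description of the branching random walk restricted to cosets of maximal parabolic subgroups that was recalled before the statement. For part~$(1)$, I would start from the observation that $t_{r,P}$, as a power series in $r$ with non-negative coefficients, is strictly increasing on its radius of convergence by admissibility of $\mu$, so $r<r_{\mathcal P}$ forces $t_{r,P}<1$ strictly for every $P\in\mathbb P_0$. Combining the identity $G(e,z|r)=G_{r,P}(e,z|1)$ of \cite[Lemma~4.4]{DG21} for $z\in P$ with $P$-invariance of $p_{r,P}$ (whose row sums equal $t_{r,P}$) and summing convolution powers yields $\sum_{z\in P}G(e,z|r)=1/(1-t_{r,P})<\infty$.

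I would then pass from $P$ to a general left coset $gP$ via the weak relative Ancona inequalities (Lemma~\ref{weakAncona}). Let $y_0\in gP$ be a point realizing $d(e,gP)$; geodesics from $e$ to a far away $x\in gP$ pass within bounded distance of $y_0$, and $y_0$ sits near a transition point on such geodesics in the sense of Lemma~\ref{ConCharLem}, so $G(e,x|r)\prec G(e,y_0|r)\,G(y_0,x|r)$. Summing in $x\in gP$ and using left-invariance of the Green function to identify $\sum_{x\in gP}G(y_0,x|r)$ with $\sum_{z\in P}G(e,z|r)$, I obtain $\sum_{x\in gP}G(e,x|r)\prec G(e,y_0|r)/(1-t_{r,P})<\infty$. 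A standard Harnack-type estimate based on admissibility of $\mu$ (giving $G(e,x|r)\asymp_\eta G(e,y|r)$ whenever $|x^{-1}y|\leq \eta$) extends this to any thickening, so $\sum_{x\in N_\eta(gP)}G(e,x|r)<\infty$ for every $\eta>0$. Since this sum equals the expected number of visits of the branching random walk to $N_\eta(gP)$, the walk visits $N_\eta(gP)$ only finitely often almost surely.

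To close part~$(1)$, I would appeal to the neighborhood description of parabolic points recalled in the proof of Theorem~\ref{divergencetypeandatoms}: for the parabolic limit point $\xi$ fixed by $gPg^{-1}$, neighborhoods of $\xi$ in $\partial\Gamma$ have the form $\{\xi\}\cup\{pK:p\in gPg^{-1},\ |p|\geq n\}$ for a compact fundamental domain $K$, so any sequence in $\Gamma$ converging to $\xi$ lies eventually in $N_\eta(gP)$ for some fixed $\eta$. Hence no such sequence of visited vertices can exist and $\xi\notin\Lambda$; the set of parabolic limit points being countable, a countable union of null events gives the claim. For part~$(2)$, the discussion preceding the statement already provides some $P\in\mathbb P_0$ with $t_{r,P}>1$ and, almost surely, infinitely many distinct cosets $g_iP$ in which the branching random walk accumulates. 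Each such coset contains infinitely many visited vertices whose word lengths tend to infinity, and since the limit set of $P$ in $\partial\Gamma$ reduces to its unique parabolic fixed point $\xi_P$, these vertices accumulate in $\partial\Gamma$ at $g_i\xi_P$. Distinct cosets of $P$ correspond to distinct parabolic points because $P$ coincides with the stabilizer of $\xi_P$, producing infinitely many parabolic points in $\Lambda$.

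The main obstacle is the upgrade from the clean identity $\sum_{z\in P}G(e,z|r)=1/(1-t_{r,P})$ to the thickened coset bound: the Ancona inequality (Lemma~\ref{weakAncona}) is the key tool but must be applied with a careful choice of $y_0$ uniformly close to a transition point on the geodesics $[e,x]$ as $x$ ranges over $gP$, which relies on the transition point geometry recalled in Lemma~\ref{ConCharLem} and in \cite[Corollary~5.10]{GePoGGD}, and the Harnack extension to thickenings must yield constants uniform in $x$.
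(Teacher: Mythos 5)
Part~(2) of your argument matches the paper's. The problem is in part~(1): the step ``any sequence in $\Gamma$ converging to $\xi$ lies eventually in $N_\eta(gP)$ for some fixed $\eta$'' is false, and the whole reduction to finiteness of $\sum_{x\in N_\eta(gP)}G(e,x|r)$ collapses with it. A sequence $x_n$ converges to the parabolic point $\xi$ fixed by $gPg^{-1}$ precisely when the projections of $x_n$ to $gP$ tend to infinity; the points $x_n$ themselves may stay at unbounded distance from $gP$. This is visible in the very description of neighborhoods you invoke from the proof of Theorem~\ref{divergencetypeandatoms}: the fundamental domain $K$ there is compact in the compactification $\Gamma\cup\partial\Gamma\setminus\{q\}$, not bounded in the word metric, so the sets $\{\xi\}\cup\{pK:|p|\ge n\}$ are not contained in any metric thickening of the coset. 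The paper makes exactly this point right after the proposition: the branching random walk can visit infinitely many partial cones $\Omega(x,C)$ with $x\in gP$ \emph{without entering $gP$ at all}, and hence could a priori accumulate at $\xi$ while your expected-visit count to $N_\eta(gP)$ stays finite.

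This is why the paper's proof cannot avoid Proposition~\ref{proppartialcone}, the appendix estimate $\mathbf P(\mathrm{BRW}\text{ visits }\Omega(x,C))\le C_1(1+|x|^\beta)G(e,x|r)$, which bounds the probability of ever entering the whole partial cone based at $x$, not just a bounded neighborhood of $x$ or of the coset. With that in hand one sets $A_n=\{\mathrm{BRW}\text{ visits }\Omega(x,C)\text{ for some }x\in gP,\ |g^{-1}x|=n\}$, applies Ancona at the transition point $g$ to get $\mathbf P(A_n)\prec (1+n)^\beta H_{P,r}(n)$, and uses $\omega_P(r)<0$ (Proposition~\ref{omegaPlesst}, from $t_{r,P}<1$) to make $\sum_n\mathbf P(A_n)$ converge, after which Borel--Cantelli finishes. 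Your computation $\sum_{z\in P}G(e,z|r)=1/(1-t_{r,P})$ and the Ancona/Harnack transfer to $gP$ are correct and give a (polynomially weaker but sufficient) substitute for the exponential decay of $H_{P,r}(n)$, but they only control the event of visiting the coset, which is strictly smaller than the event of visiting the partial cones; to repair the proof you must either prove something like Proposition~\ref{proppartialcone} or find another way to control visits to the full ``shadow'' of $gP$.
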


Note that $\Lambda$ cannot contain all parabolic limit points in case~(2).
Otherwise, since parabolic limit points are dense, $\Lambda$ would coincide with the whole Bowditch boundary, but this is impossible since its Hausdorff dimension with respect to the shortcut distance is at most half the Hausdorff dimension of the whole boundary by \cite[Theorem~1.1, Theorem~1.2]{DWY}.

This result is a consequence of the following one.
For $x\in \Gamma$ and $C\geq0$, we denote by $\Omega(x,C)$ the \textit{partial cone} at $x$ of width $C$, which is the set of points $y\in \Gamma$ such that $x$ is within $C$ of a transition point on a geodesic from $e$ to $y$.

\begin{prop}\label{proppartialcone}
Let $\Gamma$ be a relatively hyperbolic group endowed with a finitely supported admissible and symmetric probability measure $\mu$.
For every $r\in [1,R]$, there exist  $\beta>0$ with the following property.
Consider a probability measure $\nu$ on $\mathbb N$ with mean $r\leq R$.
For any $x\in \Gamma$, the probability that the branching random walk visits $\Omega(x,C)$ is at most $C_1(1+|x|^\beta )G(e,x|r)$,
where $C_1$ is a constant.
\end{prop}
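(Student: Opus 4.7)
The proof will combine the strong Markov property of the branching random walk with the weak relative Ancona inequalities of Lemma~\ref{weakAncona}. Let $\tau$ denote the first time some particle of the BRW enters the partial cone $\Omega(x,C)$, and let $Y\in\Omega(x,C)$ be its position. Conditionally on $(\tau,Y)$, the descendants of this particle form an independent branching random walk starting at $Y$, whose expected number of visits to $x$ equals $G(Y,x\mid r)$. Since the total expected number of visits to $x$ by the whole BRW is $G(e,x\mid r)$, one gets the fundamental inequality
\[
\mathbb{E}\bigl[G(Y,x\mid r)\mathbf{1}_{\tau<\infty}\bigr] \;\leq\; G(e,x\mid r).
\]

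I would then split the event $\{\tau<\infty\}$ according to whether $d(Y,x)\leq M$ or $d(Y,x)>M$, with a threshold $M$ to be chosen at the end. For the \emph{near} part, the lower bound~(\ref{lowerboundquasiisometry}), combined with translation invariance, gives $G(Y,x\mid r)\geq C_2\mathrm{e}^{-\alpha_2 d(Y,x)}\geq C_2\mathrm{e}^{-\alpha_2 M}$, whence
\[
\mathbb{P}\bigl(V_x\cap\{d(Y,x)\leq M\}\bigr)\;\leq\; C_2^{-1}\mathrm{e}^{\alpha_2 M}\,G(e,x\mid r).
\]
For the \emph{far} part, I would use a union bound over $y\in\Omega(x,C)$ with $d(x,y)>M$, the Markov estimate $\mathbb{P}(\text{BRW visits }y)\leq G(e,y\mid r)$, and Lemma~\ref{weakAncona} to factor $G(e,y\mid r)\leq C\,G(e,x\mid r)G(x,y\mid r)$. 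Together with the exponential upper bound~(\ref{upperboundquasiisometry}) and the volume estimate $\sharp S_n\leq C\mathrm{e}^{vn}$, this bounds the far contribution by a multiple of $G(e,x\mid r)$ times a geometric tail in $M$.

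Balancing the two parts by choosing $M$ proportional to $\log(1+|x|)$ turns the exponential prefactor $\mathrm{e}^{\alpha_2 M}$ into a polynomial $(1+|x|)^\beta$ with $\beta=\beta(r)>0$, and makes the far part a constant multiple of $G(e,x\mid r)$, which together yields the desired inequality. The main obstacle will be to bound the far contribution uniformly for $r$ close to $R$: the individual exponential decay rate $\alpha_1$ coming from~(\ref{upperboundquasiisometry}) degenerates relative to the volume growth $v$ as $r\uparrow R$, so the crude volume bound must be replaced by a sharper estimate exploiting that $y\in\Omega(x,C)$ forces $|y|=|x|+d(x,y)+O(1)$ by the hyperbolicity of the Cayley graph through transition points. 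This sharper bound on the translated cone sum $\sum_{z\in x^{-1}\Omega(x,C),\,|z|>M} G(e,z\mid r)$ is what ultimately determines the admissible exponent $\beta$ and absorbs the remaining $\mathrm{e}^{-\alpha_1|x|}$ factor into $G(e,x\mid r)$.
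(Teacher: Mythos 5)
Your ``near'' part is fine, and in fact the freezing/conditioning inequality $\mathbb{E}\bigl[\sum_{Y\in\mathcal Z}G(Y,x\mid r)\bigr]\le G(e,x\mid r)$ is a slicker route to the factor $|x|^{\beta}G(e,x\mid r)$ than the paper's direct summation over $B(x,\kappa\log|x|)$. The ``far'' part, however, has a genuine gap that your proposed fix does not address. The union bound costs you
\[
\sum_{y\in\Omega(x,C),\,d(x,y)>M}G(e,y\mid r)\;\asymp\;G(e,x\mid r)\sum_{y\in\Omega(x,C),\,d(x,y)>M}G(x,y\mid r),
\]
and the inner sum over the sphere $d(x,y)=n$ of the partial cone is comparable to $H_r(n)$, whose growth rate is $\omega_\Gamma(r)\ge 0$ for every $r\in[1,R]$. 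So $\sum_{n>M}$ of it diverges for \emph{all} $r$ in the stated range, not just for $r$ near $R$: the expected number of distinct points of $\Omega(x,C)$ ever visited by a transient (branching) random walk is infinite, so no union bound with the plain Green function can close. The identity $|y|=|x|+d(x,y)+O(1)$ only re-derives the Ancona factorization; it does not produce the missing geometric tail.

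The idea you are missing is the one the paper's appendix is built on: instead of bounding $\mathbf P(\text{visit }y)$ by $G(e,y\mid r)$ for every far $y$, one freezes particles at their \emph{first entrance} to a region $U(x)\supseteq\Omega(x,C)$ and bounds the expected number of frozen particles by the \emph{restricted} Green function $G(e,z;A^c\mid r)$ along genealogy paths that avoid the region up to that time. The point is geometric: first entrance at a far point $z$ forces the path to avoid a ball of radius comparable to $\epsilon\,d(x,z)$ (or $\kappa\log|x|$) centered at a transition point, and by \cite[Proposition~3.5]{DG21} and \cite[Lemmas~2.11--2.12]{DWY} the Green function restricted to avoid such a ball decays super-exponentially (like $\mathrm{e}^{-\mathrm{e}^{\delta\rho}}$ in the radius $\rho$), which beats the volume growth uniformly in $r\le R$. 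This is what Lemmas~\ref{TraceAvoidTransBall} and~\ref{TraceAvoidTransBall2} do. Finally, there is a whole case (the event $E_3$) absent from your plan: far first-entrance points $z$ for which the middle of $[x,z]$ contains no transition point because the geodesic travels deep inside a parabolic neighborhood $N_\eta(P_z)$; handling it requires splitting according to where the BRW enters $N_\eta(P_z)$ and using the deviation estimates for projections to parabolic cosets. Without the restricted-Green-function mechanism and this parabolic analysis, the proof does not go through.
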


The proof of this proposition relies mostly on material from \cite{DWY} and its proof is postponed to the Appendix.

\begin{proof}[Proof of Proposition~\ref{propparaboliclimitset}]
  
As we saw above, if $r>r_\mathcal{P}$, then the branching random walk accumulates in infinitely many cosets $gP$ for at least one $P\in \mathbb P_0$.
In particular, the limit set $\Lambda$ contains all parabolic limit points fixed by $gPg^{-1}$, for every such coset $gP$.  
Thus, we only need to prove~(1) to conclude and we assume that $r<r_\mathcal{P}$.

Recall that a sequence $x_n$ in $\Gamma$ converges to a parabolic limit point $\xi$ fixed by $gPg^{-1}$, $P\in \mathbb P_0$, if and only if the sequence of projections of $x_n$ on $gP$ tends to infinity.
Thus, $\xi\in \Lambda$ if and only if the branching random walk visits infinitely many $\Omega(x,C)$, with $x\in gP$.
Fix $g\in \Gamma$ and $P\in \mathbb P_0$.
Denote by $A_n$ the event
$$A_n=\big\{\mathrm{BRW}(\Gamma,\nu,\mu)\text{ visits } \Omega(x,C) \text{ for some } x\in gP,\text{ with } |g^{-1}x|=n\big\}.$$
Then, by Proposition~\ref{proppartialcone},
$$\mathbf P(A_n)\leq C_1(1+n)^\beta \sum_{x\in P,|x|=n}G(e,gx|r).$$
Since $g$ is a transition point on a relative geodesic from $e$ to $gx$, by relative Ancona inequalities we get
$$\mathbf P(A_n)\leq C_2(1+n)^\beta H_{P,r}(n).$$
Since $t_{r,P}<1$, $\omega_P(r)<0$ by Proposition~\ref{omegaPlesst} and so $H_{P,r}(n)$ decays exponentially fast as $n$ goes to infinity.
This proves that
$$\sum_{n}\mathbf P(A_n)<\infty.$$
By the Borel-Cantelli lemma, we deduce that almost surely, the branching random walk only visits finitely many $\Omega(x,C)$, $x\in gP$, hence $\xi\notin \Lambda$.
Since parabolic limit points are countable, this settles the proof.
\end{proof}


The critical case $r=r_\mathcal{P}$ remains open.
In the context of free products, the branching random walk needs to visit $gP$ infinitely many times in order to accumulate at $\xi$.
As a consequence, the authors of \cite{CandelleroGilchMuller} prove in Propositions~4.3,~4.4 and~4.5 that for $r=r_\mathcal{P}$, almost surely, $\Lambda$ does not contain any parabolic limit point.
However, for arbitrary relatively hyperbolic groups, the branching random walk can visit infinitely many partial cones $\Omega(x,C)$, $x\in gP$, without entering $gP$ at all.
Thus, we might need new material to figure this critical case.

\subsection{Convergent Poincar\'e series}\label{SectionconvergentPoincare}
Let $\Gamma$ be a relatively hyperbolic group and let $\mu$ be a finitely supported symmetric admissible probability measure on $\Gamma$.
We consider the Poincaré series $\Theta_\Gamma(r,s)$ defined above and for $P$ a maximal parabolic subgroup, the Poincar\'e series $\Theta_P(r,s)$ defined by
$$\Theta_P(r,s)=\sum_{x\in P}G(e,x|r)\mathrm{e}^{-sd(e,x)}.$$

In \cite[Example~C]{DWY}, we proved that if a finitely generated group $\Gamma_0$ can be endowed with a symmetric finitely supported admissible probability measure $\mu_0$ such that $\Theta_{\Gamma_0}(r_0,\omega_{\Gamma_0}(r_0))$ is convergent for some $r_0<R_0$, then the free product
$\Gamma=\Gamma_0*\mathbb Z^d$ can also be endowed with a symmetric finitely supported admissible probability measure $\mu$ such that $\Theta_{\Gamma}(r,\omega_{\Gamma}(r))$ is convergent for some $r\leq R$ depending on $r_0$.
Here, $R_0$ denotes the inverse of the spectral radius of $\mu_0$ and $R$ the inverse of the spectral radius of $\mu$.

Note that in this situation, $\Gamma$ is relatively hyperbolic with respect to the conjugates of $\Gamma_0$ and $\mathbb Z^d$.
When considered as a maximal parabolic subgroup of $\Gamma$, we will write $P$ for $\Gamma_0$ in the sequel, for sake of consistency with the previous sections.

The question of whether such a couple $(\Gamma_0,\mu_0)$ exists was left unanswered, but we announced that it was possible to construct one.
We provide the details of this construction here and prove a more precise result.

\medskip
Let $\mathbb F_n$ be the free group with $n$ generators and let $\Gamma_0=\mathbb F_n\times \mathbb F_m$, $n\neq m$.
The Cayley graph of $\Gamma_0$ is a Cartesian product of two regular trees $T_{l_1}$, $T_{l_2}$  {with respective degrees $l_1 = 2n$ and $l_2 = 2m$}.
We consider the measure $\mu_0$ on $\Gamma_0$ defined by~(\ref{defmubitree}).
We deduce the following from Theorem~\ref{thmbitree}.

\begin{prop}\label{transitionphaseparabolicsubgroup}
There exists $1<r_0 <R_0$ such that for $r\leq r_0$, the Poincar\'e series $\Theta_{\Gamma_0}(r,\omega_{\Gamma_0}(r))$ is divergent and for $r_0<r<R_0$, it is convergent.
\end{prop}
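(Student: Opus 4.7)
The plan is to derive the proposition as a direct corollary of Theorem~\ref{thmbitree}. First, I would identify $\Gamma_0 = \mathbb F_n \times \mathbb F_m$ endowed with its standard finite generating set (the union of the generators of each factor together with their inverses): its Cayley graph is precisely the Cartesian product $T_{l_1} \times T_{l_2}$ with $l_1 = 2n$ and $l_2 = 2m$, and the measure $\mu_0$ defined by~(\ref{defmubitree}) is exactly the measure to which Theorem~\ref{thmbitree} applies. Since $n \neq m$, up to swapping the two factors we may assume $l_1 > l_2$, and so Theorem~\ref{thmbitree} provides a value $r_0 \in (1, R_0)$ at which the asymptotic behavior of $H_r(n)$ undergoes a phase transition.

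The next step is to plug these asymptotics into the Poincar\'e series. Writing
\[
\Theta_{\Gamma_0}\bigl(r, \omega_{\Gamma_0}(r)\bigr) = \sum_{n \geq 0} H_r(n) \, \mathrm{e}^{-n \omega_{\Gamma_0}(r)},
\]
Theorem~\ref{thmbitree} yields the following comparisons. For $r < r_0$, $H_r(n) \asymp \mathrm{e}^{n\omega_{\Gamma_0}(r)}$ so the general term is bounded below by a positive constant and the series diverges. At $r = r_0$, $H_r(n) \asymp n^{-1} \mathrm{e}^{n\omega_{\Gamma_0}(r)}$ so the series is comparable to the harmonic series and still diverges. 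For $r_0 < r < R_0$, $H_r(n) \asymp n^{-3/2} \mathrm{e}^{n\omega_{\Gamma_0}(r)}$ so the series is comparable to $\sum_n n^{-3/2}$ and converges.

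Combining these three cases gives exactly the statement of the proposition. There is essentially no obstacle to this argument: the hard work has already been done in the proof of Theorem~\ref{thmbitree}, which established the precise polynomial correction factors $1$, $n^{-1}$, $n^{-3/2}$ via the saddle-point analysis of the function $\Psi(\lambda)$. The only thing that really needs to be checked here is the elementary observation that these three polynomial factors fall on opposite sides of the convergence threshold for $\sum_n n^{-\alpha}$, with $\alpha = 0, 1$ being divergent and $\alpha = 3/2$ being convergent; this is why the phase transition for the Poincar\'e series occurs at the same critical value $r_0$ as the phase transition in the asymptotics of $H_r(n)$.
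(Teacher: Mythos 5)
Your argument is correct and is exactly the deduction the paper intends: the proposition is stated immediately after the sentence ``We deduce the following from Theorem~\ref{thmbitree}'' with no further proof, and your three-case comparison of $H_r(n)\mathrm{e}^{-n\omega_{\Gamma_0}(r)}$ with $\sum_n n^{-\alpha}$ for $\alpha=0,1,3/2$ is the intended (and complete) justification. Nothing is missing.
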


We now recall how the measure $\mu$ is constructed on the free product $\Gamma=\Gamma_0*\Gamma_1$ where $\Gamma_1=\mathbb Z^d$.
Let $\mu_1$ be any finitely supported symmetric admissible probability measure on $\mathbb Z^d$.
Following \cite{DWY}, we assume that $d\geq3$ for convenience, so that the random walk associated with $\mu_1$ is transient at the spectral radius, i.e.\ the Green function $G_{\mu_1}(e,e|R_1)$ is finite, where $R_1$ is the inverse of the spectral radius of $\mu_1$.

For $0\leq \alpha\leq 1$, we set
$$\mu_\alpha=\alpha \mu_1+(1-\alpha)\mu_0.$$
For simplicity, we write $\mu=\mu_\alpha$ below.
We write $G$ for the Green function associated with $\mu$ and $G_{i}$ for the Green function associated with $\mu_i$.
By \cite[(10),(11)]{DWY}, there exist two numbers $w_{0,\alpha,r}$ and $w_{1,\alpha,r}$ and continuous non-decreasing functions $\zeta_{i,\alpha}$ of $r\leq R$ such that
\begin{equation}\label{equationG0}
    G(e,x|r)=\frac{1}{1-w_{0,\alpha,r}}G_0(e,x|\zeta_{0,\alpha}(r)),\ x\in \Gamma_0
\end{equation}
and
\begin{equation}\label{equationG1}
    G(e,x|r)=\frac{1}{1-w_{1,\alpha,r}}G_1(e,x|\zeta_{1,\alpha}(r)),\ x\in \Gamma_1=\mathbb Z^d.
\end{equation}
Furthermore, we have $\zeta_{\alpha, 0}(r) = \frac{(1 - \alpha) r}{1 - w_{0,\alpha,r}}$, and 
\begin{equation}\label{wralpha}
w_{0, \alpha,r} = \sum_{n \geq 1} \mathbf{P} \left( X_n = e, X_k \neq e, 1 \leq k < n, \text{ first step chosen using } \alpha \mu_1 \right) r^n. 
\end{equation}
Similar expressions hold for $w_{1,\alpha,r}$ and $\zeta_{\alpha,1}(r)$.

\begin{lem}
For fixed $\alpha$, the functions $w_{i,\alpha,r}$ and $\zeta_{\alpha,i}(r)$ are (strictly) increasing in $r$.
\end{lem}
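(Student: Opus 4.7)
The plan is to exploit the explicit probabilistic expansion~(\ref{wralpha}). The key observation is that this presents $w_{0,\alpha,r}$ as a power series in $r$ with non-negative coefficients, and the same is true for $w_{1,\alpha,r}$ (which, by the remark following~(\ref{wralpha}), admits an analogous expansion with the role of $\alpha\mu_1$ played by $(1-\alpha)\mu_0$). Any such series is automatically non-decreasing in $r$ on its interval of convergence, so the only real content of the lemma is the word ``strictly''.

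For strictness, I would check that at least one coefficient with index $n\ge 1$ is positive; it suffices to treat $n=2$. Since $\mu_1$ is admissible and symmetric on $\mathbb Z^d$, picking any $y\ne e$ in its support gives $\mu_1(y),\mu_1(y^{-1})>0$; the event ``first step, chosen via $\alpha\mu_1$, lands at $y$, and the second step returns to $e$'' then contributes to the $n=2$ coefficient in~(\ref{wralpha}) and has strictly positive probability. The symmetric argument for $w_{1,\alpha,r}$ uses any nontrivial element in the support of $\mu_0$. Termwise differentiation then gives $\partial_r w_{i,\alpha,r}>0$ on the open interval of convergence.

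To pass from this to the rescaling parameters, I would use the explicit formula $\zeta_{\alpha,0}(r)=(1-\alpha)r/(1-w_{0,\alpha,r})$ and its analogue for $i=1$. Since $w_{0,\alpha,r}<1$ on the relevant range of $r$ (forced by finiteness of $G(e,e|r)$ via~(\ref{equationG0}) together with positivity of $G_0(e,e|\zeta_{0,\alpha}(r))$), the denominator $1-w_{0,\alpha,r}$ is positive and strictly decreasing in $r$, while the numerator $(1-\alpha)r$ is strictly increasing in the nondegenerate case $0<\alpha<1$. The ratio $\zeta_{\alpha,0}$ is therefore strictly increasing, and the same reasoning applies to $\zeta_{\alpha,1}$.

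I do not foresee any substantial obstacle: the whole argument is essentially an unpacking of~(\ref{wralpha}), and the only point requiring a moment's thought is the verification that at least one coefficient of the defining power series is strictly positive, which reduces to a one-line admissibility/symmetry argument as above.
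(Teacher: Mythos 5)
Your proof is correct and follows essentially the same route as the paper: the authors likewise observe that $w_{i,\alpha,r}$ is a power series in $r$ with positive coefficients and then deduce the monotonicity of $\zeta_{\alpha,i}$ from the explicit formula $\zeta_{\alpha,0}(r)=(1-\alpha)r/(1-w_{0,\alpha,r})$. Your write-up merely supplies the details (strict positivity of one coefficient, positivity of the denominator) that the paper leaves implicit.
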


\begin{proof}
The functions $w_{i,\alpha,r}$ are power series in $r$ with positive coefficients, so they are increasing.
It follows from the above expression of $\zeta_{\alpha,i}$ that these functions are increasing too.
\end{proof}

We prove here the following.
\begin{thm}\label{transitionphasePoincare}
If $\alpha$ is small enough, then there exist $r_*(\alpha)<r_\sharp(\alpha) <R$ so that the following holds.
For $r\leq r_*(\alpha)$, the Poincaré series $\Theta_{\Gamma}(r,\omega_{\Gamma}(r))$ is divergent and $\Gamma$ has a parabolic gap for the Green function.
On the other hand, at $r=r_\sharp(\alpha)$, it is convergent and $\omega_P(r)=\omega_\Gamma(r)$, where $P=\Gamma_0$.
\end{thm}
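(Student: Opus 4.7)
The plan is to identify the values $r_*(\alpha) < r_\sharp(\alpha)$ via two independent arguments connected by the free product structure $\Gamma = \Gamma_0 * \mathbb{Z}^d$ and Proposition~\ref{transitionphaseparabolicsubgroup}. Since $\Gamma$ is a free product, the word metric on $\Gamma$ restricted to $\Gamma_0$ coincides with the intrinsic word metric on $\Gamma_0$; combined with~(\ref{equationG0}), this yields
$$\omega_P(r) = \omega_{\Gamma_0}(\zeta_{0,\alpha}(r)), \qquad \Theta_P(r,s) = \frac{1}{1-w_{0,\alpha,r}}\,\Theta_{\Gamma_0}(\zeta_{0,\alpha}(r), s).$$
The function $\zeta_{0,\alpha}$ is continuous and strictly increasing in $r$; at $\alpha = 0$ it reduces to the identity with $R(0) = R_0$, so for $\alpha$ sufficiently small $\zeta_{0,\alpha}((1,R))$ still meets $(r_0, R_0)$. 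I would fix such an $\alpha$ throughout.

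For the convergent end, I would pick $r_\sharp(\alpha) \in (1,R)$ with $\zeta_{0,\alpha}(r_\sharp(\alpha)) \in (r_0, R_0)$. By Proposition~\ref{transitionphaseparabolicsubgroup} together with the identity above, $\Theta_P(r_\sharp, \omega_P(r_\sharp))$ converges. To transfer this to convergence of $\Theta_\Gamma(r_\sharp, \omega_\Gamma(r_\sharp))$, I would follow the scheme of \cite[Example~C]{DWY}: decompose every element of the free product along its alternating syllables in $\Gamma_0$ and $\mathbb{Z}^d$, use the relative Ancona inequalities of Lemma~\ref{weakAncona} together with~(\ref{equationG0})--(\ref{equationG1}) to express $\Theta_\Gamma$ as an algebraic combination of $\Theta_P$ and $\Theta_{\mathbb{Z}^d}$, and exploit the fact that the $\mathbb{Z}^d$ factor contributes harmlessly thanks to Corollary~\ref{coramenableparabolic}. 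Convergence at $r_\sharp$ then automatically forces $\omega_P(r_\sharp) = \omega_\Gamma(r_\sharp)$: otherwise the parabolic gap at $r_\sharp$ combined with \cite[Theorem~1.8]{DWY} would yield $H_{r_\sharp}(n) \asymp \mathrm{e}^{n \omega_\Gamma(r_\sharp)}$ and hence divergence, a contradiction.

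For the divergent end, by Proposition~\ref{propamenableparabolic} the parabolic gap along the amenable factor $\mathbb{Z}^d$ is automatic, so only the gap along $P = \Gamma_0$ must be established. At $r = 1$ we have $\omega_\Gamma(1) = 0$; on the other hand, the explicit formula $\zeta_{0,\alpha}(1) = (1-\alpha)/(1-w_{0,\alpha,1}) < 1$ for $\alpha > 0$ (using transience of the walk on $\Gamma$ to get $w_{0,\alpha,1} < \alpha$) combined with the strict monotonicity of $\omega_{\Gamma_0}$ on $(0, r_0)$ coming from the explicit formulas in Theorem~\ref{thmbitree} gives $\omega_P(1) = \omega_{\Gamma_0}(\zeta_{0,\alpha}(1)) < 0 = \omega_\Gamma(1)$. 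Continuity of both $\omega_\Gamma$ and $\omega_P$ (Proposition~\ref{omegaGcont}) propagates this strict inequality to a neighborhood $[1, r_*(\alpha)]$ with $1 < r_*(\alpha) < r_\sharp(\alpha)$, on which the full parabolic gap holds and \cite[Theorem~1.8]{DWY} yields $H_r(n) \asymp \mathrm{e}^{n \omega_\Gamma(r)}$ and thus divergence of $\Theta_\Gamma(r, \omega_\Gamma(r))$. The main obstacle will be the transfer of convergence from $\Theta_P$ to $\Theta_\Gamma$: the free product decomposition has to be controlled finely enough to simultaneously identify $\omega_\Gamma(r_\sharp) = \omega_P(r_\sharp)$ and to sum all path contributions to a finite total, where the relative Ancona inequalities furnish the multiplicative control but the combinatorial bookkeeping for the free product is the delicate point.
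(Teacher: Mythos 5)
Your overall architecture coincides with the paper's: both proofs choose $r_\sharp$ so that $\zeta_{0,\alpha}(r_\sharp)$ lands above the threshold $r_0$ of Proposition~\ref{transitionphaseparabolicsubgroup}, both pass between $\Theta_P$ and $\Theta_{\Gamma_0}$ via $\omega_P(r)=\omega_{\Gamma_0}(\zeta_{0,\alpha}(r))$ and~(\ref{equationG0}), and both run the free-product syllable decomposition of \cite[Example~C]{DWY} for the convergent half. For the divergent half your route differs but works: you obtain the gap along $P$ from $\omega_P(1)<0=\omega_\Gamma(1)$ plus continuity of $\omega_\Gamma$ and $\omega_P$, whereas the paper invokes \cite[Corollary~3.9]{DWY} (divergence of the parabolic Poincar\'e series forces the gap), which gives the gap on the whole range $\zeta_{0,\alpha}(r)\le r_0$ rather than only near $r=1$; either suffices for the existence statement, though the paper's $r_*$ is the larger one.

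The convergent half, however, has a genuine gap. The syllable decomposition bounds $\Theta_\Gamma(r_\sharp,\omega_\Gamma(r_\sharp))$ by a geometric series $\sum_{k\ge 0}(AB)^k$, where $A$ is the normalized sum over $\Gamma_0\setminus\{e\}$ and $B$ the one over $\mathbb{Z}^d\setminus\{e\}$. Finiteness of $A$ and $B$ separately is not enough; one needs $AB<1$. Now $A$ is controlled by $\Theta_{\Gamma_0}(r_1,\omega_{\Gamma_0}(r_1))$, which is finite but may be large (it blows up as $r_1\downarrow r_0$, see Remark~\ref{remarkchoiceofr1}), and Corollary~\ref{coramenableparabolic} --- which only provides a parabolic gap along the amenable factor --- does nothing to make $B$ small. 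The missing ingredient is Lemma~\ref{lemmaDWY(14)}: for $\alpha$ small enough, $G(e,y|r)\le\epsilon$ uniformly over $r\le R$ and $y\in\mathbb{Z}^d\setminus\{e\}$, whence $B\le \epsilon C_1$, and $\epsilon$ is chosen \emph{after} fixing $r_1$ so that the product is less than $1$, as in~(\ref{equationepsilonpoincare}). This is a second, independent smallness requirement on $\alpha$ --- your proposal only uses small $\alpha$ to guarantee $\zeta_{0,\alpha}(R)>r_0$ --- and it is the real content of the hypothesis ``$\alpha$ small enough''; one must also check that $\epsilon$ depends only on $\mu_0$, $\mu_1$, $r_1$ and not on $\alpha$, so that $\alpha$ can legitimately be chosen last. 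Without this step the geometric series, and hence the proof of convergence, does not close.
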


In the proof of Theorem~\ref{transitionphasePoincare}, we shall also use the following result which is an enhanced version of \cite[(14)]{DWY}.

\begin{lem}\label{lemmaDWY(14)}
For every $\epsilon>0$, there exists $\alpha_0$ such that for $\alpha\leq \alpha_0$, the following holds.
For every $r\in [0,R]$ and for every $x\in \Gamma_1\setminus\{e\}$, we have
$$G(e,x|r)\leq \epsilon.$$
\end{lem}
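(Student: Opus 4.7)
The plan is to combine formula~(\ref{equationG1}) with a one-step decomposition of the $\mu_1$-Green function on $\mathbb{Z}^d$, use the explicit expression $\zeta_{\alpha,1}(r)=\alpha r/(1-w_{1,\alpha,r})$ to factor out an $\alpha$, and close by a continuity argument as $\alpha\to 0$.

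First, for every $x\in\Gamma_1\setminus\{e\}$ and every $\zeta<R_1$, a one-step decomposition of the $\mu_1$-walk gives
\[
G_1(e,x|\zeta)=\zeta\sum_{y}\mu_1(y)G_1(y,x|\zeta)\leq\zeta\,G_1(e,e|\zeta),
\]
the inequality following from the standard Cauchy--Schwarz bound $G_1(y,x|\zeta)\leq G_1(e,e|\zeta)$ for the symmetric, translation-invariant $\mu_1$. Inserting this into~(\ref{equationG1}) and comparing with the same formula at $x=e$ yields
\[
G(e,x|r)\leq \zeta_{\alpha,1}(r)\,G(e,e|r)=\frac{\alpha r}{1-w_{1,\alpha,r}}\,G(e,e|r).
\]
So it is enough to show that $G(e,e|r)$ is bounded above and $1-w_{1,\alpha,r}$ bounded below by positive constants, uniformly in $r\in[0,R]$ and $\alpha$ in a neighborhood of $0$.

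Second, I would analyze both quantities as $\alpha\to 0$. Since $w_{0,\alpha,r}$ counts first returns to $e$ whose initial step is chosen via the $\alpha\mu_1$-part of $\mu$, all its coefficients in $r$ are proportional to $\alpha$; hence $w_{0,\alpha,r}\to 0$ uniformly on any compact $r$-interval, and consequently $\zeta_{\alpha,0}(r)\to r$ and $R_\alpha\to R_0$. For $\alpha$ small the binding constraint defining $R_\alpha$ is therefore $\zeta_{\alpha,0}(R_\alpha)=R_0$, and~(\ref{equationG0}) at $x=e$ gives $G(e,e|R_\alpha)=G_0(e,e|R_0)/(1-w_{0,\alpha,R_\alpha})$, which remains bounded since the bi-tree random walk $\mu_0$ on $\mathbb F_n\times\mathbb F_m$ is transient at its spectral radius (i.e.\ $G_0(e,e|R_0)<\infty$, a consequence of Theorem~\ref{thmbitree}). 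Symmetrically $w_{1,\alpha,r}$ counts first returns starting with a $(1-\alpha)\mu_0$-step, and at $\alpha=0$ it equals $F_{\mu_0}(e,e|r)=1-1/G_0(e,e|r)$, which is strictly less than $1$ at $r=R_0$. Joint continuity of $w_{1,\alpha,r}$ in $(\alpha,r)$ combined with $R_\alpha\to R_0$ then yields a uniform positive lower bound for $1-w_{1,\alpha,r}$ on $\{\alpha\leq\alpha_0,\,r\leq R_\alpha\}$.

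Putting the three steps together gives $G(e,x|r)\leq C\alpha$ for $x\in\Gamma_1\setminus\{e\}$, $r\leq R_\alpha$ and $\alpha\leq\alpha_0$; shrinking $\alpha_0$ so that $C\alpha_0\leq\epsilon$ then yields the lemma. The main obstacle is the uniform continuity argument, since the natural range of $r$ varies with $\alpha$; this is handled by working on a fixed interval $[0,R_0+\delta]$ and observing that the power series $w_{0,\alpha,r}$ and $w_{1,\alpha,r}$ admit termwise dominations by series which converge there for all sufficiently small $\alpha$, making the needed uniform convergence routine.
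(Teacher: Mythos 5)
Your proof is correct and follows essentially the same route as the paper: both rest on formula~(\ref{equationG1}) together with the two uniform facts that $\zeta_{1,\alpha}(r)\to 0$ and that $1-w_{1,\alpha,r}$ stays bounded away from $0$, uniformly over $r\leq R$, as $\alpha\to 0$. The only difference is that the paper obtains these two estimates by citing \cite[Lemma~3.15]{DWY}, whereas you sketch a direct derivation via the identity $\zeta_{1,\alpha}(r)=\alpha r/(1-w_{1,\alpha,r})$, a one-step bound $G_1(e,x|\zeta)\leq \zeta\, G_1(e,e|\zeta)$, and a continuity argument at $\alpha=0$ --- a reasonable self-contained substitute, though the uniformity over the $\alpha$-dependent interval $[0,R]$ that you flag as the ``main obstacle'' is precisely the content of the cited lemma.
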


\begin{proof}
By \cite[Lemma~3.15]{DWY}, if $\alpha\leq \alpha_0$, then $w_{1,\alpha,r}$ stays bounded away from 1 and $\zeta_{1,\alpha}(r)$ converges to 0 as $\alpha$ tends to 0 uniformly over $r\leq R$.
We conclude from~(\ref{equationG1}) that for small enough $\alpha$, independently of $r$, for every $x\neq e\in \Gamma_1$ we have
\begin{equation*}\label{thirdboundpoincare}
    G(e,x|r)\leq \frac{G_1(e,x|\zeta_{1,\alpha}(r))}{1-w_{1,\alpha,r}}\leq \epsilon. \qedhere
\end{equation*}
\end{proof}

We are ready to complete the proof of Theorem~\ref{transitionphasePoincare}.
Let us first explain briefly how the quantities $\alpha$, $r_*(\alpha)$ and $r_\sharp(\alpha)$ are chosen.

By \cite[Lemma~3.14]{DWY}, as $\alpha$ converges to 0, $\zeta_{0,\alpha}(R)$ converges to $R_0$. Let us now fix any $r_1>r_0$, where $r_0$ is given in Proposition \ref{transitionphaseparabolicsubgroup}. Thus, there exists $\alpha_1$ so that $\zeta_{\alpha, 0}(R)>r_1$ holds for any   $\alpha\le \alpha_1$.
Since $\zeta_{0,\alpha}(r)$ is increasing in $r$, there exist $r_*<r_\sharp <R$ depending on $\alpha$ such that $\zeta_{\alpha,0}(r_*)=r_0$ and $\zeta_{0,\alpha}(r_\sharp)=r_1$.

We will also need to choose some $\epsilon>0$ only depending on $\mu_0$, $\mu_1$ and $r_1$ such that Equation~(\ref{equationepsilonpoincare}) holds below.
Then, we choose $\alpha$ small enough such that the conclusions of Lemma~\ref{lemmaDWY(14)} holds for such $\epsilon$ and such that there exist $r_*<r_\sharp <R$ with $\zeta_{\alpha,0}(r_*)=r_0$ and $\zeta_{0,\alpha}(r_\sharp)=r_1$.
\begin{proof}[Proof of Theorem~\ref{transitionphasePoincare}]
The proof follows the lines of \cite[Example~C]{DWY}.
We will write as announced above $P$ for $\Gamma_0$ when it is considered as a maximal parabolic subgroup of $\Gamma$.
In particular, we write
$$\omega_{\Gamma_0}(s)=\limsup_{n\to\infty} \frac{1}{n}\log \sum_{x\in \Gamma_0,|x|=n}G_0(e,x|s)$$
for the growth rate of the Green function $G_0$ associated with $\mu_0$ and
$$\omega_P(r)=\limsup_{n\to\infty} \frac{1}{n}\log \sum_{x\in  P,|x|=n}G(e,x|r)$$
for the growth rate of the Green function $G$ associated with $\mu$, induced on $P$.
By~(\ref{equationG0}), we see that
\begin{equation}\label{equationgrowthrates}
\omega_P(r)=\omega_{\Gamma_0}(\zeta_{0,\alpha}(r)).
\end{equation}
We will also write $\Theta_{\Gamma_0}(r,s)$ for the Poincar\'e series associated with $\mu_0$ on $\Gamma_0$ and $\Theta_P(r,s)$ for the Poincar\'e series associated with $\mu$ induced on $P$, defined as above by
$$\Theta_P(r,s)=\sum_{x\in P}G(e,x|r)\mathrm{e}^{-s|x|}.$$

If $\alpha$ is small enough, there exist $r_*<r_\sharp <R$ with $\zeta_{\alpha,0}(r_*)=r_0$ and $\zeta_{0,\alpha}(r_\sharp)=r_1$.
First, if $r\leq r_*$, then $\zeta_{0,\alpha}(r)\leq r_0$.
By Proposition~\ref{transitionphaseparabolicsubgroup}, $\Theta_{\Gamma_0}(\zeta_{0,\alpha}(r),\omega_{\Gamma_0}(\zeta_{0,\alpha}(r)))$ diverges,
hence $\Theta_P(r,\omega_{P}(r))$ also diverges.
Consequently, according to \cite[Corollary~3.9]{DWY}, $\omega_P(r)<\omega_\Gamma(r)$.
Moreover, since $\Gamma_1$ is amenable, we also deduce from Proposition~\ref{propamenableparabolic} that the induced growth rate on $\Gamma_1$ considered as a maximal parabolic subgroup is smaller than $\omega_\Gamma(r)$.
In other words, $\Gamma$ has a parabolic gap for the Green function on $(0,r_*]$.
Therefore, \cite[Lemma~3.7]{DWY} yields that the Poincar\'e series $\Theta_{\Gamma}(r,\omega_\Gamma(r))$ is divergent for $r\leq r_*$.

Next, fix $r=r_\sharp$ so that $\zeta_{0,\alpha}(r)=r_1>r_0$.
According to \cite[(15)]{DWY},
$$\Theta_{\Gamma}(r,s)\leq G(e,e|r)\sum_{k\geq 0}\left (\sum_{x\in \Gamma_0\setminus\{e\}}\frac{G(e,x|r)}{G(e,e|r)}\mathrm{e}^{-s|x|}\right)^k\left (\sum_{y\in \Gamma_1\setminus\{e\}}\frac{G(e,y|r)}{G(e,e|r)}\mathrm{e}^{-s|y|}\right)^k$$
By~(\ref{equationgrowthrates}), $\omega_P(r)=\omega_{\Gamma_0}(\zeta_{0,\alpha}(r))=\omega_{\Gamma_0}(r_1)$ and so
$\omega_{\Gamma}(r)\geq \omega_{\Gamma_0}(\zeta_{0,\alpha}(r))$.
Moreover, by Theorem~\ref{thmbitree}, $r_0>1$, so that $r_1>1$ and so we deduce from Lemma~\ref{lowerboundgrowthrateat1} and Proposition~\ref{omegaGcont} that $\omega_{\Gamma_0}(r_1)\geq c>0$.
Therefore,
\begin{equation}\label{equationgrowthratepositive}
    \omega_{\Gamma}(r)\geq c>0.
\end{equation}
Using~(\ref{equationG0}) and Proposition \ref{transitionphaseparabolicsubgroup}, we see that
\begin{equation}\label{firstboundpoincare}
    \sum_{x\in \Gamma_0\setminus\{e\}}\frac{G(e,x|r)}{G(e,e|r)}\mathrm{e}^{-\omega_\Gamma(r)|x|}\leq \frac{1}{G_0(e,e|r_1)}\Theta_{\Gamma_0}(r_1,\omega_{\Gamma_0}(r_1))<\infty.
\end{equation}
Also, $\mathbb Z^d$ has polynomial growth and so by~(\ref{equationgrowthratepositive})
\begin{equation}\label{secondboundpoincare}
    \sum_{y\in \Gamma_1\setminus\{e\}}\frac{1-w_{1,\alpha,r}}{G(e,e|r)}\mathrm{e}^{-\omega_\Gamma(r)|y|}\leq \frac{1}{G_1(e,e|\zeta_{1,\alpha}(r_\sharp))}\sum_{y\in \Gamma_1\setminus\{e\}}\mathrm{e}^{-c|y|}\leq C_1.
\end{equation}
We then choose $\epsilon>0$ such that
\begin{equation}\label{equationepsilonpoincare}
    C_2:=\epsilon\frac{1}{G_0(e,e|r_1)}\Theta_{\Gamma_0}(r_1,\omega_{\Gamma_0}(r_1))C_1<1.
\end{equation}
Note that $\epsilon$ does not depend on $\mu$ but only on $C_1$, $\mu_0$ and $r_1$.
In particular, it does not depend on $\alpha$ and so by Lemma~\ref{lemmaDWY(14)}, we can choose $\alpha$ small enough so that for every $y\in \Gamma_1\setminus \{e\}$,
$$G(e,y|r)\leq \epsilon.$$

Combining~(\ref{firstboundpoincare}),~(\ref{secondboundpoincare}),~(\ref{equationepsilonpoincare}) and~(\ref{thirdboundpoincare}), we get
\begin{align*}
    \Theta_{\Gamma}(r,\omega_\Gamma(r))&\leq G(e,e|r)\sum_{k\geq 0}\left(\epsilon\frac{1}{G_0(e,e|\zeta_{0,\alpha}(r))}\Theta_{\Gamma_0}(\zeta_{0,\alpha},\omega_{\Gamma_0}(\zeta_{0,\alpha}(r)))C_1\right)^k\\
    &\leq G(e,e|r)\sum_{k\geq 0}C_2^k,
\end{align*}
so that
$\Theta_{\Gamma}(r,\omega_\Gamma(r))$ is finite.

Finally, we deduce from  \cite[Lemma~3.7]{DWY} that at $r=r_\sharp$, $\Gamma$ does not have a parabolic gap for the Green function.
Since $\Gamma_1$ is amenable, we necessarily have $\omega_\Gamma(r)=\omega_P(r)$ by Proposition~\ref{propamenableparabolic}.
This concludes the proof.
\end{proof}

\begin{rem}\label{remarkchoiceofr1}
In the proof of Theorem~\ref{transitionphasePoincare}, note that we can choose $r_1$ arbitrarily close to $r_0$.
Unfortunately, as $r_1$ goes to $r_0$, $\Theta_{\Gamma_0}(r_1,\omega_{\Gamma_0}(r_1))$ tends to infinity, so $\epsilon$ satisfying~(\ref{equationepsilonpoincare}) converges to 0.
Consequently, the parameter $\alpha$ also tends to 0.
In other words, as $r_1$ tends to $r_0$, we need to choose a measure $\mu$ that tends to the measure $\mu_0$ distributed on $\Gamma_0$.
Now, since the functions $\zeta_{\alpha,i}$ of $r$ depend on $\alpha$, we cannot guarantee that $r_{\sharp}$ tends to $r_*$.
In particular, we cannot prove that there is a true phase transition for the convergence of the Poincar\'e series $\Theta_\Gamma(r,\omega_\Gamma(r))$ at $r_*$.
\end{rem}

\begin{cor}\label{corosubmultiplicativityfails}
If $\alpha$ is small enough, then the following holds.
For $r=r_\sharp(\alpha)$, $\Gamma$ does not have a parabolic gap for the Green function and~(\ref{roughmultiplicativity}) and~(\ref{purelyexponentialgrowth}) do not hold.
\end{cor}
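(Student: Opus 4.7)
The plan is to derive Corollary~\ref{corosubmultiplicativityfails} essentially as a formal consequence of Theorem~\ref{transitionphasePoincare}, using only the convergence of $\Theta_\Gamma(r_\sharp,\omega_\Gamma(r_\sharp))$ established there, together with an elementary Fekete-type argument.

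The failure of the parabolic gap for the Green function at $r=r_\sharp(\alpha)$ is already contained in the statement of Theorem~\ref{transitionphasePoincare}, which explicitly yields $\omega_P(r_\sharp)=\omega_\Gamma(r_\sharp)$ where $P=\Gamma_0$. By definition of the parabolic gap this is enough.

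Next I would rule out the purely exponential growth~(\ref{purelyexponentialgrowth}) at $r=r_\sharp$. If $H_{r_\sharp}(n)\asymp \mathrm{e}^{n\omega_\Gamma(r_\sharp)}$ held, then by the very definition of $\Theta_\Gamma$ from the introduction,
\[
\Theta_\Gamma(r_\sharp,\omega_\Gamma(r_\sharp))=\sum_{n\geq 0}H_{r_\sharp}(n)\mathrm{e}^{-n\omega_\Gamma(r_\sharp)}\succ \sum_{n\geq 0} 1=\infty,
\]
which contradicts the convergence statement in Theorem~\ref{transitionphasePoincare}. Hence~(\ref{purelyexponentialgrowth}) must fail.

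For the failure of the rough multiplicativity~(\ref{roughmultiplicativity}), the key observation is that its upper bound $H_{r_\sharp}(n+m)\leq C\, H_{r_\sharp}(n)H_{r_\sharp}(m)$ alone already forces the lower bound $H_{r_\sharp}(n)\succ \mathrm{e}^{n\omega_\Gamma(r_\sharp)}$ by Fekete's lemma. Indeed, the sequence $b_n:=C\cdot H_{r_\sharp}(n)$ is then sub-multiplicative and $\log b_n$ is sub-additive, so $\log b_n/n$ converges to $\inf_n \log b_n/n$, which equals $\omega_\Gamma(r_\sharp)$ since $\log C/n\to 0$. This gives $\log b_n/n\geq \omega_\Gamma(r_\sharp)$ for every $n$, i.e.\ $H_{r_\sharp}(n)\geq C^{-1}\mathrm{e}^{n\omega_\Gamma(r_\sharp)}$, which again makes $\Theta_\Gamma(r_\sharp,\omega_\Gamma(r_\sharp))$ diverge, contradicting Theorem~\ref{transitionphasePoincare}. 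Thus~(\ref{roughmultiplicativity}) cannot hold either. No step here is really an obstacle: once Theorem~\ref{transitionphasePoincare} is in hand, the corollary is a short contradiction argument, the only (standard) ingredient being the Fekete lemma applied to $\log(C\cdot H_{r_\sharp}(n))$.
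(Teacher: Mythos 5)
Your proposal is correct and follows essentially the same route as the paper: the failure of the parabolic gap is read off from Theorem~\ref{transitionphasePoincare}, and the failure of~(\ref{roughmultiplicativity}) is obtained by applying Fekete's lemma to the sub-multiplicative sequence $C\,H_r(n)$ to force the lower bound $H_r(n)\geq C^{-1}\mathrm{e}^{n\omega_\Gamma(r)}$, contradicting convergence of the Poincar\'e series. The only cosmetic difference is that the paper deduces the failure of~(\ref{purelyexponentialgrowth}) from that of~(\ref{roughmultiplicativity}) (since the former implies the latter), whereas you rule it out directly from its lower bound; both are immediate.
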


 \begin{proof}
 By Theorem~\ref{transitionphasePoincare}, if $r=r_\sharp(\alpha)$, then the Poincar\'e series $\Theta_\Gamma(r,\omega_\Gamma(r))$ is convergent and $\Gamma$ does not have a parabolic gap.
 Assume by contradiction that there exists $C$ such that
 $$H_r(m+n)\leq C H_r(n)H_r(m).$$
 Then, the quantity $CH_r(n)$ is sub-multiplicative, hence by Fekete's lemma,
 $$\omega_\Gamma(r)=\lim_{n\to\infty} \frac{1}{n}\log CH_r(n)=\inf_{n\ge 1} \frac{1}{n}\log CH_r(n).$$
 Thus, for every $n$, we have
 $$CH_r(n)\geq \mathrm{e}^{n\omega_\Gamma(r)}.$$
 This implies that $\Theta_\Gamma(r,\omega_\Gamma(r))$ diverges, which is a contradiction.
 In particular, we see that~(\ref{roughmultiplicativity}) fails.
 Now,~(\ref{roughmultiplicativity}) is a direct consequence of~(\ref{purelyexponentialgrowth}), with $C=(C')^3$, hence~(\ref{purelyexponentialgrowth}) also fails.
 \end{proof}

We also deduce the following from Theorem~\ref{divergencetypeandatoms} and Theorem~\ref{transitionphasePoincare}.
\begin{cor}\label{transitionphasePattersonSullivan}
If $\alpha$ is small enough, then the following holds.
For $r\leq r_*(\alpha)$, the measure $\nu_e$ on the Bowditch boundary has no atom and is supported on the set of conical limit points.
For $r=r_\sharp(\alpha)$, it is purely atomic and is supported on the set of parabolic limit points.
\end{cor}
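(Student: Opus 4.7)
The proof is essentially a direct combination of Theorem~\ref{transitionphasePoincare} with Theorem~\ref{divergencetypeandatoms}, so my plan is just to carefully match up the dichotomies (divergent vs.\ convergent type / atomless vs.\ purely atomic) and verify that the support statements are upgraded correctly, using geometric finiteness.

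First I would handle the regime $r\le r_*(\alpha)$. By Theorem~\ref{transitionphasePoincare}, the Poincar\'e series $\Theta_\Gamma(r,\omega_\Gamma(r))$ diverges, so $\Gamma$ is of divergent type for the Green function at $r$. The first part of Theorem~\ref{divergencetypeandatoms} then gives that $\nu_e$ has no atoms. To upgrade this to the claim that $\nu_e$ is supported on the conical limit set, I would invoke geometric finiteness of the action on $\partial\Gamma$: the boundary splits as the disjoint union of conical limit points and bounded parabolic limit points, and the set of parabolic limit points is countable (being the $\Gamma$-orbit of the finitely many parabolic fixed points represented by $\mathbb P_0$). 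Since $\nu_e$ is atomless, it assigns measure zero to this countable set, hence it is supported on $\partial\Gamma^{\mathrm{con}}$.

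Second, I would treat the value $r=r_\sharp(\alpha)$. By Theorem~\ref{transitionphasePoincare}, the Poincar\'e series $\Theta_\Gamma(r,\omega_\Gamma(r))$ converges, so $\Gamma$ is of convergent type. The second clause of Theorem~\ref{divergencetypeandatoms} then states directly that $\nu_e$ is purely atomic and supported on the set of parabolic limit points, which is exactly what we want. (One can note for clarity that, since the parabolic limit set is countable and carries full $\nu_e$-mass, $\nu_e$ must indeed be purely atomic.)

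There is no real obstacle: the two nontrivial inputs (Theorem~\ref{transitionphasePoincare} providing the phase transition in the divergence of the Poincar\'e series, and Theorem~\ref{divergencetypeandatoms} translating divergence/convergence into analytic properties of $\nu_e$) do all the work. The only detail worth spelling out is the passage from ``no atoms'' to ``supported on $\partial\Gamma^{\mathrm{con}}$'' in the first case, which rests only on the countability of the parabolic limit set in a geometrically finite convergence group action.
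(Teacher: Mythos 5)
Your proposal is correct and follows exactly the route the paper intends: the corollary is stated as a direct consequence of Theorem~\ref{transitionphasePoincare} and Theorem~\ref{divergencetypeandatoms}, and your matching of the divergent/convergent dichotomy with the atomless/purely atomic dichotomy is the whole argument. The one step you spell out that the paper leaves implicit --- passing from ``no atoms'' to ``full measure on conical limit points'' via countability of the parabolic limit set --- is a correct and worthwhile clarification.
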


\begin{rem}\label{remarkCGM}
In \cite{CandelleroGilchMuller}, the authors prove that in the context of free products, we always have
$\omega_P(r)<\omega_\Gamma(r)$ for every maximal parabolic subgroup $P$, i.e.\ $\Gamma$ has a parabolic gap for the Green function.
However, their proof relies on an unproved statement, namely that the quantity $H_r(n)$ is sub-multiplicative for every finitely generated group and then apply this property to the maximal parabolic subgroup $P$, see precisely the proof of \cite[Lemma~4.7]{CandelleroGilchMuller} and also \cite[Remark~3.17]{DWY}.
However, by Theorem~\ref{thmbitree}, we see that sub-multiplicativity fails for the Cartesian product of two regular trees if $r\geq r_0$.
Moreover, by Corollary~\ref{corosubmultiplicativityfails}, in the above example, if $r=r_\sharp(\alpha)$, then $\Gamma$ does not have a parabolic gap for the Green function.
\end{rem}


\subsection{The growth tightness property}

Let $d$ be a proper left invariant distance on $\Gamma$. The \textit{growth rate} of $\Gamma$ for $d$ is defined as follows:
$$
\delta(\Gamma,d) :=\limsup_{n\to\infty}\frac{\log \sharp\{x\in \Gamma: d(e,x)\le n\}}{n}
$$
A \textit{nontrivial quotient} $\bar \Gamma$ of $\Gamma$ means that the kernel of the canonical projection $\Gamma\to \bar \Gamma$ is an infinite normal subgroup of $\Gamma$.
We say that $\Gamma$ is  \textit{growth tight} for the distance $d$ if for every nontrivial  quotient $\bar \Gamma$ of $\Gamma$, endowed with the quotient distance $\bar d$ from $d$, we have $\delta(\bar \Gamma,\bar d)<\delta(\Gamma,d)$.

Let us assume that $\Gamma$ is a relatively hyperbolic group.
Whenever a maximal parabolic group $P$ has growth rate $\delta(P,d)$ strictly less than $\delta(\Gamma, d)$, we say that $\Gamma$ has a  parabolic gap along $P$ fir the distance $d$. When $\Gamma$  has parabolic gap along every maximal parabolic subgroup, we say that $\Gamma$ has  the parabolic gap property.


Recall that by Lemma~\ref{quasiisometricdistances}, $\mathfrak d_r(x,y)= {\omega_{\Gamma}(r)} |x^{-1}y|+|x^{-1}y|_r$ is quasi-isometric to the word distance for $r<R$. By Lemma \ref{NewPoincareSeries}, we have $\delta(\Gamma,\mathfrak d_r)=1$.  
The following result relates the   gap property for Green functions to the gap property for the distance $\delta(\Gamma,\mathfrak d_r)$.
\begin{prop}\label{PropTwoPGs}
Let $\Gamma$ be a group endowed with a probability measure $\mu$ such that the $\mu$-random walk is transient at the spectral radius, i.e.\ $G(e,e|R)$ is finite.
Let   $A\subset \Gamma$ be any subset. 
If  $\omega_A(r)<\omega_\Gamma(r)$ for some $1<r\le R $, then $\delta(A,\mathfrak d_r)<1$.    
\end{prop}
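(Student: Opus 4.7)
The plan is to show that the auxiliary Poincaré series
$$\mathcal{P}_A(s) := \sum_{x \in A} \mathrm{e}^{-s\mathfrak{d}_r(e,x)}$$
converges for some $s < 1$; by the Abel-summation argument used in Lemma~\ref{NewPoincareSeries}, this forces $\delta(A, \mathfrak{d}_r) \le s < 1$.

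First I would rewrite $\mathcal{P}_A(1)$ explicitly. Since
$\mathrm{e}^{-\mathfrak{d}_r(e,x)} = G(e,e|r)^{-1}\mathrm{e}^{-\omega_\Gamma(r)|x|}\,G(e,x|r)$, partitioning over spheres gives
$$\mathcal{P}_A(1) = G(e,e|r)^{-1} \sum_{n\geq 0} \mathrm{e}^{-\omega_\Gamma(r) n}\, H_{A,r}(n).$$
Choosing $\epsilon>0$ with $\omega_A(r) + 2\epsilon < \omega_\Gamma(r)$, by definition of $\omega_A(r)$ there exists $C_\epsilon$ so that $H_{A,r}(n) \le C_\epsilon\,\mathrm{e}^{n(\omega_A(r) + \epsilon)}$ for all $n$. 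Hence $\mathcal{P}_A(1)$ converges with a definite geometric rate $\eta := \omega_\Gamma(r) - \omega_A(r) - \epsilon > 0$. This exponential gap is the ingredient that will let us push convergence below $s = 1$.

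The second ingredient is a linear upper bound on $\mathfrak{d}_r(e,x)$. By Lemma~\ref{quasiisometricdistances}, for $1\le r < R$ the distance $\mathfrak{d}_r$ is quasi-isometric to the word distance, so there is a constant $K = K(r) > 0$ with $\mathfrak{d}_r(e,x) \le K|x| + K$. (The case $r = R$ is handled identically since the lower bound $G(e,x|R) \succ \mathrm{e}^{-\alpha|x|}$, established inside the proof of that lemma, still gives $|x|_R \le \alpha|x| + \mathrm{const}$.) For $\delta > 0$, we split
$$\mathcal{P}_A(1-\delta) = \sum_{x \in A} \mathrm{e}^{\delta\mathfrak{d}_r(e,x)} \mathrm{e}^{-\mathfrak{d}_r(e,x)} \le \mathrm{e}^{\delta K} \sum_{x \in A} \mathrm{e}^{\delta K|x|} \mathrm{e}^{-\mathfrak{d}_r(e,x)}.$$
Unfolding the right-hand side as in the first step and applying the bound on $H_{A,r}(n)$, this is bounded by
$$\frac{\mathrm{e}^{\delta K} C_\epsilon}{G(e,e|r)} \sum_{n\geq 0} \mathrm{e}^{-n(\eta - \delta K)},$$
which is finite as soon as $\delta < \eta/K$. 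Fixing such a $\delta$, we obtain $\mathcal{P}_A(1-\delta) < \infty$ and therefore $\delta(A, \mathfrak{d}_r) \le 1-\delta < 1$.

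There is no real obstacle in this argument beyond bookkeeping: everything reduces to the standard shift-of-critical-exponent trick, powered by the exponential gap $\eta > 0$ coming from the strict inequality $\omega_A(r) < \omega_\Gamma(r)$ and by the linear control on $\mathfrak{d}_r$ coming from quasi-isometry with the word distance. The only mildly delicate point is to pass between the Poincaré series $\mathcal{P}_A$ and the counting definition of $\delta(A, \mathfrak{d}_r)$, but this is exactly the equivalence already used in Lemma~\ref{NewPoincareSeries}.
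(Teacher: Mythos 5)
Your argument is correct, but it takes a genuinely different route from the paper's. The paper's proof writes $\mathrm{e}^{-(1-\epsilon)\mathfrak d_r(e,x)}$ as $\mathrm{e}^{-(1-\epsilon)\omega_\Gamma(r)|x|}\left[G(e,x|r)\right]^{1-\epsilon}$ and controls the sphere sums of the fractional power $\left[G(e,x|r)\right]^{1-\epsilon}$ via Lemma~\ref{ContinuitythetaLem}: the function $\theta\mapsto\omega_{A,r}(\theta)$ is convex by H\"older's inequality, hence continuous at $\theta=1$, so for small $\epsilon$ one still has $\omega_{A,r}(1-\epsilon)<(1-\epsilon)\omega_\Gamma(r)$. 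You instead keep the Green function at exponent $1$ and absorb the perturbation $\mathrm{e}^{\delta\mathfrak d_r(e,x)}$ into $\mathrm{e}^{\delta K|x|+\delta K}$ via the linear upper bound on $\mathfrak d_r$, which ultimately rests on the lower bound $G(e,x|r)\succ\mathrm{e}^{-\alpha|x|}$ from the proof of Lemma~\ref{quasiisometricdistances}. The two arguments are close cousins: your estimate amounts to the explicit Lipschitz bound $\omega_{A,r}(1-\delta)\le\omega_{A,r}(1)+\delta\alpha$, i.e.\ a quantitative form of the continuity that the paper extracts abstractly from convexity. Yours is more elementary (no H\"older, no convexity lemma), at the cost of invoking the admissibility-based lower bound on the Green function --- which the paper has already established and which, as you correctly note, survives at $r=R$ even though quasi-isometry of $\mathfrak d_R$ with the word metric may fail, since only the upper bound $\mathfrak d_R(e,x)\prec|x|$ is needed. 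One small attribution quibble: the passage from $\mathcal P_A(1-\delta)<\infty$ to $\delta(A,\mathfrak d_r)\le 1-\delta$ is the standard counting estimate $\sharp\{x\in A:\mathfrak d_r(e,x)\le n\}\le\mathrm{e}^{(1-\delta)n}\mathcal P_A(1-\delta)$ rather than anything proved in Lemma~\ref{NewPoincareSeries}, but the paper's own proof relies on the same unstated standard fact, so this is not a gap.
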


We need the following lemma, which generalizes \cite[Lemma~3.1]{Tanaka17} in hyperbolic groups with a similar proof.
\begin{lem}\label{ContinuitythetaLem}
Under the assumption of Proposition \ref{PropTwoPGs}. 
For any $\theta\in\mathbb R$, $r\leq R$ and $A\subset \Gamma$, define   
$$
\omega_{A, r}(\theta) := \limsup_{n \to \infty} \frac{1}{n} \log \sum_{x \in A, |x| = n} \left[ G(e, x | r) \right]^\theta. 
$$
Then   $\omega_{A, r}( \theta)$ is a convex function on $\mathbb R$. If $\Gamma$ is a relatively hyperbolic group, then  $\omega_{\Gamma, r}( \theta)$ is a true limit. 
\end{lem}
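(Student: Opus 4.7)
The two claims — convexity for an arbitrary subset $A$ and the existence of a true limit for $A=\Gamma$ in the relatively hyperbolic case — can be handled separately. For convexity, I would apply H\"older's inequality directly to the finite sum defining $\omega_{A,r}(\theta)$. Writing, for $\alpha\in[0,1]$ and $\theta_1,\theta_2\in\mathbb R$,
\[
G(e,x|r)^{\alpha\theta_1+(1-\alpha)\theta_2}=\bigl(G(e,x|r)^{\theta_1}\bigr)^\alpha\bigl(G(e,x|r)^{\theta_2}\bigr)^{1-\alpha},
\]
and applying H\"older with conjugate exponents $1/\alpha$ and $1/(1-\alpha)$ on the sphere $A\cap S_n$ gives
\[
\sum_{x\in A,\,|x|=n}G(e,x|r)^{\alpha\theta_1+(1-\alpha)\theta_2}\leq\Bigl(\sum_{x\in A,\,|x|=n}G(e,x|r)^{\theta_1}\Bigr)^\alpha\Bigl(\sum_{x\in A,\,|x|=n}G(e,x|r)^{\theta_2}\Bigr)^{1-\alpha}.
\]
Taking $\tfrac{1}{n}\log$ of both sides and then $\limsup_{n\to\infty}$, using sub-additivity of $\limsup$, yields $\omega_{A,r}(\alpha\theta_1+(1-\alpha)\theta_2)\leq \alpha\omega_{A,r}(\theta_1)+(1-\alpha)\omega_{A,r}(\theta_2)$.

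For the true-limit statement, set $h(n):=\sum_{x\in\Gamma,\,|x|=n}G(e,x|r)^\theta$. The plan is to establish the near super-multiplicativity
\[
h(n+m+K)\geq C^{-1} h(n)\,h(m),\qquad n,m\geq 0,
\]
with $K,C>0$ independent of $n,m$. To do so, I would reuse the three-loxodromic-elements trick from the proof of Lemma~\ref{ShadowLem}: fix a finite set $F$ of loxodromics with pairwise disjoint fixed points; by a Morse-type argument analogous to \cite[Lemma~2.4]{DWY}, for every $y_1\in S_n$ and $y_2\in S_m$ one can choose $f\in F$ so that $|y_1 f y_2|=|y_1|+|f|+|y_2|$ up to a uniformly bounded additive error and so that some geodesic from $e$ to $y_1 f y_2$ contains a transition point within bounded distance of $y_1$. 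The two-sided relative Ancona inequalities (Lemma~\ref{weakAncona}) then yield
\[
G(e,y_1 f y_2|r)\asymp G(e,y_1|r)\,G(e,y_2|r),
\]
with implicit constants independent of $y_1,y_2,r$. Raising both sides to the power $\theta$ preserves this two-sided comparison (crucially also for $\theta<0$), and summing over $y_1\in S_n$, $y_2\in S_m$, and $f\in F$, divided by the uniformly bounded multiplicity of $(y_1,f,y_2)\mapsto y_1 f y_2$, gives the claim.

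Finally, writing $a_n=\log h(n)$, the estimate above reads $a_{n+m+K}\geq a_n+a_m-\log C$, so that after the linear shift $\tilde a_n = a_n - D$ for sufficiently large $D$ the sequence $\tilde a_{n+K}$ becomes super-additive, and Fekete's lemma forces the existence of $\lim_{n\to\infty}\frac{1}{n}\log h(n)$. Since the limit must coincide with the corresponding $\limsup$, we conclude that $\omega_{\Gamma,r}(\theta)$ is a true limit. The main obstacle is the geometric concatenation step itself — selecting $f\in F$ uniformly in $y_1,y_2$ so that $y_1$ sits close to a genuine transition point on a geodesic $[e,y_1 f y_2]$ rather than being swallowed by a parabolic horoball — which is where the relatively hyperbolic structure, together with the Morse property of loxodromic axes and the disjointness of the fixed-point pairs in $F$, enters non-trivially.
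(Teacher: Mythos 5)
Your convexity argument via H\"older on each sphere is exactly the paper's proof of the first assertion. For the second assertion, however, you and the paper go in opposite directions, and yours is the defensible one. The paper invokes \cite[Lemma~3.2]{DWY} and writes the inequality as \emph{sub}-multiplicativity, $H_r^\theta(n+m)\le C\,H_r^\theta(n)H_r^\theta(m)$, before applying Fekete. Taken literally at $\theta=1$, that inequality would force $H_r(n)\succ \mathrm{e}^{n\omega_\Gamma(r)}$ and hence divergence of $\Theta_\Gamma(r,\omega_\Gamma(r))$ for every relatively hyperbolic group and every $r$ — which the paper itself refutes in Corollary~\ref{corosubmultiplicativityfails} and Remark~\ref{remarkCGM}. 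So the inequality must be read in the \emph{super}-multiplicative direction $H_r^\theta(n)H_r^\theta(m)\le C\,H_r^\theta(n+m)$ (up to the bounded index shift you allow), which is precisely what your concatenation argument produces and which still yields a true limit by Fekete for super-additive sequences; your observation that the two-sided Ancona comparison, rather than the trivial one-sided super-multiplicativity of the Green function, is needed to survive the exponent $\theta<0$ is also the right point to insist on. Two small things you should make explicit to close the sketch: (i) since $|y_1fy_2|$ only equals $n+m+|f|$ up to a bounded additive error and $|f|$ ranges over a finite set, you need the consecutive-sphere comparison $h(n)\asymp_\theta h(n\pm1)$ (Harnack applied to $G^\theta$) to land the estimate on a single fixed shift $K$ before running Fekete; and (ii) the finiteness of the limit requires the elementary two-sided exponential bounds $\mathrm{e}^{-|\theta|\alpha n}\prec h(n)\prec \mathrm{e}^{(v+|\theta|\alpha)n}$ coming from $\sharp S_n\le c_1\mathrm{e}^{vn}$ and $G(e,x|r)\ge \mathrm{e}^{-\alpha|x|}$. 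With those caveats your route is sound, and it is the one consistent with the rest of the paper.
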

\begin{proof}
Denote $H^\theta_r(n):=\sum_{x \in A, |x| = n} \left[ G(e, x | r) \right]^\theta$. For $\theta_0$, $\theta_1 \in \mathbb{R}$ and $0 < t < 1$, by the H\"older inequality,
  \[
    H_r^{t \theta_0 + (1 - t) \theta_1}(n) 
    \leq (H_r^{\theta_0}(n))^t  (H_r^{\theta_1}(n))^{1 - t}. 
  \]
  Thus $\omega_{A, r}$ is convex: 
  \[
    \omega_{A, r}(t \theta_0 + (1 - t) \theta_1) \leq t \omega_{A, r}(\theta_0) + (1 - t) \omega_{A, r}(\theta_1). 
  \]
If $\Gamma$ is a relatively hyperbolic group, the same proof as in \cite[Lemma 3.2]{DWY} shows (there with $\theta=1$) that for $A=\Gamma$, the sequence $H_r^\theta(n)$ is sub-multiplicative, that is, $H_r^\theta(n+m)\le C H_r^\theta(n) H_r^\theta(m)$ for some $C>0$ . Thus, the limit exists by Feketa's lemma. 
\end{proof}
As a convex function on $\mathbb R$, $\omega_{\Gamma, r}( \theta)$ is a continuous function of $\theta\in \mathbb R$, and is differentiable, except maybe at countably infinitely many points.

\begin{proof}[Proof of Proposition \ref{PropTwoPGs}]
To show $\delta(A,\mathfrak d_r)<1$, it suffices to find some $\epsilon > 0$ such that
\[
    \sum_{x \in A} \mathrm{e}^{- (1 - \epsilon) \mathfrak d_r(e, x)} = \sum_{x \in A} \mathrm{e}^{- (1 - \epsilon) \omega_{\Gamma}(r) |x|} \left[ G(e, x | r) \right]^{1 - \epsilon} < \infty.
\]

By  Lemma~\ref{ContinuitythetaLem}, the function $\omega_{A,r}(\theta)$ is continuous in $\theta\in\mathbb R$. 
If $\omega_A(r) < \omega_{\Gamma}(r)$, we can choose $\epsilon,\eta>0$  small enough so that
$$\omega_A(r) + \eta < (1 - \epsilon) \omega_{\Gamma}(r),$$
and at the same time, by continuity of $\omega_{A,r}(\theta)$ at $\theta=1$, the following holds : for large enough $n$,
$$\sum_{x \in A, |x| = n} \left[ G(e, x | r) \right]^{1 - \varepsilon} \leq  \mathrm{e}^{n \left( \omega_{A,r}(1) + \eta \right)}.$$ 
By definition, $\omega_{A,r}(1)=\omega_A(r)$, so the two inequalities above  yield
  \[
    \sum_{x \in A} \mathrm{e}^{- (1 - \epsilon) \omega_{\Gamma}(r) |x|} G(e, x | r)^{1-\epsilon} < \infty,
  \]    
which is the desired inequality.
\end{proof}

Given $f\in \Gamma$ and $\epsilon>0$, let $\mathcal V_{\epsilon, f}$ be the set of \textit{barrier-free} elements $x\in \Gamma$, that is, elements for which the $\epsilon$-neighborhood of some geodesic $[e,x]$ contains a geodesic segment representing $f$. The following is analogous to \cite[Theorem C]{YangSCC}.
\begin{lem}\label{BarrierFreeLem}
Let $\Gamma$ be a relatively hyperbolic group  with parabolic gap property for Green function.
Then there exists some $\epsilon>0$ such that  the   set $\mathcal V:=\mathcal V_{\epsilon, f}$  has growth rate strictly less than 1 for any  element $f\in \Gamma$: $\delta(\mathcal V, \mathfrak d_r)<1$.    
\end{lem}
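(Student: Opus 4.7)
The plan has three steps: reduce to a divergent-type Patterson--Sullivan framework using the parabolic gap hypothesis; build an extension lemma driven by a contracting loxodromic element that injects pairs from $\mathcal V_{\epsilon,f}\times\Gamma$ into $\Gamma\setminus\mathcal V_{\epsilon,f}$; and upgrade the resulting bound into a strict exponent gap via an ergodicity/uniqueness argument for the PS measure at the critical exponent.

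First, the parabolic gap hypothesis combined with \cite[Lemma~3.7]{DWY} yields divergence of $\Theta_\Gamma(r,\omega_\Gamma(r))$. By Lemma~\ref{NewPoincareSeries} this is equivalent to divergence of $\mathcal P_\Gamma(s)$ at its critical exponent $s=1$, so $\delta(\Gamma,\mathfrak d_r)=1$ and the PS construction from Section~\ref{Sectionpattersonsullivan} produces a measure $\nu_e$ on $\pG$ which is atomless and fully supported on the conical limit set by Theorem~\ref{divergencetypeandatoms}; the Shadow Lemma~\ref{ShadowLem} moreover gives the two-sided estimate $\nu_e(\Psi_C^{\mathrm{con}}(x))\asymp\mathrm{e}^{-\mathfrak d_r(e,x)}$, hence purely exponential growth of $\Gamma$ in the metric $\mathfrak d_r$.

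Second, after replacing $f$ by a sufficiently high power we may assume $f$ is loxodromic, and we fix three loxodromics $h_1,h_2,h_3\in\Gamma$ whose fixed point pairs in $\pG$ are pairwise disjoint and disjoint from $f^{\pm\infty}$ (such a triple exists since $\Gamma$ is non-elementary). For $\epsilon$ large compared to the hyperbolicity and contraction constants, a North--South dynamics argument in the spirit of \cite[Lemma~2.4]{DWY} provides, for each $(x,y)\in\mathcal V_{\epsilon,f}\times\Gamma$, at least one index $i$ for which a geodesic from $e$ to $\Phi_i(x,y):=x\,f\,h_i\,y$ passes within bounded distance of $x$, $xf$ and $xfh_i$ as transition points and contains a copy of $f$ inside its $\epsilon$-neighborhood at the central segment. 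In particular $\Phi_i(x,y)\notin\mathcal V_{\epsilon,f}$, and the relative Ancona inequalities (Lemma~\ref{weakAncona}) applied at these transition points give
\begin{equation*}
\mathfrak d_r(e,\Phi_i(x,y))=\mathfrak d_r(e,x)+\mathfrak d_r(e,y)+O(1),
\end{equation*}
so the map $(x,y,i)\mapsto\Phi_i(x,y)$ is uniformly finite-to-one onto a subset of $\Gamma\setminus\mathcal V_{\epsilon,f}$.

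Third, writing $V(s)$ and $P(s)$ for the Poincaré series of $\mathcal V_{\epsilon,f}$ and of $\Gamma$ in $\mathfrak d_r$, the injection of Step~2 together with the additive length formula yields $V(s)P(s)\le C(P(s)-V(s))$ for every $s>1$; letting $s\downarrow 1$ and using divergence $P(s)\to\infty$ gives $V(1)<\infty$, hence $\delta(\mathcal V_{\epsilon,f},\mathfrak d_r)\le 1$. To promote this to the strict inequality, suppose $\delta(\mathcal V_{\epsilon,f},\mathfrak d_r)=1$. Patterson's trick applied to the Poincaré series of $\mathcal V_{\epsilon,f}$ then produces a divergent-type PS measure $\nu'_e$ on $\pG$; the barrier-free condition on $\mathcal V_{\epsilon,f}$ forces $\operatorname{supp}\nu'_e$ to miss an open neighborhood of $f^{+\infty}$, since limits of sequences in $\mathcal V_{\epsilon,f}$ cannot converge to $f^{+\infty}$. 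On the other hand, both $\nu_e$ and $\nu'_e$ are $\omega_\Gamma(r)$-dimensional quasi-conformal of divergent type, which forces $\nu'_e\ll\nu_e$; combined with the full support and $\Gamma$-ergodicity of $\nu_e$ from Theorem~\ref{divergencetypeandatoms}, this yields $\operatorname{supp}\nu'_e=\pG$, a contradiction.

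The main obstacle is the uniqueness/ergodicity step in the third paragraph: one must establish that any two $\omega_\Gamma(r)$-dimensional divergent-type quasi-conformal densities on $\pG$ have the same support, which in hyperbolic groups follows from a Hopf--Tsuji--Sullivan type theorem and mixing of the geodesic flow, but in the relatively hyperbolic Green-distance setting requires an adaptation of those tools. An alternative route would be to iterate the extension $k$ times to obtain bounds of the form $V(s)^{k+1}P(s)^{k-1}\prec C^k$ and deduce strictness by letting $k\to\infty$, but verifying the $k$-fold multiplicity bound and the uniform-in-$k$ additivity of $\mathfrak d_r$ under $k$-fold concatenation through Lemma~\ref{weakAncona} is its own delicate combinatorial and geometric problem, governed by a careful coordinated choice of $\epsilon$ and of the three loxodromics $h_1,h_2,h_3$.
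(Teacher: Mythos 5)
Your overall strategy (work directly in the metric $\mathfrak d_r$, inject $\mathcal V_{\epsilon,f}\times\Gamma$ into $\Gamma\setminus\mathcal V_{\epsilon,f}$ by inserting $f$, and read off a bound on the restricted Poincar\'e series) is a legitimate variant of the growth-tightness machinery, and it differs from the paper, which instead works with the word-metric growth rates $\omega_{\mathcal V}(r)$, passes to an $L$-separated net, builds a free product of sets \`a la \cite{YangSCC}, and only at the end converts to $\mathfrak d_r$ via Proposition~\ref{PropTwoPGs}. However, your argument has a genuine gap exactly where you flag it, and the gap is not a technicality: the inequality $V(s)P(s)\le C\left(P(s)-V(s)\right)$ only yields $V(s)\le C$ for $s>1$, hence $V(1)<\infty$ and $\delta(\mathcal V,\mathfrak d_r)\le 1$. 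Convergence at the critical exponent is perfectly compatible with $\delta(\mathcal V,\mathfrak d_r)=1$ (this is the whole point of convergent-type phenomena in this paper), so the strict inequality does not follow. Your proposed repair via uniqueness/ergodicity of quasi-conformal densities cannot be carried out as stated: the measure $\nu'_e$ obtained by applying Patterson's trick to the series over $\mathcal V_{\epsilon,f}$ is not $\Gamma$-equivariant, since $\mathcal V_{\epsilon,f}$ is not $\Gamma$-invariant, so it is not an $\omega_\Gamma(r)$-dimensional quasi-conformal density in the sense of Section~\ref{Sectionpattersonsullivan}, and no Hopf--Tsuji--Sullivan dichotomy for such densities is available here. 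Your iterated alternative ($V(s)^{k}\prec C^{k}P(s)$-type bounds) suffers from the same defect: without additional input it only reproduces $V(s)\le C$.

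The missing ingredient, which is how the paper closes the argument, is to first establish that the Poincar\'e series restricted to $\mathcal V_{\epsilon,f}$ \emph{diverges at its own critical exponent}. The paper does this by a case split: if $\omega_{\mathcal V}(r)\le\omega_P(r)$ for some maximal parabolic $P$, the parabolic gap hypothesis immediately gives $\omega_{\mathcal V}(r)<\omega_\Gamma(r)$; otherwise $\omega_{\mathcal V}(r)>\omega_P(r)$ for all $P$, and a sub-multiplicativity estimate as in \cite[Lemma~3.7]{DWY} together with a variant of Fekete's lemma forces divergence of $\sum_{x\in\mathcal V}\mathrm{e}^{-s|x|}G(e,x|r)$ at $s=\omega_{\mathcal V}(r)$. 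Only with this divergence in hand does the free-product (or iterated-injection) estimate produce a strict increase of the critical exponent: divergence guarantees that the partial series exceeds $1/c$ at some $s$ strictly above the critical exponent of $\mathcal V$, so the geometric series over concatenations blows up there and pins $\delta(\Gamma,\mathfrak d_r)$ strictly above $\delta(\mathcal V,\mathfrak d_r)$. Note also that this is the only place the parabolic gap hypothesis is genuinely used; your first paragraph spends it on constructing a divergent-type PS measure for $\Gamma$, which is not what the strictness step needs. Finally, be careful with the direction of the membership claim for $\Phi_i(x,y)$: to conclude $\Phi_i(x,y)\notin\mathcal V_{\epsilon,f}$ you must show that \emph{every} geodesic $[e,\Phi_i(x,y)]$ passes $\epsilon$-close to a copy of $f$, not just the concatenated one you build, which requires a stability argument for transition points.
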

\begin{proof}
By  Proposition \ref{PropTwoPGs}, it suffices to prove $\omega_{\mathcal V}(r)<\omega_{\Gamma}(r)$.
Set
$$a^\omega(n) = \mathrm{e}^{-\omega n}\sum_{x\in  \mathcal V, |x|=n} G(e,x;r).$$
Assume that $\omega_{\mathcal V}(r) >\omega_P(r)$ for every maximal parabolic subgroup $P$; otherwise the parabolic gap concludes the proof. If   $\omega_{\mathcal V}(r)>\omega >\omega_P(r)$, one obtains
$$
a^\omega(n+m) \le c_0 \sum_{1\le i\le n} a^\omega(i) \sum_{1\le j\le n} a^\omega(j) 
$$
by the same argument of \cite[Lemma~3.7]{DWY} where $a^\omega(n)$ is summed up over $\Gamma$ instead of $\mathcal V$. This implies via a variant of Feketa's lemma in \cite[Lemma 4.3]{DPPS} that the series $\sum_{x\in \mathcal V} \mathrm{e}^{-s |x|}G(e,x;r)$ diverges at $s=\omega_{\mathcal V}(r)$. 

Fix any $L>0$. We choose an \textit{$L$-separated net} $A\subset \mathcal V$ in word distance:  if for any $x, y\in A$ we have $|x^{-1}y|>L$ and for any $y\in \mathcal V$, there exists $x\in A$ such that $|x^{-1}y|\le L$. Note that if $|x^{-1}y|\le L$, then $G(e, x;r)\asymp_L G(e,y;r)$. Since any ball of radius $L$ in word distance contains a fixed number of elements, we deduce that   $\Theta_A(r, s)\asymp_L \Theta_A(r, s)$ whenever they are finite.   Thus, $\omega_A(r)=\omega_{\mathcal V}(r)$.

Following \cite[Section~4.2]{YangSCC}, we use a ping-pong argument to construct a \textit{free product of sets} inside $\Gamma$: if $L$ is large enough, there exist a finite set of elements $B\subset \Gamma$     such that  the set $\mathcal W(A, B)$ of alternating words over $A$ and $B$ embeds into $\Gamma$ as a free semi-group under the evaluation map. This construction uses only the word distance. Now,  by \cite[Lemma~3.8]{DWY}, we have $\omega_A(r)<\omega_\Gamma(r)$ and then $\delta(\mathcal V, \mathfrak d_r)<1$ by Proposition~\ref{PropTwoPGs}.
\end{proof}

\begin{prop}\label{growthrightnessandparabolicgap}
If a relatively hyperbolic group $\Gamma$ has parabolic gap for the Green function, then it is growth tight for the distance $\mathfrak d_r$.   Otherwise, there exists a nontrivial quotient $\bar \Gamma$ such that $\delta(\bar \Gamma, \mathfrak {\bar {d_r}})=\delta(\Gamma, \mathfrak d_r)$.
\end{prop}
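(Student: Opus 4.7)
The plan is to treat the two directions separately, and in both cases to leverage Lemma~\ref{BarrierFreeLem} (for the forward direction) together with the hypothesis $\omega_P(r)=\omega_\Gamma(r)$ (for the converse). For the forward direction, given any nontrivial quotient $\bar\Gamma=\Gamma/N$, I would first exhibit a loxodromic element $f\in N$; this is standard for non-elementary relatively hyperbolic groups, where every infinite normal subgroup contains loxodromic elements. Selecting the corresponding $\epsilon>0$ from Lemma~\ref{BarrierFreeLem}, for each $\bar x\in\bar\Gamma$ I would choose a representative $\phi(\bar x)\in xN$ minimizing $\mathfrak d_r(e,\cdot)$, and argue that $\phi(\bar x)\in\mathcal V_{\epsilon,f}$. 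If, on the contrary, a word geodesic from $e$ to $\phi(\bar x)$ passed $\epsilon$-close to a conjugate $gfg^{-1}$, then writing $\phi(\bar x)\approx a\cdot gfg^{-1}\cdot b$ and setting $y=ab$ produces an element of $\phi(\bar x)N=xN$ (by normality of $N$) with strictly smaller word length; the extra ingredient is that the endpoints of the $gfg^{-1}$-subsegment are transition points, since loxodromic axes are rich in transition points, so the weak relative Ancona inequalities (Lemma~\ref{weakAncona}) supply a matching Green-length saving. For $f$ with sufficiently large translation length these combine to yield $\mathfrak d_r(e,y)<\mathfrak d_r(e,\phi(\bar x))$, contradicting minimality. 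Injectivity of $\phi$ then gives
\[
\sum_{\bar x\in\bar\Gamma}\mathrm{e}^{-s\bar{\mathfrak d_r}(\bar e,\bar x)}\leq \sum_{z\in\mathcal V_{\epsilon,f}}\mathrm{e}^{-s\mathfrak d_r(e,z)},
\]
and Lemma~\ref{BarrierFreeLem} forces $\delta(\bar\Gamma,\bar{\mathfrak d_r})<1=\delta(\Gamma,\mathfrak d_r)$.

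For the converse direction, suppose $\omega_P(r)=\omega_\Gamma(r)$ for some maximal parabolic subgroup $P$. A direct repetition of the proof of Lemma~\ref{NewPoincareSeries} restricted to $P$ gives $\delta(P,\mathfrak d_r)=\omega_P(r)/\omega_\Gamma(r)=1$. I would then construct an infinite normal subgroup $N\triangleleft\Gamma$ satisfying $N\cap gPg^{-1}=\{e\}$ for every $g\in\Gamma$: take $N$ to be the normal closure of a sufficiently high power of a loxodromic element not conjugate into $P$, invoking the Dahmani--Guirardel--Osin theory of hyperbolically embedded subgroups, which guarantees both that $N$ is free (hence infinite) and that it meets every conjugate of $P$ trivially. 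Letting $\bar\Gamma=\Gamma/N$, the restriction $\pi|_P$ is then injective, and the pointwise inequality $\bar{\mathfrak d_r}(\bar e,\pi(x))\leq\mathfrak d_r(e,x)$ yields
\[
\sum_{\bar x\in\bar\Gamma}\mathrm{e}^{-s\bar{\mathfrak d_r}(\bar e,\bar x)}\geq\sum_{x\in P}\mathrm{e}^{-s\mathfrak d_r(e,x)},
\]
whose right-hand side diverges for $s<1$. Thus $\delta(\bar\Gamma,\bar{\mathfrak d_r})\geq 1$, and combined with the automatic reverse inequality $\delta(\bar\Gamma,\bar{\mathfrak d_r})\leq\delta(\Gamma,\mathfrak d_r)=1$ this gives the desired equality.

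The main obstacle will be the Green-length bookkeeping inside the shortening step in the forward direction: one must show that removing the $gfg^{-1}$-copy actually decreases the $\mathfrak d_r$-length, not merely the word length, and this requires choosing $f$ with translation length much larger than the additive constants appearing in Lemma~\ref{weakAncona}. A secondary technical point is that, in general, the existence of an infinite normal subgroup trivially intersecting $P$ is non-trivial; in the concrete free-product example of Theorem~\ref{transitionphasePoincare} one may simply take $N=\langle\langle\mathbb Z^d\rangle\rangle$ and appeal to the free-product normal form to see that the projection $\pi|_{\Gamma_0}$ is an isometric embedding, but the general case requires the Dahmani--Guirardel--Osin machinery cited above.
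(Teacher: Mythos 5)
Your proposal is correct and follows essentially the same route as the paper: the forward direction via shortest $\mathfrak d_r$-representatives of cosets being barrier-free (using the shortening argument along transition points and the almost-additivity of $\mathfrak d_r$ from Lemma~\ref{weakAncona}, then Lemma~\ref{BarrierFreeLem}), and the converse via $\delta(P,\mathfrak d_r)=1$ together with a Dehn-filling quotient $\Gamma/\langle\langle f^n\rangle\rangle$ into which $P$ embeds. The only cosmetic difference is that the paper cites \cite[Lemma~8.9]{YangPS} rather than Dahmani--Guirardel--Osin for the triviality of $P\cap\langle\langle f^n\rangle\rangle$.
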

\begin{proof}
(1). We  follow the proof of \cite[Corollary~4.6]{YangSCC} in our setup. Let $N$ be the infinite kernel of $\Gamma \to \bar \Gamma$. We form a set $A$ by choosing a shortest representative $h\in hN$ for each $hN\in \bar G$ so that $\mathfrak d_r(e, h)=\mathfrak d_r(e, hN)$. By definition of the quotient distance, the growth rate of the set $A$ for $\mathfrak d_r$ is exactly the growth rate of $\bar \Gamma$ for $\mathfrak {\bar d_r}$.  

We now choose a sufficiently long loxodromic element $f\in N$, which exists since $N$ is infinite.  If $|f|/ \epsilon$ is large enough, we see that  any geodesic $[e, h]$ cannot contain $f$ in its $\epsilon$-neighborhood. Indeed, if not, the loxodromic element $f$ produces two transition points on some $[e,h]$ with a distance comparable with $|f|$.  Now, we   use the following fact given by Lemma~\ref{weakAncona}: if $u, v$ are two transition points in this order on a word geodesic $[x,y]$, then 
\begin{align}\label{almostEqualityEQ}
\mathfrak d_r(x,y) \simeq \mathfrak d_r(x,u)+\mathfrak d_r(u,v)+\mathfrak d_r(v,y)    
\end{align} where $\simeq$ denote the equality up to a uniform additive constant.  We could then shorten $\mathfrak d_r(e,h)$ by an amount $\mathfrak d_r(e,f)=\omega_\Gamma(r)|f|+|f|_r$, giving a contradiction with the above choice of  $h\in hN$ as the shortest one.  

In other words, we proved that $A\subset \mathcal V_{\epsilon, f}$. Hence, $\delta(A, \mathfrak d_r)\le \delta(\mathcal V_{\epsilon, f},\mathfrak d_r)<1$ by Lemma~\ref{BarrierFreeLem}. The growth tightness follows.  

(2). Assume that $\omega_P(r)=\omega_\Gamma(r)$ for a maximal parabolic subgroup $P$. Then,
\[
\sum_{x \in P} 
 \mathrm{e}^{- \mathfrak d_r(e, x)}=\sum_{x \in P} \mathrm{e}^{-\omega_{P}(r)} G(e,x|r)
\]
hence, we see that the growth rate for $\mathfrak d_r$ induced on $P$ equals $1$. 

Fix a loxodromic element $f\in G$. For any large enough $n$, the quotient group $\bar \Gamma$ defined  as $G/\langle\langle f^n\rangle\rangle$  is again a relatively hyperbolic group, and $P\cap \langle\langle f^n\rangle\rangle$ is trivial (see \cite[Lemma~8.9]{YangPS}). Thus, the set of elements in $P$ embeds into $\bar \Gamma$ whose image we denote by $\bar P$, so $\delta(\bar P, \mathfrak {\bar d_r})\ge \delta(P, \mathfrak d_r)=1$. Therefore, $\delta(\bar \Gamma, \mathfrak {\bar d_r})=1$.   
\end{proof}

Relatively hyperbolic groups endowed with a word distance are always growth tight by \cite{YangTight, ACT}. In fact,  any       co-compact action of  a relatively hyperbolic group on a proper geodesic  space contains a contracting element and thus is growth tight. Here, the existence of a contracting element in the co-compact action   follows    from the fact that in a relatively hyperbolic group, a loxodromic element is  contracting with respect to all word quasi-geodesics: any $c$-quasi-geodesic outside the $C$-neighborhood of the axis has $C$-bounded projection for  some $C=C(c)$.  See \cite[Proposition 8.5]{GP16}.  

On the contrary, as a corollary of Theorem~\ref{transitionphasePoincare} and Proposition~\ref{growthrightnessandparabolicgap},  growth tightness for $\mathfrak d_r$ may fail and depends on $r$.

\begin{thm}\label{transitionphasegrowthtightness}
There exists a relatively hyperbolic group $\Gamma$ endowed with a finitely supported symmetric and admissible probability measure $\mu$ such that the following holds.
There exist $1<r_*<r_\sharp<R$ such that $\Gamma$ endowed with the distance $\mathfrak d_r$ is growth tight for  $r\le r_*$, but is not for $r=r_\sharp$.   
\end{thm}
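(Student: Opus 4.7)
The plan is to combine the two main tools already developed in the paper: the construction of Theorem~\ref{transitionphasePoincare}, which furnishes an explicit relatively hyperbolic group with a sharp behavior of the parabolic gap as a function of $r$, together with the equivalence between parabolic gap for the Green function and growth tightness for the proper distance $\mathfrak d_r$ established in Proposition~\ref{growthrightnessandparabolicgap}. No new geometric input should be needed; this is essentially a dictionary step.

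First, I would take $\Gamma = \Gamma_0 \ast \mathbb Z^d$ with $\Gamma_0 = \mathbb F_n \times \mathbb F_m$ for $n\neq m$, and the measure $\mu = \mu_\alpha$ built in Subsection~\ref{SectionconvergentPoincare}, with $\alpha$ chosen small enough that Theorem~\ref{transitionphasePoincare} applies. This produces thresholds $1<r_*(\alpha)<r_\sharp(\alpha)<R$. Recall that by Lemma~\ref{quasiisometricdistances}, for each $r<R$ the distance $\mathfrak d_r$ is proper and quasi-isometric to the word distance, so growth tightness for $\mathfrak d_r$ is a meaningful condition, and by Lemma~\ref{NewPoincareSeries} the critical exponent $\delta(\Gamma,\mathfrak d_r)=1$.

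Next, I would handle the regime $r\le r_*$. Theorem~\ref{transitionphasePoincare} asserts that $\Gamma$ has a parabolic gap for the Green function on this range, i.e.\ $\omega_P(r)<\omega_\Gamma(r)$ for every maximal parabolic subgroup $P$ (both $\Gamma_0$ and $\mathbb Z^d$, the latter being handled by Proposition~\ref{propamenableparabolic}). Applying part~(1) of Proposition~\ref{growthrightnessandparabolicgap}, this yields growth tightness of $(\Gamma,\mathfrak d_r)$.

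For the regime $r=r_\sharp$, Theorem~\ref{transitionphasePoincare} gives $\omega_P(r)=\omega_\Gamma(r)$ for $P=\Gamma_0$. Applying part~(2) of Proposition~\ref{growthrightnessandparabolicgap}, we obtain a nontrivial quotient $\bar\Gamma$ with $\delta(\bar\Gamma,\bar{\mathfrak d_r})=\delta(\Gamma,\mathfrak d_r)=1$, so growth tightness for $\mathfrak d_{r_\sharp}$ fails. Assembling the two regimes finishes the proof. Since both Theorem~\ref{transitionphasePoincare} and Proposition~\ref{growthrightnessandparabolicgap} have been proved in the preceding subsections, there is no real obstacle here; if anything, one might want to add one sentence to record that $r_\sharp<R$ so that $\mathfrak d_{r_\sharp}$ is indeed a proper distance quasi-isometric to the word distance, ensuring that its growth rate is finite and the notion of growth tightness applies unambiguously.
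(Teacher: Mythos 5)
Your proposal is correct and is exactly the paper's argument: the theorem is stated as a direct corollary of Theorem~\ref{transitionphasePoincare} (parabolic gap for $r\le r_*$, failure of the gap with $\omega_P(r_\sharp)=\omega_\Gamma(r_\sharp)$ at $r_\sharp$) combined with the two parts of Proposition~\ref{growthrightnessandparabolicgap}. Nothing further is needed.
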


Note that the proper distance $\mathfrak d_r$ is quasi-isometric to any word distance for $r<R$ by Lemma~\ref{quasiisometricdistances}. We say that a metric space $(X,d)$ is \textit{$D$-coarsely geodesic} for some $D>0$ if for any two points $x,y\in X$, there exists a $(1,D)$-quasi-isometric embedding $\phi: [0, l]\to X$ for $l:=d(x,y)$ so that $\phi(0)=x,\phi(l)=y$, and $$|d(\phi(m), \phi(n))-|m-n||\le D$$ for any $0\le m\le n\le l$.   It is an open question   whether the Green distance is a geodesic distance on hyperbolic groups, see \cite[Section~1.7]{BHM11}. We shall however derive the following corollary from Theorem \ref{transitionphasegrowthtightness}.

\begin{cor}\label{drnotgeodesic}
For $r=r_\sharp$,   $(\Gamma,\mathfrak d_r)$  is not  a coarsely geodesic metric space.    
\end{cor}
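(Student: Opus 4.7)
The strategy is a proof by contradiction, leveraging Theorem~\ref{transitionphasegrowthtightness} together with the general principle that a proper co-compact isometric action with a contracting element is growth tight. Suppose $(\Gamma,\mathfrak d_{r_\sharp})$ is $D$-coarsely geodesic. The left regular action of $\Gamma$ on itself is by isometries for $\mathfrak d_{r_\sharp}$, is transitive (hence co-compact), and is proper since $\mathfrak d_{r_\sharp}$ is a proper metric (by Lemma~\ref{quasiisometricdistances}, using $r_\sharp<R$).

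Next I would invoke the existence of contracting elements. Since $r_\sharp<R$, Lemma~\ref{quasiisometricdistances} gives that $\mathfrak d_{r_\sharp}$ is quasi-isometric to the word distance. Because $\Gamma$ is non-elementary relatively hyperbolic, it contains loxodromic elements whose axes are contracting along word quasi-geodesics by \cite[Proposition 8.5]{GP16}. Contracting elements are preserved under quasi-isometry between coarsely geodesic metric spaces, so any such loxodromic element remains contracting for the action of $\Gamma$ on $(\Gamma,\mathfrak d_{r_\sharp})$.

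With both ingredients in place, the growth tightness results of \cite{YangTight, ACT} apply: any proper co-compact isometric action of $\Gamma$ on a proper coarsely geodesic space possessing a contracting element is growth tight. This would force $(\Gamma,\mathfrak d_{r_\sharp})$ to be growth tight, contradicting Theorem~\ref{transitionphasegrowthtightness} which exhibits a nontrivial quotient $\bar\Gamma$ with $\delta(\bar\Gamma,\bar{\mathfrak d}_{r_\sharp})=\delta(\Gamma,\mathfrak d_{r_\sharp})=1$.

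The main obstacle is technical rather than conceptual: the growth tightness theorems in \cite{YangTight, ACT} are typically stated for genuinely geodesic metric spaces, whereas here we only assume coarse geodesicity. I expect this to be a routine extension (the contracting element machinery and the ping-pong construction behind growth tightness depend only on coarse geometric input such as \eqref{almostEqualityEQ}), but it should be checked carefully; alternatively, one can reprove growth tightness for the specific coarse geodesic structure on $(\Gamma,\mathfrak d_{r_\sharp})$ by adapting the argument of Proposition~\ref{growthrightnessandparabolicgap}, essentially reducing to the barrier-free decay afforded by Lemma~\ref{BarrierFreeLem}.
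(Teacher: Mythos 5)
Your overall strategy --- deriving a contradiction with Theorem~\ref{transitionphasegrowthtightness} by showing that coarse geodesicity would force growth tightness of $(\Gamma,\mathfrak d_{r_\sharp})$ via a co-compact action with a contracting element --- is the same as the paper's. But there is a genuine gap at the step where you transfer the contracting property from the word metric to $\mathfrak d_{r_\sharp}$. You assert that ``contracting elements are preserved under quasi-isometry between coarsely geodesic metric spaces.'' This is not a valid general principle: the (strongly) contracting property used in \cite{YangTight, ACT} is defined via shortest-point projections of metric balls, and a quasi-isometry distorts both the balls and the projections. What is genuinely quasi-isometry invariant is the Morse property, which is strictly weaker than strong contraction in general and does not suffice for the growth tightness machinery you want to invoke. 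This is exactly why the paper includes Lemma~\ref{contractingdrmetric}: it proves \emph{directly} that every loxodromic element is contracting for $\mathfrak d_r$, using the specific structure of this metric --- namely the coarse additivity~(\ref{almostEqualityEQ}) of $\mathfrak d_r$ along transition points of word geodesics, which comes from the relative Ancona inequalities. The key point there is that the $\mathfrak d_r$-shortest projection to the axis is uniformly close to the word-metric shortest projection (because the latter is close to a transition point), after which the bounded projection property for $\mathfrak d_r$-balls follows from additivity. Without this lemma or an equivalent direct argument, your proof does not close.

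Your secondary worry --- that the growth tightness theorems are stated for geodesic rather than coarsely geodesic spaces --- is real but is handled in the paper exactly as you propose: once contracting elements for $\mathfrak d_r$ are in hand, the argument of \cite{YangTight} runs verbatim with coarse $\mathfrak d_r$-geodesics replacing word geodesics. So the missing ingredient is not the coarse-geodesic extension of growth tightness but the contraction statement itself.
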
 

The proof requires the following observation of independent interest.
Recall that an element of infinite order $g$ in a finitely generated group $\Gamma$ is called contracting for a distance $d$ on $\Gamma$ if any $d$-metric ball in $\Gamma$ disjoint with the subgroup $\langle g\rangle$ has $C$-bounded projection to $\langle g\rangle$ for some universal constant $C>0$.
In a $D$-coarsely geodesic metric space, this is equivalent to the bounded image property: there exists $C=C(D)>0$ such that any $D$-coarse geodesic outside the $C$-neighborhood of $\langle g\rangle$ has shortest projection of diameter at most $C$ to it. For simplicity, we can  take the same $C$ for both statements. 
\begin{lem}\label{contractingdrmetric}
Any loxodromic element in a relatively hyperbolic group is   contracting with respect to $\mathfrak d_r$ where $1\le r\le R$.     
\end{lem}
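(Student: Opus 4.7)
The strategy reduces $\mathfrak d_r$-contracting to word-contracting using the weak relative Ancona inequalities of Lemma~\ref{weakAncona}, which hold with constants uniform in $r \in [1, R]$.

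First, I recall that by \cite[Proposition 8.5]{GP16}, the axis $\langle g\rangle$ of a loxodromic element $g$ is a word quasi-geodesic that is contracting in the word metric: there is a universal $C_0$ such that the word nearest-point projection $\pi$ onto $\langle g\rangle$ has $C_0$-bounded image on any word-ball disjoint from $\langle g\rangle$. Moreover, for any $x \in \Gamma$ and $q \in \langle g\rangle$, a word geodesic $[x, q]$ enters a neighborhood of $\langle g\rangle$ near $\pi(x)$, and this entry point lies within a universal distance of an $(\eta, L)$-transition point on $[x, q]$ (loxodromic elements cannot fix parabolic points, so the axis is not deep in any parabolic coset).

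Second, I apply Lemma~\ref{weakAncona} at this transition point: $G(x, q | r) \asymp G(x, \pi(x)|r) \cdot G(\pi(x), q|r)$ with constant independent of $r \in [1, R]$. Combined with the word-additivity $|x^{-1}q| \simeq |x^{-1}\pi(x)| + |\pi(x)^{-1}q|$ coming from $\pi(x)$ being on a geodesic up to bounded error, this yields the key uniform additivity
\begin{equation*}
\mathfrak d_r(x, q) \simeq \mathfrak d_r(x, \pi(x)) + \mathfrak d_r(\pi(x), q),
\end{equation*}
analogous to~(\ref{almostEqualityEQ}) used earlier in the paper.

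Third, this additivity forces the $\mathfrak d_r$-nearest point projection $\pi_r$ onto $\langle g\rangle$ to be within uniformly bounded $\mathfrak d_r$-distance of $\pi$: taking $q = \pi_r(x)$ and using $\mathfrak d_r(x, \pi_r(x)) \le \mathfrak d_r(x, \pi(x))$ (by definition of nearest point) yields $\mathfrak d_r(\pi(x), \pi_r(x)) \le C_1$ for a universal $C_1$. Finally, for the bounded image property: given a $\mathfrak d_r$-ball $B$ disjoint from $\langle g\rangle$ and $x, x' \in B$, applying the additivity from Step 2 to a triangle through $\langle g\rangle$ constrains the word-projections $\pi(x), \pi(x')$ to have bounded word-diameter—any large word-displacement between them would, in combination with the lower bound $\mathfrak d_r(x, x') \ge \mathfrak d_r(x, \pi(x)) + \mathfrak d_r(\pi(x), \pi(x')) + \mathfrak d_r(\pi(x'), x') - O(1)$, exceed the diameter of $B$. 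Since the axis is a word quasi-geodesic along which, by iterating Lemma~\ref{weakAncona}, the Green distance is comparable to the word distance, bounded word-diameter implies bounded $\mathfrak d_r$-diameter on $\langle g\rangle$. Combining with the previous step, $\pi_r(B)$ has bounded $\mathfrak d_r$-diameter.

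\textbf{Main obstacle.} The delicate point is the last step. For $r < R$, $\mathfrak d_r$ is quasi-isometric to the word metric by Lemma~\ref{quasiisometricdistances}, so the argument is immediate. At $r = R$ however this quasi-isometry fails, and one cannot pass directly between $\mathfrak d_R$-balls and word-balls disjoint from $\langle g\rangle$. The resolution is to avoid any a priori comparison of metric balls and to rely solely on the additivity relation of Step 2, whose uniformity in $r \in [1, R]$ is precisely what encodes the Ancona-type rough-multiplicativity needed to transfer the contracting property from the word metric to $\mathfrak d_r$.
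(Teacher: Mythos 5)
Your proposal follows essentially the same route as the paper's proof: coarse additivity of $\mathfrak d_r$ along transition points via the $r$-uniform Ancona inequalities, the word-contracting property of the axis, the comparison of the $\mathfrak d_r$-projection with the word projection, and a contradiction argument for a $\mathfrak d_r$-ball disjoint from the axis. The one step to tighten is the last: comparing two arbitrary points $x,x'\in B$ only bounds $\mathfrak d_r(\pi(x),\pi(x'))$ by $\mathrm{diam}(B)+O(1)$, which is not a uniform bound; as in the paper, you must anchor the comparison at the center $c$ of $B$, where disjointness of $B$ from the axis gives $\mathfrak d_r(c,\pi_r(c))\ge \mathfrak d_r(c,x')$ for every $x'\in B$, and the additivity then forces $\mathfrak d_r(\pi_r(c),\pi_r(x'))=O(1)$.
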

\begin{proof}
By (\ref{almostEqualityEQ}),  the proper distance $\mathfrak d_r$ is coarsely additive along the set of transition points on the word geodesic. That is, if $z$ is a transition point on $[x,y]$ we have $\mathfrak d_r(x,y)\ge \mathfrak d_r(x,z)+\mathfrak d_r(z,y)-D$ for some universal $D>0$.
Let $\gamma$ be a quasi-geodesic preserved by a loxodromic element $g$. Then $\gamma$ is $C_0$-  contracting with respect to word distance for some $C_0>0$.
We claim that the shortest projection $z$ of any point $x$ to $\gamma$ for the distance $\mathfrak d_r$ is $D_0$-close to the shortest projection $w$ of $x$ to $\gamma$ for the word distance.
Indeed, as $w$ is uniformly close to a transition point on $[x,z]$, we see that $\mathfrak d_r(x,z)+D_0\ge \mathfrak d_r(x,w)+\mathfrak d_r(w,z)$  for some $D_0=D_0(C_0)>0$. By the definition of $\mathfrak d_r$-shortest projection, we have   $\mathfrak d_r(x,z)\le \mathfrak d_r(x,w)$ and thus the claim follows.

We now prove that $\gamma$ is   contracting for $\mathfrak d_r$. Pick any  $\mathfrak d_r$-distance ball  $B$ centered at $x$ disjoint with $\gamma$. Let $y\in B$ so that the projections denoted by $u, v$ respectively  of $x,y$ to $\gamma$ realizes the $\mathfrak d_r$-diameter of that of $B$ to $\gamma$. By Lemma \ref{quasiisometricdistances}, $\mathfrak d_r$ is quasi-isometric to word metric for $1\le r<R$. For $r=R$, by \cite[Lemma 2.1]{GouezelLalley}, $f(n):=\max\{G(e,x|R): x\in S_n\}\to 0$ as $n\to \infty$, so $f(n)$ is a proper function. We  can thus choose  $\mathfrak d_r(u, v)$ large enough so that $|u^{-1}y|\ge C_0$, hence the   contracting property of $\gamma$ in word distance implies that $u,v$ are uniformly close to  transition points on $[x,y]$. By the additive property of $\mathfrak d_r(x,y)$ along transition points, we obtain $\mathfrak d_r(x,y)\ge \mathfrak d_r(x,u)+\mathfrak d_r(u,v)+\mathfrak d_r(v,y)-D_1$ for some $D_1>0$. As $B$ is a $\mathfrak d_r$-distance ball disjoint with $\gamma$, we have $\mathfrak d_r(x,y)\le \mathfrak d_r(x,u)$. We then obtain a contradiction if $\mathfrak d_r(u,v)>D_1$.
Thus, the   contracting property for $\mathfrak d_r$ follows.
\end{proof}

\begin{proof}[Proof of Corollary \ref{drnotgeodesic}]
Assume on the contrary that $\mathfrak d_r$  is a coarsely geodesic distance for $r>r_\star$.
By Lemma \ref{contractingdrmetric}, the axis of any loxodromic element satisfies the bounded image property for coarse geodesics, so any loxodromic element is contracting in the sense of \cite{YangTight}. As the action on $\Gamma$ is co-compact, the same argument as in \cite{YangTight} holds verbatim by replacing word geodesics with coarse $\mathfrak d_r$-geodesics and we can show that $\mathfrak d_r$ on $\Gamma$ is growth tight. However, this contradicts Theorem \ref{transitionphasegrowthtightness}. Thus the corollary follows.
\end{proof}

In \cite[Question 1]{ACT}, Arzhantseva-Cashen-Tao asked whether growth tightness is invariant among cocompact actions   on geodesic metric spaces. Cashen-Tao \cite{CaTao16} showed the first examples  of product groups with growth tightness for one generating set but not for another generating set.
Examples of non-growth tight relatively hyperbolic groups with   non-cocompact actions were already considered in \cite[Obs. 7.9]{ACT}.
As the action   is  not-cocompact, the induced pseudo-distance on the group pulled back from the action is not quasi-isometric to the word distance.
It is natural to  ask the following variant of Arzhantseva-Cashen-Tao's question for relatively hyperbolic groups about quasi-isometry invariance of growth tightness :
if a proper (pseudo-)distance $d$ on $\Gamma$ is quasi-isometric to the word distance, does the growth tightness for $d$ hold ?
In our last   corollary, we produce examples  of non-geodesic distances  on relatively hyperbolic groups $\Gamma$ that answer negatively this question.
\begin{cor}\label{corgrowthtightness}
There exists a relatively hyperbolic group with a proper left invariant distance quasi-isometric to the word distance which does not have the growth tightness property.
\end{cor}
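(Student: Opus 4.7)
The plan is to observe that Corollary~\ref{corgrowthtightness} is an immediate consequence of Theorem~\ref{transitionphasegrowthtightness} combined with Lemma~\ref{quasiisometricdistances}, so the proof is essentially a repackaging of what has already been established. I would take the relatively hyperbolic group $\Gamma$ and the finitely supported symmetric admissible probability measure $\mu$ produced by Theorem~\ref{transitionphasegrowthtightness}, and then consider the distance $\mathfrak d_r$ at the value $r=r_\sharp$.

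First, I would verify that $\mathfrak d_{r_\sharp}$ is a proper left-invariant distance on $\Gamma$. Left-invariance follows immediately from the definition $\mathfrak d_r(x,y)=\omega_\Gamma(r)|x^{-1}y|+|x^{-1}y|_r$, since both the word length $|x^{-1}y|$ and the $r$-Green length $|x^{-1}y|_r=-\log(G(x,y|r)/G(e,e|r))$ depend only on $x^{-1}y$. Properness is part of the conclusion of Lemma~\ref{quasiisometricdistances}. Since we chose $r_\sharp<R$, that same lemma also asserts that $\mathfrak d_{r_\sharp}$ is quasi-isometric to any word distance on $\Gamma$, which is the required regularity for the desired counterexample.

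Finally, I would invoke Theorem~\ref{transitionphasegrowthtightness}, whose conclusion is precisely that for this choice of $(\Gamma,\mu)$ the group is not growth tight for $\mathfrak d_{r_\sharp}$. Combining these three facts — properness, left-invariance, quasi-isometry to the word distance, and failure of growth tightness — yields the corollary.

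Since each ingredient has already been proved in the paper (properness and quasi-isometry in Lemma~\ref{quasiisometricdistances}, and failure of growth tightness via Proposition~\ref{growthrightnessandparabolicgap} and the convergence example of Theorem~\ref{transitionphasePoincare} that feed into Theorem~\ref{transitionphasegrowthtightness}), there is no genuine obstacle at this final step; the only point worth emphasizing is the contrast with the cocompact geodesic setting of \cite{YangTight, ACT}, where such a counterexample is ruled out by the presence of a contracting element for the word metric. Here, the non-geodesic nature of $\mathfrak d_{r_\sharp}$, already highlighted in Corollary~\ref{drnotgeodesic}, is what makes the counterexample possible and answers the quasi-isometry invariance variant of the Arzhantseva--Cashen--Tao question in the negative.
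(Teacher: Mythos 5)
Your proposal is correct and is exactly the argument the paper intends (the corollary is stated without a separate proof precisely because it follows by combining Theorem~\ref{transitionphasegrowthtightness} with Lemma~\ref{quasiisometricdistances} applied at $r=r_\sharp<R$, plus the evident left-invariance of $\mathfrak d_r$). Nothing is missing.
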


\appendix
\section{Probability that the branching random walk visits partial cones}

 We consider a relatively hyperbolic group $\Gamma$.
Our goal is to prove Proposition~\ref{proppartialcone}.
The proof basically consists of a reorganization of arguments of \cite{DWY}.
We first recall some notations and definitions.

Given a subset $A$ of $\Gamma$, we write $G(x,y;A|r)$ for the Green function restricted to paths staying in $A$, expect maybe the first and last point.
That is,
$$G(x,y;A|r)=\sum_{n\geq 0}\sum_{z_1,...,z_{n-1}\in A}r^n\mu(x^{-1}z_1)\mu(z_1^{-1}z_2)...\mu(z_{n-1}^{-1}y).$$

Fix  $C>0$ and $x\in \Gamma$.
The \textit{$C$-partial cone} $\Omega(x,C)$ consists  of points $z\in G$ such that $x$ is within $C$ of an $(\eta, L)$-transition point on the geodesic $[e,z]$. Let $C>0$ be any sufficiently large constant given by \cite[Lemma 2.9]{DWY} so that the relative thin triangle property holds for $(\eta, L)$-transition points : for every triple $(x,y,z)$ of $\Gamma$, any $(\eta,L)$-tranition point on $[x,y]$ is within $C$ of either an $(\eta,L)$-transition point on $[x,z]$ or an $(\eta,L)$-transition point on $[y,z]$.

Let $B([x,z])$ be the ball centered at the middle point of $[x,z]$  of radius ${d(x,z)/2}$. Define $U(x)$ to be the union of  the balls $B([x,z])$ for all geodesics $[x,z]$ between $x$ and $z\in \Omega(x,C)$. That is,
$$
U(x):= \bigcup\left\{B([x,z]): \forall [x,z], \forall z\in  \Omega(x,C)\right\}. 
$$
It is clear that  $\Omega(x, C)$ is contained in $U(x)$.
 
Fix $\epsilon\in (0, 1/2)$.
Let $U_\epsilon(x)$ be the set of points $z\in U(x)$ such that $[x,z]$ contains a transition point $w$ being at distance at least $\epsilon d(x,z)$ to one of the endpoints:  $$ \max\{d( w, x), d(w,z)\}\ge \epsilon d(x,z).$$  
For any $m\ge 1$, let $U_{\epsilon}(x,m)$ be the set of elements  $z\in U_{\epsilon}(x)$ such that $d(x,z)\ge m$. 
 
 
We now consider a finitely supported symmetric and admissible probability measure $\mu$ on $\Gamma$ and a probability measure $\nu$ on $\mathbb N$.
We denote by $\mathrm{BRW}(\Gamma,\nu,\mu)$ the branching random walk associated with $\nu$ and $\mu$.
In what follows, we shall often use the following estimates proved in~(\ref{lowerboundquasiisometry}). There exists  $\alpha>0$ such that for any $x\in \Gamma$, for every $r\geq 1$,
\begin{align}\label{GrLBD}
G(e, x|r)\ge \mathrm{e}^{-\alpha |x|}.
\end{align}
Also, by Lemma~\ref{purelyexponentialgrowth} there exists $c_1>0$ such that for any $n\ge 1$,
\begin{align}\label{SnGrowth}
\sharp S_n\le c_1 e^{vn}
\end{align}

\begin{lem}\label{TraceAvoidTransBall}
For any $\epsilon\in (0,1/2)$, there exists $\kappa_0>0$ such that for every $\kappa\ge \kappa_0$, the following holds.
For all but finitely many $x\in \Gamma$: the  following event   
\begin{align*}
E_1:=\{\mathrm{BRW}(\Gamma,\nu,\mu)~\text{first enters}~U(x)~\text{at a point}~z\in U_{\epsilon}(x,\kappa \log |x|)\} 
\end{align*} has probability at most $G(e,x|r)$.  

\end{lem}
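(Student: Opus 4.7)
The plan is a first-moment calculation on visits to the target set, combined with the weak relative Ancona inequalities to factor out $G(e,x|r)$, and a tail estimate to close the argument.

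First, since $U_\epsilon(x, \kappa \log|x|) \subseteq U(x)$, the event $E_1$ implies that at least one particle of the branching random walk visits $U_\epsilon(x, \kappa \log|x|)$. The many-to-one lemma for branching random walks gives $\mathbf{E}[\#\{\text{visits to } z\}] = G(e, z|r)$, so a union bound yields
\[
  \mathbf{P}(E_1) \le \sum_{z \in U_\epsilon(x, \kappa \log|x|)} G(e, z|r).
\]

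Next, I would show that for every $z \in U_\epsilon(x)$, the point $x$ lies within uniformly bounded distance of an $(\eta, L)$-transition point on some geodesic $[e, z]$. By definition there exists $y \in \Omega(x, C)$ with $z \in B([x,y])$, so $x$ is within $C$ of a transition point on $[e, y]$. Combining this with the geometry of $B([x, y])$ (both $x$ and $z$ lie at distance at most $d(x,y)$ from the midpoint) and the relatively thin triangle property for transition points from \cite[Lemma~2.9]{DWY}, the transition structure on $[e, y]$ near $x$ transfers to $[e, z]$. Lemma~\ref{weakAncona} then provides a constant $C_A$, independent of $r \in [1, R]$, with $G(e, z|r) \le C_A G(e, x|r) G(x, z|r)$.

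Substituting this into the first-moment bound gives
\[
  \mathbf{P}(E_1) \le C_A \, G(e, x|r) \sum_{z \in U_\epsilon(x, \kappa \log|x|)} G(x, z|r),
\]
so the problem reduces to showing that the residual sum is at most $1/C_A$ for all but finitely many $x$. This is where both the distance constraint $d(x, z) \ge \kappa \log |x|$ and the interior transition point on $[x, z]$ coming from $z \in U_\epsilon(x)$ must be exploited in tandem: a second application of Ancona at this interior transition point decomposes $G(x, z|r)$ into two factors, and by translating to the origin and organising the double sum in the spirit of \cite[Section~3]{DWY} one obtains decay of the form $C \cdot |x|^{-\tau \kappa}$ for some $\tau > 0$. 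Choosing $\kappa_0$ so that $\tau \kappa > 0$ makes $C_A \cdot C \cdot |x|^{-\tau \kappa} \le 1$ for all $x$ beyond a finite threshold, yielding the desired bound.

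The main obstacle is this final tail estimate. The unconstrained sum $\sum_{z \in U_\epsilon(x)} G(x, z|r)$ is in general divergent (already visible in the hyperbolic case, where $U_\epsilon(x)$ can be comparable to a full half-space), so the polynomial decay in $|x|$ must really come from the combination of the long geodesic segment forced by $d(x,z)\ge \kappa\log |x|$ and the transition-point geometry that lets one iterate Ancona and invoke the upper bound $H_r(n)\prec \mathrm{e}^{n\omega_\Gamma(r)}$ from Proposition~\ref{upperboundgrowthrateat1} in a controlled way. This is the step that crucially reorganizes the material already developed in \cite{DWY}.
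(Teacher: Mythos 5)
Your first step discards the only source of smallness in the problem. Bounding $\mathbf{P}(E_1)$ by $\sum_{z\in U_\epsilon(x,\kappa\log|x|)}G(e,z|r)$ uses the unrestricted Green function, and that sum diverges: for $r>1$ one has $\omega_\Gamma(r)>0$, so the spherical sums $H_r(k)$ grow exponentially and neither the constraint $d(x,z)\ge\kappa\log|x|$ nor the restriction to $U_\epsilon(x)$ (which contains a definite proportion of each sphere) can make the tail converge. Your subsequent Ancona factorization $G(e,z|r)\le C_AG(e,x|r)G(x,z|r)$ only reshuffles this divergence into the residual sum $\sum_zG(x,z|r)$, which you correctly identify as the obstacle — but the fix you sketch (a second Ancona split at an interior transition point plus the polynomial bound of Proposition~\ref{upperboundgrowthrateat1}, which concerns $r=1$, not $r>1$) cannot produce the polynomial decay $|x|^{-\tau\kappa}$ you need. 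No rearrangement of unrestricted Green functions will work here.

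The missing idea is to use the word \emph{first} in the definition of $E_1$. Freeze each particle at its first entrance into $U(x)$; its genealogy path up to that moment avoids $U(x)$ entirely. Since $z\in U_\epsilon(x)$, the geodesic $[e,z]$ carries a transition point $y$ with $\max\{d(y,x),d(y,z)\}>\epsilon d(x,z)-3C$, and the ball $B(y,\epsilon d(x,z)-3C)$ is contained in $U(x)$ (this is \cite[Lemma~6.6]{DWY}). Hence the expected number of particles frozen at $z$ is not $G(e,z|r)$ but the \emph{restricted} Green function $G\bigl(e,z;[B(y,\epsilon d(x,z)-3C)]^c\big|r\bigr)$, which by \cite[Proposition~3.5]{DG21} decays doubly exponentially, like $\exp\bigl(-\mathrm{e}^{\delta(\epsilon d(x,z)-3C)}\bigr)$. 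This super-exponential decay crushes the exponential volume growth $\sharp S_k\prec\mathrm{e}^{vk}$, giving $\mathbf{E}[\sharp\mathcal Z]\le\mathrm{e}^{-\mathrm{e}^{\epsilon_1 m}}$ with $m=\kappa\log|x|$; choosing $\kappa$ so that $|x|^{\epsilon_1\kappa}>\alpha|x|$ and using the lower bound $G(e,x|r)\ge\mathrm{e}^{-\alpha|x|}$ from~(\ref{GrLBD}) then yields $\mathbf{P}(E_1)\le\mathbf{E}[\sharp\mathcal Z]\le\mathrm{e}^{-\alpha|x|}\le G(e,x|r)$ directly, with no need to factor out $G(e,x|r)$ at all.
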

 
\begin{proof}
Let us freeze all particles of $\mathrm{BRW}(\Gamma,\nu,\mu)$ when the event $E_1$ happens, and denote    by $\mathcal Z$  the   collection of  frozen particles. Set $m:=\kappa \log n$ for simplicity, where $n=|x|$. Then for $z \in \mathcal{Z}$ we have 
\begin{enumerate}
    \item 
     $d(x,z)\ge m$, 
    \item 
      $\max\{d(y,x), d(y,z)\}>\epsilon d(x,z)-3C$ where $y$ is an $(\eta,L)$-transition point on $[e,z]$ given by \cite[Lemma~6.6]{DWY}.  
\end{enumerate}    
As the genealogy path from $e$ to $z$ does not intersect $B(y,\epsilon d(x,z)-3C)$, the expected number of  particles frozen  at $z\in U_\epsilon(x)$   is upper bounded by
$$
G(e,z; [U_\epsilon(x)]^c|r) \le G(e,z; [B(y,\epsilon d(x,z)-3C)]^c|r) \le e^{-   e^{\delta [\epsilon d(x,z)-3C]}}. 
$$  where $\delta=\delta(\eta, L)$ be given by \cite[Proposition~3.5]{DG21}.
As a consequence, there exist $\epsilon_1=\epsilon_1(\epsilon,\delta,v)$ and $n_0>0$ such that for any $m>\kappa\log n_0$, we have by (\ref{SnGrowth}) that
$$
\mathbf E[\sharp \mathcal Z]\le \sum_{z\in U_\epsilon(x,m)} G\bigl (e,z; [B(y,\epsilon d(x,z)-3C]^c\big|r\bigr) \le \sum_{k=m}^{\infty} c_1\cdot \mathrm{e}^{kv-   e^{\delta(\epsilon k-3C)}} \le e^{-e^{\epsilon_1 m}}.
$$
Choose $\kappa$ so that $n^{\kappa \epsilon_1} >\alpha n$ holds for any $n>n_0$.
Then,
$$
 \mathbf P(E_1) \le  \mathbf E(\sharp  \mathcal Z\ge 1)  \le e^{-n^{\epsilon_1\kappa}} \le  \mathrm{e}^{-\alpha n} \le G(e,x|r) 
$$ 
where the last inequality uses (\ref{GrLBD}).
\end{proof}

Similarly, we prove the following.
\begin{lem}\label{TraceAvoidTransBall2}
For every $K\geq 1$ and $\hat C\geq 0$, there exists $\kappa_0>0$ such that for all $\kappa\geq \kappa_0$, the following holds.
For any sufficiently large $n\ge 1$, the following event 
\begin{align*}
E_2:=\{&\mathrm{BRW}(\Gamma,\nu,\mu)~\text{eventually visits a point}~z\in \Omega(x,\hat C)~\text{with}~d(e,z)\leq Kd(e,x)\\
&\hspace{.3cm}\text{but without entering}~B(x,\kappa \log |x|)~\text{where}~|x|\ge n\}     
\end{align*} 
has probability at most $G(e,x|r).$ 
\end{lem}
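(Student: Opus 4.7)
The plan is to follow the strategy of Lemma~\ref{TraceAvoidTransBall}: freeze every particle of $\mathrm{BRW}(\Gamma,\nu,\mu)$ at its first visit to $\Omega(x,\hat C)\cap\{z:|z|\le K|x|\}$ along a genealogical path avoiding $B(x,\kappa\log|x|)$, and bound the expected size $\mathbf{E}[\sharp\mathcal Z]$ of the frozen set from above. The key geometric input is that for every $z\in\Omega(x,\hat C)$ there is, by definition of the partial cone, a geodesic $[e,z]$ passing within $\hat C$ of $x$ at some $(\eta,L)$-transition point $y=y(z)$, so that setting $m:=\kappa\log|x|-\hat C$ one has $B(y(z),m)\subseteq B(x,\kappa\log|x|)$ by the triangle inequality. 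Consequently, on the event $E_2$ the genealogical path of any frozen particle $z$ avoids the smaller ball $B(y(z),m)$. Note that elements $z\in B(x,\kappa\log|x|)$ contribute nothing since visiting them violates the avoidance constraint.

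Using that for a branching random walk with mean offspring $r$ the expected number of particles ever visiting $z$ via paths in a set $A$ is bounded by $G(e,z;A|r)$, we obtain
\[
\mathbf{P}(E_2)\le\sum_{\substack{z\in\Omega(x,\hat C)\\ |z|\le K|x|}}G\bigl(e,z;[B(x,\kappa\log|x|)]^c\bigm|r\bigr)\le\sum_{\substack{z\in\Omega(x,\hat C)\\ |z|\le K|x|}}G\bigl(e,z;[B(y(z),m)]^c\bigm|r\bigr).
\]
Applying \cite[Proposition~3.5]{DG21} at the $(\eta,L)$-transition point $y(z)$ on $[e,z]$ (uniformly in $r\le R$) bounds each summand by $c_2\mathrm{e}^{-\mathrm{e}^{\delta m}}$ with $\delta=\delta(\eta,L)>0$. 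Combining with (\ref{SnGrowth}) to count $\{z:|z|\le K|x|\}$ by $c_3\mathrm{e}^{vK|x|}$, we get
\[
\mathbf{P}(E_2)\le c_4\exp\bigl(vK|x|-c_5|x|^{\delta\kappa}\bigr),
\]
where we used $\mathrm{e}^{\delta\kappa\log|x|}=|x|^{\delta\kappa}$.

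Choosing $\kappa_0>1/\delta$, the term $|x|^{\delta\kappa}$ grows strictly faster than any linear function of $|x|$, so for $\kappa\ge\kappa_0$ and all sufficiently large $|x|\ge n$ (with the threshold depending on $K$, $\hat C$, $\alpha$, $v$, $\delta$), the right-hand side is at most $\mathrm{e}^{-\alpha|x|}\le G(e,x|r)$ by (\ref{GrLBD}). The main point to verify is that the double-exponential decay of \cite[Proposition~3.5]{DG21} is indeed applicable at $y(z)$ uniformly over the range of $z$ under consideration and uniformly in $r\le R$; since $d(e,y(z))\ge|x|-\hat C$ and, for $z\notin B(x,\kappa\log|x|)$, also $d(y(z),z)\ge\kappa\log|x|-\hat C$, the transition point $y(z)$ is a genuine interior point of $[e,z]$ at distance at least $m$ from both endpoints once $|x|$ is large, and the cited proposition applies directly.
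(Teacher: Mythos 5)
Your proposal is correct and follows essentially the same route as the paper's proof: freeze particles at their first admissible visit, bound $\mathbf P(E_2)$ by the expected number of frozen particles via the restricted Green function, use the double-exponential decay of $G(e,z;[B(y,\kappa\log|x|-\hat C)]^c|r)$ at the transition point $y$ near $x$ (the paper cites \cite[Lemma~2.11]{DWY} here rather than \cite[Proposition~3.5]{DG21}, but it is the same type of estimate, used identically in the preceding lemma), and beat the exponential count of $\{z:|z|\le K|x|\}$ by choosing $\kappa$ with $\delta\kappa>1$ so that $|x|^{\delta\kappa}$ dominates $vK|x|+\alpha|x|$. Your explicit verification that $B(y(z),\kappa\log|x|-\hat C)\subseteq B(x,\kappa\log|x|)$ and that $y(z)$ is far enough from both endpoints is a correct and welcome filling-in of details the paper leaves implicit.
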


\begin{proof}
We freeze particles when the event $E_2$ happens and denote by $\mathcal Z$ the set of frozen particles. By \cite[Lemma~2.11]{DWY}, if $y\in [e,z]$ is a transition point $\hat C$-close to $x$,    the expected number of  particles frozen at $z\in B(e,K|x|)$   is upper bounded by
$$
G(e,z; [B(y,\kappa \log |x|-\hat C)]^c|r) \le e^{-   n^{\delta \kappa }}
$$  
where $c_2$ depends on $\hat C, \delta$.
Thus,   we have
$$
\mathbf E[\sharp \mathcal Z]\le    \sum_{m\ge n}\sum_{z\in S_{Km}} G(e,z; [B(y,\kappa \log m-\hat C)]^c|r) \le  \sum_{m\ge n} c_1 e^{vKm} e^{-   c_2m^{\delta\kappa}}.   
$$
Choose $\kappa, n_0>0$ so that $c_2 m^{\delta\kappa}-vKm>\alpha m$ holds for any $m\ge n_0$. The conclusion follows again.
\end{proof}

\begin{lem}\label{exponentialmoments}
For any $K>1$, there exist $\epsilon_0, \kappa_0$ such that for all $\epsilon\leq \epsilon_0$ and $\kappa\geq \kappa_0$, the following holds.
There exists $c<0$ such that for all but finitely many  $x\in \Gamma$, the following event   
\begin{align*}
E_3:=\{&\mathrm{BRW}(\Gamma,\nu,\mu)~\text{first enters}~U(x)~\text{at a point}\\
&\hspace{.2cm}z\in U(x)\setminus U_{\epsilon}(x,\kappa \log |x|)~\text{with}~|z|\ge K|x|\}    
\end{align*}
has probability at most $c\cdot G(e,x|r)$.

\end{lem}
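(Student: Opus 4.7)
The plan is to mirror the freezing arguments in Lemmas~\ref{TraceAvoidTransBall} and~\ref{TraceAvoidTransBall2}. Freeze a particle of the branching random walk the first time the event $E_3$ is witnessed, and let $\mathcal{Z}$ denote the (random) collection of frozen particles. Since an occurrence of $E_3$ produces at least one frozen particle, we have $\mathbf{P}(E_3) \le \mathbf{E}[\sharp \mathcal{Z}] \le \sum_{z} G(e, z; U(x)^c \mid r)$, where the sum ranges over $z \in U(x) \setminus U_{\epsilon}(x, \kappa \log |x|)$ with $|z| \ge K|x|$, and $G(e,z; U(x)^c|r)$ is the Green function restricted to paths that stay outside $U(x)$ except at the endpoint.

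The first step is a structural observation that restricts the geometry of $z$. For any $\epsilon \le 1/2$ and any transition point $w$ on $[x,z]$, the identity $d(x,w) + d(w,z) = d(x,z)$ forces $\max\{d(x,w), d(w,z)\} \ge d(x,z)/2 \ge \epsilon d(x,z)$. Consequently, the condition $z \notin U_\epsilon(x)$ rules out any $(\eta, L)$-transition point on $[x,z]$, which in the relatively hyperbolic setting means $[x,z]$ is entirely deep inside a single parabolic coset $gP$ containing $x$. Since $|z| \ge K|x|$ gives $d(x,z) \ge (K-1)|x| > \kappa \log |x|$ for $|x|$ large (with $\kappa$ fixed), the thresholded exclusion reduces to $z \notin U_\epsilon(x)$, so the relevant $z$ all lie in the parabolic coset of $x$, at distance at least $(K-1)|x|$ from $x$, and inside $U(x)$.

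The second step is to extract a large ball that the genealogy path must avoid. For each such $z$, the definition of $U(x)$ places $z \in B([x,z'])$ for some $z' \in \Omega(x,C)$, a ball of radius $d(x,z')/2$; note that $d(x,z') \ge d(x,z) \ge (K-1)|x|$, so this radius is proportional to $|x|$. Since $U(x)$ contains this ball and the walk first enters $U(x)$ at $z$, the genealogy path up to its final step avoids a ball of radius of order $|x|$ anchored on the segment $[x,z']$. Applying the Gerasimov--Potyagailo doubly exponential decay estimate \cite[Proposition~3.5]{DG21} that was the engine of the two preceding lemmas, we obtain
$$G(e, z; U(x)^c \mid r) \le \exp\bigl(-\exp(\delta c_0 |x|)\bigr)$$
for constants $\delta, c_0 > 0$ depending on $K$, $\eta$, $L$ but uniform in $z$.

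Finally, summing over the at most $\sum_{n} \sharp S_n \le c_1 \sum_{n} \mathrm{e}^{vn}$ relevant points $z$ (with $n$ ranging up to a constant multiple of $|x|$), the exponential prefactor is absorbed by the doubly exponential bound to give $\mathbf{E}[\sharp\mathcal{Z}] \le \exp(-\exp(\delta'|x|))$ for some $\delta' > 0$. This is strictly smaller than the universal lower bound $G(e,x|r) \ge \mathrm{e}^{-\alpha|x|}$ from~\eqref{GrLBD} for all but finitely many $x$, which furnishes the desired estimate $\mathbf{P}(E_3) \le c\, G(e,x|r)$ with any fixed $c > 0$ (a uniform constant absorbs the finitely many small $|x|$). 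The main obstacle will be the second step: $U(x)$ is a union of balls $B([x,z''])$ over varying $z'' \in \Omega(x,C)$, none of which is individually well-placed, so one must argue geometrically that the combination of $z \in U(x) \setminus U_\epsilon$ with $|z| \ge K|x|$ really forces the path to detour around a single ball of radius proportional to $|x|$ located in a region disjoint from the earlier history of the genealogy path, so that the estimate of \cite[Proposition~3.5]{DG21} can be applied cleanly.
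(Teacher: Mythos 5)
Your freezing setup and the reduction of $\mathbf{P}(E_3)$ to a sum of restricted Green functions are the same as the paper's, and your Step 1 (for these $z$ the geodesic $[x,z]$ carries no well-placed transition point, hence lies deep in a single parabolic neighborhood $N_\eta(P_z)$) is essentially the paper's starting observation, obtained there via \cite[Lemma~6.1]{DWY} with entrance/exit points $y_1,y_2$ of $[x,z]$ in $N_\eta(P_z)$. The gap is exactly where you flag it, but it is not a missing geometric refinement — it is that the tool you propose cannot work. The doubly exponential decay of \cite[Proposition~3.5]{DG21}, which powers Lemmas~\ref{TraceAvoidTransBall} and~\ref{TraceAvoidTransBall2}, applies to paths avoiding a large ball centered at a \emph{transition point} of a geodesic from $e$ to the endpoint; that is what makes the detour catastrophically expensive. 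By your own Step 1, the points $z$ in $E_3$ are precisely those for which the ball you would like to avoid sits deep inside a parabolic coset, where a path can circumvent a ball of radius $\rho$ at a cost that is at worst exponential in $\rho$ (think of the parabolic subgroup $\mathbb F_n\times\mathbb F_m$ or $\Z^d$). So the claimed uniform bound $\exp(-\exp(\delta c_0|x|))$ on $G(e,z;U(x)^c|r)$ is false, and indeed these $z$ were carved out of Lemma~\ref{TraceAvoidTransBall} and given a separate lemma precisely because ball-avoidance fails for them. A secondary but also fatal issue: the sum runs over all $z$ with $|z|\ge K|x|$, unbounded above, so even a correct bound depending only on $|x|$ cannot be summed against $\sum_{n\ge K|x|}\sharp S_n$.

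The paper's actual argument exploits the parabolic structure instead. It splits $E_3$ into three sub-events according to the genealogy path's interaction with $N_\eta(P_z)$: (i) the path first enters $N_\eta(P_z)$ at a point $w$ close to $y_1\approx\pi_{N_\eta(P_z)}(e)$ — this is absorbed into the event $E_2$ of Lemma~\ref{TraceAvoidTransBall2}; (ii) the path enters $N_\eta(P_z)$ but far from $y_1$; (iii) the path avoids $N_\eta(P_z)$ entirely. Cases (ii) and (iii) are controlled by \cite[Lemma~2.12]{DWY}: for any $M$ one can take $\eta$ large so that $G(e,\cdot;[N_\eta(P)]^c|r)\le \mathrm{e}^{-Md(\pi_{N_\eta(P)}(e),\cdot)}$, and choosing $M$ large enough beats both the volume growth $\mathrm{e}^{vn}$ and the factor $\mathrm{e}^{\beta K_1|z|}$ coming from the sub-multiplicativity trick $G(e,z;[N_\eta(P_z)]^c|r)\,G(z,y_2;[N_\eta(P_z)]^c|r)\le G(e,e|r)\,G(e,y_2;[N_\eta(P_z)]^c|r)$, yielding a convergent sum bounded by $c\,\mathrm{e}^{-\alpha|x|}\le c\,G(e,x|r)$. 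Your proposal contains none of this case analysis, and in particular misses case (i) entirely, where the path avoids nothing large and no decay estimate of the ball-avoidance type is available.
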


\begin{proof}
Let $V$ be the set of $z\in U(x)\setminus U_{\epsilon}(x,\kappa \log |x|)$ satisfying  $|z|\ge K|x|$. By definition,  
\begin{enumerate}
    \item 
     either $d(z, x)\le  \kappa \log |x|$,  
    \item
    or the $[\epsilon, 1-\epsilon]$-percentage  of $[x,z]$ does not contain any $(\eta,L)$-transition point.  
\end{enumerate}
If   $K $ and $ \kappa$ are fixed, noticing that 
$$(K-1)|x|\le  d(x,z)\le  (1+1/K)|z|,$$
the    case (1) is impossible for  sufficiently large $|x|$. Thus, it suffices to consider the case (2). Set $K_1=\epsilon(1+1/K)$ and $K_2=(1-2\epsilon)(1-1/K)$. By \cite[Lemma~6.1]{DWY}, there exist a unique   coset $P_z\in \mathbb P$ such that if $y_1,y_2$ are the entrance and exit points of  $[x,z]$ in $N_\eta(P_z)$, then
\begin{align}\label{y1y2Def}
\max\{d(x,y_1), d(y_2,z)\}\le \epsilon d(x,z)\le \epsilon(1+1/K)|z|\le K_1|z|,    
\end{align}
and  
\begin{align}\label{y1y2EQ}d(y_1,y_2)\geq (1-2\epsilon)d(x,z) \ge (1-2\epsilon)(1-1/K)|z|\ge K_2|z|.
\end{align}

Before moving on, we need the following facts about $y_1$.
By relative thin triangle property, there exists  a  constant $\hat C_1$ depending only on $C$ so that $x$ is $\hat C$-close to a transition point on $[e,y_1]$. Moreover, there exists $\hat C_2$ depending on $C, \eta$ so that the projection $\pi_{N_{\eta}(P_z)}(e)$ of $e$ to $N_{\eta}(P_z)$ is within $\hat C_2$ of $y_1$ and   $y_1$ is $\hat C_2$-close to a transition point on $[e,w]$ for any $w\in N_\eta(P_z)$.
For given $P\in \mathbb P$, let $P(y_1)$ denote the set of $w\in N_\eta(P)$ with $d(w,y_1)\leq d(e,y_1)$, where $y_1$ is $\hat C_2$-close to $\pi_{N_\eta(P)}(e)$. 

Consider first the sub-event $E_{30}$ of $E_3$, where $\mathrm{BRW}(\Gamma,\nu,\mu)$ enters $N_\eta(P_z)$ at a point $w\in P_z(y_1)$.
Thus, $d(e,w)\le d(w,y_1)+ d(e,y_1)\le 2|y_1|$.
The same proof of \cite[Lemma 6.6]{DWY} implies that $B(y_1,  d(y_1,x)-3C)$ is contained in $U(x)$. Thus, the particle does not visit $B(y_1,  d(y_1,x)-3C)$. Assume first that $|y_1|>2|x|$. Note that there exists $n_0, \kappa_0>0$ so that for any $|x|\ge n_0$, we have    $$\kappa_0 \log |y_1| \le |y_1|-|x|-3C\le  d(y_1,x)-3C. $$
In particular,  the branching random walk does not visit $B(y_1,\kappa_0 \log |y_1|)$ before arriving at $w$. Now, if  $|y_1|\le 2|x|$, then $d(e,w)\le  2|y_1|\le 4|x|$. By definition of $E_3$, the branching random walk does not visit $B(x,\kappa_0 \log |x|)$.
In summary,  this sub-event $E_{30}$ is included   into the event $E_2$ in   Lemma~\ref{TraceAvoidTransBall2} with constants $\hat C$ and $K=4$, so there exists $\kappa_0\ge \kappa>0$ so that the probability of   $E_{30}$  is at most $G(e,x|r)$. 

Now, it remains  to  consider the particles of $\mathrm{BRW}(\Gamma,\nu,\mu)$ in the event $E_3$ that do not   enter $N_\eta(P_z)$ at some point $w\in P_z(y_1)$.
Then, we have the following two sub-events denoted by $E_{31}$ and $E_{32}$ respectively: the particles either do not visit $N_\eta(P_z)$ at all or  do visit   $N_\eta(P_z)$ but at a first entrance point $w$ not in $P_z(y_1)$. 
Let us denote by $W$ the set of points $w\in N_\eta(P)\setminus P(y_1)$  for all $P\in \mathbb P$ where $y_1$ is $\hat C_2$-close to $ \pi_{N_\eta(P)}(e)$ and $|y_1|>|x|$.
In the first case, we freeze particles when they first enter the set $V$.
In the second case, we freeze particles when they first enter the set $W$.
We denote by $\mathcal Z_{31}$ and  $\mathcal Z_{32}$ respectively the sets of frozen particles.
We have
\begin{align}
\label{Z31} \mathbf E[\sharp \mathcal{Z}_{31}]&\leq \sum_{z\in V}  G\big(e,z;[N_\eta(P_z)]^c\big|r\big),\\
\label{Z32} \mathbf E[\sharp \mathcal{Z}_{32}]&\leq \sum_{w\in W} G\big(e,w;[N_\eta(P_z)]^c\big|r\big),
\end{align} 

We first bound $\mathbf E[\sharp \mathcal{Z}_{32}]$ in~(\ref{Z32}). Recall that  $\pi_{N_{\eta}(P_z)}(e)$  is $\hat C_2$-close to  $y_1$.  If  $\mathrm{BRW}(\Gamma,\nu,\mu)$     enters  $N_\eta(P_z)\setminus P_z(y_1)$ at a point $w$, by  \cite[Lemma~2.12]{DWY}, for every $M\geq 0$, there exists $\eta_0$ such that for all $\eta\geq \eta_0$,
$$G\big(e,w;[N_\eta(P_z)]^c\big|r\big)\leq \mathrm{e}^{-M d(\pi_{N_{\eta}(P_z)}(e),w)}\leq c_0 \mathrm{e}^{-M d(w,y_1)}$$ for some $c_0=c_0(\hat C_2)>0$.  Using (\ref{GrLBD}), we first sum up over $y_1$ with $|y_1|>|x|$ and then $w\in P_z(y_1)$ with $d(y_1, w)>d(e, y_1)$:
$$\mathbf E[\sharp \mathcal{Z}_{32}]\leq \sum_{n\ge |x|}  c_0 \cdot \mathrm{e}^{vn} \sum_{m\ge n} \mathrm{e}^{(v- M)m}.$$ 
Choosing $M>2v+\alpha$ that is is independent of $\eta$, we have  that for $\eta\geq \eta_0$, 
$$
\mathbf E[\sharp \mathcal{Z}_{32}]\leq c\mathrm{e}^{-\alpha |x|} \le c G(e,x|r)
$$
where $c$ depends on $c_0$, thus on $\eta$.

We are left to bound $\mathbf E[\sharp \mathcal{Z}_{31}]$ in~(\ref{Z31}).    
As the support of $\mu$ is finite,  we can replace each edge in the geodesic from $z$ to $y_2\in N_\eta(P_z)$ by a $\mu$-trajectory with uniformly bounded length. Moving possibly  the endpoint $y_2$ up to a   bounded distance depending on $\supp(\mu)$, this  produces   a trajectory outside $N_\eta(P_z)$ for the $\mu$-random walk  from   $z$ to $y_2$ so that its   length is bounded above by a  linear function of   $d(y_2,z)$.
This implies the existence of a  positive $\beta$ independent on $z, y_1,y_2$ and $\eta$ such that 
$$G\big(z,y_2;[N_\eta(P_z)]^c\big|r\big)\geq \mathrm{e}^{-\beta d(y_2,z)}\ge \mathrm{e}^{-\beta K_1 |z|}.$$
Taking into account that
$$
G\big(e,z;[N_\eta(P_z)]^c\big|r\big) \cdot G\big(z,y_2;[N_\eta(P_z)]^c\big|r\big) \le G(e,e|r) \cdot G\big(e,y_2;[N_\eta(P_z)]^c\big|r\big) 
$$
we obtain
\begin{equation}\label{proofexponentialmoments}
    G\big(e,z;[N_\eta(P_z)]^c\big|r\big)\leq   G\big(e,y_2;[N_\eta(P_z)]^c\big|r\big)\cdot \mathrm{e}^{\beta K_1 |z|},
\end{equation}
As above, the projection $\pi_{N_{\eta}(P_z)}(e)$  has a distance at most $\hat C_2$ depending on $\eta$ to $y_1$. By  \cite[Lemma~2.12]{DWY}, for every $M\geq 0$, there exists $\eta_0$ such that for all $\eta\geq \eta_0$,
$$G\big(e,y_2;[N_\eta(P_z)]^c\big|r\big)\leq \mathrm{e}^{-M d(\pi_{N_{\eta}(P_z)}(e),y_2)}\leq c_0 \mathrm{e}^{-M d(y_2,y_1)}\le c_1\mathrm{e}^{-K_2 M|z|}$$ for some $c_0=c_0(\eta)>0$.
Summing over $z\in V$ with $|z|>K|x|$, choosing $M>0$ so that $M K_2>\beta K_1+v+\alpha$, we have  by~(\ref{proofexponentialmoments}) and (\ref{GrLBD})
that for $\eta\geq \eta_0$,
$$\mathbf E[\sharp \mathcal{Z}_{31}]\leq \sum_{n\ge K|x|}  c_0c_1 \cdot \mathrm{e}^{(v+\beta K_1- M K_2)n}\le c\mathrm{e}^{-\alpha |x|} \le c G(e,x|r).$$ 
where $c$ depends on $c_0, c_1$.
The lemma is proved.
\end{proof}


We can now finish the proof of Proposition~\ref{proppartialcone}.
\begin{proof}[Proof of Proposition~\ref{proppartialcone}]
Using the thin-triangle property, we see that if  $z$ is contained in $U(x)\setminus U_{\epsilon}(x,\kappa \log |x|)$, then  $z\in \Omega(x, \hat C)$ for some uniform $\hat C$.

Fix any $K>1$. Let $\epsilon$ be small enough and $\kappa$ be large enough such that the conclusions of Lemmas~\ref{TraceAvoidTransBall},~\ref{TraceAvoidTransBall2} and~\ref{exponentialmoments} hold. Using (\ref{GrLBD}), the probability that the branching random walk visits $B(x, \kappa \log |x|)$ is bounded by $$\sum_{z \in B(x, \kappa \log |x|)} G(e, z|r)\le c\mathrm{e}^{v\kappa \log |x|} G(e,x|r) \mathrm{e}^{\alpha \kappa \log |x|}\le |x|^{\beta}G_r(e,x)$$
for some $\beta$ depending on $\alpha, \kappa$ and $v$.

Now assume that the branching random walk visits  $U(x)$ through paths outside $B(x, \kappa \log |x|)$. If the events $E_1$ and $E_3$ do not happen,  then  the event  $E_2$ happens: $\mathrm{BRW}(\Gamma,\nu,\mu)$ first visits $U(x)$  {at a point} $z\in U(x)\setminus U_{\epsilon}(x,\kappa \log |x|)$ with $|z|< K|x|$, without  entering $B(x, \kappa \log |x|)$.  
Therefore,
\begin{align*}
  & \mathbf{P}(\mathrm{BRW}(\Gamma,\nu,\mu)\text{ visits } \Omega(x, C)) \leq \mathbf{P} (\mathrm{BRW}(\Gamma,\nu,\mu)\text{ visits } U(x)) \\
  \leq& \mathbf{P}(\mathrm{BRW}(\Gamma,\nu,\mu)\text{ visits }B(x, \kappa \log |x|))+
  \mathbf{P}(E_1\cup E_2\cup E_3) \\
  \leq& C(1+|x|^{\beta})G(e,x|r) \qedhere
\end{align*} 
\end{proof}

\bibliographystyle{alpha}
\bibliography{Greengrowth}

\end{document}